\definecolor{light-gray1}{gray}{0.90}
\definecolor{light-gray2}{gray}{0.80}
\definecolor{light-gray3}{gray}{0.60}
\newcommand{\wt}{\widetilde}
\newcommand{\vp}{\varphi}
\newcommand{\R} {\mathbb R}
\newcommand{\cuad}{{\sqcap\kern-.68em\sqcup}}
\newcommand{\ve}{\varepsilon}
\newcommand{\be}{\begin{equation}}
\newcommand{\ee}{\end{equation}}
\newcommand{\la}{\lambda}
\definecolor{darkgreen}{rgb}{0.2,0.7,0.1}
\newcommand{\sech}{\mathop{\mbox{\normalfont sech}}\nolimits}
\newcommand{\px}{\partial_x}
\newcommand{\nlop}{(1-\partial_x^2)^{-1}}
\def\wh#1{\widehat{#1}}
\newcommand{\al}{\alpha}
\newcommand{\bt}{\beta}
\def\bm{\left( \begin{array}{cc}}
\def\endm{\end{array}\right)}
\providecommand{\norm}[1]{\left\| #1 \right\|}
\newcommand{\ba}{\begin{equation*}}
\newcommand{\ea}{\begin{equation*}}
\newcommand{\bea}{\begin{eqnarray}}
\newcommand{\eea}{\end{eqnarray}}
\newcommand{\bee}{\begin{eqnarray*}}
\newcommand{\eee}{\end{eqnarray*}}
\newcommand{\ben}{\begin{enumerate}}
\newcommand{\een}{\end{enumerate}}
\numberwithin{equation}{section}
\newtheorem{theorem}{Theorem}[section]
\newtheorem*{theorem*}{Theorem}
\newtheorem{proposition}{Proposition}[section]
\newtheorem{corollary}{Corollary}[section]
\newtheorem{lemma}{Lemma}[section]
\newtheorem{definition}{Definition}[section]
\theoremstyle{remark}
\newtheorem{remark}{Remark}[section]
\title[Improved decay]{Asymptotic dynamics for the small data weakly dispersive one-dimensional Hamiltonian ABCD system}
\author{Chulkwang Kwak}
\address{Facultad de Matem\'aticas, Pontificia Universidad Cat\'olica de Chile and Institute of Pure and Applied Mathematics, Chonbuk National University}
\email{chkwak@mat.uc.cl}
\thanks{C. K. is supported by FONDECYT Postdoctorado 2017 Proyecto No. 3170067.}
\author{Claudio Mu\~noz}
\address{CNRS and Departamento de Ingenier\'{\i}a Matem\'atica and Centro
de Modelamiento Matem\'atico (UMI 2807 CNRS), Universidad de Chile, Casilla
170 Correo 3, Santiago, Chile.}
\email{cmunoz@dim.uchile.cl}
\thanks{C. M. work was partly funded by Chilean research grants FONDECYT  1150202, project France-Chile ECOS-Sud C18E06 and CMM Conicyt PIA AFB170001.}
\thanks{One of us (C.M.) would like to thank the Applied Mathematics Department of University of Granada, Spain, where part of this work was completed.}
\subjclass{35Q35,35Q51}
\begin{document}

\begin{abstract}
Consider the Hamiltonian $abcd$ system in one dimension, with data posed in the energy space $H^1\times H^1$. This model, introduced by Bona, Chen and Saut \cite{BCS1,BCS2}, is a well-known physical generalization of the classical Boussinesq equations \cite{Bous}.  The Hamiltonian case corresponds to the regime where $a,c<0$ and $b=d>0$. Under this regime, small solutions in the energy space are globally defined. A first proof of decay for this $2\times 2$ system was given in \cite{KMPP2018}, in a strongly dispersive regime, i.e. under essentially the conditions 
\[
b=d > \frac29, \quad a,c<-\frac1{18}.
\]
Additionally, decay was obtained inside a proper subset of the light cone $(-|t|,|t|)$. In this paper, we improve \cite{KMPP2018} in three directions. First, we enlarge the set of parameters $(a,b,c,d)$ for which decay to zero is the only available option, considering now the so-called weakly dispersive regime $a,c\sim 0$: we prove decay if now
\[
b=d > \frac3{16}, \quad a,c<-\frac1{48}.
\]
This result is sharp in the case where $a=c$, since for $a,c$ bigger, some $abcd$ linear waves of \emph{nonzero frequency} do have \emph{zero group velocity}. Second, we sharply enlarge the interval of decay to consider the whole light cone, that is to say, any interval of the form $|x|\sim |v|t$, for any $|v|<1$. This result rules out, among other things, the existence of nonzero speed solitary waves in the regime where decay is present. Finally, we prove decay to zero of small $abcd$ solutions in exterior regions $|x|\gg |t|$, also discarding super-luminical small solitary waves. These three results are obtained by performing new improved virial estimates for which better decay properties are deduced.
\end{abstract}

\maketitle

\section{Introduction}

\subsection{Setting} Consider the initial value problem (IVP) for the one-dimensional $abcd$ system:
\begin{equation}\label{boussinesq}
\begin{aligned}
&(1- b\,\partial_x^2)\partial_t \eta  + \partial_x \! \left( a\, \partial_x^2 u +u + u \eta \right) =0, \quad (t,x)\in \R\times\R, \\
&(1- d\,\partial_x^2)\partial_t u  + \partial_x \! \left( c\, \partial_x^2 \eta + \eta  + \frac12 u^2 \right) =0,\\
&  \eta(t=0)=\eta_0, \quad u(t=0) =u_0.
\end{aligned}
\end{equation}
Here, $\eta=\eta(t,x)$ and $u=u(t,x)$ are real-valued unknowns, and $(\eta_0,u_0)$ are given initial data in $(H^1\times H^1)(\R)$.

\medskip

The $abcd$ system \eqref{boussinesq} was introduced in 2002 in a foundational paper by Bona, Chen and Saut \cite{BCS1,BCS2}, as an improved version of the original Boussinesq equations \cite{Bous} obtained from the water waves equation in the long wave regime, and it is by now a canonical shallow water model. The constants $a,b,c,d$ must follow the conditions \cite{BCS1}
\be\label{Conds}
a,~c<0, \quad a +b =\frac1{2}\left(\theta^2-\frac13\right),\quad c+d =\frac1{2} (1-\theta^2)\geq 0,
\ee
for some $\theta\in [0,1]$ (this case is referred as the regime \emph{without surface tension}). The \emph{physical perturbation parameters} under which the long wave expansion is performed in the water waves model are
\[
\al := \frac Ah \ll 1, \quad \bt := \frac{h^2}{\ell^2} \ll 1, \quad \al \sim \bt,
\]
and where $A$ and $\ell$ are typical waves amplitude and wavelength respectively, and $h$ is the constant depth.

\medskip

Note that in the case $\eta=const.u$, with $b$ and $d$ properly chosen, and after suitable rescaling and time translation $x\mapsto x-t$, \eqref{boussinesq} becomes the classical Benjamin-Bona-Mahony (BBM) equation \cite{BBM}. In that sense, the $abcd$ system is a physically motivated improvement and generalization of the BBM equation, and it is also believed to be nonintegrable as BBM. Equations \eqref{boussinesq} are part of a hierarchy of Boussinesq models, including second order systems, which were obtained in \cite{BCS1} and \cite{BCL}. See these papers for a more accurate description of the physical relevance of \eqref{boussinesq}.

\medskip

A rigorous justification for the $abcd$ model from the free surface Euler equations (as well as extensions to higher dimensions) is given by Bona, Colin and Lannes \cite{BCL}, see also Alvarez-Samaniego and Lannes \cite{ASL} for improved results. Since then, these models have been extensively studied in the literature, see e.g. \cite{BLS,BCL,LPS,Saut} and references therein for a detailed account of known results. 

\medskip

As for the low regularity Cauchy problem associated to \eqref{boussinesq} and its generalizations to higher dimensions, Saut et. al. \cite{SX,SWX} studied in great detail the long time existence problem by focusing in the small physical parameter $\ve$ appearing in the asymptotic expansions. They showed well-posedness (on a time interval of order $1/\ve$) for \eqref{boussinesq}. We also refer to Burtea's work \cite{Burtea} (compared to \cite{SWX}) for an additional improvement of \cite{SX} under some directions, including most of ``generic" cases and low-regularity regime (in comparison with \cite{SX}) for which the well-posedness holds true. Previous results by Schonbek \cite{Schonbek} and Amick \cite{Amick} considered the case $a=c=d=0$, $b=\frac13$, a version of the original Boussinesq system, proving global well-posedness under a non cavitation condition, and via parabolic regularization. Linares, Pilod and Saut \cite{LPS} considered existence in higher dimensions for time of order $O(\ve^{-1/2})$, in the KdV-KdV regime $(b=d=0)$. On the other hand, ill-posedness results and blow-up criteria (for the case $b=1$, $a=c=0$), are proved in \cite{CL}, following the ideas in Bona-Tzvetkov \cite{BT}.

\medskip

It is well-known that \eqref{boussinesq} admits (big) solitary waves in certain regimes of $a,b,c,d$, see \cite{BCL0} and references therein for details. Those solitary waves are globally defined solutions in the energy space with no decay. Below, we will discuss further this topic.

\medskip

Whenever $b=d$, the system \eqref{boussinesq} is {\bf Hamiltonian}, and globally well-posed in the energy space $H^1\times H^1$ \cite{BCS2}, at least for small data, thanks to the conservation of the energy
\be\label{Energy}
\begin{aligned}
E[u ,\eta ](t):=&~{} \frac12\int \left( -a (\partial_x u)^2 -c (\partial_x \eta)^2  + u^2+ \eta^2 + u^2\eta \right)(t,x)dx,\\
&~{} \hbox{with } a,c<0.
\end{aligned}
\ee
This will be the setting that we shall assume in this paper: globally defined small solutions for the Hamiltonian case. Notice that is unknown whether or not large or non Hamiltonian solutions of \eqref{boussinesq} may develop singularities in finite time. In the global existence setting, the case where $a,c\sim 0$ is essentially the {\bf weakly dispersive case}, and on of the main subjects of this paper.

\subsection{Scattering and decay} Assume that $(\eta,u)\in C(\R,(H^1\times H^1)(\R))$ is a globally defined solution to \eqref{boussinesq}. Can one find its asymptotic behavior as $t\to\pm\infty$, in terms of scattering? By scattering, we mean the existence of final states $(u^\pm,\eta^\pm)\in H^1\times H^1$ such that 
\be\label{Scat}
\lim_{t\to \pm\infty} \| (u,\eta)(t) - S(t)(u^\pm,\eta^\pm)\|_{H^1\times H^1}=0,
\ee
and $S(t)$ is the associated linear flow in \eqref{boussinesq}. This question is far from trivial, because of the following reasons: 

\begin{itemize}
\item First of all, the nonlinearities are too weak (just quadratic) for getting \eqref{Scat}, and modified scattering is expected.
\smallskip
\item Second, the low dimension (=1) makes the associated linear decay the worst possible, of order $O(t^{-1/3})$ for a wide range of parameters $(a,b,c,d)$ \cite{Munoz_Rivas}.
\smallskip
\item Third, there is the presence of (non decaying) solitary waves, with zero and nonzero speeds \cite{BCL0}. Therefore, \eqref{Scat} cannot hold in that case.
\smallskip
\item Finally, \eqref{boussinesq} is a $2\times2$ system, which implies that scattering techniques are harder to apply than usual \cite{Munoz_Rivas}. 
\end{itemize}

The first known ``scattering'' result for \eqref{boussinesq}, in the alternative form of an explicit proof of decay to zero in time-depending regions of space, was proved in \cite{KMPP2018}. In order to state this result, first recall the following definition, also important for the main results of this paper.

\begin{definition}[Intervals of decay] Let $v\in [-1,1]$ be any fixed number (speed), and $t\in \R$ such that $|t|\geq 2$. For each $(t,v)$, we define the interval of decay $J_v(t)\subset\R$ as
\be\label{Jv}
J_v(t):= \left( v|t| - \frac{ |t|}{\log^2 |t|}, v|t|+ \frac{|t|}{\log^2 |t|}\right).
\ee 
\end{definition}

Essentially, $J_v(t)\sim \{|x|\sim v|t|\}$. One has

\begin{theorem}[Decay for the strongly dispersive Hamiltonian $abcd$ system, see \cite{KMPP2018}]\label{Thm2}
Let $(u,\eta)\in C(\R, H^1\times H^1)$ be a global, small solution of \eqref{boussinesq}-\eqref{Conds}, such that for some $\ve>0$ small
\be\label{Smallness}
\|(u_0,\eta_0)\|_{H^1\times H^1}< \ve.
\ee
Let $v=0$, $|t|\geq 2$ and $J_0(t)$ be as in \eqref{Jv}. Assume additionally that $(a,c)$ are dispersion-like parameters, see \eqref{dispersion_like}. Then, there is strong decay in $J_0(t)$:
\be\label{Conclusion_0}
\lim_{t \to \pm\infty}   \|(u,\eta)(t)\|_{(H^1\times H^1)(J_0(t))} =0.
\ee
\end{theorem}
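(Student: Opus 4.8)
The plan is to run a \emph{localized virial argument} near $x=0$, in the spirit of the transversal estimates used for kink and soliton asymptotic stability, combined with the a priori bound $\|(u,\eta)(t)\|_{H^1\times H^1}\lesssim\ve$ that the conserved energy \eqref{Energy} and the smallness hypothesis \eqref{Smallness} provide uniformly in time. First I would rewrite \eqref{boussinesq}, using $b=d$, as a first-order Hamiltonian system $\partial_t U=\mathcal{J}\mathcal{L}U+\mathcal{N}(U)$ for $U=(\eta,u)$, with $\mathcal{L}$ the positive operator underlying \eqref{Energy} (positivity coming from $a,c<0$ in \eqref{Conds}), $\mathcal{J}$ built from $\partial_x$ and the nonlocal multiplier $\Lambda:=(1-b\partial_x^2)^{-1}$, and $\mathcal{N}(U)$ quadratic; it is also useful to pass to characteristic variables (linear combinations of $\eta$ and $u$) that expose the two branches of the dispersion relation, the operator $\Lambda$ preventing a complete decoupling but a partial one being enough.

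Next, fix a large parameter $A\gg1$ and a bounded smooth weight $\varphi=\varphi_A$ with $\varphi'\geq0$ and $\varphi'$ a bump of width $\sim A$ (say $\varphi(x)=\tanh(x/A)$), and introduce a momentum-type bilinear functional $\mathcal{I}(t):=\int\varphi(x)\,\mathcal{B}[\eta,u](t,x)\,dx$, where $\mathcal{B}$ is a suitably chosen quadratic density in $\eta$, $u$ and their first derivatives and antiderivatives, adapted to $\mathcal{L}$, so that $|\mathcal{I}(t)|\lesssim\ve^2$ uniformly. Differentiating, substituting the equations and integrating by parts gives
\[
\frac{d}{dt}\mathcal{I}(t)=-\,\mathcal{Q}_\varphi[\eta,u](t)+\mathcal{R}(t),
\]
with $\mathcal{Q}_\varphi$ a quadratic form weighted by $\varphi'$ (and $\varphi'''$) and $\mathcal{R}$ collecting the cubic contributions of $\mathcal{N}(U)$ together with commutators of $\Lambda$ with $\varphi$. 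The decisive point is that \emph{precisely} when $(a,c)$ are dispersion-like, see \eqref{dispersion_like}, and $b=d>\tfrac29$, $a,c<-\tfrac1{18}$, the form $\mathcal{Q}_\varphi$ is coercive on the support of $\varphi'$,
\[
\mathcal{Q}_\varphi[\eta,u]\;\gtrsim\;\int\varphi'\big(u^2+\eta^2+(\partial_x u)^2+(\partial_x\eta)^2\big)dx\;-\;C A^{-2}\!\int\varphi'(\cdots)dx,
\]
which is verified by diagonalizing the associated symbol frequency-by-frequency; the stated thresholds are exactly what makes that symbol nonnegative and nondegenerate.

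The remaining steps are then routine in structure. The cubic part of $\mathcal{R}$ is bounded by $\ve\int\varphi'(u^2+\eta^2+\cdots)dx$ using \eqref{Smallness} and the uniform energy bound, and the commutator terms by $O(A^{-1})$ times the same local quantity, so for $A$ large and $\ve$ small both are absorbed by $\mathcal{Q}_\varphi$; integrating on $[2,T]$ and using $|\mathcal{I}(t)|\lesssim\ve^2$ yields the spacetime bound $\int_2^\infty\!\int\varphi'(x)(u^2+\eta^2+(\partial_x u)^2+(\partial_x\eta)^2)(t,x)\,dx\,dt\lesssim\ve^2$, and symmetrically as $t\to-\infty$. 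This already gives a sequence $t_n\to\pm\infty$ along which the local $H^1\times H^1$ norm on any fixed ball vanishes; to promote it to the genuine limit \eqref{Conclusion_0} over the slowly growing window $J_0(t)$ of \eqref{Jv} I would run a second, dynamic virial with scale $\lambda(t)\sim|t|/\log^2|t|$ and an almost-monotonicity argument in the spirit of Kowalczyk--Martel--Mu\~noz, whose monotone part is again controlled by $\mathcal{Q}_\varphi$ and whose rescaled errors close exactly because $\lambda(t)$ grows sublinearly --- this is the origin of the $\log^{-2}$ loss in the width of $J_0(t)$.

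The hard part is the coercivity step. Because \eqref{boussinesq} is a genuine $2\times2$ system with two distinct dispersion relations and nonlocal coefficients, the virial derivative unavoidably mixes $\eta$, $u$ and their derivatives, and the difficulty is to identify the \emph{right} bilinear density $\mathcal{B}$ so that all cross terms reassemble into a nonnegative symbol; the thresholds $b=d>\tfrac29$ and $a,c<-\tfrac1{18}$ are precisely the price of that positivity, and any slack there degrades the admissible range of $(a,b,c,d)$. A secondary technical issue is controlling, uniformly in $\lambda(t)$, the commutators between $\Lambda$ and the growing weight in the dynamic virial of the last step.
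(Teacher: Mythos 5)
Your proposal follows essentially the same route as the paper (and \cite{KMPP2018}): a momentum-type virial built from $\int\varphi\,(u\eta+u_x\eta_x)$ and $\int\varphi'\,\eta u_x$, coercivity of the resulting weighted quadratic form precisely under \eqref{dispersion_like}, absorption of the cubic and commutator errors by smallness and the slowly growing scale $\lambda(t)=t/\log^2 t$, a spacetime integral bound yielding decay along a sequence, and an almost-monotone localized energy to upgrade this to the full limit on $J_0(t)$. The only notable difference is cosmetic: where you propose verifying the coercivity by diagonalizing the symbol frequency-by-frequency, the paper does it by passing to the canonical variables $u=f-f_{xx}$, $\eta=g-g_{xx}$, which turns the nonlocal terms $\int\varphi'\,u(1-\partial_x^2)^{-1}u$ into local expressions in $f$ (up to $\varphi''$-errors) and reduces positivity to checking the signs of explicit coefficients $A_i,B_i$ for a suitable choice of $\alpha$.
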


By \emph{dispersion-like parameters}, we mean the following \cite{KMPP2018}: $(a,b,c)$ satisfying \eqref{Conds} are such that\footnote{Recall that since $abcd$ is Hamiltonian, one has $b=d$.}
\be\label{dispersion_like}
3b(a+c) + 2b^2  < 8ac. 
\ee
This condition means that the negative $(a,c)$ are both not sufficiently close to zero, or in other words, \eqref{boussinesq} has enough dispersion to ensure decay to zero of its small solutions. In \cite{KMPP2018} it was additionally proved that the parameters of the Hamiltonian $abcd$ system (see the physical restrictions \eqref{Conds}) must obey the condition 
\be\label{primera}
b>\frac16\sim 0.17,
\ee
and that 
\be\label{segunda}
\hbox{\eqref{dispersion_like}\quad  holds if \quad $b>\frac29\sim 0.22$.}
\ee
This essentially says that Theorem \ref{Thm2} only concerns with a strongly dispersive regime, so the regime $\frac16 < b \leq \frac29$, which describes the weakly dispersive regime, was left open. Indeed, for instance in the case $a=c$, \eqref{dispersion_like} in addition to \eqref{Conds} (in particular, $a = c = \frac16 - b$) implies 
\be\label{Strong}
a=c <-\frac1{18}.
\ee
Consequently, if $b<\frac29$, then $(a,c)$ are above these last values. 
Let us be more precise. A very useful parametrization of the set of parameters $(a,b,c)$ for which \eqref{primera} and \eqref{segunda} can be understood is given in terms of two parameters, $b$ and $\nu$ \cite{KMPP2018}:
\begin{equation}\label{R0}
\begin{aligned}
(a,b,c) = &~ \left(-\frac{\nu}{2} + \frac13 -b,\; b,\; \frac{\nu}{2} - b \right), \quad (\nu,b)\in \mathcal R_0,\\
\mathcal R_0:= & ~ \left\{ (\nu,b) ~ : ~ \nu \in [0,1] \cap \left(\frac23 - 2b, 2b \right), ~b>\frac16 \right\}.
\end{aligned}
\ee
Therefore, the Hamiltonian $abcd$ system is valid only in the set $\mathcal R_0$ described in Figure \ref{Fig:0} (left). On the other hand, the  set $\mathcal R \subseteq \mathcal R_0$ of pairs $(\nu,b)$ under which \eqref{dispersion_like} is valid is given in Figure \ref{Fig:0} (right).

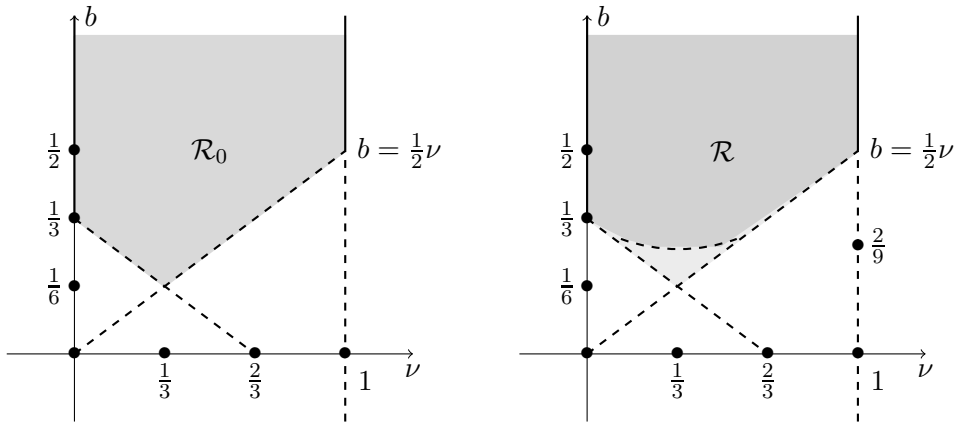
\begin{figure}[h!]
\begin{center}
\begin{tikzpicture}[scale=0.9]
\filldraw[thick, color=lightgray!60] (0,4.7)--(0,2) -- (4/3,1) --(4,3) -- (4,4.7) -- (0,4.7);
\draw[thick,dashed] (4,-1) -- (4,3);
\draw[thick] (4,3) -- (4,5);
\draw[thick,dashed] (0,0) -- (4,3);
\draw[thick,dashed] (0,2)--(8/3,0);
\draw[thick] (0,2) -- (0,5);
\draw[->] (-1,0) -- (5,0) node[below] {$\nu$};
\draw[->] (0,-1) -- (0,5) node[right] {$b$};
\node at (0,0){$\bullet$};
\node at (4,0){$\bullet$};
\node at (4.3,-0.4){$1$};
\node at (4/3,0){$\bullet$};
\node at (8/3,-0.4){$\frac23$};
\node at (4/3,-0.4){$\frac13$};
\node at (8/3,0){$\bullet$};
\node at (0,2){$\bullet$};
\node at (-0.3,2){$\frac 13$};
\node at (0,1){$\bullet$};
\node at (-0.3,1){$\frac16$};
\node at (0,3){$\bullet$};
\node at (-0.3,3){$\frac12$};
\node at (4.8,3){$b=\frac12\nu$};
\node at (2,3){$ \mathcal{R}_0$};
\end{tikzpicture}
\qquad
\begin{tikzpicture}[scale=0.9]
\filldraw[thick, color=lightgray!30] (0,4.7)--(0,2) -- (4/3,1) --(4,3) -- (4,4.7) -- (0,4.7);
\filldraw[thick, color=lightgray!70] (0,4.7)--(0,2) -- (0.6,1.7) --(1,1.6) --(4/3,1.6) --(1.97,1.6) --(4,3) -- (4,4.7) -- (0,4.7);
\draw[thick,dashed] (4,-1) -- (4,3);
\draw[thick] (4,3) -- (4,5);
\draw[thick,dashed] (0,0) -- (4,3);
\draw[thick,dashed] (0,2)--(8/3,0);
\draw[thick] (0,2) -- (0,5);
\draw[->] (-1,0) -- (5,0) node[below] {$\nu$};
\draw[->] (0,-1) -- (0,5) node[right] {$b$};
\node at (0,0){$\bullet$};
\node at (4,0){$\bullet$};
\node at (4.3,-0.4){$1$};
\node at (4/3,0){$\bullet$};
\node at (8/3,-0.4){$\frac23$};
\node at (4/3,-0.4){$\frac13$};
\node at (8/3,0){$\bullet$};
\node at (0,2){$\bullet$};
\node at (-0.3,2){$\frac 13$};
\node at (0,1){$\bullet$};
\node at (-0.3,1){$\frac16$};
\node at (0,3){$\bullet$};
\node at (4,1.6){$\bullet$};
\node at (4.3,1.6){$\frac29$};
\node at (-0.3,3){$\frac12$};
\node at (4.8,3){$b=\frac12\nu$};
\draw[thick,dashed] (0.5,1.7) arc (250:290:2.5);
\node at (2,3){$ \mathcal R$};
\end{tikzpicture}
\end{center}
\caption{(\emph{Left}). The set $\mathcal R_0$ under which the Hamiltonian $abcd$ system (without surface tension) makes sense. Note that each point has associated values $(a,c)$ via formula  \eqref{R0}, and the set of admissible values makes sense only if $b>\frac16$. Also, note that at $(\nu,b)=(\frac13,\frac16)$, one has $(a,c)=(0,0)$ (no dispersion in \eqref{boussinesq}). (\emph{Right}). The set $\mathcal R$ under which \eqref{dispersion_like} holds, and Theorem \ref{Thm2} is valid. Note that $b=\frac29$ represents the bottom of this set. Finally note that at $(\nu,b)=(\frac13,\frac29)$, one has $(a,c)=(-\frac{1}{18},-\frac{1}{18}).$ Figure taken from \cite{KMPP2018}.}\label{Fig:0}
\end{figure}

\medskip

Having explained the importance of strongly dispersive parameters $(a,b,c,d)$ in Theorem \ref{Thm2}, let us come back to the statement of Theorem \ref{Thm2}.

\begin{remark}
Note that since \eqref{boussinesq} is Hamiltonian, decay to zero in the whole energy space $H^1\times H^1$ would imply that the solution is identically zero. In that sense, \eqref{Conclusion_0} is sharp.
\end{remark}

\begin{remark}
Since the data is only assumed in the energy space, \eqref{Conclusion_0} does not give an explicit rate of decay for the solution; it is expected that a better description of the rate of decay could be obtained by assuming better spatial decay properties for the initial data, as it is usually required in scattering techniques.   
\end{remark}

\begin{remark}
An equivalent, but also useful description of the set of $(a,b,c)$-parameters{\color{black}, for which the parameters satisfy \eqref{Conds}}, and Theorem \ref{Thm2} is valid, is given in Fig. \ref{Fig:5}. In this configuration, $(a,c)$ are in functions of the remaining parameter $b$.
\end{remark}

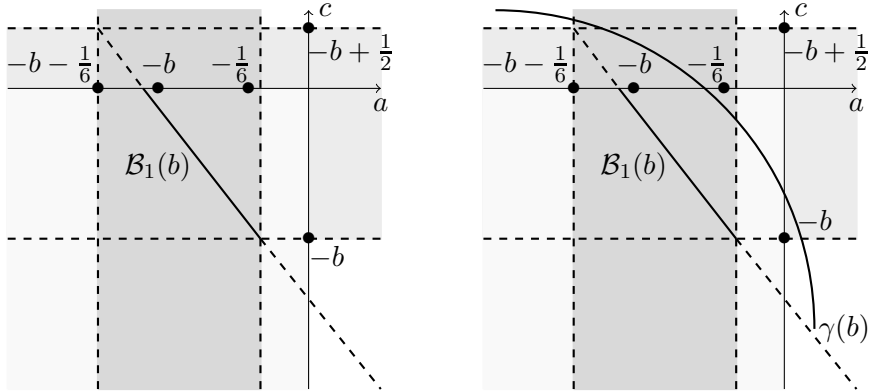
\begin{figure}[h!]
\begin{center}
\begin{tikzpicture}[scale=0.8]
\filldraw[thick, color=lightgray!30] (-1,1.5)--(5.2,1.5) -- (5.2,5) --(-1,5) -- (-1,1.5);
\filldraw[thick, color=lightgray!10] (-1,-1)--(-1,4) -- (4,4) --(4,-1) -- (-1,-1);
\filldraw[thick, color=lightgray!60] (0.5,-1)--(0.5,5.3) -- (3.2,5.3) --(3.2,-1) -- (0.5,-1);
\draw[thick, color=black] (1.25,4) -- (3.2,1.5);
\draw[thick,dashed] (0.5,5) -- (1.25,4);
\draw[thick,dashed] (3.2,1.5) -- (5.2,-1);
\draw[thick,dashed] (0.5,-1)--(0.5,5.3);
\draw[thick,dashed] (3.2,-1)--(3.2,5.3);
\draw[thick,dashed] (-1,1.5)--(5.2,1.5);
\draw[thick,dashed] (-1,5)--(5.2,5);
\draw[->] (-1,4) -- (5.2,4) node[below] {$a$};
\draw[->] (4,-1) -- (4,5.3) node[right] {$c$};
\node at (3,4){$\bullet$};
\node at (2.7,4.4){$-\frac16$};
\node at (1.5,4){$\bullet$};
\node at (1.5,4.4){$-b$};
\node at (0.5,4){$\bullet$};
\node at (-0.3,4.4){$-b -\frac16$};
\node at (4,1.5){$\bullet$};
\node at (4.3,1.2){$-b$};
\node at (4,5){$\bullet$};
\node at (4.7,4.6){$-b+\frac12$};
\node at (1.5,2.7){$ \mathcal{B}_1(b)$};
\end{tikzpicture}
\qquad
\begin{tikzpicture}[scale=0.8]
\filldraw[thick, color=lightgray!30] (-1,1.5)--(5.2,1.5) -- (5.2,5) --(-1,5) -- (-1,1.5);
\filldraw[thick, color=lightgray!10] (-1,-1)--(-1,4) -- (4,4) --(4,-1) -- (-1,-1);
\filldraw[thick, color=lightgray!60] (0.5,-1)--(0.5,5.3) -- (3.2,5.3) --(3.2,-1) -- (0.5,-1);
\draw[thick, color=black] (1.25,4) -- (3.2,1.5);
\draw[thick,dashed] (0.5,5) -- (1.25,4);
\draw[thick,dashed] (3.2,1.5) -- (5.2,-1);
\draw[thick,dashed] (0.5,-1)--(0.5,5.3);
\draw[thick,dashed] (3.2,-1)--(3.2,5.3);
\draw[thick,dashed] (-1,1.5)--(5.2,1.5);
\draw[thick,dashed] (-1,5)--(5.2,5);
\draw[->] (-1,4) -- (5.2,4) node[below] {$a$};
\draw[->] (4,-1) -- (4,5.3) node[right] {$c$};
\node at (3,4){$\bullet$};
\node at (2.7,4.4){$-\frac16$};
\node at (1.5,4){$\bullet$};
\node at (1.5,4.4){$-b$};
\node at (0.5,4){$\bullet$};
\node at (-0.3,4.4){$-b -\frac16$};
\node at (4,1.5){$\bullet$};
\node at (4.5,1.8){$-b$};
\node at (4,5){$\bullet$};
\node at (4.7,4.6){$-b+\frac12$};
\node at (1.5,2.7){$ \mathcal{B}_1(b)$};
\draw[thick] (4.5,0) arc (0:90:5.3);
\node at (5,0){$\gamma(b)$};
\end{tikzpicture}
\end{center}
\caption{(\emph{Left.}) Schematic representation of the admissible pairs $(a,c)$ (in terms of $b$) under which $abcd$ is valid ($b>\frac16$). Here, the continuous line $\mathcal{B}_1(b)$ represents this set. Note that $(a,c)$ cannot be arbitrarily small. (\emph{Right.}) The curve $\gamma(b)$ represents now the borderline for the validity of Theorem \ref{Thm2}: below this curve \eqref{dispersion_like} holds true. In other words, for $b$ sufficiently small (below 2/9), the curve $\gamma(b)$ is completely \emph{below} $\mathcal{B}_1(b)$, and Theorem \ref{Thm2} fails. See \cite{KMPP2018} for further details.} \label{Fig:5}
\end{figure}

The proof of Theorem \ref{Thm2} (see \cite{KMPP2018}) was based essentially in the use of three particular virial identities adapted to the $acbd$ dynamics, in the spirit of previous works by Martel and Merle \cite{MM,MM1,MM2}, Merle and Rapha\"el \cite{MR}, and recently by Kowalczyk, Martel and the second author \cite{KMM1,KMM2,KMM3} (see also \cite{AM1,MPP} for similar results). Let us be more precise: a well-cooked virial functional $\mathcal I_{0}(t):= \mathcal I_0[u,\eta](t)$ is said to control the dynamics of decay as $t\to +\infty$ if one is able to show that $\mathcal I_0(t)$ is bounded for all time, and 
\[
\frac{d}{dt}\mathcal I_0(t) \gtrsim \|(u,\eta)(t)\|_{(H^1\times H^1)_{loc}}^m,\quad m>0.
\]
This implies that at least for a sequence of times $t_n\to+\infty$, one has decay along compact sets in space: $\lim_{n\to+\infty}\|(u,\eta)(t_n)\|_{(H^1\times H^1)_{loc}}=0$. This elementary but no less important principle is also the key ingredient for each proof proposed in this paper, where we further improve the virials in \cite{KMPP2018}, and construct other two additional virial functionals. The virial method has the advantage of allowing --otherwise impossible to attain by today-- supercritical scattering nonlinearities, and only data in the energy space is necessary. Its drawback is the lack of an explicit rate of decay for the solution, unless one assumes more on the data. 

\bigskip

\section{New results}\label{sec:new results}  At a first sight, at least three important questions were left open in \cite{KMPP2018}: in the small data regime, 
\begin{itemize}
\item Can one improve \eqref{dispersion_like} to include all Hamiltonian $abcd$ systems, namely all negative $(a,c)$ satisfying \eqref{Conds}?  This amounts to consider the weakly dispersive regime $a,c\sim 0$, where dispersion is very weak compared with nonlinear terms. In this regime, $abcd$ becomes closer to the hyperbolic regime \cite{Schonbek,Amick,CL}.
\smallskip
\item Can one improve the interval of decay $J_0(t)$ to include the whole light cone $[-t,t]$? This question is related to the existence of moving solitary waves \cite{BCL0,CNS1,CNS2,Olivera}: a proof of decay in moving regions will preclude the existence of small solitary waves in those regimes. Theorem \ref{Thm2} only rules out small solitary waves of zero speed.
\smallskip
\item  A description of the dynamics in the exterior region $[-t,t]^c$ was left completely open. We do not know if super-luminical (speeds greater than 1) solitary waves may exist. 
\end{itemize}

The purpose of this paper is to give positive answers to the previous questions. Some of them are sharp, but others are still far from optimal. Since we will use plenty of times some particular concepts, in order to encompass them we prefer to introduce the following standard notation:

\begin{definition}[Interval of energy decay and energy decay region]
Let $(u,\eta)$ be a global $H^1\times H^1$ solution of the Hamiltonian $abcd$ \eqref{boussinesq}-\eqref{Conds}, and let $t\geq 1$. We say that an interval $I(t)\subseteq\R$ is an \emph{interval of energy decay} if 
\be\label{Fundamental_0}
\liminf_{t\to+\infty} |I(t)|>0, \qquad \hbox{and} \qquad  \lim_{t\to \infty} \|(u,\eta)(t)\|_{(H^1\times H^1)(I(t))} =0.
\ee
Given a nonempty interval of energy decay $I(t)$, we define the \emph{energy decay} region, denoted by $ED$, as the space-time set of $(t,x)\in [1,\infty) \times I(t)$. A completely analogous definition is available for times $t\to-\infty$.
\end{definition}

\medskip

As an example, $J_0(t)=J_{v=0}(t)$ defined in \eqref{Jv} corresponds to an energy decay interval for the Hamiltonian $abcd$ system, provided $(a,b,c) $ are dispersion-like parameters, in view of Theorem \ref{Thm2}. Note also that $I(t)=\R$ implies $(u,\eta)\equiv (0,0)$.

\begin{remark}
The notion of energy decay just introduced may differ from the standard notion of \emph{dispersive decay}, or decay of the $L^\infty$ norm. Indeed, note that from Theorem \ref{Thm2} one has \eqref{Fundamental_0} with $I(t)=J_0(t)$. Therefore, in this case, and by Sobolev embedding in one dimension, for any compact open interval $I_0$,
\be\label{Fundamental_0_a}
 \lim_{t\to \infty} \|(u,\eta)(t)\|_{(L^\infty\times L^\infty)(I_0)} =0.
\ee
However, it may be the case that, for some space interval $I(t)$, one has \eqref{Fundamental_0_a}, but instead
\be\label{Fundamental_0_b}
 \limsup_{t\to \infty} \|(u,\eta)(t)\|_{(H^1\times H^1)(I(t))} >0.
\ee
We will come back to this question in the next pages.
\end{remark}

\medskip

Consider now the linear flow associated to the Hamiltonian $abcd$ \eqref{boussinesq}, namely
\begin{equation}\label{boussinesq_lineal}
\begin{aligned}
&(1- b\,\partial_x^2)\partial_t \eta  + \partial_x \! \left( a\, \partial_x^2 u +u \right) =0, \quad (t,x)\in \R\times\R, \\
&(1- b\,\partial_x^2)\partial_t u  + \partial_x \! \left( c\, \partial_x^2 \eta + \eta   \right) =0.
\end{aligned}
\end{equation}
For this problem, recall the well-known notion of plane wave.

\begin{definition}[Plane wave and plane wave region]\label{def:PW} Let $k,w,A\in\R$, and $(a,b,c)$ satisfying \eqref{Conds}. We say that $(u_{pw},\eta_{pw})=(e^{i(kx-wt)},Ae^{i(kx-wt)})$  is a \emph{plane wave} for \eqref{boussinesq_lineal} if there exist $A=A(k)=A(k;a,b,c)$ and $w=w(k)=w(k;a,b,c)$ such that $(u_{pw},\eta_{pw})$ solve \eqref{boussinesq_lineal}. The quantity
$w'(k)$ (if exists) is called the \emph{group velocity} associated to \eqref{boussinesq_lineal}, and the set of rays 
\[
\{ (t, w'(k)t) \in [1,\infty) \times \R ~:~  k\in\R \},
\]
is denoted by $PW$, or plane wave region.
\end{definition}

In the case of \eqref{boussinesq_lineal}, one has
\begin{equation}\label{dispersion relation_0}
w(k) = \frac{\pm |k| }{(1+ bk^2)}(1-ak^2)^{1/2}(1-ck^2)^{1/2},
\end{equation}
\[
A(k)= \frac{k(1-ak^2)}{w(k)(1+bk^2)} = \frac{w(k)(1+bk^2)}{k(1-ck^2)},
\]
and
\begin{equation}\label{group velocity_0}
|w'(k)|= \frac{|abck^6 +3ack^4 -(b+2a+2c)k^2 +1|}{(1+bk^2)^2 (1-ak^2)^{1/2}(1-ck^2)^{1/2}}.
\end{equation}
Note the complexity of the group velocity, which, first of all, combines all the constants $(a,b,c)$, even in the Hamiltonian case $b=d$. 

\medskip

Our first result relates the $PW$ region with the $ED$ region, as approximate duals in some sense. We classify the decay and plane wave regions in terms of the most appropriate parameter $b$, in the special case $a=c$. Note that $b>\frac16$, see \eqref{primera}. Essentially, we will have the following decoupling of the half-line $b>\frac16$:

\begin{figure}[h!]
\begin{center}
\begin{tikzpicture}[scale=0.8]
\draw[->] (-6,3) -- (5,3) node[below] {$b$};
\node at (-6.3,3){$\dots$};
\node at (-5,3){$\bullet$};
\node at (-3,3){$\bullet$};
\node at (-1,3){$\bullet$};
\node at (2,3){$\bullet$};
\node at (-5,2.5){$\frac16$};
\node at (-3,2.5){$\frac3{16}$};
\node at (-1,2.5){$\frac29$};
\node at (2,2.5){$\frac{3+\sqrt{3}}{12}$};
\end{tikzpicture}
\end{center}
\end{figure}
Having in mind this last figure, and $J_v(t)$ in \eqref{Jv}, we will prove in this paper the following result.

\begin{theorem}[Description of the Hamiltonian $abcd$ system small data dynamics, case $a=c$]\label{TH0}
Assume that $a=c$ in \eqref{boussinesq}-\eqref{Conds}. Let $\epsilon,\delta>0$ be arbitrarily parameters, and $(u,\eta)$ be a sufficiently small, globally defined solution of \eqref{boussinesq}, its initial data $H^1\times H^1$ norm depending on $\epsilon,\delta$ if necessary. Then, the following holds. 

\smallskip
Assume that either
\ben
\item $\frac16 < b \le \frac{3}{16}$, and $I(t):=\big(-\infty, -(1+\epsilon)t\big)\cup \big((1+\delta)t,\infty\big)$ (Fig. \ref{Fig:11a} left);
\smallskip
\item $\frac{3}{16} < b \le \frac{2}{9}$,  and 
\[
I(t):=\big(-\infty, -(1+\epsilon)t\big)\cup J_0(t) \cup \big((1+\delta)t,\infty\big)
\]
(Fig. \ref{Fig:11a} right);
\smallskip
\item $\frac{2}{9} < b \le \frac{3+\sqrt{3}}{12} $, $|v| < 1-\frac{2}{9b}$ and  
\[
I(t):= \big(-\infty, -(1+\epsilon)t \big)\cup J_v(t) \cup \big((1+\delta)t,\infty \big)
\]
(Fig. \ref{Fig:11b} above);
\smallskip
\item $  b>\frac{3+\sqrt{3}}{12} $, $\sigma_0:=\sigma_0(b):= \frac{2(b-\frac16)(b-\frac18)}{b(b-\frac1{12})} >0$, $\sigma > \sigma_0$, $|v| < 1-\frac{2}{9b}$ and  
\[
I(t):= (-\infty, - \sigma t )\cup J_v(t) \cup (\sigma t,\infty)
\]
(Fig. \ref{Fig:11b} below);
\een
then $(u,\eta)$ decay in $ED= [1,\infty)\times I(t)$:
\be\label{Modelo}
\lim_{t\to \infty}\|(u,\eta)\|_{H^1\times H^1(I(t))} =0.
\ee
\end{theorem}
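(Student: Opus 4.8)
The plan is to prove Theorem~\ref{TH0} by the virial method: for each of the four regimes (and each admissible speed $v$, or exterior rate $\sigma$) we build a functional $\mathcal I(t)=\mathcal I[u,\eta](t)$ that is uniformly bounded along the flow and satisfies a coercive differential inequality $\frac{d}{dt}\mathcal I(t)\gtrsim\|(u,\eta)(t)\|_{H^1\times H^1(I(t))}^2$ modulo nonlinear errors; integrating in time forces $\liminf_{t\to\infty}\|(u,\eta)(t)\|_{H^1\times H^1(I(t))}=0$, and a second, almost-monotone functional upgrades this to \eqref{Modelo}. The first reduction uses that when $a=c$ the linear flow \eqref{boussinesq_lineal} diagonalizes in the variables $f:=u+\eta$, $g:=u-\eta$, which solve two mirror BBM-type equations, $(1-b\partial_x^2)\partial_t f+\partial_x(a\partial_x^2 f+f)=0$ and $(1-b\partial_x^2)\partial_t g-\partial_x(a\partial_x^2 g+g)=0$, while the quadratic terms $u\eta,\tfrac12u^2$ become quadratic couplings of $(f,g)$; smallness and global existence transfer to $(f,g)$. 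The dispersion relation is then the BBM one, $w(k)=k(1-ak^2)/(1+bk^2)$ with $a=\tfrac16-b<0$, so $w'(k)=\bigl(1-(b+3a)k^2-abk^4\bigr)/(1+bk^2)^2$; a one-variable study (its unique critical point is at $k^2=3/b$) shows $w'$ decreases from $w'(0)=1$ to $w'(\sqrt{3/b})=1-\tfrac3{16b}$ and then increases to $1-\tfrac1{6b}$, so the right/left-moving group velocities fill exactly $[1-\tfrac3{16b},1]$ and $[-1,-1+\tfrac3{16b}]$. This already explains the transition at $b=\tfrac3{16}$ ($\Leftrightarrow a=c=-\tfrac1{48}$): for $b\le\tfrac3{16}$ there is a nonzero-frequency plane wave with zero group velocity sitting on the ray $x=0$, hence interior decay must fail and only the exterior region survives, as in case~(1), which also shows sharpness there.

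\textbf{Interior virials along $x=vt$.} For $|v|$ strictly below the admissible bound I would take a weight $\phi\bigl((x-vt)/\lambda(t)\bigr)$ with $\lambda(t)\sim t/\log^2|t|$ (so its support matches $J_v(t)$) and form a shifted version of the virial functionals of \cite{KMPP2018}, with two pieces adapted to $f$ and to $g$, schematically $\mathcal I_v(t)=\int\phi\,\mathcal Q[f,g]\,dx$ for a virial-type bilinear density $\mathcal Q$. Differentiating and integrating by parts, the main term is a quadratic form whose Fourier symbol is, up to positive weights, $w'(k)-v$ on the $f$-component and $-w'(k)-v$ on the $g$-component; the contributions of $\dot\lambda$, of $\phi''$, and of the quadratic nonlinearity are lower order (the last by \eqref{Smallness} and localization). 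Coercivity then requires $w'(k)-v\ge c>0$ and $-w'(k)-v\le-c<0$ uniformly in $k$, and after optimizing $\phi$ this yields the threshold $b>\tfrac3{16}$ in the case $v=0$ (case~(2), improving the old condition $b>\tfrac29$ coming from \eqref{dispersion_like}) and the bound $|v|<1-\tfrac2{9b}$ for the boosted functional --- a controlled loss over the optimal $1-\tfrac3{16b}$ --- valid once $b>\tfrac29$ (cases~(3)--(4)); for $b>\tfrac{3+\sqrt3}{12}$ one more adjustment of the profile is needed, compatible with the exterior estimate.

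\textbf{Exterior virial, combination, and upgrade.} For $|x|\gg t$ I would use a weight $\psi$ vanishing on $\{|x|<\sigma t\}$ and saturating for $|x|\gg\sigma t$, and a companion functional $\mathcal I_{\mathrm{ext}}(t)$ of the same type, now detecting escape of mass to spatial infinity. Since $\sup_k|w'(k)|=1$, no plane wave lives in $\{|x|>(1+\epsilon)t\}$, and $\mathcal I_{\mathrm{ext}}$ closes at $\sigma=1+\epsilon$ provided $b\le\tfrac29$ (cases~(1)--(3)); for larger $b$ its monotonicity degrades and the estimate closes only for $\sigma>\sigma_0(b)=\tfrac{2(b-\frac16)(b-\frac18)}{b(b-\frac1{12})}$, which exceeds $1$ exactly when $b>\tfrac{3+\sqrt3}{12}$ --- this part is not optimal. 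Adding the relevant interior functional (one $\mathcal I_v$, or $\mathcal I_{v=0}$, or nothing in case~(1)) to $\mathcal I_{\mathrm{ext}}$ gives, on the full $I(t)$ of the statement, a bounded $\mathcal I(t)$ with $\frac{d}{dt}\mathcal I(t)\gtrsim\|(u,\eta)(t)\|_{H^1\times H^1(I(t))}^2$ minus a nonlinear error absorbed by smallness; integrating yields $\int_1^\infty\|(u,\eta)(t)\|_{H^1\times H^1(I(t))}^2\,dt<\infty$, hence decay along a sequence. To pass to the full limit \eqref{Modelo} I would run a second, almost-monotone localized energy--momentum functional on a slightly enlarged region, in the spirit of \cite{KMM1,MM}, to preclude recurrence of the local norm; smallness makes its coercivity and the control of the quadratic terms routine.

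\textbf{Main difficulty.} The crux is the uniform-in-frequency coercivity of $\frac{d}{dt}\mathcal I$ across the four regimes: one must choose the weight profiles and balance the transport term $-v\phi'$ (and the exterior cutoff) against the very weak dispersion, and it is precisely this optimization that produces the constants $\tfrac3{16}$, $\tfrac29$, $\tfrac{3+\sqrt3}{12}$, $1-\tfrac2{9b}$ and $\sigma_0(b)$. By comparison, the boundedness of $\mathcal I$, the absorption of the quadratic nonlinearity via \eqref{Smallness}, and the passage from $\liminf$ to $\lim$ are comparatively standard.
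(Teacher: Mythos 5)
Your overall architecture (a moving-weight interior virial for each speed $v$, an exterior localized-energy functional for each rate $\sigma$, integration in time to get decay along a sequence, and an almost-monotone localized energy to upgrade to a full limit) is exactly the skeleton of the paper, which derives Theorem \ref{TH0} from Theorems \ref{TH1}, \ref{TH2} and \ref{TH3}. Your diagonalization $u\pm\eta$ into two mirror BBM-type equations and the resulting group-velocity computation (minimum $1-\tfrac{3}{16b}$ at $k^2=3/b$) are correct and agree with Section \ref{sec:velocity}. However, the two places where the actual constants are produced are missing, and the mechanisms you propose in their place would not yield them. First, for the interior threshold $b>\tfrac3{16}$ in case (2): your claim that the virial derivative has Fourier symbol ``$w'(k)-v$ up to positive weights'' is not what happens, and if it were literally true it would give the sharp bound $|v|<1-\tfrac3{16b}$ rather than the stated $1-\tfrac2{9b}$. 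In the paper the virial derivative is a quartic form $\int\vp'(A_1f^2+A_2f_x^2+A_3f_{xx}^2+A_4f_{xxx}^2)+\cdots$ in the canonical variables $u=f-f_{xx}$, and requiring all coefficients positive only reproduces the old condition \eqref{dispersion_like}, i.e.\ $b>\tfrac29$ when $a=c$. The passage from $\tfrac29$ to $\tfrac3{16}$ is \emph{not} a weight optimization: it rests on the algebraic fact (Lemma \ref{lem:aux} and Lemma \ref{lem:StrPos}) that $\int(9-\delta)\wt f^2+(3+\epsilon)\wt f_x^2-5\wt f_{xx}^2+\wt f_{xxx}^2$ is a perfect square, so that a strictly negative coefficient on $f_{xx}^2$ can be tolerated. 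Without this idea your coercivity argument closes only for $b>\tfrac29$, and case (2) of the theorem is not reached. Similarly, the constant $1-\tfrac2{9b}$ in cases (3)--(4) comes from comparing the coefficient $\tfrac32-\tfrac1{3b}$ of $f_{xx}^2$ in \eqref{note1} with the reference expansion \eqref{eq:H1_est}, not from a symbol calculus; you quote the number but do not produce it.

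Second, the exterior part as you describe it does not cover the stated regions. You assert the exterior functional closes at $\sigma=1+\epsilon$ ``provided $b\le\tfrac29$'' while simultaneously claiming this handles cases (1)--(3); case (3) allows $b$ up to $\tfrac{3+\sqrt3}{12}\sim0.39$, so this is internally inconsistent. In the paper the basic coercivity (Lemma \ref{lem:Coercivity}, condition \eqref{Sigma}) gives $\sigma>1$ only for $b\le\tfrac14$ when $a=c$ (Proposition \ref{prop:classification}); extending this to all $b\le\tfrac{3+\sqrt3}{12}$, and producing $\sigma_0(b)=\tfrac{2(b-\frac16)(b-\frac18)}{b(b-\frac1{12})}$ beyond it, requires the corrected functional $E_{loc}+\tau\mathcal I$ and the verification $\widetilde A_i+|\widetilde B_i|<0$ of Proposition \ref{prop:classification1} --- an ingredient absent from your sketch, and again not obtainable by adjusting the cutoff profile alone. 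A minor further point: from the existence of a zero-group-velocity plane wave at $b\le\tfrac3{16}$ you conclude that ``interior decay must fail''; plane waves are not finite-energy solutions, so this only shows the method (and standard scattering) breaks down there, which is all the paper claims (Remark \ref{rem:optimal virial} and Lemma \ref{lem:conclusion}).
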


\bigskip

Some comments are necessary.

\begin{remark}[Extended decay]
In Items (3) and (4) of Theorem \ref{TH0}, the interval $J_v(t)$ can be extended to the union of a finite number of $J_{v_j}(t)$ (if the $v_j$'s are mutually distinct), see Remark \ref{rem:finite number} below.
\end{remark}

\begin{remark}[About Plane-Wave regions and the duality ED-PW]\label{rem:ED-PW}
Complementing Theorem \ref{TH0} (in the sense that energy decay and plane wave sets seem to be always disjoint), the plane wave regions $PW$ can be easily determined from \eqref{group velocity_0}, and are presented in Figs. \ref{Fig:11a} and \ref{Fig:11b}. Note that the $PW$ regions are determined using the linear part of the equation \eqref{boussinesq_lineal}, and \emph{do not necessarily represent} part of the long time behavior of the nonlinear $abcd$ system. For this reason, we have not explicitly included them in Theorem \ref{TH0}. An analogous phenomenon can be seen in the case of the generalized KdV (gKdV) equation \cite{MM, MM1, MM2}: precisely the left portion ($x < -\alpha t$) and the right portion ($x > \beta t$), $\alpha, \beta > 0$ correspond to the $PW$ and $ED$ regions, respectively. See also a similar phenomenon for generalized BBM (gBBM)\cite{ElDika, ElDika2, ElDika_Martel, KM2018}. Here, $(-\frac18 t, t)$ and its complementary regions are the $PW$ and $ED$ regions, respectively.
\end{remark}

\begin{remark}
The study on the decay property (under suitable assumptions on the initial data) near the boundary region of $PW$ and $ED$ for gKdV ($x \sim 0$) and gBBM ($x \sim t$) has been recently studied in \cite{MP2018} and \cite{ACKM2018}, respectively. In the gBBM case, we have no information yet about decay inside the region $x \sim -\frac18 t$.
\end{remark}

\begin{remark}
In contrast with Remark \ref{rem:ED-PW}, in this paper there is in general no complete classification in terms of $PW$, $ED$ and the solitonic region (the region in space time which contains solitary waves). For instance, the solitonic region for gKdV is identical to the region $ED$. In contrast, for the case of gBBM with even nonlinearities, the intersection between the solitonic region and the $ED$+$PW$ region (in the left portion) is non-empty, see \cite{ACKM2018} for more details.
\end{remark}

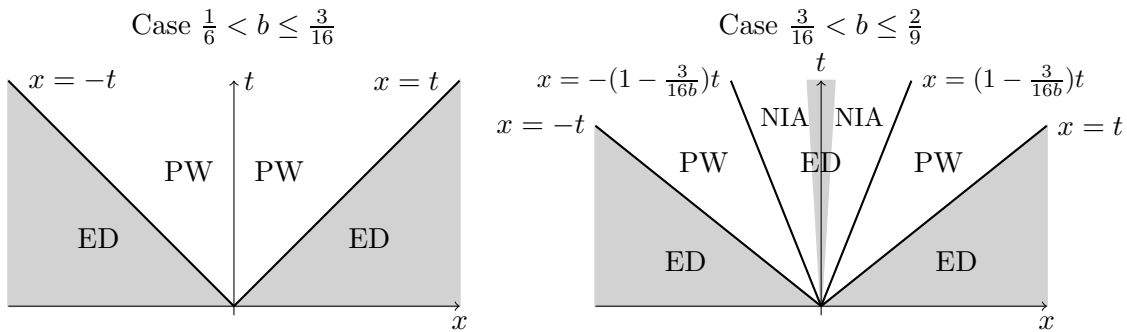
\begin{figure}[h!]
\begin{tikzpicture}[scale=0.6]
\node at (0,6.2){Case $\frac16 < b \le \frac{3}{16}$};
\filldraw[thick, color=lightgray!70] (0,0) -- (5,5) --(5,0) -- (0,0);
\filldraw[thick, color=lightgray!70] (0,0) -- (-5,5) --(-5,0) -- (0,0);
\draw[->] (-5,0) -- (5,0) node[below] {$x$};
\draw[->] (0,-0.2) -- (0,5) node[right] {$t$};
\draw[thick] (0,0) -- (5,5) node[left] {$x=t~$};
\draw[thick] (0,0) -- (-5,5) node[right] {~$x=-t$};
\node at (3,1.5){ED};
\node at (-3,1.5){ED};
\node at (-1,3){PW};
\node at (1,3){PW};
\end{tikzpicture}
\begin{tikzpicture}[scale=0.6]
\node at (0,6.2){Case $\frac{3}{16} < b \le \frac{2}{9}$};
\filldraw[thick, color=lightgray!70] (0,0) -- (0.3,5) --(-0.3,5) -- (0,0);
\filldraw[thick, color=lightgray!70] (0,0) -- (5,4) --(5,0) -- (0,0);
\filldraw[thick, color=lightgray!70] (0,0) -- (-5,4) --(-5,0) -- (0,0);
\draw[->] (-5,0) -- (5,0) node[below] {$x$};
\draw[->] (0,-0.2) -- (0,5) node[above] {$t$};
\draw[thick] (0,0) -- (5,4) node[right] {$x=t$};
\draw[thick] (0,0) -- (-5,4) node[left] {$x=-t$};
\draw[thick] (0,0) -- (2,5) node[right] {\small $x=(1-\frac{3}{16b})t$};
\draw[thick] (0,0) -- (-2,5) node[left] {\small $x=-(1-\frac{3}{16b})t$};
\node at (3,1){ED};
\node at (-3,1){ED};
\node at (2.6,3.2){PW};
\node at (-2.6,3.2){PW};
\node at (0.85,4.2){\small NIA};
\node at (-0.8,4.2){\small NIA};
\node at (0,3.2){ED};
\end{tikzpicture}
\caption{{\it Left.} When $\frac16 < b \le \frac{3}{16}$, decay holds true in the exterior region outside the light cone, while all linear plane waves (PW) stay in the interior of the light cone, see Theorem \ref{TH0}, item (1). {\it Right.} When $\frac{3}{16} < b \le \frac{2}{9}$, decay holds true not only in the exterior region outside the light cone, but also in a proper subset of the light cone around $x=0$ (Theorem \ref{Thm2} is valid), see Theorem \ref{TH0}, item (2). All linear plane waves have the group velocity whose rays span the region  $\{t\}\times \big((1-\frac{3}{16b})t, t\big)$. Here, NIA means the region where no information available.}\label{Fig:11a}
\end{figure}


\begin{figure}[h!]
\begin{tikzpicture}[scale=0.85]
\node at (0,6.6){Case $\frac{2}{9} < b \le \frac{3+\sqrt{3}}{12}$};
\filldraw[thick, color=lightgray!30] (0,0) -- (2,5) --(-2,5) -- (0,0);
\filldraw[thick, color=lightgray!70] (0,0) -- (5,3) --(5,0) -- (0,0);
\filldraw[thick, color=lightgray!70] (0,0) -- (-5,3) --(-5,0) -- (0,0);
\draw[->] (-5,0) -- (5,0) node[below] {$x$};
\draw[->] (0,-0.2) -- (0,5) node[above] {$t$};
\draw[thick] (0,0) -- (5,3) node[right] {$x=t$};
\draw[thick] (0,0) -- (-5,3) node[left] {$x=-t$};
\draw[thick] (0,0) -- (3.5,5) node[right] {\small$x=(1-\frac{3}{16b})t$};
\draw[thick] (0,0) -- (-3.5,5) node[left] {\small$x=-(1-\frac{3}{16b})t$};
\draw[thick] (0,0) -- (2,5) node[above] {\small$x=(1-\frac{2}{9b})t$};
\draw[thick] (0,0) -- (-2,5) node[above] {\small$x=-(1-\frac{2}{9b})t$};
\node at (3,1){ED};
\node at (-3,1){ED};
\node at (3.5,3.2){PW};
\node at (-3.5,3.2){PW};
\node at (2.1,3.9){NIA};
\node at (-2.1,3.9){NIA};
\node at (-0.5,3.2){ED};
\node at (0.5,3.2){ED};
\end{tikzpicture}
\begin{tikzpicture}[scale=0.85]
\node at (0,6.6){Case $b>\frac{3+\sqrt{3}}{12} $};
\filldraw[thick, color=lightgray!30] (0,0) -- (2,5) --(-2,5) -- (0,0);
\filldraw[thick, color=lightgray!70] (0,0) -- (5,1.5) --(5,0) -- (0,0);
\filldraw[thick, color=lightgray!70] (0,0) -- (-5,1.5) --(-5,0) -- (0,0);
\draw[->] (-5,0) -- (5,0) node[below] {$x$};
\draw[->] (0,-0.2) -- (0,5) node[above] {$t$};
\draw[thick] (0,0) -- (5,3) node[right] {$x=t$};
\draw[thick] (0,0) -- (-5,3) node[left] {$x=-t$};
\draw[thick] (0,0) -- (3.5,5) node[right] {\small$x=(1-\frac{3}{16b})t$};
\draw[thick] (0,0) -- (-3.5,5) node[left] {\small$x=-(1-\frac{3}{16b})t$};
\draw[thick] (0,0) -- (2,5) node[above] {\small$x=(1-\frac{2}{9b})t$};
\draw[thick] (0,0) -- (-2,5) node[above] {\small$x=-(1-\frac{2}{9b})t$};
\draw[thick] (0,0) -- (5,1.5) node[right] {$x=\sigma t$};
\draw[thick] (0,0) -- (-5,1.5) node[left] {$x=- \sigma t$};
\node at (3.5,0.5){ED};
\node at (-3.5,0.5){ED};
\node at (3.5,3.2){PW};
\node at (-3.5,3.2){PW};
\node at (2.1,3.9){NIA};
\node at (-2.1,3.9){NIA};
\node at (-0.5,3.2){ED};
\node at (0.5,3.2){ED};
\node at (3.5,1.5){NIA};
\node at (-3.5,1.5){NIA};
\end{tikzpicture}
\caption{{\it Above.} When $\frac{2}{9} < b \le \frac{3+\sqrt{3}}{12}$, decay holds true in a proper subset of the light cone not only around $x=0$, but also far away from $x=0$, i.e. around $x=vt$ for $-1+\frac{2}{9b} < v < 1-\frac{2}{9b}$ (Theorem \ref{TH1} remains valid in this regime). Also, energy decay in the whole exterior of the light cone $|x|\le |t|$ holds true. {\it Below.} When $  b>\frac{3+\sqrt{3}}{12}$, this last result seems not to hold and only decay in the ``smaller'' exterior region $(-\infty, -\sigma t)\cup(\sigma t, \infty)$ is available (Theorem \ref{TH3}, Item (3) is valid here). Here $\sigma = \frac{2(b-\frac16)(b-\frac18)}{b(b-\frac1{12})} >0 $, see \eqref{Sigma000}.  No information was obtained in the region inside $(-\sigma t, -t )\cup (\sigma t, t)$. As usual, NIA means no information available.}\label{Fig:11b}
\end{figure}
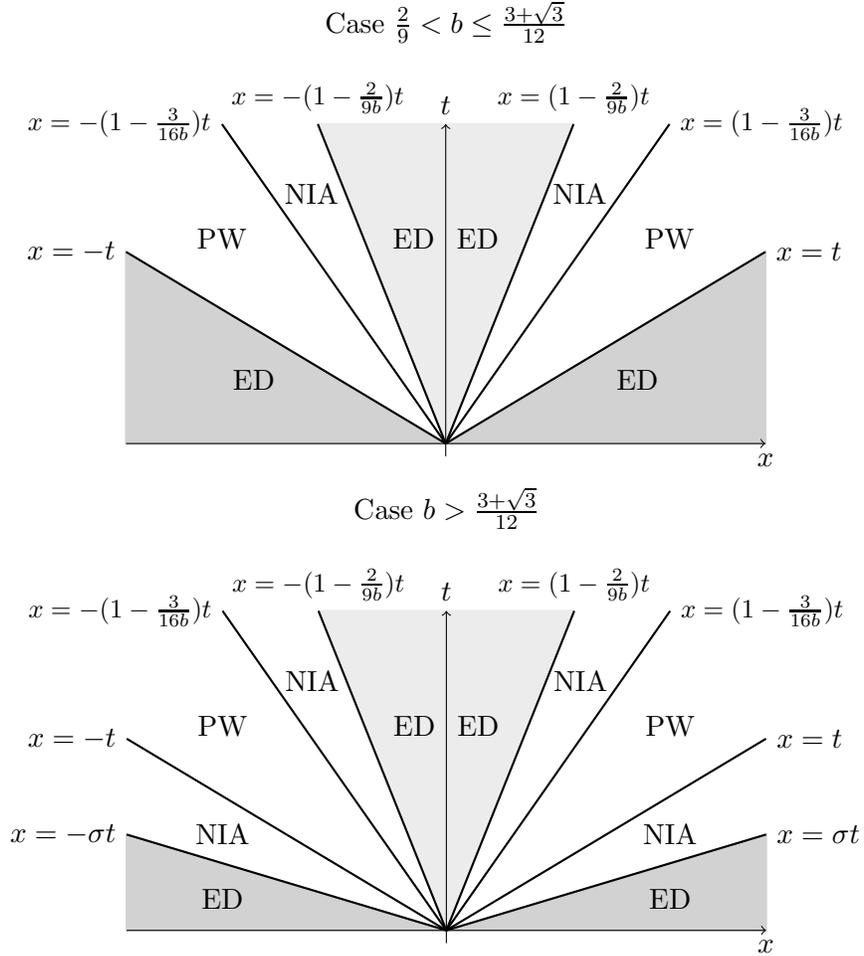


A first corollary from the above Theorem \ref{TH0} is the following \emph{two times the speed of light decay}:
\begin{corollary}\label{Corolario_0}
All globally defined small solutions to the Hamiltonian $abcd$ system \eqref{boussinesq}-\eqref{Conds} in the case $a=c=\frac16-b$, decay to zero in $(H^1\times H^1)(I(t))$ as $t\to\pm\infty$, and 
\[
I(t) := \big(-\infty, -(2+\epsilon)t \big) \cup \big((2+\delta)t, \infty\big),
\]
for arbitrary $\epsilon, \delta > 0$.  In particular, no matter the values of $(a,b,c)$, $a=c$, there are no small solitary waves of speed greater than 2.
\end{corollary}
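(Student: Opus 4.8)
The plan is to deduce Corollary~\ref{Corolario_0} directly from Theorem~\ref{TH0} by a short case analysis on the value of $b$, observing that the hypothesis $a=c=\frac16-b$ is exactly the ``diagonal'' point $\nu=\frac13$ of the parametrization \eqref{R0}, so that each of the four items of Theorem~\ref{TH0} applies. First I would note that in Items (1), (2) and (3) the conclusion already gives energy decay on the full exterior region $\big(-\infty,-(1+\epsilon')t\big)\cup\big((1+\delta')t,\infty\big)$ for arbitrarily small $\epsilon',\delta'>0$ (choosing $v=0$ in Items (2), (3), which is admissible since there $1-\frac{2}{9b}>0$). Since for any subinterval $I'(t)\subseteq I(t)$ one trivially has $\|(u,\eta)(t)\|_{H^1\times H^1(I'(t))}\le\|(u,\eta)(t)\|_{H^1\times H^1(I(t))}$, restricting to $\big(-\infty,-(2+\epsilon)t\big)\cup\big((2+\delta)t,\infty\big)$ yields the claimed decay in these ranges of $b$ (indeed with the much stronger constant $1$ in place of $2$).

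The only case needing an actual computation is Item (4), namely $b>\frac{3+\sqrt3}{12}$, where decay is asserted only on $(-\infty,-\sigma t)\cup(\sigma t,\infty)$ with $\sigma>\sigma_0(b)$. The key elementary fact is
\[
\sigma_0(b)=\frac{2\big(b-\tfrac16\big)\big(b-\tfrac18\big)}{b\big(b-\tfrac1{12}\big)}<2\qquad\text{for every } b>\tfrac16,
\]
which, after multiplying by $b(b-\tfrac1{12})>0$ and simplifying, reduces to $\tfrac1{48}<\tfrac5{24}\,b$, i.e. $b>\tfrac1{10}$; moreover $\sigma_0(b)\to 2$ as $b\to\infty$, so the threshold $2$ is sharp within this method. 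Hence, given $\epsilon,\delta>0$, one may fix $\sigma\in\big(\sigma_0(b),\,2+\min(\epsilon,\delta)\big)$ (a nonempty interval), apply Item (4) with this $\sigma$ and $v=0$, and restrict the resulting decay region to $\big(-\infty,-(2+\epsilon)t\big)\cup\big((2+\delta)t,\infty\big)$, exactly as above. This establishes \eqref{Modelo} for the $I(t)$ in the statement.

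Finally, for the nonexistence of small solitary waves of speed $|c_0|>2$: suppose $(u,\eta)(t,x)=(\phi(x-c_0t),\psi(x-c_0t))$ with $(\phi,\psi)\in H^1\times H^1$ small, $(\phi,\psi)\not\equiv(0,0)$, and say $c_0>2$. Put $\epsilon:=\tfrac{c_0-2}{2}>0$, so $c_0>2+\epsilon$, and apply the decay just proved with this $\epsilon$ and $\delta=\epsilon$, so that $I(t)\supseteq\big((2+\epsilon)t,\infty\big)$. The change of variables $y=x-c_0t$ gives
\[
\|(u,\eta)(t)\|_{H^1\times H^1(I(t))}\ \ge\ \big\|(\phi,\psi)\big\|_{H^1\times H^1\big((2+\epsilon-c_0)t,\,\infty\big)}\ \xrightarrow[t\to\infty]{}\ \|(\phi,\psi)\|_{H^1\times H^1(\R)}>0,
\]
by monotone convergence, since $(2+\epsilon-c_0)t\to-\infty$; this contradicts \eqref{Modelo}. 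The case $c_0<-2$ is symmetric. The only genuinely non-routine point I expect is the verification that $\sigma_0(b)<2$ with $\sigma_0(b)\to2$ as $b\to\infty$, which is precisely what pins down (and exhibits the sharpness within this scheme of) the ``twice the speed of light'' constant; everything else is a packaging of Theorem~\ref{TH0}.
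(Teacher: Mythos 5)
Your proposal is correct and follows essentially the same route as the paper: the corollary is deduced from Theorem \ref{TH0}, the only substantive point being that $\sigma_0(b)<2$ for every $b>\frac16$, which the paper obtains from the monotonicity of $\sigma_0$ together with $\lim_{b\to+\infty}\sigma_0(b)=2$, and which you verify equivalently by clearing denominators. Your explicit change-of-variables contradiction ruling out solitary waves of speed greater than $2$ is a correct unpacking of what the paper leaves implicit.
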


This corollary follows  from Theorem \ref{TH0} and the increasing character of $\sigma_0(b)$ in terms of $b$, and $\lim_{b\to+\infty}\sigma_0(b)=2$.

%
%

\medskip

Theorem \ref{TH0} will be a consequence of the next results, which consider the more demanding Hamiltonian $abcd$ system in the general regime for $a\neq c$. In fact, the regime $a=c$ stated in Theorem \ref{TH0} will simply follow as a particular case of the general dynamics.

\medskip 

In order to state our next result, for simplicity of notation we need the following definition. {\color{black} First, for $b>0$, define
\be\label{tilde_ac}
\tilde a:= \frac{a}{b},\qquad \tilde c:= \frac{c}{b}.
\ee
Note that from \eqref{Conds} we have $-1\leq \tilde c<0$ and $-1 - \frac{1}{6b} \leq \tilde a <0$. Recall that, in the Hamiltonian setting, $b=d>0.$ Also, condition \eqref{dispersion_like} reads now
\be\label{dispersion_like_b}
3(\tilde a+ \tilde c) + 2  < 8\tilde a \tilde c. 
\ee

\begin{definition}[Refined dispersion-like parameters]\label{New Dis_Par}
We say that $(a,b,c)$ satisfying  \eqref{Conds} are \emph{refined dispersion-like parameters} if, in terms of $(\tilde a,\tilde c)$ introduced in \eqref{tilde_ac}, they are dispersion-like as in \eqref{dispersion_like_b}, or they satisfy the following conditions:
\ben
\item either
\begin{equation}\label{ref_dispersion_like2-1}
\begin{cases}\begin{array}{lcl} 
 45\tilde a \tilde c >1- \tilde a , &\;&\mbox{for} \quad  -1 \leq \tilde c < -\frac{1}{90}(19+\sqrt{181})\sim -0.36,\\
18\tilde a \tilde c +\tilde a+\tilde c>0, &\;&\mbox{for} \quad -\frac{1}{90} (19+\sqrt{181})\le \tilde c < -\frac{1}3,\\
 27\tilde a\tilde c>6\tilde a + 1 , &\;&\mbox{for} \quad -\frac{1}3 \le \tilde c< -\frac{1}9 \sim -0.11,
\end{array}\end{cases}
\end{equation}
whenever $\tilde c \le \tilde a < 0$,  
\item or, whenever $\tilde a \le \tilde c < 0$,
\begin{equation}\label{ref_dispersion_like2-2}
\begin{cases}\begin{array}{lcl} 
45\tilde a \tilde c>1- \tilde c , &\;&\mbox{for} \quad -1-\frac{1}{6b} \leq \tilde a < -\frac{1}{90} (19+\sqrt{181}),\\
18\tilde a\tilde c +\tilde a+ \tilde c>0, &\;&\mbox{for} \quad -\frac{1}{90} (19+\sqrt{181})\le \tilde a < -\frac{1}3,\\
 27\tilde a\tilde c>6\tilde c + 1 , &\;&\mbox{for} \quad -\frac{1}3 \le \tilde a< -\frac{1}9.
\end{array}\end{cases}
\end{equation}
\een
\end{definition}

\medskip

The previous definition may appear too cumbersome, but it is easily understood in terms of the framework of previous Figs. \ref{Fig:0} and \ref{Fig:5}. Indeed, see Figure \ref{Fig:6} for a simple description of Definition \ref{New Dis_Par} constructed on the previous Figs. \ref{Fig:0} and \ref{Fig:5}.

\medskip

Our first result shows decay to zero for all Hamiltonian $abcd$ models with \emph{refined dispersion-like parameters}.

\begin{theorem}[Decay for $abcd$ in the weakly dispersive regime]\label{TH1}
Let $(u,\eta)\in C(\R, H^1\times H^1)$ be a global, small solution of \eqref{boussinesq}-\eqref{Conds}, such that for some $\ve>0$ small
\[
\|(u_0,\eta_0)\|_{H^1\times H^1}< \ve.
\]
Assume additionally that $(a,b,c)$ are the \emph{refined dispersion-like parameters} as in Definition \ref{New Dis_Par}. Then, for $J_0(t)$ as \eqref{Jv}, there is strong decay:
\[
\lim_{t \to \pm\infty}   \|(u,\eta)(t)\|_{(H^1\times H^1)(J_0(t))} =0.
\]
\end{theorem}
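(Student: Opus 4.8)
\textbf{Proof strategy for Theorem \ref{TH1}.}

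The plan is to follow the virial scheme of \cite{KMPP2018} that underlies Theorem \ref{Thm2}, but with a refined choice of weight/multiplier functions that captures the enlarged parameter range of Definition \ref{New Dis_Par}. First I would rewrite the Hamiltonian $abcd$ system \eqref{boussinesq} in the resolvent-symmetrized variables, applying $\nlop$-type operators (more precisely $(1-b\partial_x^2)^{-1}$) so that the leading linear part becomes a first-order symmetric hyperbolic system perturbed by a smoothing dispersive term; this is the step where the quantities $\tilde a=a/b$ and $\tilde c=c/b$ in \eqref{tilde_ac} naturally appear, and where condition \eqref{dispersion_like_b} shows up as the sign condition making a certain quadratic form in the frequency variable definite. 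The global smallness hypothesis together with the conserved energy \eqref{Energy} gives a uniform bound $\|(u,\eta)(t)\|_{H^1\times H^1}\lesssim\ve$ for all $t$, which is what lets us treat the quadratic nonlinearity $(u\eta,\tfrac12 u^2)$ as a genuinely lower-order error in all the virial computations.

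Next I would introduce a virial functional of the form
\[
\mathcal I_0(t) := \int \varphi\!\left(\tfrac{x}{\cdots}\right)\Big( \text{bilinear expression in } u,\eta,\partial_x u,\partial_x\eta\Big)(t,x)\,dx,
\]
with $\varphi$ a bounded, increasing (odd-derivative) weight localized so that $\varphi'$ concentrates on the region complementary to $J_0(t)$, or rather, following \cite{KMM1,KMM2,KMM3}, a pair of nested weights at well-separated scales (one for the transport part, one for a ``second-generation'' virial controlling $\partial_x$-derivatives). Differentiating in time and using the symmetrized equations, $\tfrac{d}{dt}\mathcal I_0(t)$ produces: (i) a transport/dispersive principal term which, after integration by parts, is a frequency-localized quadratic form whose coercivity is precisely governed by the inequalities in Definition \ref{New Dis_Par} — the three-regime case split on $\tilde c$ (resp. $\tilde a$) and the algebraic thresholds like $-\tfrac1{90}(19+\sqrt{181})$, $45\tilde a\tilde c>1-\tilde a$, $27\tilde a\tilde c>6\tilde a+1$, etc., are exactly the conditions under which the relevant sextic/quartic polynomial in $k$ coming from \eqref{group velocity_0} stays bounded below by a positive multiple of a local $H^1$ density; (ii) boundary-type terms supported on $\partial J_0(t)$ that are controlled by the choice of the $\log^2|t|$ scaling in \eqref{Jv}; and (iii) cubic error terms handled by the $O(\ve)$ bound. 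Combining these yields the differential inequality $\tfrac{d}{dt}\mathcal I_0(t)\gtrsim \|(u,\eta)(t)\|_{(H^1\times H^1)_{loc}}^2 - (\text{integrable errors})$, together with the a priori bound $|\mathcal I_0(t)|\lesssim\ve$ from energy conservation and boundedness of $\varphi$. Integrating in $t$ over $[1,\infty)$ forces $\int_1^\infty \|(u,\eta)(t)\|^2_{(H^1\times H^1)_{loc}}\,dt<\infty$, hence decay along a sequence $t_n\to+\infty$; a standard almost-monotonicity / continuity-in-time argument (again as in \cite{KMM1,KMM2}) then upgrades sequential decay to the full limit, and one localizes the ``loc'' region to $J_0(t)$ by the same weight construction. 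The case $t\to-\infty$ is identical by the time-reversal symmetry of \eqref{boussinesq}.

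The main obstacle, and the genuinely new technical content relative to \cite{KMPP2018}, is establishing the coercivity in step (i) on the enlarged parameter set: one must show that for each of the three sub-regimes in \eqref{ref_dispersion_like2-1}–\eqref{ref_dispersion_like2-2} there is a single virial weight whose symbol dominates the full dispersive symbol $w(k)$, $w'(k)$ from \eqref{dispersion relation_0}–\eqref{group velocity_0} — this requires carefully tuning the relative weights of the $u^2$, $\eta^2$, $(\partial_x u)^2$, $(\partial_x\eta)^2$ contributions (essentially a $2\times2$ matrix of quadratic forms depending on $\tilde a,\tilde c$) so that the resulting polynomial in $k^2$ is positive for all real $k$, which is exactly the algebraic content of Definition \ref{New Dis_Par}. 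A secondary difficulty is that when $\tilde a\neq\tilde c$ the symmetrization is less clean, so extra commutator terms between $(1-b\partial_x^2)^{-1}$ and the weight $\varphi$ must be shown to be lower order; these are absorbed using the smoothing of the resolvent and the slow variation ($\log^2$-scale) of $\varphi$. I expect everything else — energy boundedness, treatment of the cubic nonlinearity, the sequence-to-limit upgrade — to be routine adaptations of the cited works.
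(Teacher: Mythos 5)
Your overall architecture coincides with the paper's: rescale to the normalized system so that $\tilde a=a/b$, $\tilde c=c/b$ appear, run a weighted virial functional with the $\lambda(t)=t/\log^2 t$ scale, bound the cubic terms by the $O(\ve)$ energy bound, integrate in time to get decay along a sequence $t_n\to\infty$, and upgrade to the full limit via an almost-monotone localized energy. Two small corrections at that level: the weight derivative $\varphi'$ is concentrated \emph{on} $J_0(t)$ (the paper takes $\varphi=\tanh(x/\lambda(t))$, so $\varphi'=\sech^2$ near $x=0$), not on its complement; and the essential degree of freedom is not a pair of nested weights but the free scalar $\alpha$ in the combination $\mathcal H=\mathcal I+\alpha\mathcal J$ of the two virial functionals \eqref{I}--\eqref{J}.

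The genuine gap is in the coercivity step, which is the only truly new content of Theorem \ref{TH1}. You assert that the conditions of Definition \ref{New Dis_Par} are ``exactly the conditions under which the sextic polynomial coming from \eqref{group velocity_0} stays bounded below,'' and propose to prove coercivity by showing the virial symbol dominates $w'(k)$. That is not the right criterion and is not how the paper proceeds: the polynomial that must be positive in the virial argument is $A_1+A_2k^2+A_3k^4+A_4k^6$ with the $\alpha$-dependent coefficients \eqref{eq:A1B1}--\eqref{eq:A4B4} written in the canonical variables $u=f-f_{xx}$, $\eta=g-g_{xx}$, and it is a different object from the group-velocity polynomial; the link to $w'(k)$ is established only in the symmetric case $a=c$, and only as an a posteriori sharpness statement (Lemmas \ref{lem:p>0}--\ref{lem:conclusion}), not as the proof mechanism. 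The actual mechanism, which your proposal does not supply, is the following: one peels off from $\mathcal Q(t)$ the exact multiple $\frac1{18}\int\big(9\wt f^2+3\wt f_x^2-5\wt f_{xx}^2+\wt f_{xxx}^2\big)$ (and likewise for $\wt g$), proves via the perfect-square identity \eqref{eq:coefficients} and the auxiliary quartic root $r_0$ of Lemma \ref{lem:aux} that this borderline form is nonnegative with a quantified margin (Lemma \ref{lem:StrPos}), and then shows that the leftover coefficients $A_i'$, $B_i'$ in \eqref{eq:A2B2-1}--\eqref{eq:A4B4-1} admit a common $\alpha$ making them all positive precisely under \eqref{ref_dispersion_like2-1}--\eqref{ref_dispersion_like2-2}; note that some of the original $A_i,B_i$ are allowed to be \emph{negative} here, which is exactly what enlarges the parameter range beyond \eqref{dispersion_like}. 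Without this (or an equivalent sum-of-squares certificate together with the case analysis producing the thresholds $-\frac1{90}(19+\sqrt{181})$, $-\frac13$, $-\frac19$), the claimed coercivity on the enlarged set is unproven, and the group-velocity route you sketch would at best recover the $a=c$ case and would not generate the asymmetric conditions of Definition \ref{New Dis_Par}.
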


\begin{figure}[h!]
\begin{center}
\begin{tikzpicture}[scale=0.9]
\filldraw[thick, color=lightgray!70] (3.5,3.5) arc (80:90:27)--(-1.2,-1.2)--(3.5,3.5);
\filldraw[thick, color=lightgray!70] (3.5,3.5) arc (10:0:27)--(-1.2,-1.2)--(3.5,3.5);
\filldraw[thick, color=lightgray!30] (4,1) arc (23:68:5.6)--(-1.2,4)--(-1.2,-1.2)--(4,-1.2);
\draw[thick,dashed] (2.8,5) -- (2.8,-1);
\draw[thick,dashed] (-1,2.8) -- (5,2.8);
\draw[thick,dashed] (3.5,5) -- (3.5,-1);
\draw[thick,dashed] (-1,3.5) -- (5,3.5);
\draw[->] (-1,4) -- (5.2,4) node[below] {$a$};
\draw[->] (4,-1) -- (4,5.3) node[right] {$c$};
\node at (3.5,4){$\bullet$};
\node at (3.1,4.4){$-\frac{b}9$};
\node at (2.8,4){$\bullet$};
\node at (2.4,4.4){$-\frac{b}4$};
\node at (0.9,4){$\bullet$};
\node at (0.9,4.4){$-\frac{2b}3$};
\node at (0,4){$\bullet$};
\node at (-0.5,4.4){$-b -\frac16$};
\node at (4,3.5){$\bullet$};
\node at (3.5,3.5){$\bullet$};
\node at (4.5,3.2){$-\frac{b}9$};
\node at (4,2.8){$\bullet$};
\node at (4.5,2.5){$-\frac{b}4$};
\node at (4,1){$\bullet$};
\node at (4.5,1){$-\frac{2b}3$};
\node at (4,0.5){$\bullet$};
\node at (4.5,0.5){$-b$};
\draw[thick,dashed] (0,4) -- (0,-1);
\draw[thick,dashed] (4,0.5) -- (-1,0.5);
\draw[thick,dashed] (4.4,-1) arc (0:90:5.3);
\draw[thick] (4.4,-1) arc (0:28:5.3);
\draw[thick] (-0.9,4.3) arc (90:64:5.3);
\draw[thick] (3.5,3.5) arc (80:84.5:27);
\draw[thick] (3.5,3.5) arc (10:6:27);
\draw[thick,dashed] (3.5,3.5) arc (80:90:27);
\draw[thick,dashed] (3.5,3.5) arc (10:0:27);
\node at (4.8,0){$\wt{\gamma}(b)$};
\end{tikzpicture}
\qquad
\qquad
\begin{tikzpicture}[scale=1]
\filldraw[thick, color=lightgray!10] (0,4.7)--(0,2) -- (4/3,1) --(4,3) -- (4,4.7) -- (0,4.7);
\filldraw[thick, color=lightgray!95] (0.7,1.6) arc (220:320:0.8)--(1.7,1.6) arc (270:230:0.6);
\filldraw[thick, color=lightgray!30] (0,4.7)--(0,2) -- (0.6,1.7) --(1,1.6) --(4/3,1.6) --(1.97,1.6) --(4,3) -- (4,4.7) -- (0,4.7);
\draw[thick,dashed] (4,-1) -- (4,3);
\draw[thick] (4,3) -- (4,5);
\draw[thick,dashed] (0,0) -- (4,3);
\draw[thick,dashed] (0,2)--(8/3,0);
\draw[thick] (0,2) -- (0,5);
\draw[->] (-1,0) -- (5,0) node[below] {$\nu$};
\draw[->] (0,-1) -- (0,5) node[right] {$b$};
\node at (0,0){$\bullet$};
\node at (4,0){$\bullet$};
\node at (4.3,-0.4){$1$};
\node at (4/3,0){$\bullet$};
\node at (8/3,-0.4){$\frac23$};
\node at (4/3,-0.4){$\frac13$};
\node at (8/3,0){$\bullet$};
\node at (0,2){$\bullet$};
\node at (-0.3,2.1){$\frac 13$};
\node at (0,1.3){$\bullet$};
\node at (-0.3,1.5){$\frac{3}{16}$};
\node at (0,1){$\bullet$};
\node at (-0.3,0.9){$\frac16$};
\node at (0,3){$\bullet$};
\node at (4,1.6){$\bullet$};
\node at (4.3,1.7){$\frac29$};
\node at (4,1.2){$\bullet$};
\node at (4.3,1.1){$\frac{2}{11}$};
\node at (-0.3,3){$\frac12$};
\node at (4.8,3){$b=\frac12\nu$};
\draw[thick,dashed] (0,1.3) -- (4/3,1.3);
\draw[thick,dashed] (4/3,0) -- (4/3,1.3);
\draw[thick,dashed] (0.7,1.6) arc (220:320:0.8);
\draw[thick,dashed] (0.5,1.7) arc (250:290:2.5);
\node at (2,3){$ \mathcal R$};
\end{tikzpicture}
\end{center}
\caption{(Left and right.) {\color{black} The regions dark shadowed represent the refined dispersion-like parameters, added in Definition \ref{New Dis_Par}, compared to \eqref{dispersion_like} (represented by light shadowed regions). (Left) The {\bf continuous} curve $\wt{\gamma}(b)$ represents the boundary of the regions described by \eqref{dispersion_like}, \eqref{ref_dispersion_like2-1} and \eqref{ref_dispersion_like2-2} for the validity of Theorem \ref{TH1}: below the intersection of this curve and $a, c < 0$, Definition \ref{New Dis_Par} holds true. Consequently, this extended parameter region for which there is decay is the main improvement in Theorem \ref{TH1}. (Right) Note that at $(\nu,b)=(\frac13,\frac{3}{16})$, one has $(a,c)=(-\frac{b}9,-\frac{b}9)=(-\frac{1}{48},-\frac{1}{48})$, which is the supremum value for (negative) $(a,c)$ for which Theorem \ref{TH1} remains valid. These values represent an endpoint case in Theorem \ref{TH1}, in a sense that for $(a,c)$ above, a strange regime appears; see Section \ref{sec:velocity}.} 
}\label{Fig:6}
\end{figure}
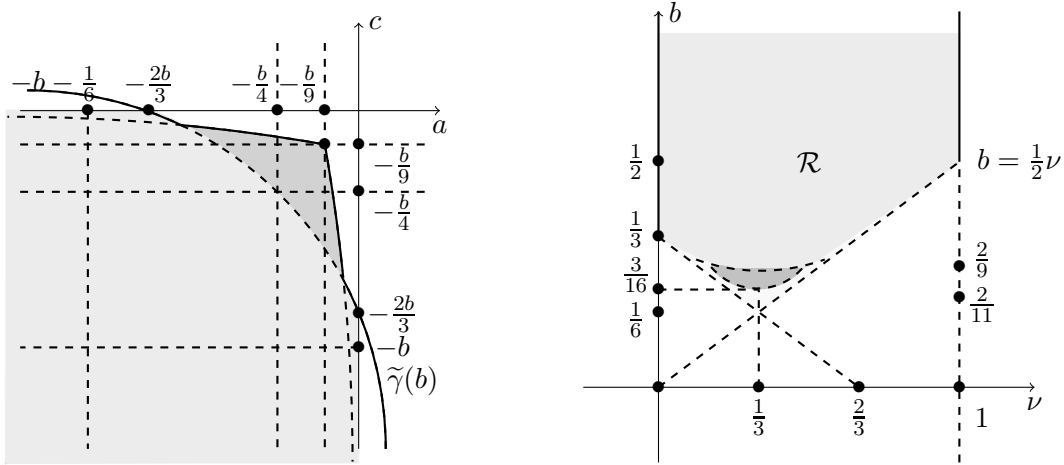

\begin{remark}
Theorem \ref{TH1} is sharp in the following sense: below $b=\frac3{16}$, i.e. above $a=c=-\frac{1}{48}$, the \emph{group velocity} $v=\omega'(k)$ of plane waves associated to the linear $abcd$ system, introduced in Definition \ref{def:PW}, is allowed to vanish at a nonzero wavenumber $k$, implying that scattering may not occur following a standard procedure. Additionally, the virial estimate used to prove Theorem \ref{TH1} fails to be true (Remark \ref{rem:optimal virial}). See Lemma \ref{lem:conclusion} for more details.
\end{remark}

The proof of Theorem \ref{TH1} is based in introducing a new refined virial estimate. The basic ingredient is the set of virial terms obtained in \cite{KMPP2018}, which have been essential to show decay for a first set of parameters  $(a,b,c)$ with sufficiently enough dispersion. In the weakly dispersive case $a,c\sim 0$, such estimates fail to hold and new ones are necessary.

\medskip

In order to overcome this problem, we need refined virial estimates. After some preliminary work, necessary to clearly see the main problem, we are led to analyze the positivity of  an $H^2$ functional with several negative coefficients, that appears as the key element to improve the virial estimate. Such an improvement is then showed to be sharp, see Lemma \ref{lem:conclusion}. Being relatively simple and elementary, our proof shows decay for the supercritical $2\times 2$ Hamiltonian $abcd$ system in all the possible ranges of decay for which the linear group velocity never vanishes.

\medskip

Our next result concerns the question of decay in \emph{exterior regions} of the light cone. 

\begin{theorem}[Decay in the exterior region outside the light cone]\label{TH3}
Let $(a,b,c)$ be parameters satisfying \eqref{Conds}. 
Let also $\epsilon, \delta > 0$ be arbitrary, small parameters. There exists $ \varepsilon_0 = \varepsilon_0(a,b,c,\epsilon, \delta)>0$ such that if $0<\ve<\ve_0$, the following are satisfied.

\smallskip

\begin{enumerate}
\item \emph{Complete decay regime.} Assume that $(a,b,c)$ satisfy
\begin{equation}\label{ellipse0}
153b^2-54b+4 \le 9ac.
\end{equation}
Define $I_{ext}(t) := \big(-\infty, -(1+\epsilon)t \big) \cup \big((1+\delta)t, \infty\big)$, and suppose $\norm{(u,\eta)(0)}_{H^1 \times H^1} < \varepsilon$. Then the $H^1 \times H^1$ global solution $(u,\eta)(t)$ to \eqref{boussinesq} with initial data $(u,\eta)(0)$  satisfies
\begin{equation}\label{Conclusion_Ext}
\begin{aligned}
\lim_{t \to \infty} \norm{(u,\eta)(t)}_{H^1 \times H^1 (I_{ext}(t))} = 0.
\end{aligned} 
\end{equation}
\item \emph{Decay in the $b$ large regime.}  If now $(a,b,c)$ satisfy
\begin{equation}\label{hyperbola0}
54ac \ge b \left(\sqrt{48(6b-1)^2 + (21b-2)^2} -1 \right),
\end{equation}
then the same conclusion as before holds in the interval (recall $\tilde a,\tilde c$ defined in \eqref{tilde_ac})
\begin{equation}\label{I_ext2}
I_{ext}(t) := \left(-\infty, - (3\sqrt{\tilde a \tilde c}+\epsilon )t \right) \cup \left( (3\sqrt{\tilde a \tilde c} +\delta )t, \infty \right).
\end{equation}
\item \emph{Improved decay in the $a=c$ regime.} Suppose $a=c =\frac16 -b<0$. Let
\begin{equation}\label{Sigma000}
\sigma =\sigma(b) :=
\frac{2(b-\frac16)(b-\frac18)}{b(b-\frac1{12})} >0.
\end{equation}
Then there is decay to zero as before inside the interval $\big(-\infty, -(1+\epsilon)t \big)  \cup \big((1+\delta)t, \infty \big)$ whenever $\frac16 < b \le \frac{1}{12}(3+\sqrt{3})$, and in the interval 
\begin{equation}\label{I_ext3}
I_{ext}(t) = \big(-\infty, -\left(\sigma+\epsilon \right)t \big) \cup \big(\left(\sigma+\delta \right)t, \infty \big),
\end{equation}
whenever $b>\frac{1}{12}(3+\sqrt{3}) $. 
\end{enumerate}
\end{theorem}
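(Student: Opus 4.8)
The plan is to follow the virial strategy outlined in the introduction, adapted to the exterior region. First I would choose a weight function $\varphi$ supported (essentially) in the exterior region $|x| > (1+\epsilon)t$, concretely something like $\varphi = \varphi(x/t)$ with $\varphi$ smooth, monotone, vanishing on $|x| \le (1+\epsilon)t$ and constant (equal to $1$) for $|x| \ge (1+2\epsilon)t$; the key point is that on the support of $\varphi'$ we have $|x| \sim t$, so the ``moving'' terms $\partial_t \varphi \sim -\frac{x}{t^2}\varphi'$ are comparable to $\frac1t$ times $\varphi'$ and, crucially, the sign of the combined weight $-\partial_t\varphi \cdot x + (\text{transport from the equation})$ can be made coercive because the group velocity of all $abcd$ plane waves is strictly less than the speed of the exterior window. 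Then I would introduce a virial functional of the form
\begin{equation*}
\mathcal{I}_{ext}(t) := \int \left( \varphi\, \Phi[u,\eta] + \psi\, \Psi[u,\eta] \right)(t,x)\,dx,
\end{equation*}
built from the two conservation-law densities of the Hamiltonian $abcd$ system (an ``energy-flux'' type density and a ``momentum'' type density), with $\psi$ a second, related weight; this is exactly the combination that appeared in \cite{KMPP2018} but with weights now localized at $|x|\sim t$ rather than at $|x|\sim 0$.

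Next I would compute $\frac{d}{dt}\mathcal{I}_{ext}(t)$ using \eqref{boussinesq}. The computation splits into a linear quadratic-form part and a cubic (nonlinear) part. For the linear part, after integrations by parts, one is left with an integral of a quadratic form in $(u,\partial_x u,\eta,\partial_x\eta)$ against $\varphi'/t$ (and $\psi'/t$); the content of condition \eqref{ellipse0}, respectively \eqref{hyperbola0} and \eqref{Sigma000}, is precisely that this quadratic form is positive definite when the ``speed parameter'' in the weight exceeds the stated threshold ($1+\epsilon$, resp. $3\sqrt{\tilde a\tilde c}+\epsilon$, resp. $\sigma+\epsilon$). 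This is where the explicit ellipse/hyperbola inequalities come from: writing out the discriminant conditions for the $2\times2$ (really $4\times4$, but block-reducible) symmetric matrix to be nonnegative definite as a polynomial inequality in $b$ and $ac$ yields exactly $153b^2 - 54b + 4 \le 9ac$ and its analogues. For the cubic part, I would bound $|u^2\eta|$ and similar terms by $\|(u,\eta)(t)\|_{H^1\times H^1}^{1/2} \cdot \|(u,\eta)(t)\|_{(H^1\times H^1)_{loc}}^{\cdots}$ using Sobolev embedding and the global smallness $\varepsilon$; for $\varepsilon_0$ chosen small enough this is absorbed by a fraction of the positive linear term, leaving
\begin{equation*}
\frac{d}{dt}\mathcal{I}_{ext}(t) \gtrsim \frac1t \int \varphi' \left( u^2 + (\partial_x u)^2 + \eta^2 + (\partial_x\eta)^2\right)(t,x)\,dx.
\end{equation*}

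Then I would run the standard integration-in-time argument: since $\mathcal{I}_{ext}(t)$ is bounded uniformly in $t$ (because it is controlled by $\|(u,\eta)(t)\|_{H^1\times H^1}^2 \lesssim \varepsilon^2$ plus lower order, using again the global well-posedness and conservation of energy \eqref{Energy}), integrating the differential inequality from $2$ to $\infty$ forces
\begin{equation*}
\int_2^\infty \frac1t \left( \int \varphi'\, (u^2 + (\partial_x u)^2 + \eta^2 + (\partial_x\eta)^2)(t,x)\,dx\right) dt < \infty,
\end{equation*}
which gives decay along a sequence $t_n\to\infty$ on the region $\{\varphi' \neq 0\}$. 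To upgrade sequential decay to a genuine limit and to cover the full exterior interval $I_{ext}(t)$ rather than just the thin shell $|x|\sim t$, I would use a second, ``propagated'' virial or simply a family of weights $\varphi_R$ with the transition at $|x| \sim Rt$ for increasing $R$, together with a monotonicity/almost-conservation argument (as in the gKdV and gBBM treatments cited, \cite{MM,ElDika_Martel,KM2018}) showing the local energy to the exterior of a moving point is eventually monotone in $t$; combined with the sequential decay this yields $\lim_{t\to\infty}$. The three items then differ only in the choice of the admissible speed threshold in the weight, dictated respectively by \eqref{ellipse0}, \eqref{hyperbola0}, and the explicit value \eqref{Sigma000} in the symmetric case $a=c$, for which the quadratic form degenerates more favorably and the computation of the optimal $\sigma(b)$ becomes fully explicit.

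The main obstacle I anticipate is establishing the positive-definiteness of the linear quadratic form with the \emph{sharp} speed thresholds: the form mixes $u,\eta$ and their derivatives with coefficients involving $a,b,c$ and the weight speed, and it is not diagonal; getting the clean inequalities \eqref{ellipse0}--\eqref{hyperbola0} requires a careful choice of the relative normalization of the two densities (the coefficient $A(k)$ in Definition \ref{def:PW} is the natural guide, since in the exterior region the ``bad'' frequencies are the high ones, $k\to\infty$, where $|w'(k)|\to 1^-$) and then a somewhat delicate Sylvester-criterion computation. A secondary difficulty is controlling the error terms coming from $\partial_t\varphi$ and from the nonlocal operators $(1-b\partial_x^2)^{-1}$ acting across the boundary of the support of $\varphi$ — these are exponentially small in $t$ on the relevant region but must be tracked, and they are the reason the statement only gives $|x| > (1+\epsilon)t$ with $\epsilon>0$ strict rather than $|x|>t$.
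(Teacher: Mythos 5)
Your overall strategy coincides with the paper's: both localize a conserved density with a weight translating at a speed $\sigma$ exceeding the relevant threshold, extract a coercive quadratic form from its time derivative, absorb the cubic terms by smallness, integrate in time, and then upgrade sequential decay on the moving window to a genuine limit on the full unbounded exterior region via a Martel--Merle type monotonicity argument with backward-shifted weights (your ``family of weights'' step is exactly Section \ref{sec:proof exterior}). Two remarks on where your sketch diverges from, or under-delivers relative to, the actual proof. First, the paper uses a fixed-width weight $\psi((x-\sigma t)/L)$ with $L$ large but constant, not a self-similar $\varphi(x/t)$; this is what makes the error terms $O(1/L)$ (absorbed by choosing $L$ large) rather than ``exponentially small in $t$,'' and it yields space-time integrability of the local norm \emph{without} the $1/t$ factor (Proposition \ref{prop:Exterior Region1}). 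The $\epsilon,\delta$-losses in the statement come from the strict inequalities in the coercivity threshold \eqref{Sigma} and from the slack $\tilde\sigma<\sigma$ needed in the monotonicity step, not from nonlocal tails. Second, and more substantively: for items (1) and (2) the localized energy \eqref{eq:local energy} alone suffices, and the real work is the polynomial analysis determining which of the four terms in \eqref{Sigma} is maximal under \eqref{ellipse0} or \eqref{hyperbola0} (Propositions \ref{prop:exterior1} and \ref{prop:exterior2}), which you leave as an assertion; but for item (3) the energy alone provably gives only $\sigma>3-\frac{1}{2b}$ (Proposition \ref{prop:classification}), and reaching the stated $\sigma(b)$ in \eqref{Sigma000} requires the modified functional $E_{loc}(t)+\tau\mathcal I(t)$ with a free parameter $|\tau|<1$ optimized jointly with $\sigma$ (Lemma \ref{claim_final}). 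Your functional does contain a second density with a second weight, so the machinery is present, but your claim that the three items ``differ only in the choice of the admissible speed threshold'' is inaccurate: following your sketch literally in the case $a=c$ would land on the non-optimal constant $3-\frac{1}{2b}$ rather than the theorem's $\sigma(b)\to 2$.
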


\medskip

\begin{remark}
The region in $a,b,c$ represented by the condition \eqref{ellipse0} can be understood, after a suitable change of variables, as the interior of an ellipse, in the sense that $b$ cannot be too large. See Remark \ref{ELLIPSE} for further details. Also, condition \eqref{hyperbola0} describes, after a precise change of variables, a hyperbola. See also Remark \ref{HYPERBOLA} and Proposition \ref{prop:exterior2} for the exact meaning of this point.
\end{remark}

A first corollary from the above Theorem \ref{TH3} is the following \emph{three times the speed of light decay}, valid for all possible $a\neq c$ (compare with Corollary \ref{Corolario_0}):

\begin{corollary}\label{Corolario_1}
All globally defined small solutions to the Hamiltonian $abcd$ system \eqref{boussinesq}-\eqref{Conds} decay to zero in $(H^1\times H^1)(I(t))$ as $t\to\pm\infty$, and 
\[
I(t) := \big(-\infty, -(3+\epsilon)t \big) \cup \big((3+\delta)t, \infty \big),
\]
for arbitrary $\epsilon, \delta > 0$.  Moreover, in the case $a=c=\frac16-b$, and from the asymptotic limit of $\sigma(b)$ in \eqref{Sigma000} as $b \to \infty$  Corollary \ref{Corolario_0} is recovered. In particular, no matter the values of $(a,b,c)$, there are no solitary waves of speed greater than 3, and if $a=c$, greater than 2.
\end{corollary}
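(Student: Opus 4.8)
The plan is to derive Corollary \ref{Corolario_1} as a purely algebraic consequence of Theorem \ref{TH3}, rather than proving anything new about the PDE dynamics. The key observation is that the three items of Theorem \ref{TH3} already give energy decay in exterior cones $|x| > \sigma_* t$ for suitable $\sigma_*$ depending on the parameters, so the task reduces to showing that across \emph{all} admissible $(a,b,c)$ satisfying \eqref{Conds} one always has $\sigma_* < 3$, with equality attained only asymptotically; and that in the special case $a=c$ the better bound $\sigma_* < 2$ holds.

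First I would handle the generic case $a \ne c$. The cleanest route is to use Item (2) of Theorem \ref{TH3}: whenever \eqref{hyperbola0} holds, decay occurs in the exterior of the cone of slope $3\sqrt{\tilde a \tilde c} = 3\sqrt{ac}/b$, where $\tilde a, \tilde c$ are as in \eqref{tilde_ac}. So I must show two things: (i) that the portion of parameter space \emph{not} covered by \eqref{hyperbola0} is covered by Item (1) (the ``complete decay'' region \eqref{ellipse0}, which gives the light cone slope $1 < 3$), so that together Items (1)--(2) exhaust all of \eqref{Conds}; and (ii) that on the region where \eqref{hyperbola0} holds, one has $3\sqrt{ac}/b < 3$, i.e. $ac < b^2$, i.e. $\tilde a \tilde c < 1$. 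Point (ii) follows immediately from the constraints $-1 \le \tilde c < 0$ and $\tilde a < 0$ derived right after \eqref{tilde_ac}: indeed $0 < \tilde a\tilde c$ and $\tilde a \tilde c \le |\tilde a|$, but more simply, the water-wave constraints \eqref{Conds} force $|a|, |c|$ to be bounded in terms of $b$ in such a way that $\sqrt{ac}/b$ stays strictly below $1$; one should check this by maximizing $ac/b^2$ over \eqref{Conds} (using $a+b = \tfrac12(\theta^2-\tfrac13)$ and $c+d = c+b = \tfrac12(1-\theta^2)$, so $a = \tfrac12(\theta^2-\tfrac13) - b$, $c = \tfrac12(1-\theta^2) - b$, both negative), which reduces to an elementary one-variable optimization in $\theta \in [0,1]$ for fixed $b$, and then the supremum of the resulting quantity over $b > \tfrac16$; the limit $b \to \infty$ gives the value $3$ as a strict supremum. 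For point (i), I would verify that the complement of \eqref{hyperbola0} inside \eqref{Conds} lies inside \eqref{ellipse0}; both conditions are quadratic in $(b, \sqrt{ac})$ (equivalently conic sections in the change of variables alluded to in the Remark after Theorem \ref{TH3}), so this is a finite check comparing two conics, and the transition happens exactly at $b = \tfrac1{12}(3+\sqrt3)$, which is the threshold appearing in Item (3).

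For the case $a = c = \tfrac16 - b$, I would instead invoke Item (3) directly: decay holds outside the light cone for $\tfrac16 < b \le \tfrac1{12}(3+\sqrt3)$ (slope $1 < 2$), and outside the cone of slope $\sigma(b) = \frac{2(b-1/6)(b-1/8)}{b(b-1/12)}$ for larger $b$. So the claim $\sigma_* < 2$ reduces to showing $\sigma(b) < 2$ for all $b > \tfrac1{12}(3+\sqrt3)$ together with $\sigma(b) \to 2$ as $b \to \infty$. This is a one-variable calculus exercise: rewrite $2 - \sigma(b) = \frac{2[\,b(b-1/12) - (b-1/6)(b-1/8)\,]}{b(b-1/12)}$ and expand the numerator bracket — it becomes $b^2 - \tfrac1{12}b - (b^2 - \tfrac{7}{24}b + \tfrac1{48}) = \tfrac{5}{24}b - \tfrac1{48} = \tfrac1{48}(10b - 1) > 0$ for $b > \tfrac16$, so $\sigma(b) < 2$; and since the denominator $\sim b^2$ while the numerator $\sim \tfrac1{24}b$, indeed $2 - \sigma(b) \to 0^+$. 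One also checks $\sigma(b) > 1$ on the relevant range (so that $I_{ext}(t)$ in \eqref{I_ext3} is genuinely a sub-cone statement and the two regimes of Item (3) patch consistently), which follows from $\sigma(b) - 1 = \frac{(b-1/6)(b-1/8) - \tfrac12 b(b-1/12)}{\cdots}$, again reducing to a sign of a linear polynomial in $b$.

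The main obstacle — really the only nontrivial point — is the generic-$a\ne c$ optimization showing $\sup ac/b^2 \to 3^2/9 = 1$ in the appropriate normalized sense, i.e. that $3\sqrt{\tilde a \tilde c}$ stays below $3$ uniformly and that Items (1)--(2) of Theorem \ref{TH3} jointly cover the entire admissible region \eqref{Conds}. Everything else is one-variable calculus with explicit rational functions. I expect the writeup to consist of: (a) parametrize \eqref{Conds} by $(\theta, b)$; (b) observe Item (1) $\cup$ Item (2) covers this parameter set by comparing the two conics \eqref{ellipse0} and \eqref{hyperbola0}; (c) on the Item (2) region bound $3\sqrt{\tilde a\tilde c} < 3$; (d) in the $a=c$ case run the explicit computation $2 - \sigma(b) = \tfrac1{48}(10b-1)/[b(b-1/12)] > 0$ with limit $0$; (e) conclude via Theorem \ref{TH3} and the observation that enlarging the exterior cone slope only shrinks $I_{ext}(t)$, together with the standard fact that a solitary wave of speed $c_0$ lives in the cone $|x| \sim c_0 t$ and hence would contradict energy decay there.
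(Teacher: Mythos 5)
Your reduction of the $a\ne c$ case to ``Items (1) and (2) of Theorem \ref{TH3} jointly exhaust all of \eqref{Conds}'' is the weak point, and in fact that covering claim is false. The region \eqref{ellipse0} is the interior of the ellipse $\frac{144}{49}(\nu-\frac13)^2+\frac{9216}{49}(b-\frac{17}{96})^2\le 1$ (Remark \ref{ELLIPSE}), which reaches $b=\frac14$ only at $\nu=\frac13$ and shrinks in $\nu$ as $b\uparrow\frac14$; the region \eqref{hyperbola0} lives in $b\ge\frac14$. Take for instance $b=d=0.249$, $\nu=0.45$, i.e. $a\approx-0.141$, $c\approx-0.024$: this pair lies in $\mathcal R_0$, but $153b^2-54b+4\approx 0.040>9ac\approx 0.030$ so \eqref{ellipse0} fails, and $54ac\approx 0.18$ while $b\bigl(\sqrt{48(6b-1)^2+(21b-2)^2}-1\bigr)\approx 0.92$, so \eqref{hyperbola0} fails as well. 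So your step (i) cannot be carried out, and the asserted ``transition exactly at $b=\frac1{12}(3+\sqrt3)$'' is also not the ellipse/hyperbola interface (that value is the threshold of Item (3) in the special case $a=c$; for $a=c$ the ellipse/hyperbola regimes meet at $b=\frac14$). As written, your argument only yields decay outside slope $3$ on a proper subset of the admissible parameters, whereas the corollary claims it for all of them.

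The paper's route avoids this entirely: it does not pass through the regions of Items (1)--(2) but goes back to the coercivity Lemma \ref{lem:Coercivity}, whose hypothesis \eqref{Sigma} is a maximum of four explicit quantities, and Remark \ref{rem:maximum velocity} shows these are bounded by $\frac53$, $\frac73$ and $3$ respectively, \emph{uniformly over all} $(\nu,b)\in\mathcal R_0$. Hence any $\sigma>3$ satisfies \eqref{Sigma} for every admissible triple, and the exterior-decay machinery (Propositions \ref{prop:Exterior Region1}--\ref{prop:Exterior Region2} and Section \ref{sec:proof exterior}) applies directly; that is the whole proof of the first assertion. Your observation that $3\sqrt{\tilde a\tilde c}<3$ with supremum $3$ as $b\to\infty$ is correct and is indeed one of the three uniform bounds, but you need the analogous uniform bounds for the other two quantities in \eqref{Sigma}, not a covering by the conic regions. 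The $a=c$ part of your proposal is fine and matches the paper: $2-\sigma(b)=\frac{10b-1}{24\,b(b-\frac1{12})}>0$ with limit $0^+$, so $\sigma(b)<2$ and $\sigma(b)\to2$, recovering Corollary \ref{Corolario_0}.
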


\medskip

Our last result is an improvement in the interval of decay $J_0(t)$ in \eqref{Jv} by allowing it to move by a speed $v\neq 0$, that is to say, we try to encompass the whole light cone $(-|t|,|t|)$. Naturally the speed of light condition $|v|<1$ will appear, but first, we need another definition. Recall that $b>\frac16$ is a necessary condition for having a well-defined $abcd$ system, see \eqref{primera}.


\medskip

\begin{definition}[Uniform dispersion-like parameters]\label{Uni_Dis_Par}
Let $b_0 > \frac16$ be given. Consider the set $\mathcal R_0$ from \eqref{R0}. Define sets $\mathcal R_1(b_0)$, $\mathcal R_2(b_0)$ and $\mathcal R_3(b_0)$ of pairs $(\nu, b)$ by
\be\label{R1}
\mathcal R_1(b_0):=\Big\{(\nu, b) \in \mathcal R_0 ~:~  3 b_0\nu(3b_0\nu - 2) \le \Big(12(b-b_0)-1\Big) b \Big\},
\ee
\be\label{R2}
\begin{aligned}
\mathcal R_2(b_0):= &~\Big\{(\nu, b) \in \mathcal R_0 :  \\
&~ \qquad    4\Big(10b - 2(3b_0+1) - (9b_0-2)\nu \Big) b \ge 15b_0 \nu (3\nu-2)  \Big\},
\end{aligned}
\ee
and
\be\label{R3}
\begin{aligned}
\mathcal R_3(b_0):=&~\Big\{(\nu, b) \in \mathcal R_0 : \\ 
&~ \qquad 4\Big(30b -2(18b_0+1) + 3(9b_0-2)\nu\Big) b \ge 45b_0\nu (3\nu - 2)  \Big\}.
\end{aligned}
\ee
Then, we say that $(a,b,c)$ satisfying \eqref{Conds} are \emph{uniform dispersion-like parameters} if additionally, they belong to the following set:
\begin{equation}\label{eq:set}
\begin{aligned}
\mathcal R_\#(b_0):= &~ \Bigg\{ (a,b,c) =\left(-\frac{\nu}{2}- \frac13 - b, \; b, \; \frac{\nu}{2} - b \right) : \\
&~ \qquad  \qquad \qquad b \ge b_0, ~ \nu \in \mathcal R_1(b_0) \cap \mathcal R_2(b_0) \cap \mathcal R_3(b_0) \Bigg\}.
\end{aligned}
\end{equation}
\end{definition}
This set has been represented in Fig. \ref{fig:7}, please compare with Fig. \ref{Fig:0}. Precisely, we shall prove below that given $0 < v_0 < 1$, there is $b_0(v_0)>\frac16$ such that, if $(a,b,c)\in \mathcal R_\#(b_0)$, then small global solutions to \eqref{boussinesq} decay in $J_v(t)$ (see \eqref{Jv})  as $t \to \infty$, uniformly in $0< |v|<v_0$. By finding new virial estimates for nonzero speed solutions of $abcd$, we will prove the following last result:

\begin{figure}[h!]
\begin{center}
\begin{tikzpicture}[scale=1]
\filldraw[thick, color=lightgray!60] (1,2.35) arc (220:320:0.4) -- (1.6,2.35) -- (4,5) -- (0,5)--(0,4) --(1,2.35);
\draw[thick,dashed] (4,-1) -- (4,3);
\draw[thick] (4,3) -- (4,5);
\draw[thick,dashed] (0,0) -- (4,3);
\draw[thick,dashed] (0,2)--(8/3,0);
\draw[thick] (0,2) -- (0,5);
\draw[thick,dashed] (0,2.2)--(4/3,2.2);
\draw[thick] (1,2.35) arc (220:320:0.4);
\draw[thick] (1,2.35) -- (0,4);
\draw[thick] (1.6,2.35) -- (4,5);
\draw[->] (-1,0) -- (5,0) node[below] {$\nu$};
\draw[->] (0,-1) -- (0,5) node[right] {$b$};
\node at (0,0){$\bullet$};
\node at (4,0){$\bullet$};
\node at (4.3,-0.4){$1$};
\node at (4/3,0){$\bullet$};
\node at (8/3,-0.4){$\frac23$};
\node at (4/3,-0.4){$\frac13$};
\node at (8/3,0){$\bullet$};
\node at (0,2){$\bullet$};
\node at (-0.3,2){$\frac 13$};
\node at (0,1){$\bullet$};
\node at (-0.3,1){$\frac16$};
\node at (0,3){$\bullet$};
\node at (-0.3,3){$\frac12$};
\node at (0,2.2){$\bullet$};
\node at (-0.3,2.5){$b_0$};
\node at (4.8,3){$b=\frac12\nu$};
\node at (1.6,4.2){$ \mathcal{R}_{\#}(b_0)$};
\end{tikzpicture}
\end{center}
\caption{The slightly dark shadowed region describes the set $\mathcal R_\#(b_0)$ defined in Definition \ref{Uni_Dis_Par}. The set moves up if $b_0$ grows; in other words, it moves up as $v_0$ approaches $1$. Theorem \ref{TH2} says that given $0<v_0<1$, the strong decay in $J_v(t)$ is valid uniformly in $|v| \le v_0$, for all parameters $(a,b,c)$ represented by $\nu, b$ in $\mathcal R_\#(b_0)$. In particular, no small solitary waves of speed $v$ exist in this regime. 
} 
\label{fig:7}
\end{figure}

\begin{theorem}[Decay near the boundary of the light cone]\label{TH2}
Let $0 < v_0 < 1$ be given. Then, there exist $b_0=b_0(v_0)>0$ and $\varepsilon_0 = \varepsilon_0(v_0) > 0$ small such that the following holds true. Let $(u,\eta)\in C(\R, H^1\times H^1)$ be a global, small solution of \eqref{boussinesq}-\eqref{Conds}, such that for some $0 < \ve \le \varepsilon_0$,
\be\label{Smallness000}
\|(u_0,\eta_0)\|_{H^1\times H^1}< \ve.
\ee
Assume additionally that the triple $(a,b,c)$ belongs to the set $\mathcal R_\#(b_0)$ in \eqref{eq:set}.
Then, 
there is, uniform in $v$, strong decay along the ray $x\sim v|t|$: 
\begin{equation}\label{sup_lim}
\sup_{v\in [-v_0,v_0]} \lim_{t \to \pm\infty}   \|(u,\eta)(t)\|_{(H^1\times H^1)(J_v(t))} =0.
\end{equation}
\end{theorem}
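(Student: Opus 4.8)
## Proof Proposal for Theorem \ref{TH2}

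The plan is to adapt the virial method already developed in \cite{KMPP2018} and used in the preceding theorems to the moving frame $x \sim v|t|$, tracking carefully the dependence of all constants on the speed $v$ and the parameter $b_0$. The starting point is to introduce a modulated virial functional: instead of the weight functions centered at $x=0$ used for $J_0(t)$, one works with weights $\varphi$ and $\psi$ depending on the shifted variable $y = \frac{x - v t}{\lambda(t)}$, where $\lambda(t)$ is a slowly growing scale, say $\lambda(t)\sim t/\log^2 t$ consistent with the width of $J_v(t)$ in \eqref{Jv}. One computes $\frac{d}{dt}\mathcal I_v(t)$ for the resulting functional $\mathcal I_v[u,\eta](t)$; the time derivative of the moving weight produces the extra transport term $-v\,\partial_x$ acting on the weights, which is precisely the mechanism by which the group velocity enters. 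The key algebraic fact is that, after integration by parts and discarding lower-order (nonlinear and scale-derivative) contributions, the quadratic form controlling $\frac{d}{dt}\mathcal I_v(t)$ becomes a perturbation, by $O(|v|)$ and by $O(v^2)$, of the quadratic form that appeared in the proof of Theorem \ref{TH1} at $v=0$.

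The second step is the coercivity analysis of that perturbed quadratic form. Since at $v=0$ and for $(a,b,c)$ dispersion-like (or refined dispersion-like) the form is positive definite on the local $H^1\times H^1$ (in fact $H^2$) norm, one needs to show it remains positive definite after an $O(v_0)$ perturbation, provided the parameters are pushed deeper into the dispersive regime — this is exactly what the three nested conditions $\mathcal R_1(b_0)$, $\mathcal R_2(b_0)$, $\mathcal R_3(b_0)$ in Definition \ref{Uni_Dis_Par} encode. Concretely, I would expand the symbol of the quadratic form in Fourier variables: it is a rational expression in $k^2$, $\tilde a$, $\tilde c$, $b$, and now linearly in $v$ (and $v^2$); the requirement that it be bounded below on all frequencies by a positive multiple of $\langle k\rangle^{\text{(appropriate power)}}$ reduces, after clearing denominators, to a finite list of polynomial inequalities in $(\nu, b, v)$. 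Setting $v$ up to $v_0$ and $b_0$ to be the threshold, these inequalities are exactly \eqref{R1}–\eqref{R3}. One then checks that for any fixed $0<v_0<1$ there exists $b_0(v_0)>\tfrac16$ large enough so that $\mathcal R_\#(b_0)$ is nonempty and the coercivity holds uniformly for $|v|\le v_0$ — and that $b_0(v_0)\to\infty$ as $v_0\to 1$, which matches the geometry of Fig.~\ref{fig:7}.

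The third step is the bootstrap/conclusion argument, identical in structure to the one underlying Theorems \ref{Thm2} and \ref{TH1}: using smallness \eqref{Smallness000} and global well-posedness in the energy space, one shows $\mathcal I_v(t)$ is uniformly bounded in $t$ (uniformly in $|v|\le v_0$, since the weights are uniformly bounded), hence $\int_1^\infty \|(u,\eta)(t)\|^m_{(H^1\times H^1)_{loc,\,v}}\,dt<\infty$ along the moving window, which forces decay along a sequence of times; upgrading to a genuine limit uses the almost-monotonicity of a second auxiliary functional (again following \cite{KMM1,KMM2,KMM3,KMPP2018}), and one must verify that every constant in this second estimate is also uniform in $v$ on the compact interval $[-v_0,v_0]$. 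This uniformity is what allows the supremum and the limit to be exchanged in \eqref{sup_lim}. For the extension to finitely many distinct speeds $v_j$ (Remark \ref{rem:finite number}), one simply adds the corresponding functionals, whose supports eventually separate.

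The main obstacle, and the technical heart of the proof, is the second step: establishing that the perturbed quadratic form stays coercive on the full range $|v|\le v_0$ for a suitable choice of parameters, i.e. verifying that the polynomial inequalities \eqref{R1}–\eqref{R3} are simultaneously satisfiable on a nonempty region for every $v_0<1$. Because the $v$-perturbation appears in a genuinely $2\times 2$ coupled quadratic form (not merely a scalar symbol), one cannot diagonalize away the cross terms, and the positivity must be extracted by a careful completion-of-squares or Cauchy–Schwarz argument that keeps the $v$-losses under control — this is where the weak dispersion $a,c\sim 0$ and the proximity to the hyperbolic regime make the estimates delicate, and it explains why $b_0$ must be taken large (and why the result degenerates as $v_0\to 1$, where no choice of $b$ suffices, consistent with the persistence of plane waves of group velocity arbitrarily close to $1$).
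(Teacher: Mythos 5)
Your overall architecture coincides with the paper's: a virial functional $\mathcal H=\mathcal I+\alpha\mathcal J$ with moving weight $\vp\big(\frac{x-vt}{\lambda(t)}\big)$, whose time derivative produces the transport term $-\frac{v}{\lambda(t)}\int\vp'(u\eta+u_x\eta_x)$ (the $\mathcal{VH}_{\lambda(t)}$ term of Proposition \ref{prop:H}); a coercivity statement for the resulting quadratic form that must beat this $O(|v|)$ loss, encoded exactly by $\mathcal R_\#(b_0)$ with $b_0=\frac{1}{9\kappa_0}=\frac{4}{9(1-v_0)}\to\infty$ as $v_0\to1$; and the upgrade from integrated-in-time decay to a genuine limit via the monotone local energy $E_{loc}$ with weight $\sech^4$. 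The one genuinely different ingredient is your second step: you propose to verify coercivity by bounding the Fourier symbol of the quadratic form from below. The paper instead passes to the canonical variables $u=f-f_{xx}$, $\eta=g-g_{xx}$, writes $\mathcal Q$ as $\sum A_i\int\vp' (\partial_x^{i-1}f)^2+\sum B_i\int\vp'(\partial_x^{i-1}g)^2$ plus $O(\lambda^{-1})$ commutator errors, extracts the block $\frac{v_0^+}{2}\int\vp'(f^2+3f_x^2+3f_{xx}^2+f_{xxx}^2)$ (with $v_0^+=\frac{1+v_0}{2}>v_0$ providing the margin over $\frac{|v|}{2}$), and reduces the residual nonnegativity to the existence of a common $\alpha$ with $\max(\bar B_2,\bar B_3,\bar B_4)\le\alpha\le\min(\bar A_2,\bar A_3,\bar A_4)$ — the long case analysis of Lemmas \ref{lem:upper minimum}--\ref{lem:alpha existence}. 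Your Fourier route captures the same algebra at leading order, but since $\mathcal Q$ is localized by $\vp'$ it is not a Fourier multiplier, so you would still need the canonical-variable (or an equivalent commutator) machinery to make "symbol positivity" rigorous; the role of the free parameter $\alpha$ multiplying $\mathcal J$ is also essential when $a\ne c$ and should be made explicit. Two small caveats: the transport perturbation is purely $O(|v|)$ (there is no $O(v^2)$ contribution), and your closing claim that uniformity "allows the supremum and the limit to be exchanged" overstates \eqref{sup_lim} — the statement proved is $\sup_v\lim_t=0$, i.e.\ decay for each fixed $|v|\le v_0$ with constants ($\varepsilon_0$, $b_0$) uniform in $v$; whether $\lim_t\sup_v=0$ holds is explicitly left open in the paper.
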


\begin{remark}
The explicit dependence of $b_0$ in terms of $v_0$ is given in \eqref{b_0} and \eqref{k_0}; it roughly says that the closer $v_0$ to the speed of light 1, the larger $b_0$.
\end{remark}

\begin{remark}
Recall that Theorem \ref{TH2} states that {\bf no} solitary wave nor breather solution of $abcd$ with finite energy and with having asymptotic speed $v$, exist in this regime. Moreover, Theorem \ref{TH2} also discards the existence of any ``compact'' object satisfying the weaker condition $\limsup_{t \to \pm\infty}   \|(u,\eta)(t)\|_{(H^1\times H^1)(J_v(t))} >0$. Recall that solitary waves for $abcd$ exist in different regimes, see \cite{BCL0,CNS1,CNS2,Olivera} and references therein.
\end{remark}

\begin{remark}\label{rem:finite number}
Theorem \ref{TH2}  extends to decay in a union of finite number of intervals $J_{v_n}(t)$, $v_n \in [-v_0,v_0]$, $v_n\neq v_{n'}$ if $n\neq n'$. Indeed, for any $N \ge 1$, and $v_n \in [-v_0,v_0]$, $n=1,2,\cdots, N$, we have  
\[
\lim_{t \to \pm\infty}   \|(u,\eta)(t)\|_{(H^1\times H^1)(\cup_{n=1}^N J_{v_n}(t))} =0.
\]
The extension of this result to an arbitrary, infinite number of disjoint intervals is an interesting open question.
\end{remark}

\begin{remark}
We do not know whether or not the order between $\sup_v$ and $\lim_t$ in \eqref{sup_lim} can be switched. A positive answer to this question may provide strong information about solutions, in the sense that small, global solutions should decay to zero in any proper light cone inside $(-t,t)$.
\end{remark}

\begin{remark}[The case with surface tension] We believe that the previous results can be extended (with considerably more complicated proofs) to the case where \emph{there is surface tension $\tau$} in $abcd$, namely the case where the parameters satisfy $a+b+c+d=1/3-\tau$ and $\tau>0$. See \cite{DD} for more details on this particular regime.
\end{remark}

\medskip
%

\subsection*{Organization of this paper} This paper is organized as follows: in Section \ref{sec:pre} we introduce the preliminary objects needed for the proofs. Section \ref{sec:TH1} is devoted to the proof of Theorem \ref{TH1}. In Section \ref{sec:TH2} we prove Theorem \ref{TH2}, and finally, Section \ref{sec:TH3a} deals with the proof of Theorem \ref{TH3}.


\section{Preliminaries}\label{sec:pre}

The purpose of this Section is to gather together several well-known results needed for the proofs of Theorems \ref{TH1}, \ref{TH3} and \ref{TH2}.

\subsection{Stretching variables and dilation equivalence}
This section devotes to simplifying \eqref{boussinesq} for the convenience of computations. One finds that \eqref{boussinesq} allows a space-time dilation equivalence, that is to say, if $(u, \eta)$ are solutions to \eqref{boussinesq}, then $(u_b,\eta_b)$, defined by
\[u_b(t,x) = u(\sqrt{b}t, \sqrt{b}x) ,\quad \eta_b(t,x) = \eta(\sqrt{b} t, \sqrt{b} x),\]
are solutions to (see \eqref{tilde_ac})
\begin{equation}\label{boussinesq0000}
\begin{cases}
(1- \partial_x^2)\partial_t \eta_b  + \partial_x \! \left( \tilde a\, \partial_x^2 u_b +u_b + u_b \eta_b \right) =0, \quad (t,x)\in \R\times\R, \\ 
(1- \partial_x^2)\partial_t u_b  + \partial_x \! \left( \tilde c \, \partial_x^2 \eta_b + \eta_b  + \frac12 u_b^2 \right) =0. 
\end{cases}
\end{equation}

Let $b > \frac16$ be fixed. A computation (change of variables) gives
\[\sqrt{b} \int (1+|\xi|^2) |\wh{u}(\xi)|^2 \; d\xi \le \int (1+|\xi|^2)|\wh{u}_b(\xi)|^2 \; d\xi \le \frac{1}{\sqrt{b}} \int (1+|\xi|^2) |\wh{u}(\xi)|^2 \; d\xi,\]
for $\frac16 < b <1$, and
\[\frac{1}{\sqrt{b}} \int (1+|\xi|^2) |\wh{u}(\xi)|^2 \; d\xi \le \int (1+|\xi|^2)|\wh{u}_b(\xi)|^2 \; d\xi \le \sqrt{b} \int (1+|\xi|^2) |\wh{u}(\xi)|^2 \; d\xi,\]
for $1 \le b$, where $\wh{u}$ denotes the Fourier transform of $u$ with respect to the spatial variable. Thus, we conclude (since the same argument holds for $\eta$)
\begin{equation}\label{equiv1}
\norm{(u_b, \eta_b)}_{H^1\times H^1} \sim_b \norm{(u,\eta)}_{H^1\times H^1},
\end{equation}
which ensures the smallness assumption on $(u_b,\eta_b)$. The implicit constant in \eqref{equiv1} depends only on $b$.

\medskip

On the other hand, let $\psi$ be a positive even function, and let $\lambda(t)$ be a function corresponding to the interval $J_v(t)$ in \eqref{Jv}, for instance,
\[\lambda(t) = \frac{t}{\log^2 t}, \quad t > 2.\]
Then, an analogous argument also yields
\[\int \psi\left(\frac{x-vt}{\lambda(t)}\right) \left(u_b^2 + u_{b,x}^2\right) (t,x) \; dx \sim_b \int \psi\left(\frac{x - v\sqrt{b}t}{\sqrt{b}\lambda(t)}\right) \left(u^2 + u_x^2\right)(\sqrt{b}t,x) \; dx.\]
Note that we have
\[\log (\sqrt{b}t) \le \log t \le 2 \log (\sqrt{b}t), \quad t \ge \frac1b,\]
whenever $\frac16 < b < 1$, and
\[\frac12\log (\sqrt{b}t) \le \log t \le \log (\sqrt{b}t), \quad t \ge \sqrt{b},\]
whenever $1 < b$. This implies $\sqrt{b}\lambda(t) \sim_b \lambda(\sqrt{b}t)$, for $t \ge \max(b^{-1}, \sqrt{b})$, $\frac16 < b$. Thus, we conclude (since the same argument holds for $\eta$)
\begin{equation}\label{equiv2}
\norm{(u_b, \eta_b) (t)}_{(H^1 \times H^1)(J_v(t))} \sim_b \norm{(u, \eta) (t)}_{(H^1 \times H^1)(J_v(t))},
\end{equation}
which ensures the equivalence of $H^1$-decay between $(u,\eta)$ and $(u_b,\eta_b)$. The implicit constant in \eqref{equiv2} depends only on $b$.

\medskip

{\color{black}
Our analysis below will be given for $(u_b,\eta_b)$, not original solutions $(u, \eta)$. However, the above observation ensures that Theorems \ref{TH1}, \ref{TH3} and \ref{TH2} (and hence so Theorem \ref{TH0} and its corresponding corollaries) hold for $(u,\eta)$ without changing any condition, in particular, decay regions. 
}
%


\subsection{Virial identities} The following results are essentially contained in \cite{KMPP2018}, with some minor modifications that we will mention below.

\medskip

First, we recall some virial identities. Let $(u,\eta)=(u,\eta)(t,x)$ be an $H^1\times H^1$ global in time solution of the $abcd$ system \eqref{boussinesq0000}, and $\vp$ be a smooth, bounded function to be chosen later. Consider the two functionals 
\begin{equation}\label{I}
\mathcal I(t) := \int \vp(x)(u\eta + \px u \px\eta)(t,x)dx,
\end{equation}
and
\begin{equation}\label{J}
\mathcal J(t) := \int \vp'(x)(\eta \px u)(t,x)dx.
\end{equation}
There is a third functional $\mathcal K(t)$ defined in \cite{KMPP2018}, which will not be useful in this paper, because of some simplifications in the computations that we have found.

\medskip

We also need the general functional
\be\label{H}
\mathcal{H}(t): = \mathcal{H}_{\alpha}(t) := \mathcal{I}(t) + \alpha\mathcal{J}(t),
\ee
defined for some $\alpha\in\R$ to be found later on. Note that the Cauchy-Schwarz inequality yields the basic estimate
\begin{equation}\label{H_est}
|\mathcal{H}(t)| \lesssim \norm{u}_{H^1}^2 + \norm{\eta}_{H^1}^2.
\end{equation}
In what follows we state a description of the dynamics of $\mathcal I(t)$ and $\mathcal J(t)$ under the $abcd$ flow:

\begin{lemma}[Variation of $\mathcal{I}(t)$, \cite{KMPP2018}]\label{Virial_bous}
For any $t\in \R$,
\be\label{Virial0}
\begin{aligned}
\frac{d}{dt} \mathcal I(t) = &~  {}  -\frac{a}{2b} \int  \varphi' u_x^2-\frac{c}{2b} \int  \varphi' \eta_x^2 \\
& ~ {} - \left(\frac{a}{b}+\frac12\right)   \int  \varphi' u^2 -\left( \frac{c}{b}+ \frac12 \right)  \int  \varphi' \eta^2 \\
& ~ {} + \left(1+ \frac{a}{b}\right)\int \varphi' u (1-\partial_x^2)^{-1} u  + \left(1+ \frac{c}{b} \right)\int \varphi' \eta (1-\partial_x^2)^{-1} \eta  \\
&~ {}     -\frac12 \int  \varphi' u^2 \eta   + \int \varphi' u (1-\partial_x^2)^{-1}\left(u \eta \right)  + \frac12\int \varphi' \eta (1-\partial_x^2)^{-1}\left(u^2\right) .
\end{aligned}
\ee
\end{lemma}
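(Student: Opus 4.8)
The identity is the statement that $\frac{d}{dt}\mathcal I(t)$, computed along solutions of the rescaled system \eqref{boussinesq0000}, equals the displayed right-hand side. The plan is a direct computation: differentiate $\mathcal I(t)=\int \varphi(u\eta+\partial_x u\,\partial_x\eta)$ in time, substitute the two evolution equations, and integrate by parts systematically to reorganize everything into the listed terms. To do this cleanly, I would first solve \eqref{boussinesq0000} for the time derivatives, writing
\[
\partial_t\eta = -(1-\partial_x^2)^{-1}\partial_x\bigl(\tilde a\,\partial_x^2 u + u + u\eta\bigr),\qquad
\partial_t u = -(1-\partial_x^2)^{-1}\partial_x\bigl(\tilde c\,\partial_x^2\eta + \eta + \tfrac12 u^2\bigr).
\]
Then $\frac{d}{dt}\mathcal I=\int\varphi(\partial_t u\,\eta+u\,\partial_t\eta)+\int\varphi(\partial_x\partial_t u\,\partial_x\eta+\partial_x u\,\partial_x\partial_t\eta)$, and I would integrate the second pair of terms by parts in $x$ so that $\partial_t u$ and $\partial_t\eta$ appear undifferentiated against $-\partial_x(\varphi\,\partial_x\eta)$ and $-\partial_x(\varphi\,\partial_x u)$ respectively. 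After this step the whole expression is a sum of terms of the form $\int (\text{coefficient of }\varphi,\varphi',\varphi'')\cdot\bigl[(1-\partial_x^2)^{-1}\partial_x(\cdots)\bigr]\cdot(\text{linear or quadratic in }u,\eta)$.

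The key algebraic trick, as in \cite{KMPP2018}, is the identity $\partial_x^2 = -(1-\partial_x^2)^{-1} + \text{Id}\cdot(\text{operator adjustments})$, or more precisely $-(1-\partial_x^2)^{-1}\partial_x(\partial_x^2 f) = -(1-\partial_x^2)^{-1}\partial_x\bigl((\partial_x^2-1)f + f\bigr) = -\partial_x f - (1-\partial_x^2)^{-1}\partial_x f$. Applying this to the $\tilde a\,\partial_x^2 u$ and $\tilde c\,\partial_x^2\eta$ contributions converts the highest-order pieces into a local term (which after integration by parts against $\varphi$ produces the $\int\varphi' u_x^2$, $\int\varphi'\eta_x^2$ and $\int\varphi' u^2$, $\int\varphi'\eta^2$ terms, the latter combining with the genuinely local $u,\eta$ terms) plus a nonlocal remainder of lower order (producing the $\int\varphi' u(1-\partial_x^2)^{-1}u$ and $\int\varphi'\eta(1-\partial_x^2)^{-1}\eta$ terms). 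One must be careful to commute $(1-\partial_x^2)^{-1}$ past multiplication by $\varphi$ only where it is harmless, i.e. use self-adjointness of $(1-\partial_x^2)^{-1}$ to move it onto the smooth factor, and collect commutator terms; a convenient bookkeeping is to note $\int \varphi\, f\, (1-\partial_x^2)^{-1}\partial_x g = -\int \bigl[(1-\partial_x^2)^{-1}(\varphi f)\bigr]' g$ and similar, then expand derivatives of products. The cross terms $u\,\partial_t\eta$ and $\eta\,\partial_t u$ generate, after the same manipulations, the coefficients $1+\tilde a$, $1+\tilde c$ on the nonlocal linear terms and contribute to the $u^2$, $\eta^2$ local coefficients, which is how $\frac{a}{b}+\frac12 = \tilde a+\frac12$ and $\tilde c+\frac12$ arise (the extra $\frac12$ coming from the symmetric splitting of the $\tilde c\,\partial_x^2\eta$ term hitting $u$ versus the $\tilde a\,\partial_x^2 u$ term hitting $\eta$ — they are not symmetric because the nonlinearities differ, but the linear parts combine with a factor $\frac12$ after symmetrization).

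Finally the nonlinear terms: $u\eta$ in the $\eta$-equation contributes $\int\varphi\, (\text{stuff})\,(1-\partial_x^2)^{-1}\partial_x(u\eta)$, which after the integration-by-parts bookkeeping gives $\int\varphi' u(1-\partial_x^2)^{-1}(u\eta)$ together with a purely local piece; the $\frac12 u^2$ in the $u$-equation gives $\frac12\int\varphi'\eta(1-\partial_x^2)^{-1}(u^2)$ plus a local piece; and the local nonlinear pieces assemble into $-\frac12\int\varphi' u^2\eta$ after using $\partial_x(u^2\eta)$-type identities (note $\int\varphi\,\partial_x(u\eta)\eta + \frac12\int\varphi\,\partial_x(u^2)\eta$-type combinations telescope, up to $\varphi'$ terms, into $-\frac12\int\varphi' u^2\eta$). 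I expect the main obstacle to be purely organizational rather than conceptual: keeping track of the numerous integration-by-parts boundary-free terms and the precise rational coefficients in $\tilde a,\tilde c$ (equivalently $a/b,c/b$), and verifying that all terms without a $\varphi'$ factor cancel — since $\mathcal I$ with $\varphi\equiv 1$ is (a combination of) conserved quantities of the linear flow, every surviving term must carry at least one derivative of $\varphi$, and checking this cancellation is the delicate part. Since the lemma is quoted from \cite{KMPP2018} with only "minor modifications," I would in practice cite that computation and only re-derive the terms affected by dropping the auxiliary functional $\mathcal K(t)$, verifying that \eqref{Virial0} is unchanged.
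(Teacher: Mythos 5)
Your overall strategy --- differentiate $\mathcal I$, substitute $\partial_t u,\ \partial_t\eta$ from \eqref{boussinesq0000} written with $(1-\partial_x^2)^{-1}\partial_x$, integrate by parts so the time derivatives appear undifferentiated, and split $(1-\partial_x^2)^{-1}\partial_x^2$ into a local part plus a smoothing remainder --- is exactly the computation that the paper delegates to \cite{KMPP2018}; there is no genuinely different route here, and your sanity check that every surviving term must carry a factor of $\varphi'$ (because $\int(u\eta+u_x\eta_x)$ is conserved when $\varphi\equiv 1$) is a good way to audit the cancellation of the $\varphi$-terms.

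However, the ``key algebraic trick'' as you display it is wrong by a sign, and the error is not harmless. Since $(\partial_x^2-1)f=-(1-\partial_x^2)f$, one has
\[
-(1-\partial_x^2)^{-1}\partial_x\bigl(\partial_x^2 f\bigr)
=-(1-\partial_x^2)^{-1}\partial_x\bigl(-(1-\partial_x^2)f+f\bigr)
=+\,\partial_x f-(1-\partial_x^2)^{-1}\partial_x f,
\]
with $+\partial_x f$, not $-\partial_x f$. That local piece is precisely what produces, after pairing with $\varphi u$ (respectively $-\partial_x(\varphi u_x)$) and integrating by parts, the coefficient $-\frac{a}{2b}$ of $\int\varphi' u_x^2$ and the $-\frac ab$ portion of the coefficient of $\int\varphi' u^2$ (and symmetrically for $c$ and $\eta$); with your sign these terms come out with the opposite sign and \eqref{Virial0} is not recovered. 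Correct the sign and the remainder of your plan (self-adjointness of $(1-\partial_x^2)^{-1}$ to absorb the commutators with $\varphi$, and the telescoping of the local cubic pieces into $-\frac12\int\varphi' u^2\eta$) goes through as described and reproduces the lemma.
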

We also have
\begin{lemma}[Variation of $\mathcal{J}(t)$, \cite{KMPP2018}]\label{lem:J}
For any $t \in \R$,
\begin{equation}\label{eq:J-1}
\begin{aligned}
\frac{d}{dt} \mathcal J(t)=&~   \left(1+ \frac{c}{b} \right)\int\vp'\eta^2 - \frac{c}{b}\int\vp' \eta_x^2  -\left(1+ \frac{a}{b}\right)\int\vp' u^2 + \frac{a}{b} \int\vp' u_x^2\\
&- \left(1+ \frac{c}{b} \right)\int\vp'\eta\nlop\eta + \left(1+ \frac{a}{b} \right)\int\vp' u \nlop u\\
&+ \left(1+ \frac{a}{b} \right) \int \vp''u \nlop u_x + \frac{c}{2b}\int\vp''' \eta^2\\
&-\frac12\int\vp' u^2\eta -\frac12\int\vp' \eta\nlop \left( u^2 \right)  \\
&+ \int \vp' u\nlop \left( u\eta \right) +\int \vp'' u\nlop ( u\eta )_x.
\end{aligned}
\end{equation}
\end{lemma}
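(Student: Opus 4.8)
\textbf{Plan of proof for Lemma \ref{lem:J} (Variation of $\mathcal{J}(t)$).}

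The plan is to differentiate $\mathcal{J}(t)=\int \vp'(x)\,\eta\,\px u\,(t,x)\,dx$ directly in time, substituting the time derivatives $\pt\eta$ and $\pt u$ from the $abcd$ system in the rescaled form \eqref{boussinesq0000}. First I would solve each equation of \eqref{boussinesq0000} for $\pt\eta$ and $\pt u$ by applying the operator $\nlop=(1-\px^2)^{-1}$, obtaining
\[
\pt\eta = -\nlop\px\!\left(\tilde a\,\px^2 u + u + u\eta\right),\qquad
\pt u = -\nlop\px\!\left(\tilde c\,\px^2 \eta + \eta + \tfrac12 u^2\right),
\]
and then rewrite $\tilde a\,\px^2 u = -\tilde a\,u + \tilde a\,(1-\px^2)u$, so that $\nlop\px(\tilde a\,\px^2 u + u) = (1+\tilde a)\nlop\px u - \tilde a\,\px u$ (and similarly for the $\eta$-equation). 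This is the algebraic manipulation that produces the local terms $\tfrac{a}{b}\int\vp' u_x^2$, $-\tfrac{c}{b}\int\vp'\eta_x^2$ and the pairs $\mp(1+\tfrac{a}{b})\int\vp' u\nlop u$, etc., after integration by parts.

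Next I would expand $\tfrac{d}{dt}\mathcal{J}(t)=\int\vp'\,(\pt\eta)\,\px u + \int\vp'\,\eta\,\px(\pt u)$ and process each resulting integral by integration by parts, moving derivatives onto $\vp$ where convenient (producing the $\vp''$ and $\vp'''$ terms) and using that $\nlop$ is self-adjoint to shift it between factors. The linear terms split into: (i) the ``$u_x^2$, $\eta_x^2$'' and ``$u^2$, $\eta^2$'' local pieces, which come from the non-BBM parts $\pm\tilde a\,\px u$, $\pm\tilde c\,\px\eta$ after one integration by parts; (ii) the nonlocal pieces $\int\vp' u\nlop u$, $\int\vp'\eta\nlop\eta$ and the mixed $\int\vp'' u\nlop u_x$; the $\tfrac{c}{2b}\int\vp'''\eta^2$ term arises from integrating by parts twice in the term $\int\vp'\,\eta\,\nlop\px(\tilde c\,\px^2\eta)$ contribution once the $\px^2$ is traded against $(1-\px^2)$ and the remaining $\nlop$-free piece $-\tilde c\int\vp'\eta\,\px\eta$ is integrated by parts to $\tfrac{\tilde c}{2}\int\vp''\eta^2$ — here one must be careful to track exactly which derivatives land on $\vp$. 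The quadratic (nonlinear) terms $u\eta$ and $\tfrac12 u^2$ feed directly into the last four displayed terms: $-\tfrac12\int\vp' u^2\eta$ and $-\tfrac12\int\vp'\eta\nlop(u^2)$ from the $\eta$-equation nonlinearity paired against $\px u$ (after integrating by parts to move the $\px$), and $\int\vp' u\nlop(u\eta)+\int\vp'' u\nlop(u\eta)_x$ from the $u$-equation nonlinearity, again distributing one derivative onto $\vp$.

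The main obstacle — really the only nontrivial point — is the careful bookkeeping of integration by parts: each term must have its derivatives redistributed consistently so that no boundary terms survive (legitimate since $(u,\eta)\in H^1$ and $\vp$ is bounded with bounded derivatives, so all integrands and their relevant antiderivatives decay or are integrable), and so that terms with a common structure are correctly combined. A secondary subtlety is verifying the self-adjointness manipulations with $\nlop$ — e.g. $\int \vp' (\nlop f) g = \int (\nlop(\vp' g))\, f$ is \emph{not} what one wants; rather one uses $\int f\,\nlop g = \int (\nlop f)\, g$ only when no weight intervenes, so each nonlocal term must be arranged with $\nlop$ acting on a single factor before the weight $\vp'$ is attached, which dictates the order of operations. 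Since the corresponding identity for $\mathcal{I}(t)$ is already established in Lemma \ref{Virial_bous} (from \cite{KMPP2018}) by an entirely parallel computation, I would model the argument on that one and simply record the result; the simplification alluded to in the text (dropping the auxiliary functional $\mathcal K(t)$) does not affect the derivation of \eqref{eq:J-1} itself.
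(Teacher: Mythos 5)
Your plan is correct and is essentially the computation behind the lemma (which the paper simply quotes from \cite{KMPP2018}): differentiate $\mathcal J$, substitute $\pt\eta$ and $\pt u$ after inverting $1-\px^2$, split $\tilde a\,\px^2 u+u=(1+\tilde a)u-\tilde a(1-\px^2)u$ (and similarly in the $\eta$-equation), use $\nlop\px^2=-1+\nlop$, and integrate by parts; carried out carefully this reproduces every term of \eqref{eq:J-1}, including the cancellation that turns the two cubic local contributions into $-\frac12\int\vp' u^2\eta$. The only slip is in your aside on the $\frac{c}{2b}\int\vp'''\eta^2$ term: it arises from the local piece $\tilde c\int\vp'\eta\,\eta_{xx}$ (the second $\px$ coming from $\px(\pt u)$ in $\frac{d}{dt}\int\vp'\eta u_x$), which after two integrations by parts gives $-\tilde c\int\vp'\eta_x^2+\frac{\tilde c}{2}\int\vp'''\eta^2$, whereas your intermediate expression $-\tilde c\int\vp'\eta\,\px\eta$ is one derivative short and would land on $\vp''$ rather than $\vp'''$.
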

Finally, the following result is obtained from \cite{KMPP2018} by setting $\beta = 0$ in that paper (or gathering \eqref{Virial0} and \eqref{eq:J-1}).
\begin{proposition}[Decomposition of $\frac{d}{dt}\mathcal H(t)$, \cite{KMPP2018}]\label{prop:general virial}
Let $u$ and $\eta$ satisfy \eqref{boussinesq0000}. For any $\alpha\in \R$ and any $t \in \R$, we have the decomposition
\begin{equation}\label{eq:gvirial}
\frac{d}{dt}\mathcal H(t) = \mathcal Q(t) + \mathcal{SQ}(t) + \mathcal{NQ}(t),
\end{equation}
where $\mathcal Q(t) =\mathcal Q[u,\eta](t) $ is the quadratic form
\begin{equation}\label{eq:leading}
\begin{aligned}
\mathcal Q(t) :=& ~\left(\left(1+ \frac{c}{b} \right)(\alpha-1) + \frac{1}2\right)\int \vp' \eta^2 +\frac{c}{b}\Big( - \alpha -\frac12\Big)\int \vp' \eta_x^2 \\
&+\left(\left(1+ \frac{a}{b} \right)( -\alpha-1) + \frac{1}2\right)\int \vp' u^2 +\frac{a}{b}\Big(\alpha-\frac12\Big)\int \vp' u_x^2 \\
&+\left(1+ \frac{c}{b} \right)( - \alpha + 1)\int \vp' \eta \nlop \eta\\
&+\left(1+ \frac{a}{b} \right)(\alpha  +1)\int \vp' u \nlop u ,
\end{aligned}
\end{equation}
$ \mathcal{SQ}(t)$ represents lower order quadratic terms not included in $\mathcal Q(t)$:
\begin{equation}\label{eq:small linear}
\begin{aligned}
 \mathcal{SQ}(t) :=& ~ \left(1+ \frac{c}{b} \right)\int\vp'' \eta \nlop \eta_x + \alpha\left(1+ \frac{a}{b} \right)\int\vp'' u \nlop u_x\\
&+\frac{\alpha c}{2b} \int \vp''' \eta^2 + \frac{ a}{2b} \int \vp'''  u^2,
\end{aligned}
\end{equation}
and  $\mathcal{NQ}(t)$ are truly cubic order terms or higher:
\begin{equation}\label{eq:nonlinear}
\begin{aligned}
 \mathcal{NQ}(t) :=& ~\frac12( - \alpha -1)\int\vp' u^2\eta + \frac12( -\alpha + 1)\int \vp' \eta \nlop (u^2)\\
&+ (\alpha +1)\int \vp'  u \nlop (u\eta) + \alpha\int \vp'' u \nlop (u\eta)_x.
\end{aligned}
\end{equation}
\end{proposition}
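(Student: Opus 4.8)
The plan is to view Proposition \ref{prop:general virial} for what it is — an exact algebraic identity — and obtain it by superposing the two variation formulas already available. Since $\mathcal H(t)=\mathcal I(t)+\alpha\mathcal J(t)$ by \eqref{H}, I would simply write
\[
\frac{d}{dt}\mathcal H(t)=\frac{d}{dt}\mathcal I(t)+\alpha\,\frac{d}{dt}\mathcal J(t),
\]
and plug in \eqref{Virial0} from Lemma \ref{Virial_bous} together with $\alpha$ times \eqref{eq:J-1} from Lemma \ref{lem:J}. Equivalently (and this is the cleanest route for recovering the exact normal form), one revisits the computation in \cite{KMPP2018}, where the corresponding identity is derived for a weight carrying an extra exponential parameter $\beta$, and specializes to $\beta=0$; recall that $\mathcal I,\mathcal J$ themselves arise by differentiating \eqref{I}--\eqref{J} under the integral, substituting $\partial_t\eta,\partial_t u$ from \eqref{boussinesq0000}, and integrating by parts. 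Once the right-hand side is assembled, the whole content of the proof is to sort the resulting terms into the three groups announced in \eqref{eq:gvirial}.

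The first group is the leading quadratic form $\mathcal Q$: those terms carrying a single weight derivative $\vp'$ and no nonlinearity, i.e. the six integrals $\int\vp'\eta^2$, $\int\vp'\eta_x^2$, $\int\vp' u^2$, $\int\vp' u_x^2$, $\int\vp'\eta\nlop\eta$, $\int\vp' u\nlop u$. For each one I would add the contribution coming from $\tfrac{d}{dt}\mathcal I$ to the one coming from $\alpha\tfrac{d}{dt}\mathcal J$; for instance the coefficient of $\int\vp'\eta^2$ is $-\bigl(\tfrac cb+\tfrac12\bigr)+\alpha\bigl(1+\tfrac cb\bigr)=\bigl(1+\tfrac cb\bigr)(\alpha-1)+\tfrac12$, and the analogous two-line computations for the other five entries produce exactly \eqref{eq:leading}. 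This is purely arithmetic, but requires care with the signs of $a,b,c$ and with not mixing up which piece of the coefficient is multiplied by $\alpha$.

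The second group is the lower-order quadratic remainder $\mathcal{SQ}$: the quadratic terms carrying $\vp''$ or $\vp'''$, namely $\int\vp''\eta\nlop\eta_x$, $\int\vp'' u\nlop u_x$, $\int\vp'''\eta^2$, $\int\vp''' u^2$. Here one uses that $\nlop$ commutes with $\px$ together with one or two integrations by parts (moving the spatial derivative onto the weight, and using $\vp'' u u_x=\tfrac12\vp''(u^2)_x$, $\vp''\eta\eta_x=\tfrac12\vp''(\eta^2)_x$, etc.) to bring them into the normal form \eqref{eq:small linear}; a useful consistency check is the asymmetry $\tfrac{\alpha c}{2b}\int\vp'''\eta^2$ versus $\tfrac{a}{2b}\int\vp''' u^2$, which reflects the fact that the first originates from $\mathcal J$ and hence carries $\alpha$, while the second originates from $\mathcal I$. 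The third group, the genuinely cubic part $\mathcal{NQ}$, consists of the four terms with $u^2\eta$ or the nonlocal products $\nlop(u^2)$, $\nlop(u\eta)$, $\nlop(u\eta)_x$; these are simply collected from \eqref{Virial0} and $\alpha\times$\eqref{eq:J-1}, giving \eqref{eq:nonlinear}.

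I do not expect any genuine analytic obstacle: no smallness of the solution, no decay, and no structural cancellation beyond integration by parts is invoked — \eqref{eq:gvirial} is a pure identity valid for every $\alpha\in\R$ and every $t$. The only thing demanding attention is the bookkeeping: tracking roughly a dozen quadratic integrals and four cubic integrals through the superposition and a handful of integrations by parts, and correctly deciding which integral belongs to $\mathcal Q$, which to $\mathcal{SQ}$, and which to $\mathcal{NQ}$ — a partition that is harmless here but becomes decisive later, since the positivity analysis will rest entirely on the leading form $\mathcal Q$ while $\mathcal{SQ}$ and $\mathcal{NQ}$ are to be treated as controllable error terms.
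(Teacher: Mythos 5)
Your proposal is correct and coincides with the paper's own proof, which simply superposes Lemma \ref{Virial_bous} and $\alpha$ times Lemma \ref{lem:J} (equivalently, sets $\beta=0$ in the corresponding identity of \cite{KMPP2018}) and sorts the resulting integrals into $\mathcal Q$, $\mathcal{SQ}$, $\mathcal{NQ}$; your sample coefficient computations, e.g. $-\left(\tfrac cb+\tfrac12\right)+\alpha\left(1+\tfrac cb\right)=\left(1+\tfrac cb\right)(\alpha-1)+\tfrac12$, all check out. The one point to watch is that the displayed \eqref{Virial0} omits the $\alpha$-free lower-order terms $\left(1+\tfrac cb\right)\int\vp''\eta\nlop\eta_x$ and $\tfrac{a}{2b}\int\vp''' u^2$ appearing in \eqref{eq:small linear}, so these must be taken from the full variation of $\mathcal I$ computed in \cite{KMPP2018}, which you implicitly acknowledge when you attribute the $\tfrac{a}{2b}\int\vp''' u^2$ term to $\mathcal I$.
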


\begin{remark}\label{rem:phi'}
In view of the statements of Theorems \ref{TH1} and \ref{TH2}, a weight function $\vp$ in $\mathcal H(t)$ will be chosen as a time dependent function, for instance $\vp( \frac{x}{\lambda(t)} )$ for a function $\lambda(t)$. In the case, an additional term
\begin{equation}\label{eq:phi'}
\begin{aligned}
&-\frac{\lambda'(t)}{\lambda(t)} \int \frac{x}{\lambda(t)}\vp'\left( \frac{x}{\lambda(t)}\right) \left(u\eta + u_x \eta_x \right) \\
&-\alpha\frac{\lambda'(t)}{\lambda(t)^2} \int\left(\vp'\left( \frac{x}{\lambda(t)} \right) + \frac{x}{\lambda(t)}\vp''\left( \frac{x}{\lambda(t)} \right) \right) \eta \px u
\end{aligned}
\end{equation}
will be added in Proposition \ref{prop:general virial}. It follows from the time derivative of the weight function. See Proposition 5.2 in \cite{KMPP2018} for more details.
\end{remark}

\subsection{Passage to canonical variables}  We say that $f$ and $g$ are canonical variables for $u$ and $\eta$ respectively, if
\begin{equation}\label{eq:fg}
u = f-f_{xx}, \quad \hbox{ and } \quad \eta = g-g_{xx}.
\end{equation}
Note that these variables are always well-defined in $H^3$ if $u,\eta\in H^1$. See also \cite{ElDika,ElDika2,ElDika_Martel} for a first introduction and use of these variables. We recall the basic properties associated with these canonical variables.
\begin{lemma}[Equivalence of local $H^1$ norms, \cite{KMPP2018}]\label{lem:L2 comparable}
Let $f$ be as in \eqref{eq:fg}. Let $\phi$ be a smooth, bounded positive weight satisfying $|\phi''| \le \lambda \phi$ for some small but fixed $0 < \lambda \ll1$. Then, for any $a_1,a_2,a_3,a_4 > 0$, there  exist $c_1, C_1 >0$, depending on $a_j$ and $\lambda >0$, such that
\begin{equation}\label{eq:L2_est}
c_1  \int \phi \, (u^2 + u_x^2) \le \int \phi\left(a_1f^2+a_2f_x^2+a_3f_{xx}^2 +a_4 f_{xxx}^2 \right) \le C_1 \int \phi \, (u^2 + u_x^2).
\end{equation}
Moreover, we particularly have
\begin{equation}\label{eq:H1_est}
\int \phi \, (u^2 + u_x^2) = \int \phi\left(f^2+3f_x^2+3f_{xx}^2 + f_{xxx}^2 \right) - \int \phi'' \left(f^2 + f_x^2 \right).
\end{equation} 
The same consequences hold for $g$ corresponding to $\eta$.
\end{lemma}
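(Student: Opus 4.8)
The statement to prove is Lemma \ref{lem:L2 comparable}, the equivalence of local $H^1$ norms between $u$ and its canonical variable $f$, where $u = f - f_{xx}$.

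\textbf{Overall approach.} The plan is to first establish the algebraic identity \eqref{eq:H1_est} by direct substitution and integration by parts, and then derive the two-sided bound \eqref{eq:L2_est} from it by an elementary interpolation/absorption argument exploiting the smallness of $\lambda$. The key point is that the exact identity \eqref{eq:H1_est} is not an inequality but a bookkeeping computation: substitute $u = f - f_{xx}$ so that $u^2 = f^2 - 2 f f_{xx} + f_{xx}^2$ and $u_x^2 = f_x^2 - 2 f_x f_{xxx} + f_{xxx}^2$, multiply by $\phi$, integrate, and move derivatives around. The cross terms $-2\int \phi f f_{xx}$ and $-2\int \phi f_x f_{xxx}$ are handled by integrating by parts: $-2\int \phi f f_{xx} = 2\int \phi f_x^2 + 2\int \phi' f f_x = 2\int \phi f_x^2 - \int \phi'' f^2$, and similarly $-2\int \phi f_x f_{xxx} = 2\int \phi f_{xx}^2 + 2\int \phi' f_x f_{xx} = 2\int \phi f_{xx}^2 - \int \phi'' f_x^2$. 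Collecting terms gives exactly $\int \phi(f^2 + 3f_x^2 + 3 f_{xx}^2 + f_{xxx}^2) - \int \phi''(f^2 + f_x^2)$, which is \eqref{eq:H1_est}.

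\textbf{From the identity to the two-sided estimate.} For the lower bound in \eqref{eq:L2_est}, note that by \eqref{eq:H1_est} and $|\phi''| \le \lambda\phi$,
\[
\int \phi(u^2 + u_x^2) \le \int \phi(f^2 + 3 f_x^2 + 3 f_{xx}^2 + f_{xxx}^2) + \lambda \int \phi(f^2 + f_x^2) \le C \int \phi(a_1 f^2 + a_2 f_x^2 + a_3 f_{xx}^2 + a_4 f_{xxx}^2),
\]
which is the right half of \eqref{eq:L2_est} with the roles reversed; to get the left half one needs the reverse, namely to bound $\int \phi(a_1 f^2 + \cdots + a_4 f_{xxx}^2)$ by $\int \phi(u^2 + u_x^2)$. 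For this I would use the Fourier/commutator structure: since $(1-\partial_x^2)f = u$, one has $f = (1-\partial_x^2)^{-1} u$ and $f_x = (1-\partial_x^2)^{-1} u_x$, and localized in a slowly varying weight $\phi$ the operator $(1-\partial_x^2)^{-1}$ is bounded on weighted $L^2$ up to errors controlled by $\lambda$; concretely, $\int \phi f^2 \lesssim \int \phi u^2 + \lambda(\cdots)$ and likewise for all derivatives up to $f_{xxx} = (1-\partial_x^2)^{-1} u_{xxx}$, but since $u \in H^1$ only, one instead writes $f_{xx} = f - u$ and $f_{xxx} = f_x - u_x$, so all four quantities $f, f_x, f_{xx}, f_{xxx}$ are controlled once $\int \phi f^2$ and $\int \phi f_x^2$ are controlled by $\int\phi(u^2+u_x^2)$. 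So the crux reduces to the weighted elliptic estimate $\int \phi f^2 + \int \phi f_x^2 \lesssim \int \phi(u^2 + u_x^2)$, which follows from multiplying $(1-\partial_x^2)f = u$ by $\phi f$ and by $\phi f_{xx}$, integrating by parts, and absorbing the $\phi''$ terms using $|\phi''|\le\lambda\phi$.

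\textbf{Main obstacle.} The only genuinely delicate point is this last weighted elliptic estimate, i.e. controlling $\int\phi f^2$ and $\int \phi f_x^2$ by $\int \phi(u^2+u_x^2)$ when $\phi$ is not constant: the integration by parts generates $\int \phi' f f_x$ and $\int \phi'' f^2$-type terms, and one must check these are absorbable. The hypothesis $|\phi''| \le \lambda \phi$ with $\lambda$ small is exactly what makes this work — one typically also needs $|\phi'|^2 \lesssim \phi \phi''$ or $|\phi'| \lesssim \sqrt\lambda \,\phi$, which holds for the standard choices of $\phi$ (exponentials or $\mathrm{sech}$-type weights) used later in the paper; I would either assume this as part of the admissible class of weights or derive it from $|\phi''|\le\lambda\phi$ plus positivity. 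Everything else — the identity \eqref{eq:H1_est}, the passage to $f_{xx} = f-u$, and the statement for $g, \eta$ — is routine and follows verbatim. Since the lemma is quoted as coming from \cite{KMPP2018}, I would in practice just cite that reference and reproduce the short computation leading to \eqref{eq:H1_est} for completeness.
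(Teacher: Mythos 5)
Your proposal is correct and follows essentially the same route as the paper, which proves \eqref{eq:H1_est} by summing the exact identities \eqref{eq:L2} and \eqref{eq:H1} of Lemma \ref{lem:Can_H1} (obtained by the same substitution $u=f-f_{xx}$ and integration by parts) and obtains \eqref{eq:L2_est} by absorbing the $\phi''$ terms via $|\phi''|\le\lambda\phi$ together with the weighted elliptic identity $\int\phi u\nlop u=\int\phi(f^2+f_x^2)-\tfrac12\int\phi'' f^2$, i.e. your estimate for $\int\phi(f^2+f_x^2)$ in terms of $\int\phi u^2$. Two harmless quibbles: you swap the labels ``left half''/``right half'' of \eqref{eq:L2_est} at one point, and the extra hypothesis $|\phi'|\lesssim\sqrt{\lambda}\,\phi$ you worry about is not needed, since every cross term of the form $\int\phi' f f_x$ integrates by parts into $-\tfrac12\int\phi'' f^2$ and is then absorbed by $|\phi''|\le\lambda\phi$ alone.
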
 

\begin{lemma}[Lemma 4.1 in \cite{KMPP2018}]\label{lem:Can_H1}
Let $f$ be as in \eqref{eq:fg} and $\phi$ be a smooth, bounded positive weight function. Then, one has 
\begin{equation}\label{eq:L2}
\int \phi u^2 = \int\phi\left(f^2 + 2f_x^2 + f_{xx}^2\right) - \int \phi''f^2,
\end{equation}
\begin{equation}\label{eq:H1}
\int \phi u_x^2 = \int\phi\left(f_x^2 + 2f_{xx}^2 + f_{xxx}^2\right) - \int \phi''f_x^2,
\end{equation}
and
\begin{equation}\label{eq:nonlocal}
\int \phi u \nlop u = \int\phi\left(f^2 + f_x^2\right) - \frac12\int \phi''f^2.
\end{equation}
\end{lemma}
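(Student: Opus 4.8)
The plan is to derive all three identities directly from the defining relation $u = f - f_{xx}$ by elementary integration by parts. The only structural fact needed is that $f\in H^3$ whenever $u\in H^1$ (so $f$, $f_x$, $f_{xx}$ decay at $\pm\infty$), and that $\phi,\phi',\phi''$ are bounded; together these guarantee that every product appearing below lies in $L^1$ and that all boundary terms at infinity vanish. This is the only point requiring a word of justification, and it is immediate.

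For \eqref{eq:L2}, expand $u^2 = f^2 - 2ff_{xx} + f_{xx}^2$, multiply by $\phi$ and integrate. The first and third terms are already in final form, and for the cross term one integrates by parts twice:
\[
-2\int \phi f f_{xx} = 2\int (\phi f)_x f_x = 2\int \phi f_x^2 + 2\int \phi' f f_x = 2\int \phi f_x^2 + \int \phi' (f^2)_x = 2\int \phi f_x^2 - \int \phi'' f^2 .
\]
Adding the three contributions gives $\int \phi u^2 = \int \phi(f^2 + 2f_x^2 + f_{xx}^2) - \int \phi'' f^2$, which is \eqref{eq:L2}. Identity \eqref{eq:H1} is then obtained for free: since $u_x = f_x - (f_x)_{xx}$, it is exactly \eqref{eq:L2} applied with $f$ replaced by $f_x$ (which belongs to $H^2$, so the same integrations by parts are legitimate).

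For \eqref{eq:nonlocal}, the key observation is that $u = f - f_{xx} = (1-\pp_x^2)f$, hence $\nlop u = f$. Therefore
\[
\int \phi\, u\, \nlop u = \int \phi (f - f_{xx}) f = \int \phi f^2 - \int \phi f f_{xx},
\]
and a single integration by parts on the last term, $-\int \phi f f_{xx} = \int (\phi f)_x f_x = \int \phi f_x^2 + \int \phi' f f_x = \int \phi f_x^2 - \tfrac12\int \phi'' f^2$, yields $\int \phi\, u\, \nlop u = \int \phi(f^2 + f_x^2) - \tfrac12 \int \phi'' f^2$, which is \eqref{eq:nonlocal}. The same computations apply verbatim to $g$ and $\eta$. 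I do not anticipate any genuine obstacle here; the ``hard part'' is merely bookkeeping of the integration-by-parts terms and observing the inversion identity $\nlop u = f$. (As a consistency check, summing \eqref{eq:L2} and \eqref{eq:H1} reproduces \eqref{eq:H1_est} from Lemma \ref{lem:L2 comparable}.)
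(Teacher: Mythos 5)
Your proof is correct, and it is the standard direct computation (expand $u=f-f_{xx}$, integrate by parts on the cross terms, and use $\nlop u = f$) that the paper itself delegates to Lemma 4.1 of \cite{KMPP2018} rather than reproving. All three identities check out, including the observation that \eqref{eq:H1} follows from \eqref{eq:L2} applied to $f_x$.
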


Using these two representations \eqref{eq:fg}, one has
\begin{lemma}[Representation of $\mathcal{Q}(t)$ in canonical variables, \cite{KMPP2018}]\label{lem:leading}
The quadratic form $\mathcal Q(t)$ satisfies
\begin{equation}\label{eq:leading-1}
\begin{aligned}
\mathcal Q(t) =& \int \vp' \Big( A_1 f^2 + A_2 f_x^2 + A_3 f_{xx}^2 + A_4 f_{xxx}^2\Big)\\
&+\int \vp' \Big( B_1 g^2 + B_2 g_x^2 + B_3 g_{xx}^2 + B_4 g_{xxx}^2\Big)\\
&+\int \vp''' \Big(D_{11}f^2 + D_{12}f_x^2 + D_{21}g^2 + D_{22}g_x^2\Big),
\end{aligned}
\end{equation}
where
\begin{equation}\label{eq:A1B1}
A_1 = B_1 = \frac12>0,
\end{equation}

\begin{equation}\label{eq:A2B2}
A_2 = -\alpha -\frac{3a}{2b}, \qquad B_2 = \alpha - \frac{3c}{2b},
\end{equation}

\begin{equation}\label{eq:A3B3}
A_3 = -\left(1-\frac{a}{b}\right)\alpha -\frac{2a}{b} - \frac12, \qquad B_3 = \left(1-\frac{c}{b}\right)\alpha -\frac{2c}{b} - \frac12,
\end{equation}

\begin{equation}\label{eq:A4B4}
A_4= -\frac{a}{b}\left( \frac12 - \alpha\right), \qquad B_4= -\frac{c}{b}\left(\frac12 +  \alpha\right),
\end{equation}
and
\[
\begin{aligned}
&D_{11} =-\frac12\left(1+\frac{a}{b}\right)(-\alpha - 1) - \frac12 , \qquad D_{12}=-\frac{a}{b}\left(\alpha - \frac12\right),\\
&D_{21}= -\frac12\left(1+\frac{c}{b}\right)(\alpha - 1) - \frac12,\qquad D_{22}=-\frac{c}{b}\left(- \alpha - \frac12\right).
\end{aligned}
\]
\end{lemma}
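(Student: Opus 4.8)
The plan is to derive \eqref{eq:leading-1} by substituting the canonical-variable identities of Lemma \ref{lem:Can_H1} directly into the expression \eqref{eq:leading} for $\mathcal{Q}(t)$, and then collecting coefficients. The first observation is that $\mathcal{Q}(t)$ in \eqref{eq:leading} is a linear combination, with \emph{constant} coefficients depending only on $\alpha$, $a/b$ and $c/b$, of the six building blocks
\[
\int \vp' u^2, \quad \int \vp' u_x^2, \quad \int \vp' u\nlop u, \quad \int \vp' \eta^2, \quad \int \vp' \eta_x^2, \quad \int \vp' \eta\nlop \eta .
\]
Let me write $P_1 = (1+\tfrac ab)(-\alpha-1)+\tfrac12$, $P_2 = \tfrac ab(\alpha-\tfrac12)$, $P_3 = (1+\tfrac ab)(\alpha+1)$ for the coefficients of the three $u$-blocks, and $R_1 = (1+\tfrac cb)(\alpha-1)+\tfrac12$, $R_2 = \tfrac cb(-\alpha-\tfrac12)$, $R_3 = (1+\tfrac cb)(-\alpha+1)$ for the coefficients of the three $\eta$-blocks, exactly as they appear in \eqref{eq:leading}.

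Next I would apply Lemma \ref{lem:Can_H1} with weight $\phi = \vp'$, so that $\phi'' = \vp'''$, which turns each $u$-block into an expression in $f, f_x, f_{xx}, f_{xxx}$ against $\vp'$ plus a correction in $f^2, f_x^2$ against $\vp'''$, namely $\int\vp'u^2 = \int\vp'(f^2+2f_x^2+f_{xx}^2) - \int\vp'''f^2$, $\int\vp'u_x^2 = \int\vp'(f_x^2+2f_{xx}^2+f_{xxx}^2) - \int\vp'''f_x^2$, and $\int\vp'u\nlop u = \int\vp'(f^2+f_x^2) - \tfrac12\int\vp'''f^2$, together with the three identical formulas for $\eta$ in terms of $g$. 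Substituting and regrouping, the coefficient of $\int\vp'f^2$ becomes $A_1 = P_1+P_3$, that of $\int\vp'f_x^2$ becomes $A_2 = 2P_1+P_2+P_3$, that of $\int\vp'f_{xx}^2$ becomes $A_3 = P_1+2P_2$, that of $\int\vp'f_{xxx}^2$ becomes $A_4 = P_2$, and the only surviving $\vp'''$-terms are $D_{11} = -P_1-\tfrac12 P_3$ against $f^2$ and $D_{12} = -P_2$ against $f_x^2$. The $B_i$ and the $D_{2j}$ are produced in exactly the same way from $R_1, R_2, R_3$. It then remains only to check the elementary algebraic identities $P_1+P_3 = \tfrac12$, $2P_1+P_2+P_3 = -\alpha-\tfrac{3a}{2b}$, $P_1+2P_2 = -(1-\tfrac ab)\alpha-\tfrac{2a}{b}-\tfrac12$, $-P_1-\tfrac12 P_3 = -\tfrac12(1+\tfrac ab)(-\alpha-1)-\tfrac12$, and their $c/b$-analogues for the $B$'s and $D_{2j}$'s, which match \eqref{eq:A1B1}--\eqref{eq:A4B4} and the stated values of the $D$'s.

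The argument carries no real analytic content: it is pure bookkeeping, and all the integrations by parts implicit in Lemma \ref{lem:Can_H1} are already justified there (boundary terms vanish since $\vp'$ and its derivatives are bounded and $f, g \in H^3$ decay at infinity, or $\vp'$ has compact support, in the time-dependent case of Remark \ref{rem:phi'}). The only point worth flagging as a potential source of error is organizational rather than substantive: one must verify that the $\phi''$-corrections in Lemma \ref{lem:Can_H1} feed \emph{only} into the coefficients of $f^2$ and $f_x^2$ against $\vp'''$ -- which is immediate, since $\int\phi u^2$ and $\int\phi u\nlop u$ have $\phi''$-terms proportional to $f^2$ while $\int\phi u_x^2$ has a $\phi''$-term proportional to $f_x^2$ -- and hence that no $\vp''$-term and no $\vp'''f_{xx}^2$ or $\vp'''f_{xxx}^2$ term is generated, so that the result has precisely the three-line structure claimed in \eqref{eq:leading-1}.
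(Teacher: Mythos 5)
Your proposal is correct: substituting the three identities of Lemma \ref{lem:Can_H1} (with $\phi=\vp'$, so $\phi''=\vp'''$) into \eqref{eq:leading} and collecting coefficients is precisely the computation behind this lemma in \cite{KMPP2018}, and your bookkeeping checks out — e.g. $A_1=P_1+P_3=\frac12$, $A_2=2P_1+P_2+P_3=-\alpha-\frac{3a}{2b}$, $D_{11}=-P_1-\frac12P_3=-\frac12(1+\frac ab)(-\alpha-1)-\frac12$, and likewise for the $B_i$ and $D_{2j}$. Your closing observation that the $\vp'''$-corrections can only land on $f^2$, $f_x^2$, $g^2$, $g_x^2$ is exactly why the three-line structure of \eqref{eq:leading-1} holds.
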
 

\begin{remark}
It is known \cite{KMPP2018} that there exists $\alpha \in \R$ such that $A_i, B_i > 0$, $i=1,2,3,4$, precisely when $a$, $b$ and $c$ satisfy \eqref{dispersion_like}. This is the key ingredient for the proof of Theorem \ref{Thm2}. In this paper, we will assume that some $A_i, B_i $ may take negative values, and prove decay even if there are negative terms in the quadratic form $\mathcal Q(t)$. 
\end{remark}

\bigskip
\section{Improved decay in the $abcd$ system: Proof of Theorem \ref{TH1}}\label{sec:TH1}
 
\subsection{Preliminaries} We start out with some simple results. The following definition are in some sense inspired in the sets $\mathcal B_j$, $j=1,2,3,4$, introduced in \cite{KMPP2018}.

\begin{lemma}\label{B5} Let $\mathcal B_5=\mathcal B_5(b)$ be the set of points $(a,b,c)$ such that \eqref{ref_dispersion_like2-1} and \eqref{ref_dispersion_like2-2} hold. 
Then, the following are satisfied:
\ben
\item In the case $a=c$, one has $b>\frac{3}{16}$, $a=c<-\frac1{48}$, and $\tilde a=\tilde c<-\frac1{9}$;
\smallskip
\item $\mathcal B_5(b)$ is symmetric with respect to the axis $a=c$, and invariant under the change $a\leftrightarrow c$;
\smallskip
\item $\mathcal B_5(b)$ is nonempty if $b> \frac{3}{16}$.
\een
\end{lemma}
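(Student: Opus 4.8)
The plan is to verify the three items of Lemma~\ref{B5} by a direct but careful analysis of the defining inequalities \eqref{ref_dispersion_like2-1} and \eqref{ref_dispersion_like2-2}, rewritten throughout in terms of the normalized parameters $\tilde a=a/b$, $\tilde c=c/b$ of \eqref{tilde_ac}. Recall that by \eqref{Conds} with $\theta\in[0,1]$ one has $a+c=\frac16-b$ (since $a+b+c+d=\frac13$ and $b=d$), hence $\tilde a+\tilde c=\frac1{6b}-1$; this single linear relation between $\tilde a$ and $\tilde c$ will be used repeatedly. The key observation is that $\mathcal B_5(b)$ is, by its very definition, the union of two regions swapped by $a\leftrightarrow c$ (equivalently $\tilde a\leftrightarrow\tilde c$): the system \eqref{ref_dispersion_like2-1} is imposed in the half-plane $\tilde c\le\tilde a<0$, while \eqref{ref_dispersion_like2-2} is obtained from it by exchanging $\tilde a$ and $\tilde c$ and is imposed in the complementary half-plane $\tilde a\le\tilde c<0$. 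This immediately gives item (2): the set of points $(a,b,c)$ satisfying ``\eqref{ref_dispersion_like2-1} or \eqref{ref_dispersion_like2-2}'' is manifestly invariant under $a\leftrightarrow c$, and the diagonal $a=c$ is the axis of this symmetry.

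For item (1), I would specialize to $a=c$, so $\tilde a=\tilde c=:t<0$ and the constraint becomes $2t=\frac1{6b}-1$, i.e. $t=\frac1{12b}-\frac12$. On the diagonal the two systems coincide, and one must check which of the three regimes of \eqref{ref_dispersion_like2-1} is active and what it forces. Since $a=c=\frac16-b$ and $\tilde a=\tilde c=\frac{1/6-b}{b}$, the condition $\tilde a<-\frac19$ (the claimed conclusion) is equivalent to $\frac16-b<-\frac b9$, i.e. $\frac{8b}{9}>\frac16$, i.e. $b>\frac3{16}$; and then $a=c=\frac16-b<\frac16-\frac3{16}=-\frac1{48}$. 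So the three assertions $b>\frac3{16}$, $a=c<-\frac1{48}$, $\tilde a=\tilde c<-\frac19$ are pairwise equivalent given $a=c$, and it suffices to show that the defining inequalities of $\mathcal B_5$, specialized to $a=c$, are equivalent to (say) $\tilde a<-\frac19$. With $t<-\frac19$ we are in the third regime $-\frac13\le\tilde c<-\frac19$ (after also checking $t\ge-\frac13$, i.e. $b\le\frac12$, holds in the relevant range — or handling the other regimes similarly), where the condition reads $27t^2>6t+1$; substituting $t=\frac1{12b}-\frac12$ and simplifying should produce exactly the inequality $b>\frac3{16}$ (equivalently $t<-\frac19$), perhaps up to also discarding the spurious root $27t^2-6t-1=0$ with $t>0$. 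This is a one-variable quadratic computation and the only real care needed is bookkeeping of which sub-case of \eqref{ref_dispersion_like2-1} applies for which $b$.

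For item (3), nonemptiness for $b>\frac3{16}$, the natural approach is to exhibit points: by item (1), for any such $b$ the diagonal point $a=c=\frac16-b$ lies in the closure/relevant boundary, and by continuity (strict inequalities are open conditions) a full neighborhood within the admissible line $a+c=\frac16-b$, $a,c<0$ lies in $\mathcal B_5(b)$; alternatively one checks directly that for $b$ slightly above $\frac3{16}$ the point $\tilde a=\tilde c$ strictly satisfies $27\tilde a\tilde c>6\tilde a+1$ and hence so do nearby points. One should also confirm that such points actually satisfy the ambient constraint \eqref{Conds} (i.e. correspond to some $\theta\in[0,1]$) — but this is exactly the content of the parametrization \eqref{R0}, and the condition $b>\frac16$ there is weaker than $b>\frac3{16}$, so no obstruction arises.

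The main obstacle I anticipate is purely organizational rather than deep: the region $\mathcal B_5(b)$ is defined piecewise in three $\tilde c$-ranges (and symmetrically three $\tilde a$-ranges), with breakpoints at $-\tfrac1{90}(19+\sqrt{181})$, $-\tfrac13$, $-\tfrac19$, so for item (1) one must correctly identify, as a function of $b$, which piece the diagonal point $\tilde a=\tilde c=\frac{1/6-b}{b}$ falls into, and check the endpoint/matching behavior of the three inequalities at those breakpoints (they are designed to be continuous there, which is worth verifying). Once the correct branch is pinned down, every remaining step is an elementary manipulation of quadratics in a single variable, and the symmetry in item (2) is immediate from the definition. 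I would therefore structure the proof as: (2) first (one line from the definition), then reduce (1) to a single quadratic identity after locating the active branch, then (1)$\Rightarrow$(3) by openness of strict inequalities plus the parametrization \eqref{R0} to guarantee \eqref{Conds}.
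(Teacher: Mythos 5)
Your proposal is correct and follows essentially the same route as the paper: item (2) is read off from the definition's symmetry under $a\leftrightarrow c$, item (1) comes from substituting $a=c=\frac16-b$ into the last inequality of \eqref{ref_dispersion_like2-1} and solving the resulting quadratic $128b^2-40b+3>0$ with $b>\frac16$ to get $b>\frac3{16}$ (hence $a<-\frac1{48}$ and $\tilde a<-\frac19$), and item (3) follows from (1) since the diagonal point itself lies in $\mathcal B_5(b)$. Your extra attention to which piecewise branch of the definition is active for a given $b$ is a reasonable refinement of the same computation, not a different method.
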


\begin{proof} 
Putting $a=c= \frac16 -b$ into the last inequality in \eqref{ref_dispersion_like2-1}, valid for the largest possible $c$ (or \eqref{ref_dispersion_like2-2}), and solving $128b^2 - 40b + 3 > 0$ with $b > \frac16$, one has $b > \frac{3}{16}$, which  implies $a<-\frac1{48}$. It completes the proof of Item (1). Item (2) follows from the definition of $\mathcal B_5$. As a consequence of Items (1) and (2), we have Item (3). 
\end{proof}

In what follows, we will show that under $(a,b,c)\in \mathcal B_5(b)$, the virial functional $\mathcal H(t)$ introduced in \eqref{H} has coercive variation in time. In order to show this result, first we fix  
\[
\vp (t,x):= \tanh\left(\frac{x}{\lambda(t)}\right),
\]
and define 
\begin{equation}\label{eq:f'g'}
\wt{f}:= \vp'^{\frac12}f =  \sech\left(\frac{x}{\lambda(t)}\right) f, \qquad \wt{g}:= \vp'^{\frac12}g = \sech\left(\frac{x}{\lambda(t)}\right)g.
\end{equation}
\begin{lemma}
We have the representation
\begin{equation}\label{eq:leading-2}
\begin{aligned}
\mathcal Q(t) =& \int  A_1 \wt{f}^2 + A_2 \wt{f}_x^2 + A_3 \wt{f}_{xx}^2 + A_4 \wt{f}_{xxx}^2\\
&+\int  B_1 \wt{g}^2 + B_2 \wt{g}_x^2 + B_3 \wt{g}_{xx}^2 + B_4 \wt{g}_{xxx}^2\\
&+O\left(\frac{1}{\lambda(t)} \int \left(\wt{f}^2 + \wt{f}_x^2+ \wt{f}_{xx}^2 + \wt{f}_{xxx}^2 +  \wt{g}^2 + \wt{g}_x^2 + \wt{g}_{xx}^2 +\wt{g}_{xxx}^2\right)\right),
\end{aligned}
\end{equation}
where $A_i$ and $B_i$, $i=1,2,3,4$, are given in \eqref{eq:A1B1}-\eqref{eq:A4B4}.
\end{lemma}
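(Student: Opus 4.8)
The plan is to start from the representation of $\mathcal{Q}(t)$ in canonical variables given in Lemma \ref{lem:leading}, namely \eqref{eq:leading-1}, which expresses $\mathcal{Q}(t)$ as an integral of $\vp'$ against a quadratic form in $f,f_x,f_{xx},f_{xxx}$ and $g,g_x,g_{xx},g_{xxx}$, plus a remainder integrated against $\vp'''$. The key point is that with $\vp = \tanh(x/\lambda(t))$ one has $\vp' = \sech^2(x/\lambda(t))$, so $\vp'^{1/2} = \sech(x/\lambda(t))$ is itself a smooth, bounded, positive weight, and moreover $\vp''' = O(\lambda^{-2}\,\vp')$ while $(\vp'^{1/2})' = O(\lambda^{-1}\vp'^{1/2})$ and $(\vp'^{1/2})'' = O(\lambda^{-2}\vp'^{1/2})$. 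So the whole strategy is: move the weight $\vp'$ inside the derivatives by writing $\vp' h^2 = (\vp'^{1/2}h)^2 = \wt h^2$ at the level of $h=f$, and then successively trade $\vp' h_x^2$, $\vp' h_{xx}^2$, $\vp' h_{xxx}^2$ for $\wt h_x^2$, $\wt h_{xx}^2$, $\wt h_{xxx}^2$ up to lower-order errors of size $O(\lambda^{-1})$.

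Concretely, the first step is to record the commutator identities. For $\wt f = \vp'^{1/2} f$ we have $\wt f_x = \vp'^{1/2} f_x + (\vp'^{1/2})' f$, hence $\vp' f_x^2 = \wt f_x^2 - 2(\vp'^{1/2})'\vp'^{1/2} f f_x - ((\vp'^{1/2})')^2 f^2 = \wt f_x^2 + O(\lambda^{-1})(\wt f^2 + \wt f_x^2)$ pointwise after absorbing the weight factors, since $(\vp'^{1/2})'/\vp'^{1/2} = O(\lambda^{-1})$ and $(\vp'^{1/2})'^2/\vp' = O(\lambda^{-2})$. Iterating: $\wt f_{xx} = \vp'^{1/2} f_{xx} + 2(\vp'^{1/2})' f_x + (\vp'^{1/2})'' f$, so $\vp' f_{xx}^2 = \wt f_{xx}^2 + O(\lambda^{-1})(\wt f^2 + \wt f_x^2 + \wt f_{xx}^2)$; and similarly $\vp' f_{xxx}^2 = \wt f_{xxx}^2 + O(\lambda^{-1})(\wt f^2 + \wt f_x^2 + \wt f_{xx}^2 + \wt f_{xxx}^2)$, using that all the coefficients $(\vp'^{1/2})^{(j)}/\vp'^{1/2}$ are $O(\lambda^{-j}) = O(\lambda^{-1})$ for $\lambda$ large (say $\lambda(t) \ge 1$). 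Here one must be slightly careful and integrate the cross terms by parts when needed so that no uncontrolled $\wt f_{xxxx}$ appears — e.g. $\int (\vp'^{1/2})' f \cdot \vp'^{1/2} f_{xxx}$ type terms are handled by one integration by parts, shifting a derivative off the highest-order factor, producing again only $O(\lambda^{-1})$ errors in $\wt f^2,\dots,\wt f_{xxx}^2$.

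The second step is to substitute these identities into \eqref{eq:leading-1}. The main terms $\int \vp'(A_1 f^2 + \dots + A_4 f_{xxx}^2) + \int\vp'(B_1 g^2 + \dots)$ become exactly $\int(A_1\wt f^2 + \dots + A_4 \wt f_{xxx}^2) + \int(B_1\wt g^2 + \dots + B_4\wt g_{xxx}^2)$ plus the collected $O(\lambda^{-1})$ errors. For the remainder term $\int \vp'''(D_{11}f^2 + D_{12}f_x^2 + D_{21}g^2 + D_{22}g_x^2)$, one uses $\vp''' = O(\lambda^{-2}\vp')$, so $\int\vp''' D_{12}f_x^2 = O(\lambda^{-2})\int\vp' f_x^2 = O(\lambda^{-1})\int(\wt f^2 + \wt f_x^2)$ by the identity just derived, and similarly for the other three; all of these fall into the stated $O(\lambda^{-1})$ error term. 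Collecting everything yields exactly \eqref{eq:leading-2}.

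The main obstacle — really the only place that needs genuine care rather than bookkeeping — is controlling the cross terms in the squares of $\wt f_{xx}$ and $\wt f_{xxx}$ so that (a) one never generates a derivative of order higher than three on $f$ (handled by integration by parts, at the cost of differentiating the weight one more time, which is fine since each extra weight derivative costs only a further $\lambda^{-1}$), and (b) every error term is genuinely bounded by $\lambda^{-1}$ times the integral of $\wt f^2 + \wt f_x^2 + \wt f_{xx}^2 + \wt f_{xxx}^2$ (and the $\wt g$ analogue), uniformly for $\lambda(t)$ large; this amounts to checking that $|(\sech)^{(j)}(y)| \lesssim \sech(y)$ for $j=1,2,3,4$ and that the chain rule brings out exactly the claimed powers of $1/\lambda$. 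Once these pointwise bounds on the weight are in hand, the substitution is mechanical and the identity \eqref{eq:leading-2} follows.
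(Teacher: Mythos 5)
Your proposal is correct and is essentially the paper's own argument: the paper proves this lemma simply by "putting \eqref{eq:f'g'} into \eqref{eq:leading-1} and computing," which is exactly the substitution-plus-commutator bookkeeping you carry out, using that each derivative falling on the weight $\sech(x/\lambda(t))$ costs a factor $\lambda(t)^{-1}$ and that $|\sech^{(j)}|\lesssim\sech$. The only minor remark is that the cross terms you worry about can all be absorbed directly by Cauchy--Schwarz without any integration by parts, since no derivative of order higher than three is ever generated.
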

\begin{proof}
The proof follows from a computation after putting \eqref{eq:f'g'} into \eqref{eq:leading-1}.
\end{proof}

Following the idea introduced in \cite{KM2018}, we will see that $\mathcal{Q}(t)$ can be rewritten as a nonnegative functional.  The following lemma is useful to verify the positivity of $\mathcal Q(t)$ in \eqref{eq:leading-2}.
\begin{lemma}\label{lem:aux}
Let 
\be\label{P0}
r_0 := \left(\frac{2\sqrt{571}}{3\sqrt{3}} - \frac{170}{27}\right)^{\frac13} - \frac{32}{9\left(\frac{2\sqrt{571}}{3\sqrt{3}} - \frac{170}{27}\right)^{\frac13}} + \frac43 \sim 0.27  >0.
\ee
Then, $r=0$ and $r=r_0$ are the unique real-valued roots of the quartic polynomial
\be\label{poly}
p(r):=\left(\frac{(1-r)^2+5}{2}\right)^2 +5r - 9.
\ee
Moreover, for any $0 < \epsilon < r_0$, we can find $\delta = \delta(\epsilon) >0$ such that the following equation holds
\begin{equation}\label{ep-de}
\left(\frac{(1-\epsilon)^2+5}{2}\right)^2 - 2\sqrt{9-\delta}(1-\epsilon) = 3+\epsilon.
\end{equation}
\end{lemma}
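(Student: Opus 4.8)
The plan is to prove Lemma \ref{lem:aux} in two essentially independent pieces: (i) identify the real roots of the quartic $p(r)$ in \eqref{poly}, and (ii) solve the auxiliary equation \eqref{ep-de} for $\delta$ in terms of $\epsilon$ on the relevant range, using a continuity/monotonicity argument.

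\medskip

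\textbf{Step 1: the roots of $p(r)$.} First I would simply expand $p(r)$ as an honest degree-$4$ polynomial. Writing $q(r):=\frac{(1-r)^2+5}{2}=\frac{r^2-2r+6}{2}$, one has $p(r)=q(r)^2+5r-9=\frac14(r^2-2r+6)^2+5r-9$, and expanding $(r^2-2r+6)^2 = r^4-4r^3+16r^2-24r+36$ gives $p(r)=\frac14 r^4 - r^3 + 4r^2 - 6r + 9 + 5r - 9 = \frac14 r^4 - r^3 + 4r^2 - r = r\left(\frac14 r^3 - r^2 + 4r - 1\right)$. Thus $r=0$ is a root, and the remaining roots solve the depressed-ready cubic $\frac14 r^3 - r^2 + 4r - 1 = 0$, i.e. $r^3 - 4r^2 + 16r - 4 = 0$. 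To see this cubic has exactly one real root I would compute its discriminant (or note that its derivative $3r^2-8r+16$ has negative discriminant $64-192<0$, hence the cubic is strictly increasing, hence has a unique real root), and then exhibit that root in closed form via Cardano's formula after the substitution $r = s + \tfrac43$ to kill the quadratic term; this produces exactly the expression $r_0$ in \eqref{P0}, and the numerical value $r_0\sim 0.27$ is checked directly. This confirms $r=0$ and $r=r_0$ are the only real roots of $p$.

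\medskip

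\textbf{Step 2: solving \eqref{ep-de}.} Fix $\epsilon\in(0,r_0)$. The equation \eqref{ep-de} is linear in $\sqrt{9-\delta}$: it rearranges to
\[
\sqrt{9-\delta} \;=\; \frac{1}{2(1-\epsilon)}\left(\left(\frac{(1-\epsilon)^2+5}{2}\right)^2 - (3+\epsilon)\right),
\]
which is well defined since $1-\epsilon>1-r_0>0$. Call the right-hand side $R(\epsilon)$. To produce a valid $\delta=9-R(\epsilon)^2$ with $\delta>0$ (equivalently $0\le R(\epsilon)<3$), it suffices to show $R(\epsilon)\in(0,3)$ for $\epsilon\in(0,r_0)$. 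For the upper bound: $R(\epsilon)<3$ is equivalent to $\left(\frac{(1-\epsilon)^2+5}{2}\right)^2 - 6(1-\epsilon) < 3+\epsilon$, i.e. $\left(\frac{(1-\epsilon)^2+5}{2}\right)^2 + 5\epsilon - 9 < 0$, which is precisely $p(\epsilon)<0$; by Step 1, $p$ vanishes only at $0$ and $r_0$ and $p(\epsilon)<0$ on the open interval $(0,r_0)$ (a sign check at one interior point, e.g. using $p(r)=r(\tfrac14 r^3 - r^2+4r-1)$ and noting the cubic factor is negative for small positive $r$, pins the sign). For the lower bound $R(\epsilon)>0$: this says $\left(\frac{(1-\epsilon)^2+5}{2}\right)^2 > 3+\epsilon$; at $\epsilon$ near $0$ the left side is $9$ and the right side is $3$, and one checks the inequality persists on $(0,r_0)$ (again reducing to the sign of a low-degree polynomial). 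Then set $\delta(\epsilon):=9-R(\epsilon)^2\in(0,9)$; by construction $\sqrt{9-\delta}=R(\epsilon)$ and \eqref{ep-de} holds.

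\medskip

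\textbf{Expected main obstacle.} There is no deep difficulty here — the whole lemma is elementary algebra — so the only real ``obstacle'' is bookkeeping: correctly expanding the quartic, correctly applying Cardano so that the messy radical in \eqref{P0} comes out verbatim (sign conventions on cube roots matter), and making sure the window $(0,r_0)$ is exactly the set on which $p<0$ and simultaneously $R(\epsilon)>0$, so that $\delta(\epsilon)$ is a genuine positive number. I would double-check the identification $R(\epsilon)<3 \iff p(\epsilon)<0$, since that is the pivot linking Step 2 back to Step 1 and explains why $r_0$ (and not some other threshold) is the sharp cutoff in the statement.
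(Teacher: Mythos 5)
Your proposal is correct and follows essentially the same route as the paper: factor $p(r)=\frac{r}{4}(r^3-4r^2+16r-4)$, use monotonicity of the cubic to get the unique nonzero real root $r_0$, and solve \eqref{ep-de} linearly for $\sqrt{9-\delta}$, with the pivot $R(\epsilon)<3\iff p(\epsilon)<0$ (the paper's explicit formula $\delta=9-\bigl(3+\tfrac{p(\epsilon)}{2(1-\epsilon)}\bigr)^2$ is exactly your $\delta=9-R(\epsilon)^2$). If anything, your explicit check that $R(\epsilon)>0$ on $(0,r_0)$ — needed so that $R(\epsilon)$ really is the nonnegative square root of $9-\delta$ — is a small point the paper passes over silently.
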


\begin{proof}
A computation gives
\[
p(r) = \frac{r}{4} \left(r^3-4r^2+16r-4 \right).
\]
Let denote the cubic polynomial by $q(r) =r^3-4r^2+16r-4$. It is not difficult to check that $q'(r) = 3(r-1)^2 + 13 >0$, and hence there is the unique non-trivial real root of $q(r)$, and $r_0$ as in \eqref{P0} is, indeed, the only real root of $q(r)$.\footnote{It is well-known to solve a cubic equation, so one can obtain \eqref{P0} via the well-known cubic formula or using a calculator.}

\medskip

On the other hand, a computation gives
\[p'(r) = r^3 - 3r^2 +8r -1, \quad p''(r) = 3(r-1)^2 + 5 >0,\]
which reveal that $p(r) < 0$ for $0 < r < r_0$. Note that $p(1) > 0$, and hence $r_0 < 1$. 

\medskip

For the second claim of Lemma \ref{lem:aux}, we fix $0 < \epsilon < r_0$. First, note that 
\[
p(r)=\left(\frac{(1-r)^2+5}{2}\right)^2 - 2\sqrt{9}(1-r) - (3+r).
\]
Then, a straightforward calculation exactly shows that
\begin{equation}\label{delta}
\delta = \delta(\epsilon) := 9 - \left(3 + \frac{p(\epsilon)}{2(1-\epsilon)} \right)^2
\end{equation}
solves the equation \eqref{ep-de}. $\delta >0$ is well-defined thanks to $p(\epsilon) < 0$ and $0 < \epsilon < r_0 < 1$.
\end{proof}

\begin{lemma}\label{lem:StrPos}
Let $r_0$ be as in \eqref{P0}. 
Then, for $0 < \epsilon < r_0$, there exists $\delta > 0$, only depending on $\epsilon$, and such that
\begin{equation}\label{StrPos}
\int  (9-\delta)\wt{f}^2 + (3+\epsilon)\wt{f}_x^2 -5\wt{f}_{xx}^2 + \wt{f}_{xxx}^2 \ge 0.
\end{equation}
\end{lemma}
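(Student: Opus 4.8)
The plan is to reduce the inequality \eqref{StrPos} to a statement about a single-variable polynomial via the Fourier transform. Writing $\widehat{\wt f}(\xi)$ for the Fourier transform of $\wt f$ and using Plancherel, the left-hand side of \eqref{StrPos} becomes
\[
\int \Bigl( (9-\delta) + (3+\epsilon)\xi^2 + 5\xi^4 + \xi^6 \Bigr) |\widehat{\wt f}(\xi)|^2 \, d\xi,
\]
so it suffices to show that the symbol $P(\xi) := \xi^6 + 5\xi^4 + (3+\epsilon)\xi^2 + (9-\delta)$ is nonnegative for all $\xi \in \R$. Wait --- the coefficient of $\wt f_{xx}^2$ in \eqref{StrPos} is $-5$, not $+5$, so after Fourier transform the quartic term contributes $-5\xi^4$. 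Thus one must instead show $P(\xi) := \xi^6 - 5\xi^4 + (3+\epsilon)\xi^2 + (9-\delta) \ge 0$ for all real $\xi$. Setting $s := \xi^2 \ge 0$, this is the cubic inequality $R(s) := s^3 - 5s^2 + (3+\epsilon)s + (9-\delta) \ge 0$ for all $s \ge 0$.

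Next I would analyze $R(s)$ on $[0,\infty)$. The idea is to choose $\delta = \delta(\epsilon)$ exactly so that $R$ has a double root at its local minimum on $(0,\infty)$, which makes $R \ge 0$ sharp. Concretely, one completes the square / uses the structure suggested by Lemma \ref{lem:aux}: note that
\[
s^3 - 5s^2 + (3+\epsilon)s = s\Bigl( \bigl(s - \tfrac52\bigr)^2 - \tfrac{25}{4} + (3+\epsilon)\Bigr),
\]
and more usefully $R(s)$ can be rewritten (by matching coefficients) in a form involving $\bigl(\tfrac{(1-r)^2+5}{2}\bigr)$. The cleanest route is: Lemma \ref{lem:aux} tells us that with $\delta = \delta(\epsilon)$ as in \eqref{delta}, the equation \eqref{ep-de} holds, i.e.
\[
\left(\frac{(1-\epsilon)^2+5}{2}\right)^2 - 2\sqrt{9-\delta}\,(1-\epsilon) = 3+\epsilon.
\]
This is precisely the discriminant-type condition ensuring that $s = \tfrac{(1-\epsilon)^2+5}{2}$ is a double root of $R$. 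Indeed, one checks by direct substitution that $R\bigl(\tfrac{(1-r)^2+5}{2}\bigr)$, as a function of $r$, factors through $p(r)$ from \eqref{poly}, and more precisely that $R$ has the factorization
\[
R(s) = \bigl(s - s_*\bigr)^2 \bigl(s - s_{**}\bigr), \qquad s_* = \frac{(1-\epsilon)^2+5}{2} > 0,
\]
with $s_{**} = 5 - 2s_* = 5 - (1-\epsilon)^2 - 5 = -(1-\epsilon)^2 < 0$ (using that the sum of the roots of $R$ is $5$). Since $s_{**} < 0 \le s$ for all $s \ge 0$ and the squared factor is nonnegative, we get $R(s) = (s-s_*)^2 \cdot \bigl(s - s_{**}\bigr) \ge 0$ on $[0,\infty)$, which is exactly what we need.

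The main obstacle --- really the only nontrivial point --- is verifying the factorization $R(s) = (s - s_*)^2(s - s_{**})$ with the claimed roots, i.e. confirming that the constant term $9 - \delta$ produced by \eqref{delta} matches $s_*^2 \, s_{**}$ and that the linear coefficient matches; this is where Lemma \ref{lem:aux} does the work, and one should double-check the bookkeeping against \eqref{ep-de}. I would carry out the steps in this order: (i) Plancherel to pass to the symbol $R(s)$, $s = \xi^2$; (ii) record that the three roots of $R$ sum to $5$ and the product is $-(9-\delta)$; (iii) invoke Lemma \ref{lem:aux} to get the double root $s_* = \tfrac{(1-\epsilon)^2+5}{2}$ and hence the third root $s_{**} = 5 - 2s_* < 0$; (iv) conclude $R \ge 0$ on $[0,\infty)$ since $(s - s_*)^2 \ge 0$ and $s - s_{**} > 0$, and therefore \eqref{StrPos} holds. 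A subtlety worth a remark: the condition $0 < \epsilon < r_0$ is used precisely to ensure (via $p(\epsilon) < 0$ in Lemma \ref{lem:aux}) that $\delta = \delta(\epsilon) > 0$, so that $9 - \delta$ is a legitimate (positive) coefficient and the hypothesis of the lemma is consistent.
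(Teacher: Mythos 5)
Your reduction via Plancherel to the one-variable inequality $R(s):=s^3-5s^2+(3+\epsilon)s+(9-\delta)\ge 0$ for $s=\xi^2\ge 0$ is fine, but the mechanism you propose for proving it --- that $\delta(\epsilon)$ is chosen so that $R$ has a double root at $s_*=\tfrac{(1-\epsilon)^2+5}{2}$ and hence factors as $(s-s_*)^2(s-s_{**})$ with $s_{**}=5-2s_*<0$ --- is false for $0<\epsilon<r_0$. The Vieta bookkeeping you yourself flagged does not close: with $u=(1-\epsilon)^2$ the factorization would force the linear coefficient to be $s_*^2+2s_*s_{**}=\tfrac{(u+5)(5-3u)}{4}$ and the constant term to be $-s_*^2 s_{**}=\bigl(\tfrac{u+5}{2}\bigr)^2 u$, and neither equals $3+\epsilon$ resp.\ $9-\delta$ except at $\epsilon=0$ (e.g.\ for $\epsilon=0.1$ one gets $3.73\ne 3.1$ and $6.84\ne 8.80$; numerically $R$ then has a strictly positive local minimum $\approx 0.097$, so there is no double root at all). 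Equation \eqref{ep-de} is not a discriminant condition for $R$; it is the condition $\hat b^2-2\hat a\hat c=3+\epsilon$ in the paper's completed-square identity \eqref{eq:coefficients}.

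The repair, staying inside your Fourier framework, is to use a \emph{weighted sum of two squares} rather than a cubic factorization: with $\hat a=\sqrt{9-\delta}$, $\hat b=\tfrac{(1-\epsilon)^2+5}{2}$, $\hat c=1-\epsilon$, $\hat d=1$ one has the identity
\begin{equation*}
R(s)=(\hat a-\hat c\,s)^2+s\,(\hat b-\hat d\,s)^2
= \hat d^2 s^3+(\hat c^2-2\hat b\hat d)s^2+(\hat b^2-2\hat a\hat c)s+\hat a^2,
\end{equation*}
and the three coefficient matchings are exactly $\hat c^2-2\hat b\hat d=-5$, $\hat b^2-2\hat a\hat c=3+\epsilon$ (this is \eqref{ep-de}), and $\hat a^2=9-\delta$; nonnegativity for $s\ge0$ is then immediate. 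This is the Fourier-side transcription of the paper's actual proof, which works directly in physical space by writing the left-hand side of \eqref{StrPos} as $\int\bigl(\hat a\wt f+\hat b\wt f_x+\hat c\wt f_{xx}+\hat d\wt f_{xxx}\bigr)^2\ge 0$. Your observation that $0<\epsilon<r_0$ is needed only to guarantee $\delta(\epsilon)>0$ via $p(\epsilon)<0$ is correct and agrees with the paper.
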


\begin{proof}
Assume $w\in H^3(\R)$, and $\hat a,\hat b,\hat c,\hat d$ real valued. A straightforward calculation gives
\begin{equation}\label{eq:coefficients}
\begin{aligned}
&0\leq \int (\hat aw+\hat bw_x+\hat cw_{xx}+\hat dw_{xxx})^2 \\
& \qquad = \int \hat a^2 w^2 + (\hat b^2-2\hat a \hat c)w_x^2 + (\hat c^2-2\hat b \hat d)w_{xx}^2 + \hat d^2 w_{xxx}^2.
\end{aligned}
\end{equation}
For given $0 < \epsilon < r_0$, we can choose $\delta >0$ as in \eqref{delta} thanks to Lemma \ref{lem:aux}. Putting 
\[
\hat a = \sqrt{9-\delta}, \quad \hat b = \left(\frac{(1-\epsilon)^2+5}{2}\right), \quad \hat c= 1-\epsilon, \quad \hat d=1
\]
into \eqref{eq:coefficients}, one has
\[
\mbox{LHS of } \eqref{StrPos} = \int  \left( (\sqrt{9-\delta})\wt{f} + \left(\frac{(1-\epsilon)^2+5}{2}\right)\wt{f}_x + (1-\epsilon)\wt{f}_{xx} + \wt{f}_{xxx} \right)^2 \ge 0.
\]
The proof is complete.
\end{proof}

\subsection{First positivity conditions} Consider again the set $\mathcal B_5$ introduced in Lemma \ref{B5}. In a first step, we consider the last two conditions introduced for $\mathcal B_5$ in the case $c \le a < 0$, see \eqref{ref_dispersion_like2-1}. The condition $\tilde c \geq -\frac{1}{90}(19+\sqrt{181}) $ will be introduced later.


\begin{lemma}[Positivity from \eqref{ref_dispersion_like2-1}, first case]\label{lem:Pos1}
Let $c \le a <0$. When $a$ and $c$ satisfy
\begin{equation}\label{ref_dispersion_like2-1.1}
\begin{cases}\begin{array}{lcl} 
6ab + b^2 < 27ac, &\;&\mbox{for} \quad -\frac{b}{3} \le c< -\frac{b}9,\\
ab+bc+18ac>0, &\;&\mbox{for} \quad c < -\frac{b}3,
\end{array}\end{cases}
\end{equation}
for sufficiently large $t \gg 1$, $\mathcal Q(t)$ is positive definite, that is, 
\begin{equation}\label{Pos1.1}
\mathcal Q(t) \gtrsim \frac{1}{\lambda(t)}\int \sech^2\left( \frac{x}{\lambda(t)} \right) \left(u^2 + \eta^2 + u_x^2 + \eta_x^2 \right).
\end{equation}
\end{lemma}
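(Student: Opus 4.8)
The plan is to verify the positivity of $\mathcal{Q}(t)$ by writing it in the canonical variables $\wt f, \wt g$ using the representation \eqref{eq:leading-2}, and then combining the squared-integral lemma (Lemma \ref{lem:StrPos}) with an appropriate choice of the free parameter $\alpha$. Since the $O(1/\lambda(t))$ remainder in \eqref{eq:leading-2} is lower order (and, after the passage to canonical variables, controlled by the $H^1_{loc}$-norm of $(u,\eta)$ via Lemma \ref{lem:L2 comparable}), for $t \gg 1$ it suffices to show that the leading quadratic form with coefficients $A_i, B_i$ is positive definite, with a strictly positive lower bound on $A_1 = B_1 = \tfrac12$ surviving. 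The strategy is then: (i) record the explicit formulas \eqref{eq:A1B1}--\eqref{eq:A4B4} for $A_i,B_i$ in terms of $\tilde a = a/b$, $\tilde c = c/b$ and $\alpha$; (ii) pick $\alpha$ (depending on $\tilde a, \tilde c$) so that the $f$-block $\int A_1\wt f^2 + A_2 \wt f_x^2 + A_3 \wt f_{xx}^2 + A_4 \wt f_{xxx}^2$ and the $g$-block are each bounded below by a nonnegative multiple of the model quartic form $\int (9-\delta)w^2 + (3+\epsilon)w_x^2 - 5 w_{xx}^2 + w_{xxx}^2$ appearing in \eqref{StrPos}, after rescaling; (iii) invoke Lemma \ref{lem:StrPos} to conclude nonnegativity, keeping the genuinely positive $\wt f^2, \wt g^2$ terms to get \eqref{Pos1.1}.

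Concretely, I would first choose $\alpha$ to balance the two blocks. A natural candidate, already used in \cite{KMPP2018}, is to take $\alpha$ so that $A_4$ and $B_4$ are comparable, or to normalize so that the two third-order coefficients match the model form; since $A_4 = -\tilde a(\tfrac12 - \alpha)$ and $B_4 = -\tilde c(\tfrac12 + \alpha)$ are both positive precisely for $-\tfrac12 < \alpha < \tfrac12$ (using $\tilde a, \tilde c < 0$), the admissible window for $\alpha$ is $(-\tfrac12,\tfrac12)$, and within it I would optimize. After fixing $\alpha$, divide the $f$-block by $A_4 > 0$: the normalized coefficients become $(A_1/A_4, A_2/A_4, A_3/A_4, 1)$, and the claim reduces to checking that $A_1/A_4 \geq 9$, $A_2/A_4 \geq 3$, $A_3/A_4 \geq -5$ (with at least one strict, or with a quantitative gap $\epsilon, \delta$ as in Lemma \ref{lem:aux}), together with the analogous inequalities for the $g$-block after dividing by $B_4$. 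Translating these four (really eight) inequalities back through \eqref{eq:A1B1}--\eqref{eq:A4B4} produces polynomial inequalities in $\tilde a, \tilde c$; I expect that, after clearing denominators and using $-\tfrac13 \le \tilde c < -\tfrac19$ (equivalently $-\tfrac b3 \le c < -\tfrac b9$) or $\tilde c < -\tfrac13$ ($c < -\tfrac b3$), these collapse exactly to the two stated conditions $6ab + b^2 < 27ac$ and $ab + bc + 18ac > 0$. So the bulk of the proof is: (a) the reduction to the model form, (b) a careful choice of $\alpha$ as a function of $(\tilde a,\tilde c)$ in each sub-regime, and (c) the algebra showing the resulting inequalities are equivalent to \eqref{ref_dispersion_like2-1.1}.

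The main obstacle I anticipate is step (b)–(c): finding the \emph{right} $\alpha$ so that both blocks simultaneously dominate a common model form is delicate, because a choice that makes the $f$-block large tends to shrink the $g$-block (the $\alpha$-dependence has opposite signs in the two blocks, compare $A_2 = -\alpha - \tfrac{3\tilde a}{2}$ versus $B_2 = \alpha - \tfrac{3\tilde c}{2}$). One likely needs a piecewise definition of $\alpha$ across the sub-ranges $-\tfrac b3 \le c < -\tfrac b9$ and $c < -\tfrac b3$, matching the piecewise structure of \eqref{ref_dispersion_like2-1}, and the boundary case $\tilde c = -\tfrac13$ should be checked for continuity. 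A secondary technical point is ensuring the remainder terms and the $\wt f \leftrightarrow f$ conversions (Lemma \ref{lem:L2 comparable}, requiring $|\phi''| \le \lambda \phi$ with $\phi = \sech^2$, which holds for $\lambda(t)$ large) do not eat the positive $\wt f^2$ contribution — this is routine but must be tracked to get the clean lower bound \eqref{Pos1.1} rather than mere nonnegativity.

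Finally, to assemble the proof I would: fix $t$ large enough that $1/\lambda(t)$ is smaller than the coercivity constant obtained from Lemmas \ref{lem:StrPos} and \ref{lem:aux}; apply the above block decomposition to bound $\mathcal{Q}(t)$ from below by $c_0 \int \sech^2(x/\lambda(t))(f^2 + f_x^2 + g^2 + g_x^2)$ minus an $O(1/\lambda(t))$ error; absorb the error into the main term for $t \gg 1$; and convert back to $(u,\eta)$ via \eqref{eq:L2_est}. Under the hypotheses \eqref{ref_dispersion_like2-1.1} this yields \eqref{Pos1.1}, completing the proof of Lemma \ref{lem:Pos1}.
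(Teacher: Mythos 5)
Your overall architecture --- pass to the canonical variables $\wt f,\wt g$ via \eqref{eq:leading-2}, compare each block with the model quartic form of Lemma \ref{lem:StrPos}, extract upper/lower bounds on $\alpha$ from the $f$- and $g$-blocks, and read \eqref{ref_dispersion_like2-1.1} off as the compatibility condition --- is exactly the paper's. The gap is in your step (ii): normalizing the $f$-block by $A_4$ and demanding $A_1/A_4\ge 9$, $A_2/A_4\ge 3$, $A_3/A_4\ge -5$ is the wrong comparison, and it does not ``collapse exactly to the two stated conditions.'' Since $A_1=\tfrac12$ is the only coefficient independent of $\alpha$ and of $(\tilde a,\tilde c)$, your requirement $A_1/A_4\ge 9$ forces $A_4\le\tfrac1{18}$, whereas the paper peels off exactly $\tfrac1{18}$ of the model form (so that $A_1=\tfrac12=\tfrac9{18}$ is fully consumed) and needs the \emph{leftovers} $A_2'=A_2-\tfrac16$, $A_3'=A_3+\tfrac5{18}$, $A_4'=A_4-\tfrac1{18}$ to be positive, i.e.\ $A_4>\tfrac1{18}$ --- the opposite inequality. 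Concretely your reduction misses parameters the lemma covers: take $(a,c)=(-\tfrac b{10},-\tfrac{3b}{10})$, which lies in the sub-case $-\tfrac b3\le c<-\tfrac b9$ and satisfies $6ab+b^2=\tfrac25 b^2<\tfrac{81}{100}b^2=27ac$. There $A_2\ge 3A_4$ reads $\alpha(3|\tilde a|-1)\ge0$, forcing $\alpha\le0$ (as $|\tilde a|=\tfrac1{10}<\tfrac13$), while $B_2\ge 3B_4$ reads $\alpha(1-3|\tilde c|)\ge0$, forcing $\alpha\ge0$; so $\alpha=0$, and then $A_3\ge-5A_4$ becomes $\tfrac92|\tilde a|\ge\tfrac12$, i.e.\ $|\tilde a|\ge\tfrac19$, which fails. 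No choice of $\alpha$ rescues your normalization, so your argument would prove a strictly weaker statement than Lemma \ref{lem:Pos1}.

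The fix is to compare with $\mu\,(9\wt f^2+3\wt f_x^2-5\wt f_{xx}^2+\wt f_{xxx}^2)$ for the optimal $\mu=\min(A_1/9,\,A_2/3,\,A_4)$, which in the relevant regime is $A_1/9=\tfrac1{18}$, not $A_4$. The leftover conditions $A_2',A_3',A_4'>0$ then give $\alpha<\min\bigl(-\tfrac{9a+b}{6b},\,-\tfrac{18a+2b}{9(b-a)},\,\tfrac12+\tfrac b{18a}\bigr)=\tfrac12+\tfrac b{18a}$ for $-\tfrac b9\le a<0$, and the $g$-block gives $\alpha>\max\bigl(\tfrac{9c+b}{6b},\,\tfrac{18c+2b}{9(b-c)},\,-\tfrac12-\tfrac b{18c}\bigr)$, equal to $\tfrac{9c+b}{6b}$ for $-\tfrac b3\le c<-\tfrac b9$ and to $-\tfrac12-\tfrac b{18c}$ for $c<-\tfrac b3$; nonemptiness of the resulting interval for $\alpha$ is precisely \eqref{ref_dispersion_like2-1.1}. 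Your remaining points (piecewise behaviour in $c$, absorbing the $O(1/\lambda(t))$ remainder and the $\epsilon\,\wt f_x^2$ loss from Lemma \ref{lem:StrPos} into the strictly positive $A_2'$, conversion back to $u^2+u_x^2$ via Lemma \ref{lem:L2 comparable}) are correct and handled the same way in the paper; note only that your normalization also leaves no surplus in $\wt f_{xxx}^2$, so even where it applies it would yield mere nonnegativity rather than the full coercivity \eqref{Pos1.2} needed for \eqref{Pos1.1}.
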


\begin{proof}
In view of \eqref{eq:leading-2} with \eqref{eq:f'g'}, it suffices for \eqref{Pos1.1} to show 
\begin{equation}\label{Pos1.2}
\begin{aligned}
 \int  A_1 \wt{f}^2 + A_2 &\wt{f}_x^2 + A_3 \wt{f}_{xx}^2 + A_4 \wt{f}_{xxx}^2 +\int  B_1 \wt{g}^2 + B_2 \wt{g}_x^2 + B_3 \wt{g}_{xx}^2 + B_4 \wt{g}_{xxx}^2\\
 &\ge c_0 \int  \left(\wt{f}^2 + \wt{f}_x^2 + \wt{f}_{xx}^2 + \wt{f}_{xxx}^2 + \wt{g}^2 + \wt{g}_x^2 + \wt{g}_{xx}^2 + \wt{g}_{xxx}^2 \right),
\end{aligned}
\end{equation}
for some $c_0 > 0$, where $A_i$ and $B_i$, $i=1,2,3,4$, are given in \eqref{eq:A1B1}-\eqref{eq:A4B4}. The left-hand side of \eqref{Pos1.2} is rewritten as 
\begin{align}
 &\frac{1}{18} \left( \int  9\wt{f}^2 + 3\wt{f}_x^2 -5\wt{f}_{xx}^2 +\wt{f}_{xxx}^2 +\int  9 \wt{g}^2 + 3\wt{g}_x^2 - 5 \wt{g}_{xx}^2 + \wt{g}_{xxx}^2 \right) \label{Pos1.3}\\
 &+\int  A_2' \wt{f}_x^2 + A_3' \wt{f}_{xx}^2 + A_4' \wt{f}_{xxx}^2 +\int  B_2' \wt{g}_x^2 + B_3' \wt{g}_{xx}^2 + B_4' \wt{g}_{xxx}^2, \nonumber
\end{align}
where
\begin{equation}\label{eq:A2B2-1}
A_2' = -\alpha -\frac{3a}{2b} -\frac16, \qquad B_2' = \alpha - \frac{3c}{2b} -\frac16,
\end{equation}

\begin{equation}\label{eq:A3B3-1}
A_3' = -\left(1-\frac{a}{b}\right)\alpha -\frac{2a}{b} - \frac29, \qquad B_3' = \left(1-\frac{c}{b}\right)\alpha -\frac{2c}{b} - \frac29,
\end{equation}
and
\begin{equation}\label{eq:A4B4-1}
A_4' = - \frac{a}{b}\left(\frac12 - \alpha\right) - \frac{1}{18}, \qquad B_4' = -\frac{c}{b}\left(\frac12 + \alpha\right) -\frac{1}{18}.
\end{equation}
If $A_i', B_i' > 0$, $i=2,3,4$, applying Lemma \ref{lem:StrPos} to \eqref{Pos1.3}, one proves \eqref{Pos1.2}. 

\medskip

We remark that $A_i', B_i' > 0$, $i=2,3,4$ holds true by taking $\alpha = 0$, when $a < -\frac{b}9$. Moreover, there is no $\alpha \in \R$ such that $A_i', B_i' > 0$, $i=2,3,4$ holds true, when $c \ge -\frac{b}9$. In what follows, we assume $c < - \frac{b}9 \le a$.

\medskip

 A computation gives
\[
\begin{aligned}
\alpha < & ~{} \min \left(-\frac{9a+b}{6b}, -\frac{18a+2b}{9(b-a)}, \frac12 + \frac{b}{18a} \right) \\
= & ~ {} \frac12 + \frac{b}{18a}, \quad -\frac{b}9 \le a < 0 ,
\end{aligned}
\]
for $A_i$, and 
\[
\begin{aligned}
\alpha >&~{} \max \left(\frac{9c+b}{6b}, \frac{18c+2b}{9(b-c)}, -\frac12 - \frac{b}{18c} \right) \\
= & ~ {} \begin{cases}\begin{array}{lcl}\-\frac{9c+b}{6b}, &\;&\mbox{if} \quad -\frac{b}3 \le c < -\frac{b}9 ,\\ -\frac12 - \frac{b}{18c}, &\;&\mbox{if} \quad c < -\frac{b}3,\end{array}\end{cases} 
\end{aligned}
\]
for $B_i$. Hence, for $c \le a < 0$, there exists $\alpha \in \R$ such that $A_i', B_i' > 0$, $i=2,3,4$, when $a$ and $c$ satisfy
\[
\frac{9c+b}{6b} < \frac12 + \frac{b}{18a} \quad \mbox{if} \quad -\frac{b}3 \le c < -\frac{b}9,
\]
and
\[-\frac12 - \frac{b}{18c}< \frac12 + \frac{b}{18a} \quad \mbox{if} \quad c < -\frac{b}3,\]
which are equivalent to \eqref{ref_dispersion_like2-1.1}.
\end{proof}

Due to the symmetry of $a$ and $c$ in \eqref{eq:A2B2}--\eqref{eq:A4B4}, taking $\alpha \mapsto -\alpha$ carries 

\begin{lemma}[Positivity from \eqref{ref_dispersion_like2-2}, first case]\label{lem:Pos2}
Let $a \le c <0$. When $a$ and $c$ satisfy
\begin{equation}\label{ref_dispersion_like2-2.1}
\begin{cases}\begin{array}{lcl} 
6bc + b^2 < 27ac, &\;&\mbox{for} \quad -\frac{b}3 \le a < -\frac{b}9,\\
ab+bc+18ac>0, &\;&\mbox{for} \quad a < -\frac{b}3,
\end{array}\end{cases}
\end{equation}
for sufficiently large $t \gg 1$, $\mathcal Q(t)$ is positive definite, that is, 
\[\mathcal Q(t) \gtrsim \frac{1}{\lambda(t)}\int \sech^2\left( \frac{x}{\lambda(t)} \right) \left(u^2 + \eta^2 + u_x^2 + \eta_x^2 \right).\]
\end{lemma}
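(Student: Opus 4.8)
The plan is to reduce Lemma \ref{lem:Pos2} to the already-proven Lemma \ref{lem:Pos1} by exploiting the built-in symmetry of the problem under the simultaneous swap $a\leftrightarrow c$ and $\alpha\mapsto-\alpha$. First I would inspect the coefficients $A_2,A_3,A_4,B_2,B_3,B_4$ in \eqref{eq:A2B2}--\eqref{eq:A4B4} (equivalently their shifted versions $A_i',B_i'$ in \eqref{eq:A2B2-1}--\eqref{eq:A4B4-1}) and observe that the map $(a,c,\alpha)\mapsto(c,a,-\alpha)$ interchanges $A_i'\leftrightarrow B_i'$ for $i=2,3,4$, while leaving $A_1=B_1=\tfrac12$ and the structure of the ``universal'' piece $9\wt f^2+3\wt f_x^2-5\wt f_{xx}^2+\wt f_{xxx}^2$ untouched (and likewise for $\wt g$). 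Hence the decomposition \eqref{Pos1.3} and the positivity criterion ``$A_i',B_i'>0$ for $i=2,3,4$'' are invariant under this swap.

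Concretely, the argument runs as follows. Suppose $a\le c<0$ and $(a,b,c)$ satisfies \eqref{ref_dispersion_like2-2.1}. Set $a^\sharp:=c$, $c^\sharp:=a$, so that $c^\sharp\le a^\sharp<0$ and the pair $(a^\sharp,b,c^\sharp)$ satisfies exactly the hypothesis \eqref{ref_dispersion_like2-1.1} of Lemma \ref{lem:Pos1} (the two cases ``$-\tfrac b3\le c<-\tfrac b9$'' and ``$c<-\tfrac b3$'' of \eqref{ref_dispersion_like2-1.1} become, under the relabelling, ``$-\tfrac b3\le a<-\tfrac b9$'' and ``$a<-\tfrac b3$'' of \eqref{ref_dispersion_like2-2.1}, and the quantity $6ab+b^2<27ac$ becomes $6bc+b^2<27ac$, while $ab+bc+18ac>0$ is manifestly symmetric). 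Applying Lemma \ref{lem:Pos1} to this relabelled triple produces an $\alpha^\sharp\in\R$ for which $A_i'(a^\sharp,c^\sharp,\alpha^\sharp),\,B_i'(a^\sharp,c^\sharp,\alpha^\sharp)>0$, $i=2,3,4$. Now set $\alpha:=-\alpha^\sharp$; by the symmetry noted above one has $A_i'(a,c,\alpha)=B_i'(a^\sharp,c^\sharp,\alpha^\sharp)>0$ and $B_i'(a,c,\alpha)=A_i'(a^\sharp,c^\sharp,\alpha^\sharp)>0$ for $i=2,3,4$. With these $A_i',B_i'$ all positive, we feed \eqref{Pos1.3} into Lemma \ref{lem:StrPos} exactly as in the proof of Lemma \ref{lem:Pos1}: the universal $(9,3,-5,1)$-blocks in $\wt f$ and $\wt g$ are nonnegative, the remaining terms $A_i'\wt f_{\cdots}^2+B_i'\wt g_{\cdots}^2$ are nonnegative, and a quantitative version (shrinking the $\tfrac1{18}$ coefficient slightly, or using the strict positivity of $A_i',B_i'$) yields \eqref{Pos1.2} with some $c_0>0$, hence $\mathcal Q(t)\gtrsim\frac1{\lambda(t)}\int\sech^2(x/\lambda(t))(u^2+\eta^2+u_x^2+\eta_x^2)$ for $t\gg1$, using \eqref{eq:leading-2} and \eqref{eq:f'g'}. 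This is precisely the claimed conclusion.

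The only genuinely substantive point to verify — and the step I would be most careful about — is that the transformation $\alpha\mapsto-\alpha$ really does exchange the $A$- and $B$-families of coefficients. This is a one-line check from \eqref{eq:A2B2}--\eqref{eq:A4B4}: e.g. $A_2=-\alpha-\tfrac{3a}{2b}$ and $B_2=\alpha-\tfrac{3c}{2b}$, so $B_2(a,c,\alpha)=A_2(c,a,-\alpha)$; similarly $A_3=-(1-\tfrac ab)\alpha-\tfrac{2a}b-\tfrac12$ and $B_3=(1-\tfrac cb)\alpha-\tfrac{2c}b-\tfrac12$ give $B_3(a,c,\alpha)=A_3(c,a,-\alpha)$; and $A_4=-\tfrac ab(\tfrac12-\alpha)$, $B_4=-\tfrac cb(\tfrac12+\alpha)$ give $B_4(a,c,\alpha)=A_4(c,a,-\alpha)$. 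The shifts defining $A_i',B_i'$ are symmetric in the two families, so the same relations hold for the primed coefficients. Everything else is a verbatim repetition of the proof of Lemma \ref{lem:Pos1}, which is exactly why the statement is prefaced by ``Due to the symmetry of $a$ and $c$ in \eqref{eq:A2B2}--\eqref{eq:A4B4}, taking $\alpha\mapsto-\alpha$ carries''. In short, no new analysis is required; the lemma is the mirror image of Lemma \ref{lem:Pos1}.
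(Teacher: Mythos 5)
Your proposal is correct and is precisely the paper's argument: the paper derives Lemma \ref{lem:Pos2} from Lemma \ref{lem:Pos1} via the same swap $(a,c,\alpha)\mapsto(c,a,-\alpha)$, which it invokes with the sentence ``Due to the symmetry of $a$ and $c$ in \eqref{eq:A2B2}--\eqref{eq:A4B4}, taking $\alpha\mapsto-\alpha$ carries'' and leaves unproved. You have simply made explicit the coefficient identities $B_i(a,c,\alpha)=A_i(c,a,-\alpha)$ and the relabelling of the hypotheses, which check out.
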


\begin{remark}\label{rem:optimal virial}
Note  that the case when $(a,c) = (-\frac{b}9,-\frac{b}9)$ cannot be covered via our method. Precisely, one has
\[\begin{aligned}
\mathcal{Q}(t) \sim&~{} \frac{1}{18} \int \varphi' \left(9(f^2+g^2) + 3(f_x^2+g_x^2) - 5(f_{xx}^2+ g_{xx}^2) + (f_{xxx}^2+g_{xxx}^2) \right)\\
=&~{} \frac{1}{18} \int \varphi' \left(3f + 3f_x + f_{xx} + f_{xxx} \right)^2 + \left(3g + 3g_x + g_{xx} + g_{xxx} \right)^2 \ge 0,
\end{aligned}\]
when $(a,c) = (-\frac19,-\frac19)$. Such critical coefficients form a perfect square formula which do not allow \eqref{Pos1.1}.
\end{remark}

\subsection{Second positivity conditions} Consider now $\mathcal B_5$ introduced in Lemma \ref{B5}, in the case of the first condition in the case $c \le a < 0$, see \eqref{ref_dispersion_like2-1}. Now we have the following result.

\begin{lemma}[Positivity from \eqref{ref_dispersion_like2-1}, final case]\label{lem:Pos3}
Let $c \le a <0$ . When $a$ and $c$ satisfy
\begin{equation}\label{ref_dispersion_like2-1.2}
\begin{cases}\begin{array}{lcl} 
11ab+9bc+4b^2 <9ac, &\;&\mbox{for} \quad -\frac{b}3 \le c < \frac{(10-2\sqrt{34})b}{9},\\
b^2-ab < 45ac, &\;&\mbox{for} \quad c < -\frac{b}3,
\end{array}\end{cases}
\end{equation}
for sufficiently large $t \gg 1$, $\mathcal Q(t)$ is positive, that is, 
\[\mathcal Q(t) \gtrsim \frac{1}{\lambda(t)}\int \sech^2\left( \frac{x}{\lambda(t)} \right) \left(u^2 + \eta^2 + u_x^2 + \eta_x^2 \right).\]
\end{lemma}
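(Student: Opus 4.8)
The approach mirrors the proof of Lemma~\ref{lem:Pos1}, with the difference that we now use the other \emph{free direction} in the perfect–square decomposition of Lemma~\ref{lem:StrPos}. Recall that in Lemma~\ref{lem:Pos1} the quadratic form $\mathcal Q(t)$ was split (after passing to the tilde–variables $\wt f,\wt g$) as a fixed coercive piece $\tfrac1{18}\int(9\wt f^2+3\wt f_x^2-5\wt f_{xx}^2+\wt f_{xxx}^2)+(\text{same in }\wt g)$ plus a remainder $\int A_2'\wt f_x^2+A_3'\wt f_{xx}^2+A_4'\wt f_{xxx}^2+(\text{same in }\wt g)$, and one needed all of $A_i',B_i'>0$. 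In the present regime $c<-\tfrac b3$ (i.e. $\tilde c<-\tfrac13$), it turns out that one cannot make all six of $A_i',B_i'$ positive, so instead I would absorb part of the $\wt f_x^2,\wt g_x^2$ budget into the fixed coercive block. Concretely, I would write the coercive block with a \emph{larger} coefficient in front of $\wt f_x^2$: using \eqref{eq:coefficients} of Lemma~\ref{lem:StrPos} with the alternate choice $\hat a,\hat b,\hat c,\hat d$ producing a form $\lambda_1 w^2+\lambda_2 w_x^2-5 w_{xx}^2+w_{xxx}^2\ge 0$ for a suitable pair $(\lambda_1,\lambda_2)$ along the admissible curve $p(r)=0$ (equivalently, the $\epsilon,\delta$ family of Lemma~\ref{lem:aux}), one gets a normalized coercive quartic in which the ratio of the $w^2$–coefficient to the $w_x^2$–coefficient differs from $9:3$.

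The key steps, in order, would be: (i) rewrite the left-hand side of \eqref{Pos1.2} as $\kappa\int(\lambda_1\wt f^2+\lambda_2\wt f_x^2-5\wt f_{xx}^2+\wt f_{xxx}^2)+(\text{same in }\wt g)$ plus a positive-coefficient remainder $\int \widetilde A_2'\wt f_x^2+\widetilde A_3'\wt f_{xx}^2+\widetilde A_4'\wt f_{xxx}^2+(\text{same in }\wt g)$, where now $\kappa=\tfrac1{2\lambda_1}$ is fixed by $A_1=B_1=\tfrac12$ (see \eqref{eq:A1B1}), and $\widetilde A_i',\widetilde B_i'$ are the shifted versions of $A_i,B_i$ from \eqref{eq:A2B2}–\eqref{eq:A4B4} obtained by subtracting the corresponding multiples $\kappa\lambda_2,\,-5\kappa,\,\kappa$ of the coercive block; (ii) compute, for each of the three conditions $\widetilde A_i',\widetilde B_i'\ge 0$, the interval of $\alpha$ for which it holds — exactly as in the proof of Lemma~\ref{lem:Pos1}, these are linear (or rational) inequalities in $\alpha$ giving $\alpha<$ (upper bound depending on $\tilde a$) and $\alpha>$ (lower bound depending on $\tilde c$); (iii) impose that the upper and lower envelopes are compatible, i.e. $\max(\text{lower bounds})<\min(\text{upper bounds})$; (iv) simplify the resulting inequality, which after clearing denominators becomes precisely \eqref{ref_dispersion_like2-1.2}, with the case split at $c=-\tfrac b3$ reflecting which of the two lower bounds for $\alpha$ (the one from $\widetilde B_2'$ versus the one from $\widetilde B_4'$) is the binding one; (v) apply Lemma~\ref{lem:StrPos} (with the chosen $(\lambda_1,\lambda_2)$, i.e. the associated $\epsilon\in(0,r_0)$ and $\delta(\epsilon)$) to conclude $\mathcal Q(t)\ge c_0\int(\wt f^2+\cdots+\wt g_{xxx}^2)-O(\lambda(t)^{-1})(\cdots)$, and then invoke Lemma~\ref{lem:L2 comparable} together with $\lambda(t)=t/\log^2 t\to\infty$ to pass from the canonical variables back to $\int\sech^2(x/\lambda)(u^2+\eta^2+u_x^2+\eta_x^2)$, absorbing the $O(\lambda^{-1})$ error for $t\gg1$.

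The numerics should land on the stated thresholds: the endpoint $c=\tfrac{(10-2\sqrt{34})b}{9}$ in \eqref{ref_dispersion_like2-1.2} ought to appear as the value of $\tilde c$ at which the two competing constraints on $\alpha$ (from $\widetilde A_3',\widetilde B_3'$ on one hand and $\widetilde A_4',\widetilde B_4'$ on the other) coincide, producing the quadratic $9\tilde c^2-20\tilde c+\cdots$ whose relevant root is $\tfrac{10-2\sqrt{34}}{9}$; I would double-check the discriminant bookkeeping here. The main obstacle, as usual with these $\mathcal Q(t)$-positivity lemmas, is \textbf{choosing the right coercive block} — i.e. the right point $(\lambda_1,\lambda_2)$ on the curve cut out by $p(r)=0$ — so that the three residual inequalities $\widetilde A_i',\widetilde B_i'\ge 0$ are \emph{simultaneously} satisfiable in $\alpha$ over exactly the parameter region \eqref{ref_dispersion_like2-1.2} and no smaller. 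Since Lemma~\ref{lem:aux} only guarantees a one-parameter family of admissible $(\lambda_1,\lambda_2)$, one must verify that some member of this family (not merely the base point $(9,3)$ used in Lemma~\ref{lem:Pos1}) does the job; getting this optimization sharp, rather than merely obtaining \emph{some} sufficient condition, is the delicate part, and it is where the case distinction at $c=-\tfrac b3$ and the algebraic form of the two inequalities in \eqref{ref_dispersion_like2-1.2} genuinely come from.
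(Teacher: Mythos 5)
Your general skeleton (pass to canonical variables, peel off a coercive block of the type in Lemma \ref{lem:StrPos}, reduce to sign conditions on the remainder coefficients, and find a common admissible $\alpha$) is the right one, and you correctly anticipate that the case split at $c=-\frac b3$ reflects which of the $g$-side lower bounds on $\alpha$ is binding. But the mechanism you propose for going beyond Lemma \ref{lem:Pos1} — subtracting a \emph{modified} block $\kappa\int\bigl(\lambda_1\wt f^2+\lambda_2\wt f_x^2-5\wt f_{xx}^2+\wt f_{xxx}^2\bigr)$ \emph{symmetrically} from both the $\wt f$- and $\wt g$-portions — is not what produces \eqref{ref_dispersion_like2-1.2}, and I do not believe it can. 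The paper's proof is asymmetric: since $c\le a<0$ and one may reduce to $a\ge-\frac b4$, the $\wt f$-coefficients $A_2,A_3,A_4$ from \eqref{eq:A2B2}--\eqref{eq:A4B4} are made positive \emph{outright}, with no coercive block at all on the $f$-side; the binding constraint is $A_3>0$, i.e. $\alpha<-\frac{b+4a}{2(b-a)}$. Only the $\wt g$-portion keeps the $(9,3,-5,1)$-block, with the same $B_i'$ as in Lemma \ref{lem:Pos1}. Compatibility of the new upper bound $-\frac{b+4a}{2(b-a)}$ with the old lower bounds $-\frac{9c+b}{6b}$ (for $-\frac b3\le c$) and $-\frac12-\frac b{18c}$ (for $c<-\frac b3$) is exactly \eqref{ref_dispersion_like2-1.2}. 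In your scheme the admissible blocks from Lemma \ref{lem:aux} have $\lambda_1=9-\delta\le 9$, hence $\kappa=\frac1{2\lambda_1}\ge\frac1{18}>0$, so the remainder condition $\widetilde A_4'=A_4-\kappa\ge0$ is strictly stronger than the plain condition $A_4>0$ (and your family never attains the $\kappa=0$ endpoint that the paper's $f$-side treatment amounts to); since on the boundary of \eqref{ref_dispersion_like2-1.2} the admissible window for $\alpha$ degenerates to a single point, there is no slack to absorb this tightening. Consequently your step (iv) — the assertion that the optimized compatibility conditions "become precisely \eqref{ref_dispersion_like2-1.2}" — is exactly the unclosed gap, and you yourself flag it as unverified; as described, the argument yields positivity only on a strictly smaller parameter set.

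A smaller point: the threshold $c=\frac{(10-2\sqrt{34})b}{9}$ is not where two competing $\alpha$-constraints coincide. It is the nonemptiness threshold of the first condition in \eqref{ref_dispersion_like2-1.2} under the standing assumption $c\le a<0$: setting $a=c$ in $11ab+9bc+4b^2<9ac$ gives $9c^2-20bc-4b^2>0$, whose relevant root is $c=\frac{(10-2\sqrt{34})b}{9}$.
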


\begin{proof}
Similarly as the proof of Lemma \ref{lem:Pos1}, it suffices to show \eqref{Pos1.2}. Note that $A_i, B_i >0$, $i=1,2,3,4$,  in \eqref{eq:A1B1} -- \eqref{eq:A4B4} holds true by taking $\alpha = 0$, when $c \le a < -\frac{b}4$. In what follows, we assume $a \ge -\frac{b}4$ (also $c < -\frac{b}9$ by the same reason as in the proof of Lemma \ref{lem:Pos1}). It is known that $A_i >0$, $i=1,2,3,4$, holds true when
\[
\alpha < \min\left(-\frac{3a}{2b}, -\frac{b+4a}{2(b-a)}, \frac12 \right) = -\frac{b+4a}{2(b-a)}, \quad \mbox{if} \quad a \ge -\frac{b}4, 
\]
and hence, we obtain $\wt{f}$-portions in \eqref{Pos1.2}. On the other hand, the $\wt{g}$-portions in the left-hand side of \eqref{Pos1.2} is rewritten as 
\begin{equation}
 \frac{1}{18} \int \left( 9 \wt{g}^2 + 3\wt{g}_x^2 - 5 \wt{g}_{xx}^2 + \wt{g}_{xxx}^2 \right)  +\int  \left( B_2' \wt{g}_x^2 + B_3' \wt{g}_{xx}^2 + B_4' \wt{g}_{xxx}^2 \right),
\end{equation}
where $B_i'$, $i=2,3,4$, is given in \eqref{eq:A2B2-1} -- \eqref{eq:A4B4-1}. In order to show $B_i' > 0$, $i=2,3,4$, $\alpha$ should satisfy
\[
\begin{aligned}
\alpha > &~\max \left(\frac{9c+b}{6b}, \frac{18c+2b}{9(b-c)}, -\frac12 - \frac{b}{18c} \right)\\
 =& ~ \begin{cases}\begin{array}{lcl}\-\frac{9c+b}{6b}, &\;&\mbox{if} \quad -\frac{b}3 \le c < -\frac{b}9 ,\\ -\frac12 - \frac{b}{18c}, &\;&\mbox{if} \quad c < -\frac{b}3.\end{array}\end{cases} 
\end{aligned} 
\] 
Hence, for $c \le a < 0$, there exists $\alpha \in \R$ such that $A_i, B_i' > 0$, $i=2,3,4$, when $a$ and $c$ satisfy
\[\frac{9c+b}{6b} < -\frac{b+4a}{2(b-a)} \quad \mbox{if} \quad -\frac{b}3 \le c < \frac{(10-2\sqrt{34})b}{9}\]
and
\[-\frac12 - \frac{b}{18c}< -\frac{b+4a}{2(b-a)} \quad \mbox{if} \quad c < -\frac{b}3,\]
which are equivalent to \eqref{ref_dispersion_like2-1.2}.
\end{proof}

\begin{remark}
An opposite way to the proof of Lemma \ref{lem:Pos3} provides a different parameter conditions. Indeed, under the restriction of $\alpha$ satisfying $B_i >0$, applying Lemma \ref{lem:StrPos} to $\wt{f}$-portions, one has
\begin{equation}\label{comparison}
\begin{cases}\begin{array}{lcl} 
9ab + 11bc+4b^2 <9ac, &\;&\mbox{for} \quad -\frac{b}4 \le c < \frac{(10-2\sqrt{34})b}{9},\\
b^2-bc < 45ac, &\;&\mbox{for} \quad c < -\frac{b}4.
\end{array}\end{cases}
\end{equation}
However, the new parameter condition \eqref{comparison} is useless in our result in the sense that, for each $b$, the region of $(a,c)$ satisfying \eqref{comparison} is covered by the region \eqref{ref_dispersion_like2-1.1}, even though it is wider than one described by \eqref{ref_dispersion_like2-1.2} up to $-\frac{(\sqrt{1865}+80)b}{414} \le c$. Thus, Lemma \ref{lem:Pos3} in addition to Lemma \ref{lem:Pos1} guarantees a better result.
%
\end{remark}

Similarly as Lemma \ref{lem:Pos2}, we have
\begin{lemma}[Positivity from \eqref{ref_dispersion_like2-2}, final case]\label{lem:Pos4}
Let $a \le c <0$ . When $a$ and $c$ satisfy
\begin{equation}\label{ref_dispersion_like2-2.2}
\begin{cases}\begin{array}{lcl} 
11bc+9ab+4b^2 <9ac, &\;&\mbox{for} \quad -\frac{b}3 \le a < \frac{(10-2\sqrt{34})b}{9},\\
b^2-bc < 45ac, &\;&\mbox{for} \quad a < -\frac{b}3,
\end{array}\end{cases}
\end{equation}
for sufficiently large $t \gg 1$, $\mathcal Q(t)$ is positive definite, that is, 
\[\mathcal Q(t) \gtrsim \frac{1}{\lambda(t)}\int \sech^2\left( \frac{x}{\lambda(t)} \right) \left(u^2 + \eta^2 + u_x^2 + \eta_x^2 \right).\]
\end{lemma}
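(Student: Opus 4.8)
The plan is to obtain Lemma \ref{lem:Pos4} from Lemma \ref{lem:Pos3} by exploiting the built-in symmetry of the quadratic form $\mathcal Q(t)$, exactly as Lemma \ref{lem:Pos2} was deduced from Lemma \ref{lem:Pos1}. Inspecting \eqref{eq:A1B1}--\eqref{eq:A4B4} one checks that the map $a\leftrightarrow c$, $\alpha\mapsto-\alpha$ interchanges $A_i$ and $B_i$ for $i=1,2,3,4$; likewise \eqref{eq:A2B2-1}--\eqref{eq:A4B4-1} show that it interchanges $A_i'$ and $B_i'$ for $i=2,3,4$. Since the representation \eqref{eq:leading-2} together with \eqref{eq:f'g'} is symmetric in the pairs $(A_i,\wt f)$ and $(B_i,\wt g)$, the entire analysis of $\mathcal Q(t)$ in the proof of Lemma \ref{lem:Pos3} applies verbatim under this relabelling, with the roles of $\wt f$ and $\wt g$ exchanged; the hypothesis \eqref{ref_dispersion_like2-2.2} is precisely the image of \eqref{ref_dispersion_like2-1.2} under $a\leftrightarrow c$.

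Concretely, first I would note that for $a\le c<-\frac b4$ the choice $\alpha=0$ already gives $A_i,B_i>0$ for $i=1,2,3,4$, so that the desired bound \eqref{Pos1.2} holds trivially (and, as in Lemma \ref{lem:Pos3}, the hypothesis $c<-\frac b9$ is what forces the nontrivial regime). For $c\ge-\frac b4$ one splits $\mathcal Q(t)$, via \eqref{eq:leading-2}, into a $\wt g$-part and a $\wt f$-part. The $\wt g$-part is made positive definite directly by choosing $\alpha$ in the range dictated by $B_2,B_3,B_4>0$, namely $\alpha>\frac{b+4c}{2(b-c)}$; for the $\wt f$-part one peels off the universal nonnegative block $\tfrac1{18}\int\big(9\wt f^2+3\wt f_x^2-5\wt f_{xx}^2+\wt f_{xxx}^2\big)$ using Lemma \ref{lem:StrPos}, which leaves exactly the requirement $A_2',A_3',A_4'>0$, i.e. an upper bound $\alpha<\min\big(-\tfrac{9a+b}{6b},\,-\tfrac{2(9a+b)}{9(b-a)},\,\tfrac12+\tfrac b{18a}\big)$. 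Compatibility of this upper bound with the lower bound $\frac{b+4c}{2(b-c)}$, after discussing which term in the minimum is active in each subinterval of $a$, is equivalent to the two inequalities displayed in \eqref{ref_dispersion_like2-2.2}; combining the resulting positivity with \eqref{eq:leading-2} then yields the claimed lower bound on $\mathcal Q(t)$.

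I do not expect any genuine obstacle here: every ingredient — the coefficient formulas \eqref{eq:A2B2}--\eqref{eq:A4B4} and their primed analogues, the positivity Lemma \ref{lem:StrPos}, and the worked-out $\alpha$-analysis in Lemma \ref{lem:Pos3} — is already available, and the only point that must be verified is the exact symmetry of the coefficients under $a\leftrightarrow c$, $\alpha\mapsto-\alpha$, which is a direct inspection. Accordingly, in the write-up it would suffice to say that by this symmetry the claim follows from the proof of Lemma \ref{lem:Pos3} with $\wt f$ and $\wt g$ interchanged, optionally recording the intermediate bounds on $\alpha$ above for the reader's convenience.
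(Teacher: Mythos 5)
Your proposal is correct and matches the paper's own route: the paper obtains Lemma \ref{lem:Pos4} from Lemma \ref{lem:Pos3} by precisely the symmetry $a\leftrightarrow c$, $\alpha\mapsto-\alpha$ that interchanges $A_i$ with $B_i$ (and $A_i'$ with $B_i'$), just as Lemma \ref{lem:Pos2} is deduced from Lemma \ref{lem:Pos1}, so the roles of $\wt f$ and $\wt g$ are swapped exactly as you describe. The only cosmetic slip is in your parenthetical: in the Lemma \ref{lem:Pos4} setting it is $a<-\frac{b}{9}$ (not $c<-\frac{b}{9}$) that forces the nontrivial regime, which is in any case consistent with the $\alpha$-bounds you record.
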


\begin{remark}\label{rem:Pos}
A direct comparison between \eqref{ref_dispersion_like2-1.1} (resp. \eqref{ref_dispersion_like2-2.1}) and \eqref{ref_dispersion_like2-1.2} (resp. \eqref{ref_dispersion_like2-2.2}) reveals that the result obtained in Lemma \ref{lem:Pos3} (resp. \ref{lem:Pos4}) covers wider regions of parameters than one in Lemma \ref{lem:Pos1} (resp. \ref{lem:Pos2}), when $c\le a < 0$ (resp. $a\le c < 0$) and $c < -\frac{(19+\sqrt{181})b}{90}$ (resp. $a < -\frac{(19+\sqrt{181})b}{90}$). More precisely, a computation in \eqref{ref_dispersion_like2-1.1} and \eqref{ref_dispersion_like2-1.2} for $c \ge -\frac{b}{3}$ gives
\[
a < -\frac{b^2}{6b-27c}, \quad \mbox{and} \quad a < -\frac{4b^2+9bc}{11b+9c},
\]
respectively. Moreover, one could obtain, under the fundamental assumption on $b$ and $c$, that 
\[
\begin{aligned}
-\frac{4b^2+9bc}{11b+9c} < -\frac{b^2}{6b-27c}\quad  \Longleftrightarrow \quad  &~{}  243c^2 + 45bc -24b^2 <0\\
  \Longleftrightarrow \quad &~{}  -\frac{(5+\sqrt{313})b}{54} < c.
  \end{aligned}
\]
Since $-\frac{(5+\sqrt{313})b}{54} < -\frac{b}{3}$, we conclude that the region described by \eqref{ref_dispersion_like2-1.1} contains the region by \eqref{ref_dispersion_like2-1.2} for $c \ge -\frac{b}{3}$. Otherwise, an analogous argument yields that the regions described by \eqref{ref_dispersion_like2-1.1} and \eqref{ref_dispersion_like2-1.2}, for $c < -\frac{b}{3}$, are expressed as
\[a < -\frac{bc}{b+18c}, \quad \mbox{and} \quad a < \frac{b^2}{b+45c},\]
respectively. A  computation gives
\[
-\frac{bc}{b+18c} = \frac{b^2}{b+45c} \quad  \Longleftrightarrow \quad  c = \frac{(-19\pm\sqrt{181})b}{90},
\]
which, in addition to the fundamental assumption on $b$ and $c$, implies
\[-\frac{bc}{b+18c} \ge \frac{b^2}{b+45c}, \quad \mbox{if} \quad -\frac{(19+\sqrt{181})b}{90} \le c < -\frac{b}{3}\]
and
\[-\frac{bc}{b+18c} < \frac{b^2}{b+45c}, \quad \mbox{if} \quad c <-\frac{(19+\sqrt{181})b}{90}.\]
The condition $c \geq -\frac{(19+\sqrt{181})b}{90} $ appears in this observation and this proves our claim. The same argument is available to the comparison between \eqref{ref_dispersion_like2-2.1} and \eqref{ref_dispersion_like2-2.2}.
\end{remark}

\subsection{Refined dispersion-like parameters} Gathering Lemmas \ref{lem:Pos1} and \ref{lem:Pos4}, in addition to Remark \ref{rem:Pos}, we extend the results proved in Lemma 4.5 in \cite{KMPP2018} as follows:
\begin{lemma}[Refined positivity of the quadratic form $\mathcal{Q}(t)$]\label{lem:Q}
Let $a, c <0$ satisfy \eqref{Conds}. Then, we have 
\be\label{Q_new_new}
\mathcal Q(t) \gtrsim \frac{1}{\lambda(t)}\int \sech^2\left( \frac{x}{\lambda(t)} \right) \left(u^2 + \eta^2 + u_x^2 + \eta_x^2 \right),
\ee
whenever $(a,c)$ are the \emph{refined dispersion-like parameter} defined in Definition \ref{New Dis_Par}.
\end{lemma}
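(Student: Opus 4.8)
The plan is to assemble Lemma~\ref{lem:Q} directly from the four positivity results already established, using the symmetry of $\mathcal B_5$ to reduce to one half of the parameter region. Specifically, since $\mathcal B_5(b)$ is symmetric under $a \leftrightarrow c$ (Lemma~\ref{B5}, Item (2)), it suffices to prove \eqref{Q_new_new} in the case $\tilde c \le \tilde a < 0$, i.e. when $(a,c)$ lie in the region described by \eqref{ref_dispersion_like2-1}; the case $\tilde a \le \tilde c < 0$ then follows by the change $\alpha \mapsto -\alpha$ exactly as Lemma~\ref{lem:Pos2} followed from Lemma~\ref{lem:Pos1} and Lemma~\ref{lem:Pos4} from Lemma~\ref{lem:Pos3}. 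In the overlap with the original dispersion-like condition \eqref{dispersion_like_b}, positivity is already known (Lemma 4.5 of \cite{KMPP2018}); so we only need to handle the genuinely new pieces of \eqref{ref_dispersion_like2-1}.

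Next I would match the three regimes of $\tilde c$ in \eqref{ref_dispersion_like2-1} against the conditions proved in Lemmas~\ref{lem:Pos1} and \ref{lem:Pos3}. After clearing the normalization (multiplying through by $b$ and using $\tilde a = a/b$, $\tilde c = c/b$), the inequalities $45\tilde a\tilde c > 1-\tilde a$, $18\tilde a\tilde c + \tilde a + \tilde c > 0$, and $27\tilde a\tilde c > 6\tilde a + 1$ become precisely the scaled forms of the conditions $b^2 - ab < 45ac$, $ab+bc+18ac>0$, and $6ab+b^2 < 27ac$ appearing in \eqref{ref_dispersion_like2-1.1} and \eqref{ref_dispersion_like2-1.2}. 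The key bookkeeping step is Remark~\ref{rem:Pos}: for $\tilde c < -\tfrac{1}{90}(19+\sqrt{181})$ the region covered by Lemma~\ref{lem:Pos3} strictly contains that of Lemma~\ref{lem:Pos1}, so there we invoke Lemma~\ref{lem:Pos3}; for $-\tfrac{1}{90}(19+\sqrt{181}) \le \tilde c < -\tfrac13$ we use Lemma~\ref{lem:Pos1} (the $ab+bc+18ac>0$ branch); and for $-\tfrac13 \le \tilde c < -\tfrac19$ we again use Lemma~\ref{lem:Pos1} (the $6ab+b^2<27ac$ branch). Taking the union of these regions reproduces exactly the set $\mathcal B_5(b)$, and in each case the cited lemma gives \eqref{Pos1.1}, which is precisely \eqref{Q_new_new}. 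The boundary wavenumber $\tilde c = -\tfrac{1}{90}(19+\sqrt{181})$ is forced upon us by the crossover computation in Remark~\ref{rem:Pos}, so the threshold in Definition~\ref{New Dis_Par} is the natural one.

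The remaining ingredient is to certify that the error term in \eqref{eq:leading-2}, which is $O(\lambda(t)^{-1})$ times the full $H^3$ bilinear form in $\wt f, \wt g$, is genuinely lower order and can be absorbed. Here I would note that in each of Lemmas~\ref{lem:Pos1}--\ref{lem:Pos4} the strict positivity \eqref{Pos1.2} is obtained with a fixed constant $c_0 > 0$ controlling the \emph{entire} $H^3$ form $\wt f^2 + \wt f_x^2 + \wt f_{xx}^2 + \wt f_{xxx}^2$ (and likewise for $\wt g$), so for $t$ large enough that $C/\lambda(t) < c_0$ the $O(\lambda(t)^{-1})$ remainder is swallowed, leaving half of the gain; then the equivalence of local $H^1$ norms (Lemma~\ref{lem:L2 comparable}, with $\phi = \sech^2(x/\lambda(t))$ and $\lambda = \lambda(t)^{-1} \ll 1$ for large $t$) converts the $\wt f, \wt g$ control back into control of $u^2 + u_x^2 + \eta^2 + \eta_x^2$, giving \eqref{Q_new_new}. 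I expect the main obstacle to be purely organizational rather than analytic: one must carefully verify that the three piecewise conditions of \eqref{ref_dispersion_like2-1}, after normalization, exactly tile into the hypotheses of Lemmas~\ref{lem:Pos1} and~\ref{lem:Pos3} with no gap and no parameter left uncovered — in particular checking the two crossover points $-\tfrac{b}{3}$ and $-\tfrac{b}{9}$ where the branches meet, and confirming (via Remark~\ref{rem:optimal virial}) that the endpoint $\tilde a = \tilde c = -\tfrac19$ is genuinely excluded since there the quadratic form degenerates to a perfect square with no room for strict positivity.
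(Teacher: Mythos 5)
Your proposal is correct and follows essentially the same route as the paper, which proves Lemma \ref{lem:Q} simply by gathering Lemmas \ref{lem:Pos1}--\ref{lem:Pos4} together with the crossover analysis of Remark \ref{rem:Pos} (and the already-known case \eqref{dispersion_like_b} from \cite{KMPP2018}). Your additional remarks on absorbing the $O(\lambda(t)^{-1})$ remainder and on the degenerate endpoint $\tilde a=\tilde c=-\tfrac19$ are consistent with how the paper handles these points inside the individual positivity lemmas.
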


\begin{remark}
Lemma \ref{lem:Q} is valid under Definition \ref{New Dis_Par}, and it is the main new ingredient for the proof of Theorem \ref{TH1}. Indeed, once we have the validity of \eqref{Q_new_new}, the rest of the proof will follow with standard energy methods.
\end{remark}

\subsection{End of Proof of Theorem \ref{TH1}} The end of the proof is similar to the one in \cite{KMPP2018}. For the sake of reader's convenience, we give, here, a sketch of the proof.

\medskip

Let $\la=\lambda(t)$ be the time-dependent function given by
\begin{equation}\label{eq:lambda000}
\lambda(t) := \frac{t}{\log^2t}, \quad t\geq 2.
\end{equation}
Note that
\begin{equation}\label{eq:lambda0001}
\lambda'(t) = \frac{1}{\log^2 t}\left(1-\frac{2}{\log t} \right) \quad \mbox{and} \quad \frac{\lambda'(t)}{\lambda(t)} = \frac{1}{t}\left(1-\frac{2}{\log t} \right).
\end{equation}

\begin{proposition}[Weak decay property]\label{prop:TH1}
Let $(a,b,c)$ be \emph{refined dispersion-like} parameters defined as in Definition \ref{New Dis_Par}. Let $(u,\eta)(t)$ be $H^1 \times H^1$ global solutions to \eqref{boussinesq0000} such that for some $\ve>0$ small
\[
\|(u_0,\eta_0)\|_{H^1\times H^1}< \ve.
\]
Then, we have
\begin{equation}\label{eq:TH1-1}
\int_2^{\infty}\frac{1}{\lambda(t)}\int \sech^2 \left(\frac{x}{\lambda(t)}\right) \left(u^2 + (\px u)^2 + \eta^2 + (\px \eta)^2\right)(t,x) \, dx \,dt\lesssim \ve^2.
\end{equation}
As an immediate consequence, there exists an increasing sequence of time $\{t_n\}$ $(t_n \to \infty$ as $n \to \infty)$ such that
\begin{equation}\label{eq:TH1-2}
\int \sech^2 \left(\frac{x}{\lambda(t_n)}\right) \left(u^2 + (\px u)^2 + \eta^2 + (\px \eta)^2\right)(t_n,x) \; dx \longrightarrow 0 \mbox{ as } n \to \infty.
\end{equation}
\end{proposition}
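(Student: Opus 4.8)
The plan is to establish \eqref{eq:TH1-1} by integrating the virial identity in time, and then deduce \eqref{eq:TH1-2} as an immediate corollary via a standard contradiction argument.

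\medskip

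\textbf{Step 1: the time-dependent virial functional.} First I would work with the functional $\mathcal H(t)=\mathcal H_\alpha(t)=\mathcal I(t)+\alpha\mathcal J(t)$ from \eqref{H}, using the weight $\vp(t,x)=\tanh(x/\lambda(t))$ with $\lambda(t)=t/\log^2 t$, $t\geq 2$, and $\alpha\in\R$ chosen as in the positivity analysis (Lemmas \ref{lem:Pos1}--\ref{lem:Pos4}, assembled in Lemma \ref{lem:Q}). By Proposition \ref{prop:general virial} combined with the extra term \eqref{eq:phi'} coming from the time-dependence of the weight,
\[
\frac{d}{dt}\mathcal H(t) = \mathcal Q(t) + \mathcal{SQ}(t) + \mathcal{NQ}(t) + (\text{terms from }\partial_t\vp).
\]
By Lemma \ref{lem:Q}, since $(a,b,c)$ are refined dispersion-like parameters, for $t$ large enough $\mathcal Q(t)$ dominates the localized $H^1\times H^1$ norm:
\[
\mathcal Q(t) \gtrsim \frac{1}{\lambda(t)}\int \sech^2\!\Big(\frac{x}{\lambda(t)}\Big)\big(u^2+u_x^2+\eta^2+\eta_x^2\big).
\]

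\textbf{Step 2: absorbing the error terms.} The remaining terms must be shown to be negligible compared to $\mathcal Q(t)$. The lower-order quadratic terms $\mathcal{SQ}(t)$ in \eqref{eq:small linear} carry factors $\vp''$ or $\vp'''$, hence an extra power of $1/\lambda(t)$, and the error terms in \eqref{eq:leading-2} are also $O(\lambda(t)^{-1})$ times the local $H^3$ norm of the canonical variables, which by Lemma \ref{lem:L2 comparable} is comparable to the local $H^1$ norm of $(u,\eta)$; similarly the weight-derivative terms \eqref{eq:phi'} carry a factor $\lambda'(t)/\lambda(t)=\tfrac1t(1-\tfrac2{\log t})$, which is $o(1/\lambda(t))$ since $\lambda(t)=t/\log^2 t$. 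Thus for $t\geq t_\ast$ sufficiently large all these contributions are absorbed into half of $\mathcal Q(t)$. The cubic term $\mathcal{NQ}(t)$ in \eqref{eq:nonlinear} is handled by smallness: each summand is of the form $\int\vp'\,(\text{quadratic})\cdot(\text{one factor in }L^\infty)$, and by Sobolev embedding $\|u\|_{L^\infty}+\|\eta\|_{L^\infty}\lesssim\|(u,\eta)\|_{H^1\times H^1}\lesssim\ve$ (using the energy conservation \eqref{Energy} to propagate smallness globally in time); hence $|\mathcal{NQ}(t)|\lesssim \ve\,\mathcal Q(t)$, again absorbable for $\ve$ small. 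One small subtlety: the nonlocal operators $\nlop$ appearing in $\mathcal{NQ}$ and $\mathcal{SQ}$ must be controlled, but $\nlop$ is bounded on $L^2$ and commutes favorably with the smooth bounded weight $\vp'$, at the cost of commutator terms that again gain a factor $1/\lambda(t)$.

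\medskip

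\textbf{Step 3: integration in time.} Combining Steps 1--2, there is $t_\ast\geq 2$ and $c_0>0$ with
\[
\frac{d}{dt}\mathcal H(t) \geq c_0\,\frac{1}{\lambda(t)}\int \sech^2\!\Big(\frac{x}{\lambda(t)}\Big)\big(u^2+u_x^2+\eta^2+\eta_x^2\big)(t,x)\,dx, \qquad t\geq t_\ast.
\]
Integrating from $t_\ast$ to $T$ and using the uniform bound $|\mathcal H(t)|\lesssim \|(u,\eta)(t)\|_{H^1\times H^1}^2\lesssim\ve^2$ from \eqref{H_est} and global smallness, we get
\[
\int_{t_\ast}^{T}\frac{1}{\lambda(t)}\int \sech^2\!\Big(\frac{x}{\lambda(t)}\Big)\big(u^2+u_x^2+\eta^2+\eta_x^2\big)\,dx\,dt \;\leq\; \frac{1}{c_0}\big(\mathcal H(T)-\mathcal H(t_\ast)\big)\;\lesssim\;\ve^2,
\]
uniformly in $T$; letting $T\to\infty$ gives \eqref{eq:TH1-1} on $[t_\ast,\infty)$, and the integral over $[2,t_\ast]$ is trivially finite and $O(\ve^2)$, completing \eqref{eq:TH1-1}. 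Finally, \eqref{eq:TH1-2} follows because if the integrand $h(t):=\tfrac1{\lambda(t)}\int\sech^2(x/\lambda(t))(u^2+u_x^2+\eta^2+\eta_x^2)\,dx$ satisfied $\liminf_{t\to\infty}\lambda(t)h(t)=:m>0$, then $h(t)\gtrsim m/\lambda(t)=m\log^2 t/t$ for all large $t$, whose integral diverges, contradicting \eqref{eq:TH1-1}; hence there is a sequence $t_n\to\infty$ with $\lambda(t_n)h(t_n)\to 0$, which is precisely \eqref{eq:TH1-2}.

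\medskip

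\textbf{Main obstacle.} The delicate point is Step 2: verifying that \emph{every} error term — the $\mathcal{SQ}$ terms, the $O(1/\lambda)$ remainders from passing to canonical variables in \eqref{eq:leading-2}, the commutators generated by $\nlop$ against the localizing weight, and the weight-time-derivative terms \eqref{eq:phi'} — is genuinely of lower order than the coercive quantity $\mathcal Q(t)\sim \lambda(t)^{-1}\int\sech^2(\cdots)$, uniformly for $t$ large. This is where the specific choice $\lambda(t)=t/\log^2 t$ is used (it decays slowly enough that $\lambda'(t)/\lambda(t)\ll 1/\lambda(t)$ yet $\int dt/\lambda(t)=\infty$), and it is the step that requires care rather than just the positivity input from Lemma \ref{lem:Q}. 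Everything else is bookkeeping with Cauchy--Schwarz, Sobolev embedding in one dimension, and the $L^2$-boundedness of $\nlop$.
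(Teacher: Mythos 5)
Your proposal follows essentially the same route as the paper's proof: the functional $\mathcal H(t)=\mathcal I(t)+\alpha\,\mathcal J(t)$ with weight $\tanh(x/\lambda(t))$ and $\lambda(t)=t/\log^2 t$, the coercivity of $\mathcal Q(t)$ from Lemma \ref{lem:Q}, absorption of $\mathcal{SQ}(t)$, $\mathcal{NQ}(t)$ and the weight-derivative contributions, integration in time against the uniform bound \eqref{H_est}, and the same $\liminf$/integrability argument for \eqref{eq:TH1-2}. The one imprecision is your claim that the terms \eqref{eq:phi'} are entirely absorbed into $\tfrac12\mathcal Q(t)$: their weight is $\frac{x}{\lambda(t)}\vp'\bigl(\frac{x}{\lambda(t)}\bigr)\sim y\sech^2 y$, which is \emph{not} dominated by $\sech^2 y$, so one must split (as in \eqref{VH_est}) into a localized piece absorbed with a vanishing coefficient $\log^{-1/2}t$ and a non-localized remainder of size $O(\ve^2/(t\log^{3/2}t))$ that is carried along as a separate time-integrable source term rather than absorbed --- this does not affect the conclusion, since $\int_2^\infty dt/(t\log^{p}t)<\infty$ for $p>1$.
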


\begin{proof}
We choose, in Proposition \ref{prop:general virial}, the weight $\vp = \tanh$ with $\la(t)$ given by \eqref{eq:lambda000}. Analogous arguments in the proof of Propositions 5.1 and 5.2 (see also Remark \ref{rem:phi'}) in \cite{KMPP2018} yield
\[
\eqref{eq:phi'} \lesssim \frac{1}{\lambda(t)} \int \sech^2\Big( \frac{x}{\lambda(t)}\Big) \left(u^2 + (\px u)^2 + \eta^2 + (\px \eta)^2\right)+\frac{\ve^2}{t \log ^2 t},
\]
\[
|\mathcal{SQ}(t)| \lesssim \frac{1}{\lambda(t)^2}\int \vp' \left( \frac{x-vt}{\lambda(t)} \right)\left(u^2 + \eta^2 + u_x^2 + \eta_x^2 \right),
\]
and
\[
|\mathcal{NQ}(t)| \lesssim \frac{\norm{u}_{H^1} + \norm{\eta}_{H^1}}{\lambda(t)}\int \vp' \left( \frac{x-vt}{\lambda(t)} \right)\left(u^2 + \eta^2 + u_x^2 + \eta_x^2 \right).
\]

Together with all above, Lemma \ref{lem:Q} with the smallness assumption implies \eqref{eq:TH1-1} thanks to the fact that $\frac{1}{t\log^pt}$ is integrable on $[2,\infty)$ when $p>1$. The limit argument ensures that \eqref{eq:TH1-1} implies \eqref{eq:TH1-2}.
\end{proof}

Proposition \ref{prop:TH1} in addition to local energy estimate (see Section 6 in \cite{KMPP2018}) guarantees the strong decay property. The proof follows from the proof of Propositions 7.1 and 7.2 in \cite{KMPP2018}, thus we omit the detail (see also the proof of Proposition \ref{prop:energy2} below). We end this section with the statement of the strong decay property.
\begin{proposition}[Strong decay property]\label{prop:1}
Let $(a,b,c)$ be \emph{refined dispersion-like} parameters defined as in Definition \ref{New Dis_Par}. Let $(u,\eta)(t)$ be $H^1 \times H^1$ global solutions to \eqref{boussinesq0000} such that for some $\ve>0$ small
\[
\|(u_0,\eta_0)\|_{H^1\times H^1}< \ve.
\]
Then, we have
\begin{equation}\label{eq:TH1-3}
\lim_{t \to \infty} \int \sech^4 \left(\frac{x}{\lambda(t)}\right) \left(u^2 + (\px u)^2 + \eta^2 + (\px \eta)^2\right)(t,x) \; dx = 0.
\end{equation}
\end{proposition}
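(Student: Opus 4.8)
The plan is to upgrade the weak decay statement of Proposition \ref{prop:TH1} (decay along a sequence $t_n\to\infty$, with the time-integrability \eqref{eq:TH1-1}) to decay for the full continuous limit $t\to\infty$, exactly as in the passage from Proposition 7.1 to Proposition 7.2 in \cite{KMPP2018}. The mechanism is a second, ``localized'' virial/energy functional: instead of the $\tanh$-weight spreading mass on the whole $\sech^2$-window, one introduces a weight concentrated on a slightly smaller region, say $\psi(y)=\sech^2(y)$ or a compactly supported bump, evaluated at $x/\lambda(t)$, and one forms the corresponding energy-type quantity
\[
\mathcal E(t):=\frac12\int \psi\Big(\frac{x}{\lambda(t)}\Big)\Big(-a u_x^2 - c\eta_x^2 + u^2+\eta^2\Big)(t,x)\,dx
\]
(or rather its canonical-variable analogue, using Lemma \ref{lem:L2 comparable}). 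The point is that $\mathcal E(t)$ is nonnegative and comparable to $\int \sech^4(x/\lambda(t))(u^2+u_x^2+\eta^2+\eta_x^2)$, so proving $\mathcal E(t)\to 0$ gives \eqref{eq:TH1-3}.

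The key steps, in order: first, compute $\frac{d}{dt}\mathcal E(t)$. The bulk nonlinear part of the $abcd$ flow produces, after integration by parts, only terms supported where $\psi'(x/\lambda(t))\neq 0$, hence bounded by $\frac{1}{\lambda(t)}\int \sech^2(x/\lambda(t))(u^2+u_x^2+\eta^2+\eta_x^2)$ — precisely the integrand appearing in \eqref{eq:TH1-1}; cubic terms carry an extra factor $\|(u,\eta)\|_{H^1}\lesssim\ve$ and are absorbed; and the time-derivative-of-weight term carries the factor $\lambda'(t)/\lambda(t)\sim 1/t$, which combined with the $\sech^2$ localization is again controlled by the same integrand (up to the harmless $\ve^2/(t\log^2 t)$). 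Thus
\[
\Big|\frac{d}{dt}\mathcal E(t)\Big|\lesssim \frac{1}{\lambda(t)}\int \sech^2\Big(\frac{x}{\lambda(t)}\Big)\big(u^2+u_x^2+\eta^2+\eta_x^2\big)(t,x)\,dx+\frac{\ve^2}{t\log^2 t},
\]
and the right-hand side is integrable on $[2,\infty)$ by \eqref{eq:TH1-1}. Second, integrate this differential inequality between $t_n$ (from Proposition \ref{prop:TH1}, where $\mathcal E(t_n)\to 0$) and an arbitrary later time $t$: for $t\ge t_n$,
\[
\mathcal E(t)\le \mathcal E(t_n)+\int_{t_n}^{\infty}\Big|\frac{d}{ds}\mathcal E(s)\Big|\,ds \longrightarrow 0
\]
as $n\to\infty$, since the first term tends to $0$ and the tail of the convergent integral tends to $0$. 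Hence $\limsup_{t\to\infty}\mathcal E(t)=0$. Third, invoke the coercivity $\mathcal E(t)\gtrsim \int\sech^4(x/\lambda(t))(u^2+u_x^2+\eta^2+\eta_x^2)$ (with the implicit constant depending only on $a,b,c$, via $a,c<0$ and Lemma \ref{lem:L2 comparable}) to conclude \eqref{eq:TH1-3}. One should also record that $\mathcal E(t)$ is a priori bounded by $\ve^2$ so that all integrations are legitimate.

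I expect the main obstacle to be purely bookkeeping: choosing the localized weight $\psi$ so that (i) it is dominated by the $\sech^2$-weight of Proposition \ref{prop:TH1} on the support of $\psi'$ — so that the error terms in $\frac{d}{dt}\mathcal E(t)$ are genuinely controlled by the integrable quantity \eqref{eq:TH1-1} rather than by something larger — while (ii) $\psi$ decays fast enough (e.g. $\sech^4$-type) that $\mathcal E$ stays coercive for the local $H^1$ norm via Lemma \ref{lem:L2 comparable}, whose hypothesis $|\phi''|\le\lambda\phi$ forces a slowly varying profile. The parameter juggling between ``$\psi$ narrow enough that $\psi'$ lives inside the $\sech^2$ window'' and ``$\psi$ flat enough for the Lemma \ref{lem:L2 comparable} comparison'' is exactly the content of Propositions 7.1–7.2 of \cite{KMPP2018}, and since here the quadratic form positivity (Lemma \ref{lem:Q}) has already been secured, no genuinely new analytic difficulty arises — the computation is identical up to the new parameter range. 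For this reason I would present the proof as: ``Arguing exactly as in the proof of Propositions 7.1 and 7.2 in \cite{KMPP2018} (see also Proposition \ref{prop:energy2} below), using the local energy functional associated with a $\sech^4$-type weight, together with the time-integrability \eqref{eq:TH1-1} and the sequential limit \eqref{eq:TH1-2}, one obtains \eqref{eq:TH1-3}.''
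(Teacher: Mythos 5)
Your proposal follows essentially the same route as the paper, which likewise upgrades the sequential decay of Proposition \ref{prop:TH1} to the full limit $t\to\infty$ by differentiating a localized energy with a $\sech^4$-type weight, bounding the derivative by the time-integrable quantity \eqref{eq:TH1-1}, and integrating from the times $t_n$ of \eqref{eq:TH1-2} (this is exactly the argument of Propositions 7.1--7.2 of \cite{KMPP2018}, reproduced in the proof of Proposition \ref{prop:energy2}). The only slip is that your displayed functional omits the cubic density $u^2\eta$ appearing in \eqref{eq:local energy}; it should be kept so that $\frac{d}{dt}\mathcal E(t)$ consists solely of terms supported on $\psi'$ and $\psi_t$ (the localized-conservation-law structure you invoke), and it is then harmless for nonnegativity and coercivity by Sobolev embedding and the smallness of the data.
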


\begin{remark}
This last result ends the proof of Theorem \ref{TH1}.
\end{remark}

\subsection{Sharpness in Theorem \ref{TH1}}\label{sec:velocity}
The aim of this section devotes to justifying the sharpness in Theorem \ref{TH1} in the sense that the group velocity of linear waves associated to \eqref{boussinesq} never vanish at any (nonzero) wavenumber, in particular for $a=c$ case.

\medskip

Recall \eqref{dispersion relation_0} and \eqref{group velocity_0} (see also \cite{KMPP2018}), the formula of the dispersion relation and the group velocity of linear waves to \eqref{boussinesq} as 
\begin{equation}\label{dispersion relation}
w(k) = \frac{\pm |k| }{(1+ bk^2)}(1-ak^2)^{1/2}(1-ck^2)^{1/2}
\end{equation}
and
\begin{equation}\label{group velocity}
|w'(k)|= \frac{|abck^6 +3ack^4 -(b+2a+2c)k^2 +1|}{(1+bk^2)^2 (1-ak^2)^{1/2}(1-ck^2)^{1/2}},
\end{equation}
respectively. In what follows, we deal with the quantity
\begin{equation}\label{group velocity1}
\frac{abck^6 +3ack^4 -(b+2a+2c)k^2 +1}{(1+bk^2)^2 (1-ak^2)^{1/2}(1-ck^2)^{1/2}}.
\end{equation}

\medskip

Let us denote \eqref{group velocity1} as $P(bk^2)$, and put $a = c = \frac16 -b$. Let $\widetilde b = \frac{1}{6b} - 1$. Note that $\widetilde b$ is well-defined and $-1 < \widetilde b < 0$, since $b > \frac16$. Then, a simple calculation gives
\begin{equation}\label{eq:P}
P(\mu) = \frac{(1-\widetilde b \mu)(1+(-1-3\widetilde b)\mu - \widetilde b \mu^2)}{(1+\mu)^2 (1-\widetilde b\mu)} = \frac{1+(-1-3\widetilde b)\mu - \widetilde b \mu^2}{(1+\mu)^2},
\end{equation}
where $\mu = bk^2$.

\medskip

\begin{lemma}\label{lem:p>0}
Fix $b > \frac16$. Let $P(\mu)$ be the function given in the RHS of \eqref{eq:P}. Then, $P(\mu)$ never vanishes  for $\mu \ge 0$, provided $b > \frac{3}{16}$.
\end{lemma}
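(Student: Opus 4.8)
The plan is to analyze the numerator of $P(\mu)$ directly as a quadratic polynomial in $\mu$ and show it stays strictly positive on $[0,\infty)$ exactly when $b>\tfrac3{16}$. Write
\[
N(\mu) := -\widetilde b\,\mu^2 + (-1-3\widetilde b)\mu + 1, \qquad \widetilde b = \frac1{6b}-1 \in (-1,0),
\]
so that $P(\mu) = N(\mu)/(1+\mu)^2$ and the sign of $P$ on $\mu\ge0$ is the sign of $N$. Since $-\widetilde b>0$, the parabola $N$ opens upward, and $N(0)=1>0$. First I would observe that if the linear coefficient $-1-3\widetilde b$ is nonnegative, i.e. $\widetilde b \le -\tfrac13$ (equivalently $b \le \tfrac1{4}$... wait, $\widetilde b=-\tfrac13 \iff \tfrac1{6b}=\tfrac23 \iff b=\tfrac14$), then both coefficients of $N$ at positive powers are $\ge 0$ along with the constant term, so $N(\mu)>0$ trivially for all $\mu\ge0$; in fact for all such $b$ there is nothing to prove. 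The only interesting range is $\widetilde b > -\tfrac13$, i.e. $\tfrac16 < b < \tfrac14$, where the linear term is negative and the vertex of the upward parabola sits at a positive abscissa.

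In that range the minimum of $N$ on $\mathbb R$ (hence on $[0,\infty)$, since the vertex is at $\mu_* = \frac{-1-3\widetilde b}{2\widetilde b} > 0$) is attained at $\mu_*$, and the condition $N>0$ on $[0,\infty)$ is equivalent to the discriminant being negative:
\[
\Delta := (-1-3\widetilde b)^2 + 4\widetilde b < 0.
\]
Expanding, $\Delta = 9\widetilde b^2 + 6\widetilde b + 1 + 4\widetilde b = 9\widetilde b^2 + 10\widetilde b + 1 = (9\widetilde b+1)(\widetilde b+1)$. Since $\widetilde b+1 = \tfrac1{6b}>0$ always, the sign of $\Delta$ is that of $9\widetilde b+1$, so $\Delta<0 \iff \widetilde b < -\tfrac19 \iff \tfrac1{6b}-1 < -\tfrac19 \iff \tfrac1{6b} < \tfrac89 \iff b > \tfrac{9}{48} = \tfrac3{16}$. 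Thus for $b>\tfrac3{16}$ we get $N(\mu)>0$ for all real $\mu$, in particular on $[0,\infty)$, so $P(\mu)$ never vanishes there; and this is sharp because at $b=\tfrac3{16}$ one has $\widetilde b=-\tfrac19$, $\Delta=0$, and $N$ has a (double) positive root $\mu_* = \frac{-1-3(-1/9)}{2(-1/9)} = \frac{-2/3}{-2/9} = 3$, so $P(3)=0$ — matching the endpoint value $a=c=\tfrac16-b=-\tfrac1{48}$ flagged in the statement.

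A clean way to present this, avoiding any case split, is simply to compute $\Delta=(9\widetilde b+1)(\widetilde b+1)$ once, note $\widetilde b+1>0$, and conclude: if $b>\tfrac3{16}$ then $9\widetilde b+1<0$, hence $\Delta<0$, hence the upward parabola $N$ is everywhere positive, hence $P(\mu)\ne0$ for $\mu\ge 0$ (and in particular $P(\mu)>0$, since $P(0)=1$). The only genuinely computational step is the discriminant expansion and its factorization, which is elementary; there is no real obstacle here. The one point worth stating carefully is why it suffices to look at the discriminant rather than just $N(0)$ and the behavior at $+\infty$: because the vertex abscissa $\mu_*$ is positive (when the linear coefficient is negative), the relevant minimum over $[0,\infty)$ really is the global minimum, so $\Delta<0$ is both necessary and sufficient for positivity on the half-line; I would include this one-line remark to make the sharpness assertion airtight, and I expect verifying this sign of $\mu_*$ to be the only place requiring a moment's care.
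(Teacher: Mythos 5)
Your proof is correct, and it takes a genuinely different (purely algebraic) route from the paper's. You work with the numerator $N(\mu)=-\widetilde b\,\mu^2+(-1-3\widetilde b)\mu+1$, an upward parabola since $-\widetilde b>0$, and reduce the whole lemma to the sign of its discriminant $(1+3\widetilde b)^2+4\widetilde b=(9\widetilde b+1)(\widetilde b+1)$; since $\widetilde b+1=\tfrac1{6b}>0$, this is negative exactly when $\widetilde b<-\tfrac19$, i.e.\ $b>\tfrac3{16}$, whence $N>0$ on all of $\R$ and $P$ never vanishes on $[0,\infty)$. The paper instead differentiates $P$ itself, obtaining $P'(\mu)=\frac{(1+\widetilde b)(\mu-3)}{(1+\mu)^3}$, so that $\mu=3$ is the unique minimizer of $P$ on $[0,\infty)$ for \emph{every} admissible $b$, with minimum value $P(3)=-\frac{9\widetilde b+1}{8}$, compared against $P(0)=1$ and $P(\infty)=-\widetilde b$. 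Both arguments hinge on the same quantity $9\widetilde b+1$ and yield the same threshold; yours is slightly more economical (no differentiation, no limit at infinity, and — as you note yourself — the preliminary case split on the sign of the linear coefficient is unnecessary for the stated implication, since $\Delta<0$ alone forces an upward parabola to be everywhere positive). What the paper's calculus route buys in exchange is the $b$-independent location $\mu=3$ of the minimizer, which is what identifies the vanishing group velocity at wavenumber $k=\pm4$ at the endpoint $b=\tfrac3{16}$ in Lemma \ref{lem:conclusion}; your argument recovers the double root $\mu_*=3$ only at the threshold value of $b$, which is all that is needed for that endpoint statement anyway. Your one flagged point of care — that the vertex abscissa is positive when the linear coefficient is negative, so the discriminant condition is also necessary — is verified correctly and makes the sharpness assertion airtight.
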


\begin{proof}
We have
\begin{equation}\label{p'(mu)}
P'(\mu) = \frac{(1+\widetilde b)(\mu-3)}{(1+\mu)^3},
\end{equation}
which  says $\mu = 3$ is the critical point of $P$. Moreover, we know
\[P''(3) = \frac{1+ \widetilde b}{64} > 0,\]
which clarifies that $P$ has the minimum value at $\mu = 3$.  

\medskip

A straightforward calculation gives all maximum and minimum values of $P$ as follows:
\[P(0) = 1, \quad P(3) = -\frac{9 \widetilde b +1}{8}, \quad P(\infty) := \lim_{\mu \to \infty} P(\mu) = - \widetilde b.\]
We conclude that the maximum and minimum values of $P$ are $1$ and $-\frac{9 \widetilde b +1}{8}$, respectively. Moreover, $P$ never vanishes at any $\mu$ if and only if
\[-\frac{9 \widetilde b +1}{8} > 0 \quad \Longleftrightarrow \quad b > \frac{3}{16},\]
which completes the proof. 
\end{proof}

Together with Lemma \ref{lem:p>0} in \eqref{group velocity} and Definition \ref{New Dis_Par}, we conclude
\begin{lemma}\label{lem:conclusion}
Let $a,b,c$ satisfy \eqref{Conds}, and assume $a=c$. Then, parameter conditions defined in Definition \ref{New Dis_Par} are optimal, in the sense that the group velocity $|w'(k)|$ never vanishes for any wavenumber $k$. Moreover, for $b=\frac3{16}$, $|w'(k)|$ does vanish at nonzero wave number $k=\pm 4$. 
\end{lemma}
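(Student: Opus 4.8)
The plan is to reduce the statement to the elementary one-variable analysis of $P(\mu)$ already performed in Lemma \ref{lem:p>0}. First I would observe that the denominator in the formula \eqref{group velocity} for $|w'(k)|$ is strictly positive for every real $k$: since $a,c<0$ by \eqref{Conds} we have $1-ak^2\ge 1$ and $1-ck^2\ge 1$, while $(1+bk^2)^2>0$ because $b>0$. Hence $|w'(k)|=0$ at some $k$ if and only if the sextic numerator $abck^6+3ack^4-(b+2a+2c)k^2+1$ vanishes there, equivalently the rational expression \eqref{group velocity1} vanishes.

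Next I would specialize to $a=c$. Adding the two identities in \eqref{Conds} (and using $b=d$) gives $a+c+2b=\tfrac13$, so $a=c=\tfrac16-b$; this is exactly the regime for which the reduction leading to \eqref{eq:P} was carried out, with $\widetilde b=\tfrac1{6b}-1\in(-1,0)$ and $\mu=bk^2$. (The cancellation of the factor $1-\widetilde b\mu=1-ak^2$ between numerator and denominator there is legitimate precisely because $1-ak^2>0$.) Thus \eqref{group velocity1} equals $P(\mu)$, and as $k$ ranges over $\R$ the variable $\mu=bk^2$ ranges over the whole interval $[0,\infty)$. Therefore $|w'(k)|$ vanishes at some wavenumber if and only if $P(\mu)=0$ for some $\mu\ge 0$. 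By Lemma \ref{lem:p>0} this fails whenever $b>\tfrac3{16}$, and by Lemma \ref{B5}, Item (1), the hypothesis $a=c$ together with Definition \ref{New Dis_Par} forces exactly $b>\tfrac3{16}$ (equivalently $\widetilde b<-\tfrac19$). This proves that under the refined dispersion-like hypothesis $|w'(k)|$ never vanishes, and also explains the optimality: $b=\tfrac3{16}$ is the borderline value excluded from Definition \ref{New Dis_Par}.

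For the final assertion I would put $b=\tfrac3{16}$, so $a=c=-\tfrac1{48}$ and $\widetilde b=-\tfrac19$. By \eqref{p'(mu)} the unique critical point of $P$ on $[0,\infty)$ is $\mu=3$, and $P(3)=-\tfrac{9\widetilde b+1}{8}=0$; hence $\mu=bk^2=3$, i.e. $k^2=3/b=16$ and $k=\pm4$. A direct substitution into the numerator $abck^6+3ack^4-(b+2a+2c)k^2+1$ at $k=\pm4$ gives $\tfrac13+\tfrac13-\tfrac53+1=0$, confirming $|w'(\pm4)|=0$.

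I do not expect a genuine obstacle: the whole proof is bookkeeping on top of Lemma \ref{lem:p>0}. The only two points requiring mild care are keeping the equivalence ``$|w'|$ vanishes somewhere $\iff P$ vanishes on $[0,\infty)$'' stated for the closed range $\mu\ge0$ (this is what makes the threshold $b=\tfrac3{16}$ sharp, since $P$ attains its minimum at the interior point $\mu=3$), and quoting Lemma \ref{B5}(1) correctly so that the $a=c$ case of Definition \ref{New Dis_Par} is identified with $b>\tfrac3{16}$.
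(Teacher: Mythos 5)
Your proposal is correct and follows essentially the same route as the paper: the paper's proof of this lemma is precisely the reduction to $P(\mu)$ via \eqref{eq:P} and the invocation of Lemma \ref{lem:p>0}, together with the identification $a=c=\tfrac16-b$ and the threshold $b=\tfrac3{16}$ from Lemma \ref{B5}. Your explicit verification that $\mu=3$ corresponds to $k^2=3/b=16$ at $b=\tfrac3{16}$, and the direct numerical check of the sextic numerator at $k=\pm4$, are consistent with (and slightly more detailed than) what the paper records.
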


\begin{remark}
From Lemma \ref{lem:conclusion}, and in terms of the language of scattering theory, $(u,\eta)=(1,A(\pm 4)) e^{i(\pm 4x-w(\pm 4)t)}$ are internal modes for the linear dynamics near zero. The nonlinear dynamics in those cases should be treated using other techniques, such as the ones in \cite{KMM1}.
\end{remark}

\begin{remark}
As far as we understand, there is no classical wave-like linear PDE with zero group velocity at nonzero wave number $k$. Indeed, for linear Klein-Gordon one has
\[
\partial_t^2 u -\partial_x^2 u + u=0 \implies w'(k)= \pm \frac{|k|}{\sqrt{1+k^2}},
\]
which vanishes only at $k=0$ wave number. Also, for the weakly ill-posed improved Boussinesq model one has
\[
(1-\partial_x^2)\partial_t^2 u -\partial_x^2 u =0 \implies w'(k)= \frac{\pm  1}{(1+k^2)^{3/2}},
\]
which implies that in the limit $k\to\pm\infty$, the group velocity vanishes. Lemma \ref{lem:conclusion} certainly reveals a new phenomenon for $abcd$ in the regime $b\leq \frac3{16}$.
\end{remark}

\bigskip

\section{Decay for nonzero speeds: Proof of Theorem \ref{TH2}}\label{sec:TH2}

In this Section we start the proof of Theorem \ref{TH2}.  

\subsection{A refined variation of functional $\mathcal{H}$ and motivation in Theorem \ref{TH2}}
With a slight abuse of notation, let us choose a weight function of the form
\be\label{new_vp}
\vp = \vp\left(\frac{x-vt}{\lambda(t)}\right)
\ee
in \eqref{I} and \eqref{J}, and where $v \in \R$ is a fixed speed, and $\lambda(t)$ is a time-dependent function to be chosen later. Then it is not difficult to check that we have a refined variation of the functional $\mathcal{H}$:

\begin{proposition}\label{prop:H}
Let $\eta$ and $u$ satisfy \eqref{boussinesq0000}. Let $\mathcal H$ be the functional defined in \eqref{H} with weight function \eqref{new_vp}. For any $\alpha\in \R$ and any $t \in \R$, we have the decomposition
\begin{equation}\label{eq:refined gvirial}
\frac{d}{dt}\mathcal H(t) = \mathcal{VH}_{\lambda(t)}(t) + \mathcal Q_{\lambda(t)}(t) + \mathcal{SQ}_{\lambda(t)}(t) + \mathcal{NQ}_{\lambda(t)}(t),
\end{equation}
where $\mathcal Q_{\lambda(t)}(t)$, $\mathcal{SQ}_{\lambda(t)}(t)$ and $\mathcal{NQ}_{\lambda(t)}(t)$ are defined in \eqref{eq:leading}, \eqref{eq:small linear} and \eqref{eq:nonlinear}, respectively, by simply replacing $\vp^{(n)}$ by $\frac{1}{\lambda(t)^{n}}\vp^{(n)}$, and $\mathcal{VH}_{\lambda(t)}(t) = \mathcal{VH}_{\lambda(t)}[u,\eta](t)$ represents the new additional terms wrt \eqref{eq:gvirial}, and whose sizes are either similar or smaller compared to $\mathcal{Q}_{\lambda(t)}$. More precisely,
\begin{equation}\label{VH}
\begin{aligned}
\mathcal{VH}_{\lambda(t)}(t) :=~{}& -\frac{v}{\lambda(t)}\int \vp' \left(\frac{x-vt}{\lambda(t)} \right) (u\eta + u_x\eta_x) \\
&-\frac{\lambda'(t)}{\lambda(t)}\int\left( \frac{x-vt}{\lambda(t)} \right)\vp' \left(\frac{x-vt}{\lambda(t)} \right) (u\eta + u_x\eta_x) \\
&-\frac{\lambda'(t)}{\lambda(t)^2}\int\vp'\left( \frac{x-vt}{\lambda(t)}\right)(\eta u_x) -\frac{v}{\lambda(t)^2}\int \vp'' \left(\frac{x-vt}{\lambda(t)} \right)(\eta u_x)  \\
&-\frac{\lambda'(t)}{\lambda(t)^2}\int \left( \frac{x-vt}{\lambda(t)} \right) \vp'' \left(\frac{x-vt}{\lambda(t)} \right) (\eta u_x).
\end{aligned}
\end{equation}
\end{proposition}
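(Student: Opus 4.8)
The starting point is the Leibniz rule applied to $\mathcal H(t)=\mathcal I(t)+\alpha\mathcal J(t)$ with the weight $\vp=\vp\!\left(\frac{x-vt}{\lambda(t)}\right)$ as in \eqref{new_vp} inserted into \eqref{I} and \eqref{J}. I would split the computation into the part where $\pt$ hits the unknowns $u,\eta$ and the part where $\pt$ hits the weight:
\begin{equation*}
\frac{d}{dt}\mathcal H(t)=\underbrace{\int(\pt\vp)(u\eta+u_x\eta_x)+\alpha\int(\pt\px\vp)(\eta u_x)}_{\text{weight contribution}}+\underbrace{\int\vp\,\pt(u\eta+u_x\eta_x)+\alpha\int(\px\vp)\,\pt(\eta u_x)}_{\text{``frozen-}t\text{'' contribution}}.
\end{equation*}
All integrals are finite by \eqref{H_est} since $\vp$ is smooth and bounded with bounded derivatives; this legitimizes differentiation under the integral sign.

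\textbf{The frozen-$t$ contribution.} For each fixed $t$, the second group is precisely the quantity computed in Lemmas \ref{Virial_bous} and \ref{lem:J} and assembled in Proposition \ref{prop:general virial} (with $\beta=0$), because those identities were derived for an \emph{arbitrary} smooth bounded spatial weight, using only the equations \eqref{boussinesq0000} to replace $\pt\eta$ and $\pt u$; the nonlinear terms of \eqref{boussinesq0000} are exactly what produces $\mathcal{NQ}$. The one bookkeeping point is that in \eqref{eq:leading}, \eqref{eq:small linear}, \eqref{eq:nonlinear} the symbols $\vp',\vp'',\vp'''$ denote \emph{spatial} derivatives of the weight, and for our specific choice one has $\px^{n}\vp\!\left(\tfrac{x-vt}{\lambda(t)}\right)=\lambda(t)^{-n}\,\vp^{(n)}\!\left(\tfrac{x-vt}{\lambda(t)}\right)$. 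Substituting this into the three functionals gives exactly $\mathcal Q_{\lambda(t)}(t)+\mathcal{SQ}_{\lambda(t)}(t)+\mathcal{NQ}_{\lambda(t)}(t)$, that is, the expressions of \eqref{eq:leading}--\eqref{eq:nonlinear} with each $\vp^{(n)}$ replaced by $\lambda(t)^{-n}\vp^{(n)}$, as claimed in the statement.

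\textbf{The weight contribution: identifying $\mathcal{VH}_{\lambda(t)}$.} Here I would just differentiate the argument: $\pt\!\left(\tfrac{x-vt}{\lambda(t)}\right)=-\tfrac{v}{\lambda(t)}-\tfrac{(x-vt)\lambda'(t)}{\lambda(t)^{2}}$, so that
\begin{equation*}
\pt\Big[\vp\Big(\tfrac{x-vt}{\lambda(t)}\Big)\Big]=-\frac{v}{\lambda(t)}\vp'\Big(\tfrac{x-vt}{\lambda(t)}\Big)-\frac{\lambda'(t)}{\lambda(t)}\cdot\frac{x-vt}{\lambda(t)}\,\vp'\Big(\tfrac{x-vt}{\lambda(t)}\Big),
\end{equation*}
which contributes the first two lines of \eqref{VH} via $\tfrac{d}{dt}\mathcal I$. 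Writing $\alpha\mathcal J(t)=\alpha\int(\px\vp)\eta u_x=\frac{\alpha}{\lambda(t)}\int\vp'\!\left(\tfrac{x-vt}{\lambda(t)}\right)\eta u_x$, the time derivative of the prefactor $\lambda(t)^{-1}$ together with the chain rule applied to $\vp'\!\left(\tfrac{x-vt}{\lambda(t)}\right)$ produces the three remaining terms of $\mathcal{VH}_{\lambda(t)}$ (the ones with $\vp'$, $\vp''$ and $\tfrac{x-vt}{\lambda}\vp''$ paired with $\eta u_x$); this is exactly the nonzero-speed analogue of the $\lambda$-only computation recorded in Remark \ref{rem:phi'} and \eqref{eq:phi'}. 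Collecting these five terms gives \eqref{VH} and completes \eqref{eq:refined gvirial}.

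\textbf{Size of $\mathcal{VH}_{\lambda(t)}$.} Finally I would record, for later use, that every term of \eqref{VH} carries either a factor $\lambda(t)^{-1}$ (the $v$-terms) or a factor $\lambda'(t)/\lambda(t)$ (the dilation terms), and that for the admissible weights (e.g. $\vp=\tanh$) the profiles $\vp'$, $\tfrac{x-vt}{\lambda}\vp'$, $\lambda^{-1}\vp''$, $\tfrac{x-vt}{\lambda}\cdot\lambda^{-1}\vp''$ are all $\lesssim\sech^{2}\!\left(\tfrac{x-vt}{\lambda(t)}\right)$ up to constants; hence, by Cauchy--Schwarz, $|\mathcal{VH}_{\lambda(t)}(t)|\lesssim\tfrac{1}{\lambda(t)}\int\sech^{2}\!\left(\tfrac{x-vt}{\lambda(t)}\right)(u^{2}+\eta^{2}+u_x^{2}+\eta_x^{2})$, i.e. of size comparable to (or, for $\lambda(t)=t/\log^{2}t$, the dilation terms strictly smaller than) $\mathcal Q_{\lambda(t)}(t)$. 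This is the sense of the last sentence in the statement. The whole argument is essentially a routine Leibniz-rule computation; the only real care needed is in faithfully tracking the $\lambda(t)^{-n}$ factors coming both from the spatial chain rule (which rescales $\mathcal Q,\mathcal{SQ},\mathcal{NQ}$) and from the temporal chain rule (which generates $\mathcal{VH}$), and in keeping the drift ($v$) and dilation ($\lambda'$) contributions separate.
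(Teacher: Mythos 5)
Your proposal is correct and follows essentially the same route as the paper: the frozen-$t$ part is exactly Proposition \ref{prop:general virial} applied with the spatial weight $\vp\big(\tfrac{x-vt}{\lambda(t)}\big)$ (whose $n$-th spatial derivative produces the $\lambda(t)^{-n}\vp^{(n)}$ rescaling), while the chain-rule identities for $\tfrac{d}{dt}\vp\big(\tfrac{x-vt}{\lambda(t)}\big)$ and $\tfrac{d}{dt}\big(\lambda(t)^{-1}\vp'\big(\tfrac{x-vt}{\lambda(t)}\big)\big)$ generate the five terms of $\mathcal{VH}_{\lambda(t)}$, which is precisely the content of \eqref{Derivative Weight} and the companion identity in the paper's proof. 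Your added size estimate for $\mathcal{VH}_{\lambda(t)}$ is consistent with how the paper uses it later (cf. \eqref{VH_est1}).
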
 

\begin{proof}
The proof follows the steps of the proof of Proposition \ref{prop:general virial}, together with the calculations
\begin{equation}\label{Derivative Weight}
\frac{d}{dt} \vp\left( \frac{x-vt}{\lambda(t)}\right) = - \left( \frac{v\lambda(t) + (x-vt)\lambda'(t)}{\lambda(t)^2} \right)\vp'\left( \frac{x-vt}{\lambda(t)}\right),
\end{equation}
and
\[
\begin{aligned}
\frac{d}{dt} \left(\frac{1}{\lambda(t)}\vp'\left( \frac{x-vt}{\lambda(t)}\right) \right) = &~{} -\frac{\lambda'(t)}{\lambda(t)^2}\vp'\left( \frac{x-vt}{\lambda(t)}\right) \\
&~{} - \left( \frac{v\lambda(t) + (x-vt)\lambda'(t)}{\lambda(t)^3} \right)\vp''\left( \frac{x-vt}{\lambda(t)}\right),
\end{aligned}
\]
the rest of the proof is very similar and we skip the details.
\end{proof}

In view of the ideas in \cite{KMPP2018}, the purpose after Proposition \ref{prop:H}, and more specifically Lemma \ref{lem:leading}, is being able to show an estimate of the type 
\begin{equation}\label{1/2 positivity}
\begin{aligned}
\frac{d}{dt} \mathcal{H}(t) \ge&~{}  \frac{1-|v|}{2\la(t)} \int \vp' (u^2+\eta^2 + u_x^2 + \eta_x^2) \\
& ~ {} + \mbox{(positive quantities)} + \mbox{(small terms)},
\end{aligned}
\end{equation}
for $|v| < 1$. Such an estimate is valid if e.g., $A_2,A_3,B_2,B_3>\frac32$ and $A_4,B_4>\frac12$ in Lemma \ref{lem:leading}. Indeed, in that case, the identity (see \eqref{eq:H1_est})
\[
\frac12\int \vp' (f^2 + 3 f_x^2 +3f_{xx}^2 +f_{xxx}^2)  = \frac12 \int \vp' (u^2 + u_x^2)   + \mbox{(small terms)},
\]
(and similar for $g$), together with \eqref{VH}, allows us to conclude \eqref{1/2 positivity}. Consequently, $(a,b,c)$ must satisfy
{\color{black}
\begin{equation}\label{empty set}
a < 0, \;\;\; c < 0, \;\;\; 2ab+3ac \ge b^2, \;\;\; 2bc + 3ac \ge b^2.
\end{equation}
}
This ensures that small (dependent only on $v$), global solutions to \eqref{boussinesq0000} decay to zero in $J_v(t)$ as in \eqref{Jv}, uniformly in $v \in (-1,1)$.

\medskip


However, in contrast with other results (Lemma \ref{lem:Q}, and Lemma 4.5 in \cite{KMPP2018}), from the setting in Lemma \ref{lem:leading}, and under the alternative expression $(\nu,v)$ of $(a,b,c)$ as in \eqref{R0}, \eqref{1/2 positivity} seems not possible to hold, even in some simple parametric settings for $(a,b,c)$. The following lemma claims that the virial identitiy \eqref{eq:refined gvirial} together with \eqref{Conds} do not allow \eqref{1/2 positivity}.

\begin{lemma}\label{lem:lack of parameters}
There is no common set of parameters $(a,b,c)$ satisfying both \eqref{Conds} and \eqref{empty set}. Moreover, \eqref{1/2 positivity} does not hold for \eqref{eq:refined gvirial} with physically well-defined parameters \eqref{Conds}. 
\end{lemma}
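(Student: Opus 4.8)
The statement has two halves: the \emph{algebraic} incompatibility of \eqref{Conds} with \eqref{empty set}, and the \emph{analytic} consequence that \eqref{1/2 positivity} cannot hold. I would treat them in that order.

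\medskip

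\textbf{Step 1: \eqref{Conds} and \eqref{empty set} have empty intersection.} First I would record the only two features of \eqref{Conds} that are needed here, in the Hamiltonian case $b=d$: adding $a+b=\tfrac12(\theta^2-\tfrac13)$ and $c+b=\tfrac12(1-\theta^2)$ gives the affine identity $a+c+2b=\tfrac13$, and since $a,c<0$ this already forces $b>\tfrac16$ (cf. \eqref{primera}). Now suppose some triple also satisfies \eqref{empty set}. Adding the two inequalities $2ab+3ac\ge b^2$ and $2bc+3ac\ge b^2$ yields $b(a+c)+3ac\ge b^2$; substituting $a+c=\tfrac13-2b$ and simplifying gives the lower bound $ac\ge b\big(b-\tfrac19\big)$. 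On the other hand, writing $-a,-c>0$ with $-(a+c)=2b-\tfrac13>0$, the AM--GM inequality gives the upper bound $ac=(-a)(-c)\le\big(\tfrac{-(a+c)}2\big)^2=\big(b-\tfrac16\big)^2$. Combining, $b\big(b-\tfrac19\big)\le\big(b-\tfrac16\big)^2$, which after expansion collapses to $\tfrac{2b}{9}\le\tfrac1{36}$, i.e. $b\le\tfrac18$; this contradicts $b>\tfrac16$. Hence no such $(a,b,c)$ exists. This part is entirely elementary once the relation $a+c+2b=\tfrac13$ is isolated.

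\medskip

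\textbf{Step 2: \eqref{1/2 positivity} is impossible.} Here the plan is to show that \eqref{empty set} is a \emph{necessary} condition for \eqref{1/2 positivity} when the latter is sought through the canonical-variable representation of Lemma \ref{lem:leading}. Pass $\mathcal Q_{\lambda(t)}(t)$ to the variables $f,g$ of \eqref{eq:fg}, use \eqref{eq:H1_est} to express $\tfrac{1-|v|}{2\lambda}\int\vp'(u^2+\eta^2+u_x^2+\eta_x^2)$ in the same variables, and discard the terms carrying $\vp''$ and $\vp'''$, which are of strictly lower order in $1/\lambda$. Then \eqref{1/2 positivity} requires the $\vp'$-quadratic form in $f$ with coefficients $\big(A_1-\tfrac{1-|v|}{2},\,A_2-\tfrac{3(1-|v|)}{2},\,A_3-\tfrac{3(1-|v|)}{2},\,A_4-\tfrac{1-|v|}{2}\big)$ (and its $g$-analogue) to be nonnegative. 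Testing the associated even symbol $\xi\mapsto \sum c_{2j}\xi^{2j}$ at $\xi=0$ and as $\xi\to\infty$ shows in particular that $A_2,B_2\ge\tfrac{3(1-|v|)}2$ and $A_4,B_4\ge\tfrac{1-|v|}2$; taking $v=0$ (the worst case, and exactly the one needed for a statement uniform in $v$) gives $A_2,B_2\ge\tfrac32$ and $A_4,B_4\ge\tfrac12$. Writing these four inequalities via \eqref{eq:A2B2}--\eqref{eq:A4B4} as bounds on the free parameter $\alpha$, the requirement that the lower bound coming from $B_2\ge\tfrac32$ not exceed the upper bound from $A_4\ge\tfrac12$ becomes, after clearing denominators with $a,c<0$, precisely $2ab+3ac\ge b^2$; symmetrically, $B_4\ge\tfrac12$ versus $A_2\ge\tfrac32$ gives $2bc+3ac\ge b^2$. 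Thus \eqref{1/2 positivity} implies \eqref{empty set}, which by Step 1 is impossible; this proves the ``moreover'' part.

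\medskip

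\textbf{Main obstacle.} Step 1 is routine. The delicate point is Step 2, namely making rigorous \emph{in what sense} \eqref{empty set} is forced by \eqref{1/2 positivity} rather than merely sufficient for it (the ``e.g.'' preceding the lemma). A nonnegative even polynomial need not have all nonnegative coefficients --- this is exactly the slack that Lemma \ref{lem:StrPos} exploits to tolerate a negative $\widetilde f_{xx}^2$ coefficient --- so one cannot demand $A_3,B_3\ge 0$. What survives unconditionally is that the constant term and the leading coefficient of such a symbol must be nonnegative, and these are precisely the four inequalities on $A_2,A_4,B_2,B_4$ whose $\alpha$-elimination reproduces \eqref{empty set}. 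I would therefore be careful to phrase the conclusion as: within the scheme of Lemma \ref{lem:leading} (pass to canonical variables, discard lower-order weights, demand pointwise positivity of the leading symbol), the estimate \eqref{1/2 positivity} cannot be obtained for any physically admissible $(a,b,c)$ --- which is exactly what forces the more elaborate, $v$-restricted construction of Definition \ref{Uni_Dis_Par} and Theorem \ref{TH2}.
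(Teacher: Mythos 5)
Your proposal is correct in substance but follows a genuinely different route from the paper in both halves. For the first claim, the paper substitutes the parametrization \eqref{R0} into each inequality of \eqref{empty set} separately and reads off a bound $b<\frac16$; you instead add the two inequalities, use the affine relation $a+c+2b=\frac13$ coming from \eqref{Conds} and AM--GM to get $b\big(b-\frac19\big)\le ac\le\big(b-\frac16\big)^2$, hence $b\le\frac18$, contradicting \eqref{primera}. Your version is coordinate-free and arguably cleaner. For the second claim, the paper's argument is shorter than yours: after splitting off the $\frac12(1,3,3,1)$ block it observes that already the two conditions $\widetilde A_2\ge 0$ and $\widetilde B_2\ge 0$ (equivalently $A_2,B_2\ge\frac32$) force $\frac{3\nu}{4b}\le\alpha\le\frac{3\nu-2}{4b}$, which is vacuous --- equivalently, $A_2+B_2=-\frac{3(a+c)}{2b}=3-\frac1{2b}<3$ --- so no appeal to \eqref{empty set} or to Step 1 is needed. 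Your route, eliminating $\alpha$ between $B_2\ge\frac32$ and $A_4\ge\frac12$ (and symmetrically) to recover exactly \eqref{empty set} and then citing Step 1, is also valid and has the merit of explaining where \eqref{empty set} comes from in the first place.

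One imprecision in Step 2 should be repaired. As written, testing the symbol at $\xi=0$ and as $\xi\to\infty$ only constrains the constant term (the $f^2$-coefficient, i.e.\ $A_1-\frac{1-|v|}{2}=\frac{|v|}{2}$) and the leading term ($A_4-\frac{1-|v|}{2}$); it says nothing about the $\xi^2$-coefficient, so the claimed necessity of $A_2,B_2\ge\frac{3(1-|v|)}{2}$ is false for $v\neq0$, where the strictly positive constant term can absorb a deficit at order $\xi^2$. At $v=0$ the constant term vanishes exactly, so the nonnegative symbol factors as $\xi^2$ times an even polynomial whose constant term is $A_2-\frac32$ and whose leading coefficient is $A_4-\frac12$; it is to this quotient that your ``constant term and leading coefficient'' argument applies. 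Since you do specialize to $v=0$, the conclusion stands, but the factorization step must be made explicit. Finally, your closing caveat --- that pointwise nonnegativity of the symbol is the relevant necessary condition only within the scheme of Lemma \ref{lem:leading}, after the $\vp''$, $\vp'''$ and nonlinear terms are discarded as lower order --- is exactly the level of rigor at which the paper itself proves and intends the ``moreover'' part of the statement.
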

\begin{proof}
Putting $a = -\frac{\nu}{2} + \frac13 - b$ and $c = \frac{\nu}{2} - b$ into \eqref{empty set}, one has
\[
b \le \frac{(1-\frac{3\nu}{2})\nu}{2(\nu+\frac13)} < \frac16, \quad b \le \frac{(1-\frac{3\nu}{2})\nu}{2(1-\nu)} < \frac16,
\]
for all $0 \le \nu <1$, which is nonsense. A simple computation also yields that \eqref{empty set} does not make sense for $a = -\frac{\nu}{2} + \frac13 - b$ and $c = \frac{\nu}{2} - b$ in the case $\nu =1$ and $b >0$. Thus, we prove that there is no common parameter $(a,b,c)$ satisfying \eqref{Conds} and \eqref{empty set}.

\medskip

Now we show that the virial identity \eqref{eq:refined gvirial} with \eqref{Conds} never allow \eqref{1/2 positivity}. Putting $a = -\frac{\nu}{2} + \frac13 - b$ and $c = \frac{\nu}{2} - b$ into \eqref{eq:A2B2}--\eqref{eq:A4B4}, one has \eqref{eq:leading-1} (only for the leading terms) as follows:
\[\begin{aligned}
& \frac12 \int \vp' \left( (f^2+g^2) + 3( f_x^2 +g_x^2) + 3 (f_{xx}^2 + g_{xx}^2) + (f_{xxx}^2 + g_{xxx}^2)\right)\\
&\qquad + \int \vp' \left( \widetilde A_2 f_x^2 + \widetilde A_3 f_{xx}^2 + \widetilde A_4 f_{xxx}^2 + \widetilde B_2 g_x^2 +\widetilde B_3 g_{xx}^2 + \widetilde B_4 g_{xxx}^2\right),
\end{aligned}\]
where
\[\widetilde A_2 = \frac{3\nu-2}{4b} - \alpha, \quad \widetilde B_2 = -\frac{3\nu}{4b} + \alpha,\]
\[\widetilde A_3 = \frac{3\nu-2}{3b} - \Bigg(2+\frac{3\nu-2}{6b}\Bigg) \alpha, \quad \widetilde B_3 =-\frac{\nu}{b} + \left(2-\frac{\nu}{2b}\right)\alpha\]
and
\[
\widetilde A_4 = \frac{3\nu-2}{12b} -\Bigg(1+\frac{3\nu-2}{6b}\Bigg) \alpha, \quad \widetilde B_4 = -\frac{\nu}{4b} + \left(1-\frac{\nu}{2b}\right)\alpha.
\]
The first  terms, in addition to the first term in the right-hand side of \eqref{VH} and \eqref{eq:H1_est}, ensures \eqref{1/2 positivity}. However, a  comparison of corresponding coefficients says that there is no $\alpha \in \R$ such that $\widetilde A_i, \widetilde B_i > 0$, $i=2,3,4$. Precisely, we have
\[
\widetilde A_2, \widetilde B_2 \ge 0 \Longleftrightarrow \frac{3\nu -2}{4b} \ge \frac{3\nu}{4b},
\]
while the left-hand side is not possible to hold even for $b > 0$ and $\nu \in \R$. Moreover, an analogous argument yields the same conclusion for all $(\nu,b) \in \mathcal R_0$. As a consequence, we prove there is no $\alpha$ such that $\widetilde A_i, \widetilde B_i > 0$, $i=2,3,4$ for $(\nu,b) \in \mathcal R_0$, which completes the proof of Lemma \ref{lem:lack of parameters}.
\end{proof}

\begin{remark}
 
Thanks to the symmetric structure of $a$ and $c$ in coefficients \eqref{eq:A2B2}--\eqref{eq:A4B4}, putting $a=c=\frac16 - b$ and $\alpha = 0$  into \eqref{eq:A2B2}--\eqref{eq:A4B4}, one has \eqref{eq:leading-1} as follows:
\begin{equation}\label{note1}
\begin{aligned}
&\int \vp' \Bigg( \frac12 (f^2 + g^2) + \Big(\frac32 -\frac{1}{4b}\Big) (f_x^2 + g_x^2) \\
& \qquad \qquad + \Big(\frac32 - \frac{1}{3b}\Big) (f_{xx}^2 + g_{xx}^2 )+ \Big(1-\frac{1}{12b}\Big) (f_{xxx}^2 + g_{xxx}^2)\Bigg),
\end{aligned}
\end{equation}
which, compared to \eqref{eq:H1_est}, cannot be reduced to the first term in right-hand side of \eqref{1/2 positivity}. However, \eqref{note1} reveals that the coefficients in terms of $f_x^2$ and $f_{xx}^2$ approach to $\frac32$, when $b \to \infty$, in that case, one may obtain the less demanding identity
\[
\frac{d}{dt} \mathcal{H}(t) \ge \frac12(1-\delta) \int \vp' (u^2+\eta^2 + u_x^2 + \eta_x^2),
\]
for $0<\delta < 1$ and sufficiently large $b$ depending on $\delta$. 

\end{remark}

\medskip

Theorem \ref{TH2} is inspired by this observation, and thus, we need a refined version of the virial estimates presented in \cite{KMPP2018} for the rigorous description of this observation.


\subsection{A refined representation of $\mathcal Q(t)$} Recall the quadratic term $\mathcal Q(t)$ described in Lemma \ref{lem:leading}:
\begin{equation}\label{Q}
\begin{aligned}
\mathcal Q(t) =& \int \vp' \Big( A_1 f^2 + A_2 f_x^2 + A_3 f_{xx}^2 + A_4 f_{xxx}^2\Big)\\
&+\int \vp' \Big( B_1 g^2 + B_2 g_x^2 + B_3 g_{xx}^2 + B_4 g_{xxx}^2\Big) + \hbox{l.o.t.},
\end{aligned}
\end{equation}
where \hbox{l.o.t.} means lower order terms, and 
\begin{equation}\label{A1B1}
A_1 = B_1 = \frac12>0,
\end{equation}

\begin{equation}\label{A2B2}
A_2 = -\alpha -\frac{3a}{2b}, \qquad B_2 = \alpha - \frac{3c}{2b},
\end{equation}

\begin{equation}\label{A3B3}
A_3 = -\left(1-\frac{a}{b}\right)\alpha -\frac{2a}{b} - \frac12, \qquad B_3 = \left(1-\frac{c}{b}\right)\alpha -\frac{2c}{b} - \frac12
\end{equation}
and
\begin{equation}\label{A4B4}
A_4= -\frac{a}{b}\left( \frac12 - \alpha\right), \qquad B_4= -\frac{c}{b}\left(\frac12 +  \alpha\right).
\end{equation}

Let $0 < v_0 < 1$ be given. The rewriting of the coefficients in \eqref{A2B2}--\eqref{A4B4} in terms of an alternative expression of the parameters given in \eqref{R0} enables us to represent $\mathcal Q(t)$ in \eqref{Q} as follows:
\begin{equation}\label{Q-1}
\begin{aligned}
& \int \vp' \Big( \frac{v_0^+}{2} f^2 + \frac{3v_0^+}{2} f_x^2 + \frac{3v_0^+}{2} f_{xx}^2 + \frac{v_0^+}{2} f_{xxx}^2\Big)\\
&+\int \vp' \Big( \frac{v_0^+}{2} g^2 + \frac{3v_0^+}{2} g_x^2 + \frac{3v_0^+}{2} g_{xx}^2 + \frac{v_0^+}{2} g_{xxx}^2\Big)\\
&+\int \vp' \Big( A_1^* f^2 + A_2^* f_x^2 + A_3^* f_{xx}^2 + A_4^* f_{xxx}^2\Big)\\
&+\int \vp' \Big( B_1^* g^2 + B_2^* g_x^2 + B_3^* g_{xx}^2 + B_4^* g_{xxx}^2\Big) + l.o.t.,
\end{aligned}
\end{equation}
where
\begin{equation}\label{v_0^+}
v_0^+ := 1- \frac{1-v_0}{2} = \frac{1+v_0}{2},
\end{equation} 
and
\begin{equation}\label{AB1}
A_1^* = B_1^* = \frac{1-v_0^+}{2}>0,
\end{equation}

\begin{equation}\label{AB2}
A_2^* = \frac{3(1-v_0^+)}{2} + \frac{3\nu-2}{4b} - \alpha, \quad B_2^* = \frac{3(1-v_0^+)}{2} -\frac{3\nu}{4b} + \alpha,
\end{equation}

\begin{equation}\label{AB3}
\begin{aligned}
A_3^* =&~{} \frac{3(1-v_0^+)}{2} + \frac{3\nu-2}{3b} - \left(2+\frac{3\nu-2}{6b}\right) \alpha ,\\
B_3^* = &~{}\frac{3(1-v_0^+)}{2} -\frac{\nu}{b} + \left(2-\frac{\nu}{2b}\right)\alpha,
\end{aligned}
\end{equation}
and
\begin{equation}\label{AB4}
\begin{aligned}
A_4^*= &~{} \frac{1-v_0^+}{2} + \frac{3\nu-2}{12b} - \left(1+\frac{3\nu-2}{6b}\right) \alpha,\\
B_4^*=&~{} \frac{1-v_0^+}{2} -\frac{\nu}{4b} + \left(1-\frac{\nu}{2b}\right)\alpha.
\end{aligned}
\end{equation}

\medskip

In order to obtain a \emph{maximal positivity condition} (see Proposition \ref{prop:CONDITION} below), we shall see that the last two terms in \eqref{Q-1} necessarily must obey a positivity condition. Let
\begin{equation}\label{k_0}
\kappa_0 = \frac{1-v_0^+}{2} =\frac{1-v_0}{4}.
\end{equation}
One has $0 < \kappa_0 <\frac14$ for all $0< v_0 < 1$. 

\begin{lemma}[Basic nonnegativity conditions]
Let $0 < v_0 < 1$ be given. Define $\kappa_0$ as in \eqref{k_0}. Then, one has
\ben
\item Nonnegativity on $f$:
\begin{equation}\label{vanish1}
\int \vp' \Big( A_1^* f^2 + A_2^* f_x^2 + A_3^* f_{xx}^2 + A_4^* f_{xxx}^2\Big) \ge 0,
\end{equation}
provided $\alpha \le \min (\bar{A}_2, \bar{A}_3, \bar{A}_4)$, where
\begin{equation}\label{A2A3A4}
\begin{aligned}
& \bar{A}_2 := 3\kappa_0 + \frac{3\nu-2}{4b}, \quad \bar{A}_3 := \frac{18\kappa_0b+2(3\nu-2)}{12b+3\nu-2}, \\
& \qquad \qquad  \hbox{and} \qquad\bar{A}_4: = \frac{12\kappa_0b+3\nu-2}{12b+2(3\nu-2)}.
\end{aligned}
\end{equation}
\item Nonnegativity on $g$: similarly,
\begin{equation}\label{vanish2}
\int \vp' \Big( B_1^* g^2 + B_2^* g_x^2 + B_3^* g_{xx}^2 + B_4^* g_{xxx}^2\Big) \ge 0,
\end{equation}
if $\alpha \ge \max (\bar{B}_2, \bar{B}_3, \bar{B}_4)$, where
\begin{equation}\label{B2B3B4}
\bar{B}_2 := \frac{3\nu}{4b} - 3\kappa_0, \quad \bar{B}_3 := \frac{2\nu-6\kappa_0b}{4b-\nu}, \quad \bar{B}_4 := \frac{\nu-4\kappa_0b}{4b-2\nu}.
\end{equation}
\een
\end{lemma}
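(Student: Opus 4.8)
The plan is elementary, and the statement should follow directly from the sign of the weight plus three linear inequalities in $\alpha$. Since the weight in \eqref{I}--\eqref{J} is $\vp=\tanh(\cdot)$, one has $\vp'=\sech^2(\cdot)>0$ pointwise, so both \eqref{vanish1} and \eqref{vanish2} are immediate as soon as each coefficient $A_i^*$ (resp. $B_i^*$), $i=1,2,3,4$, is nonnegative. For $i=1$ this is automatic from \eqref{AB1}: $A_1^*=B_1^*=\tfrac{1-v_0^+}{2}=\kappa_0>0$ because $0<v_0<1$, using \eqref{v_0^+} and \eqref{k_0}. Hence everything reduces to solving $A_i^*\ge 0$ and $B_i^*\ge 0$ for $i=2,3,4$.

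For the $f$-coefficients I would argue as follows. From \eqref{AB2}, and recalling $3\kappa_0=\tfrac{3(1-v_0^+)}{2}$, the inequality $A_2^*\ge 0$ is exactly $\alpha\le 3\kappa_0+\tfrac{3\nu-2}{4b}=\bar A_2$. For $A_3^*$ in \eqref{AB3} and $A_4^*$ in \eqref{AB4}, the coefficient of $\alpha$ is $2+\tfrac{3\nu-2}{6b}=\tfrac{12b+3\nu-2}{6b}$ and $1+\tfrac{3\nu-2}{6b}=\tfrac{6b+3\nu-2}{6b}$, respectively. The key point is that both are strictly positive on $\mathcal R_0$: if $(\nu,b)\in\mathcal R_0$ as in \eqref{R0}, then $\nu>\tfrac23-2b$, i.e. $3\nu-2>-6b$, so $6b+3\nu-2>0$ and a fortiori $12b+3\nu-2>0$. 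Dividing $A_3^*\ge0$ and $A_4^*\ge0$ by these positive quantities (so the inequality direction is preserved) and simplifying gives precisely $\alpha\le\bar A_3$ and $\alpha\le\bar A_4$ with $\bar A_3,\bar A_4$ as in \eqref{A2A3A4}. Intersecting the three conditions yields item (1).

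For the $g$-coefficients I would use the reflection symmetry $a\leftrightarrow c$: comparing \eqref{AB2}--\eqref{AB4}, the $B_i^*$ are obtained from the $A_i^*$ by replacing $\alpha$ with $-\alpha$ and $3\nu-2$ with $-3\nu$, which is the algebraic form of $a\leftrightarrow c$ under the parametrization \eqref{R0}. Thus $B_i^*\ge 0$ becomes a lower bound on $\alpha$: from \eqref{AB2}, $B_2^*\ge0\iff\alpha\ge\tfrac{3\nu}{4b}-3\kappa_0=\bar B_2$; for $B_3^*$ and $B_4^*$ the coefficients of $\alpha$ are $2-\tfrac{\nu}{2b}=\tfrac{4b-\nu}{2b}$ and $1-\tfrac{\nu}{2b}=\tfrac{2b-\nu}{2b}$, which are positive on $\mathcal R_0$ since $(\nu,b)\in\mathcal R_0$ forces $\nu<2b$, hence $2b-\nu>0$ and $4b-\nu>0$. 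Dividing and simplifying gives $\alpha\ge\bar B_3$ and $\alpha\ge\bar B_4$ as in \eqref{B2B3B4}, and intersecting the three bounds gives item (2).

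The only step that is not pure bookkeeping is the sign analysis of the $\alpha$-coefficients of $A_3^*,A_4^*$ (and $B_3^*,B_4^*$): were any of $6b+3\nu-2$, $12b+3\nu-2$, $2b-\nu$ or $4b-\nu$ to vanish or change sign, the corresponding inequality would flip and the claimed one-sided bound on $\alpha$ would be false. So the content of the lemma rests entirely on the fact that the admissible set $\mathcal R_0$ in \eqref{R0} keeps all four of these quantities strictly positive; everything else is straightforward linear algebra and the positivity $\vp'>0$.
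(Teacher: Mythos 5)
Your proposal is correct and follows exactly the paper's (very terse) argument: since $\vp'\ge 0$, it suffices to make each coefficient $A_i^*$, $B_i^*$ nonnegative, and solving the resulting linear inequalities in $\alpha$ gives the stated bounds. Your sign verification of the $\alpha$-coefficients (that $6b+3\nu-2$, $12b+3\nu-2$, $2b-\nu$ and $4b-\nu$ are strictly positive on $\mathcal R_0$) is the one nontrivial point, and you handle it correctly where the paper leaves it implicit.
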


\begin{proof}
The nonnegativity in \eqref{vanish1} holds provided $A_j^*\geq 0$, for $j=1,2,3,4$ in \eqref{AB1}-\eqref{AB4}. We readily have $\alpha \le \min (\bar{A}_2, \bar{A}_3, \bar{A}_4)$. The case for $g$ in \eqref{vanish2} is obtained in a similar fashion.
\end{proof}

In what follows, our objective will be a deeper understanding of the quantities $\bar A_j$ and $\bar B_k$ defined in \eqref{A2A3A4} and \eqref{B2B3B4}.

\subsection{Preliminary setting} We start out with some definitions.

\begin{definition}[Threshold parameters]\label{rs}
Let $0 < \sigma < \frac12$ and $\rho >0$ be given. 

\ben
\item We define the threshold parameters $r_{\pm}(\sigma,\rho)$ and $\wt{r}_{\pm}(\sigma,\rho)$ by
\begin{equation}\label{r+-min}
r_{\pm}(\sigma,\rho) := \frac23 + \frac23\rho\left(-(2\sigma+1) \pm \sqrt{(2\sigma+1)^2 - 6\sigma}\right),
\end{equation}
and
\begin{equation}\label{r+-max}
\wt{r}_{\pm}(\sigma,\rho) := \frac23\rho\left((2\sigma+1) \pm \sqrt{(2\sigma+1)^2 - 6\sigma}\right),
\end{equation}
respectively. 
\smallskip
\item Additionally, if $0 < \sigma < \frac{2-\sqrt{3}}{3}$, we further define the modified threshold parameters $s_{\pm}(\sigma,\rho)$ and $\wt{s}_{\pm}(\sigma,\rho)$ by the expressions
\begin{equation}\label{s+-min}
s_{\pm}(\sigma,\rho): = \frac23 + \frac23\rho\left(-(3\sigma+1) \pm \sqrt{(3\sigma+1)^2 - 18\sigma}\right),
\end{equation}
and
\begin{equation}\label{s+-max}
\wt{s}_{\pm}(\sigma,\rho) := \frac23\rho\left((3\sigma+1) \pm \sqrt{(3\sigma+1)^2 - 18\sigma}\right).
\end{equation}
\een
\end{definition}

The above defined parameters will be important to differentiate the possible behaviors of the parameters $\bar A_j$ and $\bar B_k$. We first prove

\begin{lemma}[Properties of $r_{\pm}(\sigma,\rho)$, $\wt{r}_{\pm}(\sigma,\rho)$, $s_{\pm}(\sigma,\rho)$ and $\wt{s}_{\pm}(\sigma,\rho)$]\label{lem:properties of points}
Let $0 < \sigma < \frac12$ and $\rho > 0$ be given. Then,
\begin{enumerate}
\item $\frac23 - 2\rho < r_-(\sigma,\rho)$.
\smallskip
\item $\wt{r}_+(\sigma,\rho) < 2\rho$.
\smallskip
\item The interval $(r_-(\sigma,\rho),r_+(\sigma,\rho))$ is well-defined and nonempty. 
\smallskip
\item The interval $(\wt{r}_-(\sigma,\rho),\wt{r}_+(\sigma,\rho))$ is well-defined and nonempty. 
\end{enumerate}
Moreover, if $0 < \sigma < \frac{2-\sqrt{3}}{3}\sim 0.09$, one has
\begin{enumerate}
\item[(5)] The interval $(s_-(\sigma,\rho),s_+(\sigma,\rho))$ is well-defined and nonempty. 
\smallskip
\item[(6)] The interval $(\wt{s}_-(\sigma,\rho),\wt{s}_+(\sigma,\rho))$ is well-defined and nonempty. 
\smallskip
\item[(7)] $s_+(\sigma,\rho) < r_+(\sigma,\rho)$ and $\wt{s}_+(\sigma,\rho) < \wt{r}_+(\sigma,\rho)$.
\smallskip
\item[(8)] $r_-(\sigma,\rho) < s_-(\sigma,\rho)$ and $\wt{r}_-(\sigma,\rho) < \wt{s}_-(\sigma,\rho) $.
\end{enumerate}
\end{lemma}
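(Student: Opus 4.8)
The plan is to prove each of the eight assertions by direct algebraic verification, since all quantities $r_\pm, \wt r_\pm, s_\pm, \wt s_\pm$ are explicit functions of $\sigma$ and $\rho$. First I would record the elementary fact that for $0<\sigma<\frac12$ the discriminant $(2\sigma+1)^2-6\sigma = 4\sigma^2-2\sigma+1$ is strictly positive (its own discriminant is $4-16<0$), so $r_\pm$ and $\wt r_\pm$ are real and distinct; similarly for $0<\sigma<\frac{2-\sqrt3}{3}$ one checks $(3\sigma+1)^2-18\sigma = 9\sigma^2-12\sigma+1 > 0$ precisely on the interval $\sigma\in(0,\frac{2-\sqrt3}{3})$ (the roots of $9\sigma^2-12\sigma+1$ are $\frac{2\pm\sqrt3}{3}$), which is exactly why this threshold appears. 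This handles the ``well-defined'' halves of (3)--(6), and the ``nonempty'' halves follow because in each pair the $+$ root strictly exceeds the $-$ root, the gap being $\frac43\rho\sqrt{\cdots}>0$ (resp. $\frac43\rho\sqrt{\cdots}>0$).

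Next I would dispatch (1) and (2). For (1), $r_-(\sigma,\rho)-(\frac23-2\rho) = \frac23\rho\big(3 - (2\sigma+1) - \sqrt{(2\sigma+1)^2-6\sigma}\big) = \frac23\rho\big(2-2\sigma - \sqrt{4\sigma^2-2\sigma+1}\big)$, and since $0<\sigma<\frac12$ we have $2-2\sigma>1>0$, so it suffices to compare squares: $(2-2\sigma)^2 - (4\sigma^2-2\sigma+1) = 4 - 8\sigma + 4\sigma^2 - 4\sigma^2 + 2\sigma - 1 = 3 - 6\sigma = 3(1-2\sigma) > 0$. For (2), $2\rho - \wt r_+(\sigma,\rho) = \frac23\rho\big(3 - (2\sigma+1) - \sqrt{(2\sigma+1)^2-6\sigma}\big)$, which is the identical expression, so the same square comparison gives positivity. (This also shows $r_-$ and $2\rho-\wt r_+$ are literally equal, a fact worth noting but not needed.)

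Finally, for (7) and (8) I would compare $s_\pm$ with $r_\pm$ termwise. Note $s_+(\sigma,\rho) - r_+(\sigma,\rho) = \frac23\rho\Big(-(3\sigma+1)+(2\sigma+1) + \sqrt{(3\sigma+1)^2-18\sigma} - \sqrt{(2\sigma+1)^2-6\sigma}\Big) = \frac23\rho\Big(-\sigma + \sqrt{9\sigma^2-12\sigma+1} - \sqrt{4\sigma^2-2\sigma+1}\Big)$, and one shows this is negative by verifying $\sqrt{9\sigma^2-12\sigma+1} < \sigma + \sqrt{4\sigma^2-2\sigma+1}$; squaring the right side and subtracting gives $(\sigma^2 + 4\sigma^2-2\sigma+1 + 2\sigma\sqrt{4\sigma^2-2\sigma+1}) - (9\sigma^2-12\sigma+1) = -4\sigma^2 + 10\sigma + 2\sigma\sqrt{4\sigma^2-2\sigma+1} = 2\sigma\big(5 - 2\sigma + \sqrt{4\sigma^2-2\sigma+1}\big) > 0$ on $0<\sigma<\frac12$. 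The comparison $\wt s_+ < \wt r_+$ uses the same inner inequality with the $\frac23$ shift removed, and (8), $r_- < s_-$ and $\wt r_- < \wt s_-$, follows by the mirror-image computation (flip the sign of the square roots, which reverses the inequality). The main obstacle — though it is more bookkeeping than genuine difficulty — is keeping the casework on the sign of $\sigma$ straight when clearing square roots, i.e. making sure each quantity whose square I take is nonnegative before squaring; all of these nonnegativities hold on the stated $\sigma$-ranges, so no extra hypotheses are needed.
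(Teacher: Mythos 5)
Your proposal follows essentially the same route as the paper: everything is checked by direct algebra on the explicit formulas, using the positivity of the discriminants $(2\sigma+1)^2-6\sigma=4\sigma^2-2\sigma+1$ and $(3\sigma+1)^2-18\sigma=9\sigma^2-12\sigma+1$ (the latter positive exactly for $\sigma<\frac{2-\sqrt3}{3}$, which is indeed where that threshold comes from) and the bound $2\sigma+1+\sqrt{(2\sigma+1)^2-6\sigma}<3$ for items (1)--(2). Items (1)--(6) and the comparison $s_+<r_+$ are correct and complete as written.

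The one step you should repair is the claim that $\wt s_+<\wt r_+$ ``uses the same inner inequality with the $\frac23$ shift removed.'' Removing the shift flips the sign of the linear term: writing $D_r=(2\sigma+1)^2-6\sigma$ and $D_s=(3\sigma+1)^2-18\sigma$, one has
\[
s_+-r_+=\tfrac23\rho\bigl(-\sigma+\sqrt{D_s}-\sqrt{D_r}\bigr),\qquad
\wt s_+-\wt r_+=\tfrac23\rho\bigl(+\sigma+\sqrt{D_s}-\sqrt{D_r}\bigr),
\]
so $s_+<r_+$ (and likewise $\wt r_-<\wt s_-$) needs only $\sqrt{D_s}<\sigma+\sqrt{D_r}$, which you prove, whereas $\wt s_+<\wt r_+$ and $r_-<s_-$ need the strictly stronger inequality $\sigma+\sqrt{D_s}<\sqrt{D_r}$, which you never actually establish. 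It does hold on the stated range: squaring once and using $D_r-D_s=5\sigma(2-\sigma)$ reduces it to $\sqrt{D_s}<5-3\sigma$, and squaring again to $18\sigma<24$. So the gap is easily patched by one more round of the same squaring technique, but it is a genuinely different computation, not a mirror image of the one you did. For what it is worth, the paper's own proof is equally terse at exactly this point: it derives (7) and (8) from $\sqrt{D_r}>\sqrt{D_s}$ alone, which likewise settles only the two ``weak'' comparisons directly.
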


\begin{proof}
A straightforward computation using \eqref{r+-min} and \eqref{r+-max} shows items (1) and (2). Indeed, (1) follows from the bound
\[
2\sigma +1 + \sqrt{ (2\sigma+1)^2 -6\sigma} <3, \qquad 0 < \sigma < \frac12.
\]
Additionally, (2) follows from the same identity. Since 
\begin{equation}\label{positive}
(2\sigma+1)^2-6\sigma = (2\sigma -1)^2 + 2\sigma>0,
\end{equation} 
we prove Items (3) and (4).

\medskip

Now we deal with the second part. For $0 < \sigma < \frac{2-\sqrt{3}}{3}$, $(3\sigma+1)^2 - 18\sigma > 0$, hence items (5) and (6) immediately hold. The fact that
\[
\sqrt{(2\sigma+1)^2-6\sigma} > \sqrt{(3\sigma+1)^2-18\sigma},
\]
for $0 < \sigma < 2$, ensures item (7). Item (8) follows similarly, provided $\sigma \in (0,2)$.
\end{proof}

\begin{definition}
In what follows we denote by $r_{\pm}$, $\wt{r}_{\pm}$, $s_{\pm}$ and $\wt{s}_{\pm}$ the values $r_{\pm}(\kappa_0,b)$, $\wt{r}_{\pm}(\kappa_0,b)$, $s_{\pm}(\kappa_0,b)$ and $\wt{s}_{\pm}(\kappa_0,b)$, respectively, introduced in Definition \ref{rs}.
\end{definition}

More precisely, and in order to avoid confusion below, we have
\begin{equation}\label{r+-min_new}
r_{\pm} = \frac23 + \frac23 b\left(-(2\kappa_0+1) \pm \sqrt{(2\kappa_0+1)^2 - 6\kappa_0}\right),
\end{equation}
and
\begin{equation}\label{r+-max_new}
\wt{r}_{\pm}= \frac23b \left((2\kappa_0+1) \pm \sqrt{(2\kappa_0+1)^2 - 6\kappa_0}\right).
\end{equation}
Similarly, for $0 < \kappa_0 < \frac{2-\sqrt{3}}{3}$,
\begin{equation}\label{s+-min_new}
s_{\pm}: = \frac23 + \frac23b\left(-(3\kappa_0+1) \pm \sqrt{(3\kappa_0+1)^2 - 18\kappa_0}\right),
\end{equation}
and
\begin{equation}\label{s+-max_new}
\wt{s}_{\pm} := \frac23b\left((3\kappa_0+1) \pm \sqrt{(3\kappa_0+1)^2 - 18\kappa_0}\right).
\end{equation}

\begin{remark}
Lemma \ref{lem:properties of points} is valid for the case when $(\sigma,\rho) = (\kappa_0, b)$, where $\kappa_0$ is in \eqref{k_0}, in particular for \eqref{r+-min_new}--\eqref{s+-max_new}, since $0 < \kappa_0 < \frac14$ and $b > \frac16$.
\end{remark}

\subsection{Quantitative comparison, first part} Let $\kappa_0$ be as in \eqref{k_0}. In what follows, our objective will be to compare the quantities $\bar A_j$ in the case $v_0$ small. 

\begin{lemma}[Case $\kappa_0$ large, i.e. $v_0$ small]\label{lem:upper minimum}
Let $\kappa_0$ satisfy $\frac{2-\sqrt{3}}{3} \le \kappa_0 < \frac14$. Let $\bar{A}_2, \bar{A}_3,\bar{A}_4$ be given in \eqref{A2A3A4} and $(\nu, b) \in \mathcal R_0$ \eqref{R0}. Let  $r_{\pm}$ be defined in \eqref{r+-min_new}. Then the following hold:
\begin{enumerate}
\item $\bar{A}_2 \ge \bar{A}_3$.
\item $\min (\bar{A}_3,\bar{A}_4) = \bar{A}_3$, if $\nu \in [r_-,r_+]$, otherwise $\min (\bar{A}_3,\bar{A}_4)=\bar{A}_4$.
\end{enumerate}
\end{lemma}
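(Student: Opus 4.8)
The statement is purely algebraic: we must compare three explicit rational functions of $\nu$ (with $b$, $\kappa_0$ as parameters) on the admissible set $\mathcal R_0$. The plan is to reduce each comparison to the sign of a polynomial in $\nu$, and then use the constraints $(\nu,b)\in\mathcal R_0$ (namely $\tfrac23-2b<\nu<2b$, $b>\tfrac16$) together with $\tfrac{2-\sqrt3}{3}\le\kappa_0<\tfrac14$ to fix that sign. Recall from \eqref{A2A3A4} that
\[
\bar A_2=3\kappa_0+\frac{3\nu-2}{4b},\qquad \bar A_3=\frac{18\kappa_0b+2(3\nu-2)}{12b+3\nu-2},\qquad \bar A_4=\frac{12\kappa_0b+3\nu-2}{12b+2(3\nu-2)}.
\]

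\textbf{Step 1 (proof of (1), i.e. $\bar A_2\ge\bar A_3$).} First I would observe that the common ``numerator shape'' $3\nu-2$ appears throughout, so it is natural to set $m:=3\nu-2$; note that on $\mathcal R_0$ one has $m>-6b$ and hence the denominators $12b+m$ and $12b+2m$ are positive (indeed $12b+m>6b>0$ and $12b+2m>0$). Then $\bar A_2-\bar A_3=\bigl(3\kappa_0+\tfrac{m}{4b}\bigr)-\tfrac{18\kappa_0 b+2m}{12b+m}$. Putting this over the common denominator $4b(12b+m)>0$, the sign of $\bar A_2-\bar A_3$ equals the sign of
\[
(12\kappa_0 b+m)(12b+m)-4b(18\kappa_0 b+2m)=m^2+12\kappa_0 b m+4bm-72\kappa_0 b^2=m^2+4bm(3\kappa_0+1)-72\kappa_0 b^2 .
\]
Viewing this as a quadratic in $m$, its roots are $m=-2b(3\kappa_0+1)\pm 2b\sqrt{(3\kappa_0+1)^2+18\kappa_0}$; the positive root is $\ge 2b(\sqrt{\cdots}-(3\kappa_0+1))$, and I expect a short estimate (using $\kappa_0<\tfrac14$) to show this positive root exceeds $m=3\nu-2$ for all admissible $\nu$ only fails on a bounded window — so the cleaner route is to check the quadratic is $\ge 0$ throughout by showing the smaller root lies below $m=-6b$ while the larger root lies above the maximal $m=6b-2$ (attained as $\nu\to 2b$). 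This is where I expect the one genuinely delicate inequality: verifying $2b\bigl(\sqrt{(3\kappa_0+1)^2+18\kappa_0}-(3\kappa_0+1)\bigr)\ge 6b-2$, or equivalently an inequality linking $b$ and $\kappa_0$; since $\kappa_0=\tfrac{1-v_0}{4}$ and we are in the regime $\kappa_0$ close to $\tfrac14$ (i.e. $v_0$ small), the factor $\sqrt{\cdot}-(3\kappa_0+1)$ stays bounded away from $0$, and combined with $b>\tfrac16$ this should close. I would present this as an explicit polynomial sign computation rather than through the root formulas if that turns out shorter.

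\textbf{Step 2 (proof of (2), the dichotomy for $\min(\bar A_3,\bar A_4)$).} Here I would compute $\bar A_3-\bar A_4$ over the common (positive) denominator $(12b+m)(12b+2m)$; the sign of $\bar A_3-\bar A_4$ equals the sign of
\[
(18\kappa_0 b+2m)(12b+2m)-(12\kappa_0 b+m)(12b+m),
\]
which expands to a quadratic in $m$ with leading coefficient $4-1=3>0$. Collecting terms gives $3m^2+\bigl(\text{linear in }b,\kappa_0\bigr)m+\bigl(\text{quadratic in }b\bigr)$, and after dividing by $3$ and completing the square one recognizes — this is the whole point of Definition \ref{rs} — that the two roots in the variable $\nu$ are exactly $r_-$ and $r_+$ from \eqref{r+-min_new}. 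Concretely I expect the roots to come out as $\nu=\tfrac23+\tfrac23 b\bigl(-(2\kappa_0+1)\pm\sqrt{(2\kappa_0+1)^2-6\kappa_0}\bigr)$, matching \eqref{r+-min_new} verbatim; the discriminant is positive by \eqref{positive}. Since the leading coefficient is positive, $\bar A_3-\bar A_4\le 0$ precisely for $\nu\in[r_-,r_+]$ and $\ge 0$ outside, which is exactly statement (2). By Lemma \ref{lem:properties of points}(1)--(3), the interval $[r_-,r_+]$ is nonempty and fits inside $\mathcal R_0$'s $\nu$-range, so the dichotomy is genuinely nontrivial (both alternatives occur).

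\textbf{Main obstacle.} The conceptual content is minimal — everything is a discriminant computation — so the real work, and the only place an error can hide, is in Step 1: confirming that the quadratic $m^2+4bm(3\kappa_0+1)-72\kappa_0 b^2$ is nonnegative for \emph{all} $(\nu,b)\in\mathcal R_0$ with $\kappa_0\in[\tfrac{2-\sqrt3}{3},\tfrac14)$, which requires carefully using the upper constraint $\nu<2b$ (equivalently $m<6b-2$) and the lower bound $b>\tfrac16$ in tandem. I would handle it by bounding the larger root of that quadratic from below by $6b-2$ via the elementary inequality $\sqrt{X}\ge \tfrac{X+Y^2}{2Y}$-type trick, or simply by plugging $m=6b-2$ in and checking the resulting inequality in $(b,\kappa_0)$ reduces to something manifestly true on the stated parameter box. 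The computations in Step 2 are routine once one trusts that the roots must reproduce \eqref{r+-min_new} — indeed Definition \ref{rs} was evidently reverse-engineered from exactly this comparison, so matching the algebra to it is essentially a consistency check.
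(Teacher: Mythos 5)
Your Step 2 is correct and is essentially the paper's argument: the sign of $\bar A_3-\bar A_4$ is governed by the quadratic $3\bigl(m^2+4b(2\kappa_0+1)m+24\kappa_0 b^2\bigr)$ in $m=3\nu-2$, whose roots reproduce $r_\pm$ and whose discriminant is positive by \eqref{positive}; the dichotomy follows. Your Step 1, however, contains a fatal sign error that derails the whole plan for Item (1).

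Expanding $(12\kappa_0 b+m)(12b+m)-4b(18\kappa_0 b+2m)$ gives $144\kappa_0b^2-72\kappa_0b^2=+72\kappa_0 b^2$ for the constant term, so the numerator of $\bar A_2-\bar A_3$ is
\[
m^2+4b(3\kappa_0+1)m+72\kappa_0 b^2,
\]
not $m^2+4b(3\kappa_0+1)m-72\kappa_0 b^2$ as you wrote (this matches \eqref{A2-A3} in the paper). With your sign, the quadratic has roots of opposite signs and is negative at $m=0$, i.e.\ at $\nu=\tfrac23$, which lies in $\mathcal R_0$ for any $b>\tfrac13$; so the root-location strategy you sketch (smaller root below $-6b$, larger root above $6b-2$) cannot possibly succeed, and indeed you can check directly that at $\nu=\tfrac23$ one has $\bar A_2=3\kappa_0>\tfrac{3\kappa_0}{2}=\bar A_3$, contradicting what your polynomial would predict. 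With the correct sign the argument collapses to one line and is exactly where the hypothesis on $\kappa_0$ enters: the discriminant is $16b^2\bigl((3\kappa_0+1)^2-18\kappa_0\bigr)=16b^2(9\kappa_0^2-12\kappa_0+1)$, which is $\le 0$ precisely for $\tfrac{2-\sqrt3}{3}\le\kappa_0\le\tfrac{2+\sqrt3}{3}$, so the quadratic is nonnegative for \emph{all} $m$ and no constraint from $\mathcal R_0$ is needed. Your write-up never uses the lower bound $\kappa_0\ge\tfrac{2-\sqrt3}{3}$ in any essential way, which should have been a warning sign: that bound is the entire content of Item (1).
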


\begin{proof}
A computation gives
\begin{equation}\label{A2-A3}
\bar{A}_2 - \bar{A}_3 = \frac{(3\nu-2)^2 + 4b(3\kappa_0+1)(3\nu-2) + 72\kappa_0b^2}{4b(12b+3\nu-2)}. 
\end{equation}
Note that the denominator $4b(12b+3\nu-2) >0$ for all $(\nu,b) \in \mathcal R_0$. If {\color{black} $(3\kappa_0 +1)^2 - 18\kappa_0 \le 0$}, we have the numerator $(3\nu-2)^2 + 4b(3\kappa_0+1)(3\nu-2) + 72\kappa_0b^2 \ge 0$ for all possible value of $3\nu-2$, and hence Item (1) holds. Note that the condition $\frac{2-\sqrt{3}}{3} \le \kappa_0 \; (< \frac14)$ ensures $(3\kappa_0 +1)^2 - 18\kappa_0 \le 0$.

\medskip

Similarly, a straightforward calculation in \eqref{A2A3A4} gives
\[
\bar{A}_3 - \bar{A}_4 = \frac{3\left((3\nu-2)^2 + 4b(2\kappa_0+1)(3\nu-2) + 24\kappa_0b^2\right)}{(12b+3\nu-2)(12b+2(3\nu-2))}.
\]
Note that the denominator $(12b+3\nu-2)(12b+2(3\nu-2)) > 0$ for all $(\nu,b) \in \mathcal R_0$, see \eqref{R0}. From \eqref{positive}, we know that $(3\nu-2)^2 + 4b(2\kappa_0+1)(3\nu-2) + 24\kappa_0b^2$ is \emph{nonnegative} if 
\[
2(-(2\kappa_0+1)-\sqrt{(2\kappa_0+1)^2-6\kappa_0})b \le 3\nu -2 \le 2(-(2\kappa_0+1)+ \sqrt{(2\kappa_0+1)^2-6\kappa_0})b,
\]
that is, from \eqref{r+-min_new},
\[
3r_- - 2 \le 3\nu -2 \le 3 r_+ -2.
\]
Otherwise, it is positive, which exactly proves Item (2).
\end{proof}

\begin{lemma}[Case $\kappa_0$ small, i.e. $v_0$ large]\label{lem:lower minimum}
Let $\kappa_0$ satisfy now $0 < \kappa_0 < \frac{2-\sqrt{3}}{3}$. Let $\bar{A}_2, \bar{A}_3,\bar{A}_4$ be given in \eqref{A2A3A4} and $(\nu, b) \in \mathcal R_0$. Let $r_{\pm}$ and $s_{\pm}$ be defined in \eqref{r+-min_new} and \eqref{s+-min_new}, respectively. Then the following hold:
\smallskip
\begin{enumerate}
\item $\min (\bar{A}_2,\bar{A}_3,\bar{A}_4) = \bar{A}_2$, if $\nu \in [s_-,s_+]$.
\smallskip
\item $\min (\bar{A}_2,\bar{A}_3,\bar{A}_4) = \bar{A}_3$, if $\nu \in [r_-,s_-] \cup [s_+,r_+]$.
\smallskip
\item $\min (\bar{A}_2,\bar{A}_3,\bar{A}_4) = \bar{A}_4$, if $\nu \le r_-$ or $\nu \ge r_+$.
\end{enumerate}
\end{lemma}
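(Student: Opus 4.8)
\textbf{Proof strategy for Lemma \ref{lem:lower minimum}.}

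The plan is to reduce the claim about $\min(\bar A_2,\bar A_3,\bar A_4)$ on various $\nu$-intervals to the already-computed sign information for the pairwise differences $\bar A_2-\bar A_3$ and $\bar A_3-\bar A_4$, exactly as in the proof of Lemma \ref{lem:upper minimum}, but now retaining the term $(3\kappa_0+1)^2-18\kappa_0$, which is \emph{strictly positive} in the present regime $0<\kappa_0<\frac{2-\sqrt3}{3}$. First I would recall the two identities already established:
\[
\bar A_2 - \bar A_3 = \frac{(3\nu-2)^2 + 4b(3\kappa_0+1)(3\nu-2) + 72\kappa_0 b^2}{4b(12b+3\nu-2)},
\qquad
\bar A_3 - \bar A_4 = \frac{3\big((3\nu-2)^2 + 4b(2\kappa_0+1)(3\nu-2) + 24\kappa_0 b^2\big)}{(12b+3\nu-2)(12b+2(3\nu-2))}.
\]
Both denominators are strictly positive for $(\nu,b)\in\mathcal R_0$ (since $b>\frac16$ and $\nu>\frac23-2b$ forces $12b+3\nu-2>0$), so the signs of these differences are governed entirely by the quadratics in $3\nu-2$ appearing in the numerators. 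The first quadratic has discriminant proportional to $(3\kappa_0+1)^2-18\kappa_0>0$, with roots at $3\nu-2 = 3s_\mp-2$ by the very definition \eqref{s+-min_new}; hence it is $\le 0$ precisely for $\nu\in[s_-,s_+]$ and $>0$ otherwise. The second quadratic has discriminant proportional to $(2\kappa_0+1)^2-6\kappa_0>0$ (see \eqref{positive}), with roots at $3\nu-2=3r_\mp-2$ by \eqref{r+-min_new}; hence it is $\le 0$ precisely for $\nu\in[r_-,r_+]$ and $>0$ otherwise.

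Next I would assemble the three cases by combining these sign patterns, using $r_-<s_-\le s_+<r_+$ from Lemma \ref{lem:properties of points} (items (7)--(8)), so the relevant $\nu$-line decomposes into the ordered blocks $(-\infty,r_-]$, $[r_-,s_-]$, $[s_-,s_+]$, $[s_+,r_+]$, $[r_+,\infty)$. For $\nu\in[s_-,s_+]$ we have $\bar A_2\le\bar A_3$ and, since $[s_-,s_+]\subset[r_-,r_+]$, also $\bar A_3\le\bar A_4$; hence $\bar A_2$ is the minimum, giving item (1). For $\nu\in[r_-,s_-]\cup[s_+,r_+]$ the first quadratic is positive so $\bar A_2\ge\bar A_3$, while the second is still $\le 0$ so $\bar A_3\le\bar A_4$; hence $\bar A_3$ is the minimum, giving item (2). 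For $\nu\le r_-$ or $\nu\ge r_+$ the second quadratic is positive, so $\bar A_3\ge\bar A_4$, and the first is also positive, so $\bar A_2\ge\bar A_3\ge\bar A_4$; hence $\bar A_4$ is the minimum, giving item (3). I would also remark that on the overlap endpoints the two descriptions agree (e.g.\ at $\nu=s_-$ one has $\bar A_2=\bar A_3$), so the closed intervals cause no ambiguity.

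The only subtle point — and the one I would check carefully — is the ordering $r_-<s_-\le s_+<r_+$, which is what guarantees that the block $[s_-,s_+]$ on which $\bar A_2$ dominates sits \emph{inside} the block $[r_-,r_+]$ on which $\bar A_3\le\bar A_4$; without it, item (1) could fail. But this ordering is exactly items (7)--(8) of Lemma \ref{lem:properties of points}, already proved for $(\sigma,\rho)=(\kappa_0,b)$ in the regime $0<\kappa_0<\frac{2-\sqrt3}{3}$, so no new work is needed. Everything else is the elementary observation that a rational function with positive denominator has the sign of its numerator, together with the fact that a monic quadratic in a real variable is nonpositive exactly between its (real, by the positive-discriminant checks) roots. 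Thus the lemma follows without any genuinely new computation beyond what Lemma \ref{lem:upper minimum} already contains.
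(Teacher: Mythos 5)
Your proposal is correct and follows essentially the same route as the paper: both reduce the claim to the sign of the two quadratics in $3\nu-2$ from the proof of Lemma \ref{lem:upper minimum}, identify their roots with $3s_{\pm}-2$ and $3r_{\pm}-2$, and combine this with the ordering $r_-<s_-<s_+<r_+$ from Lemma \ref{lem:properties of points}. Your write-up merely makes explicit the final case assembly that the paper summarizes as ``gathering all this information.''
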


\begin{proof}
In view of the proof of Lemma \ref{lem:upper minimum} (in particular, \eqref{A2-A3}), under the new condition $0 < \kappa_0 < \frac{2-\sqrt{3}}{3}$, the polynomial $(3\nu-2)^2 + 4b(3\kappa_0+1)(3\nu-2) + 72\kappa_0b^2$ is no longer nonnegative for all $3\nu-2$. Therefore, it has roots and changes sign.

\medskip

Similarly as the second part of the proof of Lemma \ref{lem:upper minimum}, we have from \eqref{A2-A3} that  $\bar{A}_2 \le \bar{A}_3$ if $\nu \in [s_-,s_+]$, otherwise, $\bar{A}_2 > \bar{A}_3$. Indeed, the roots of the convex polynomial on $(3\nu-2)$ given by $(3\nu-2)^2 + 4b(3\kappa_0+1)(3\nu-2) + 72\kappa_0b^2$ are
\[
3\nu-2 = 2\left(-(3\kappa_0+1) \pm \sqrt{ (3\kappa_0+1)^2 - 18 \kappa_0}\right)b = 3s_{\pm} -2.
\] 
On the other hand, Lemma \ref{lem:properties of points} ensures the ordering $r_- < s_- < s_+ < r_+$. Thanks to Item (2) in Lemma \ref{lem:upper minimum}, we have $\min (\bar{A}_3,\bar{A}_4) = \bar{A}_3$, if $\nu \in [r_-,r_+]$, otherwise $\min (\bar{A}_3,\bar{A}_4)=\bar{A}_4$. Gathering all this information, we finally prove all items in Lemma \ref{lem:lower minimum}.
\end{proof}

Collecting Lemmas \ref{lem:upper minimum} and \ref{lem:lower minimum}, one has the following complete description.
 
\begin{proposition}[Comparison of $\bar A_j$]\label{prop:min}
Let $0 < \kappa_0 < \frac14$ be given. Let $\bar{A}_2, \bar{A}_3,\bar{A}_4$ be given in \eqref{A2A3A4} and $(\nu, b) \in \mathcal R_0$. Let $r_{\pm}$ and $s_{\pm}$ be defined as in \eqref{r+-min} and \eqref{s+-min}, respectively.
\begin{enumerate}
\item When $\nu \le r_-$ or $\nu \ge r_+$, one has 
\begin{equation}
\min(\bar{A}_2, \bar{A}_3,\bar{A}_4) = \bar{A}_4.
\end{equation}

\item When $\nu \in [r_-, r_+]$, one has 
\begin{equation}
\begin{aligned}
&\min(\bar{A}_2, \bar{A}_3,\bar{A}_4) =  \\
& \begin{cases}\begin{array}{lcl} 
\begin{cases}\begin{array}{lcl} 
\bar{A}_2, &&\mbox{if} \quad \nu \in [s_-,s_+],\\
\bar{A}_3, &&\mbox{if} \quad \nu \in \left([r_-, s_-] \cup [s_+,r_+]\right) \cap [0,1],
\end{array}\end{cases} &&\hspace{-1.5em}\mbox{if} \quad 0 < \kappa_0 < \frac{2-\sqrt{3}}{3},\\
\bar{A}_3, &&\hspace{-1.5em}\mbox{if} \quad \frac{2-\sqrt{3}}{3} \le \kappa_0 < \frac14.
\end{array}\end{cases}
\end{aligned}
\end{equation}
\end{enumerate}
\end{proposition}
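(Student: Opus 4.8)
The plan is to obtain Proposition~\ref{prop:min} by simply concatenating the two preceding lemmas, since the hypothesis $0<\kappa_0<\tfrac14$ decomposes into precisely the two ranges already analyzed: the ``$v_0$ large'' range $0<\kappa_0<\tfrac{2-\sqrt 3}{3}$ of Lemma~\ref{lem:lower minimum}, and the ``$v_0$ small'' range $\tfrac{2-\sqrt 3}{3}\le \kappa_0<\tfrac14$ of Lemma~\ref{lem:upper minimum}. Throughout, I would keep in mind that $(\nu,b)\in\mathcal R_0$ forces $\nu\in[0,1]$, which is exactly what produces the intersections with $[0,1]$ appearing in the statement.

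First I would dispatch the range $\tfrac{2-\sqrt3}{3}\le\kappa_0<\tfrac14$. Item~(1) of Lemma~\ref{lem:upper minimum} gives $\bar A_2\ge\bar A_3$ on all of $\mathcal R_0$, hence $\min(\bar A_2,\bar A_3,\bar A_4)=\min(\bar A_3,\bar A_4)$; Item~(2) of that lemma then identifies this minimum as $\bar A_3$ when $\nu\in[r_-,r_+]$ and as $\bar A_4$ otherwise. Reading off these two statements yields part~(1) of the proposition in this $\kappa_0$ range, together with the second branch of part~(2).

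Next I would handle $0<\kappa_0<\tfrac{2-\sqrt3}{3}$. Here I would first invoke Lemma~\ref{lem:properties of points} with $(\sigma,\rho)=(\kappa_0,b)$ --- legitimate since $0<\kappa_0<\tfrac14<\tfrac12$ and $b>\tfrac16>0$ --- whose items~(5)--(8) give the nesting $r_-<s_-<s_+<r_+$, so that all intervals appearing in the conclusion are genuine and the piecewise description is unambiguous. Lemma~\ref{lem:lower minimum} then states verbatim that $\min(\bar A_2,\bar A_3,\bar A_4)$ equals $\bar A_2$ on $[s_-,s_+]$, equals $\bar A_3$ on $[r_-,s_-]\cup[s_+,r_+]$, and equals $\bar A_4$ for $\nu\le r_-$ or $\nu\ge r_+$; intersecting the middle regime with $[0,1]$ gives the first branch of part~(2), and the last regime gives part~(1) in this $\kappa_0$ range.

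Combining the two $\kappa_0$ ranges exhausts $0<\kappa_0<\tfrac14$ and reproduces the asserted formulas exactly, so nothing further is needed. There is really no genuine obstacle here --- the analytic content was spent in Lemmas~\ref{lem:upper minimum}--\ref{lem:lower minimum} --- and the only point that merits a moment's care is the bookkeeping at the transition values $\nu=r_\pm,\,s_\pm$, where adjacent branches agree because the relevant differences ($\bar A_2-\bar A_3$ or $\bar A_3-\bar A_4$, computed explicitly in the proof of Lemma~\ref{lem:upper minimum}) vanish there, so the closed intervals in the statement may be used without inconsistency.
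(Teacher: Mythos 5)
Your proposal is correct and is essentially identical to the paper's treatment: the paper states Proposition \ref{prop:min} with no separate proof, introducing it simply as "Collecting Lemmas \ref{lem:upper minimum} and \ref{lem:lower minimum}, one has the following complete description." Your write-up just makes that collection explicit, with some harmless extra bookkeeping about the $[0,1]$ intersection and the boundary values $\nu=r_\pm,s_\pm$.
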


Figure \ref{fig:Minimum} describes the results in Proposition \ref{prop:min}.
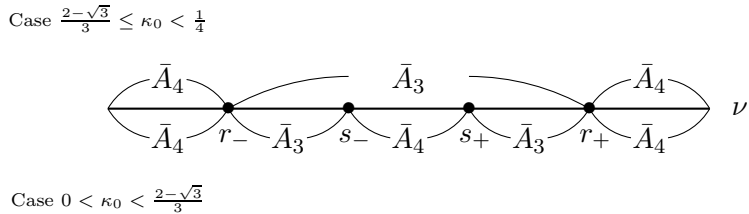
\begin{figure}[h!]
\begin{center}
\begin{tikzpicture}[scale=0.8]
\draw[thick] (-5,3) -- (5,3);
\node at (-3,3){$\bullet$};
\node at (-1,3){$\bullet$};
\node at (1,3){$\bullet$};
\node at (3,3){$\bullet$};
\node at (-2.9,2.5){$r_-$};
\node at (-0.9,2.5){$s_-$};
\node at (1.1,2.5){$s_+$};
\node at (3.1,2.5){$r_+$};
\node at (-5,4.5){\tiny{Case $\frac{2-\sqrt{3}}{3} \le \kappa_0 < \frac14$}};
\draw (-5,3) arc (150:100:1);
\node at (-4,3.5){$\bar{A}_4$};
\draw (-3,3) arc (30:80:1);
\draw (-3,3) arc (120:90:4);
\node at (0,3.5){$\bar{A}_3$};
\draw (3,3) arc (60:90:4);
\draw (3,3) arc (150:100:1);
\node at (4,3.5){$\bar{A}_4$};
\draw (5,3) arc (30:80:1);
\node at (-5,1.5){\tiny{Case $0 < \kappa_0 < \frac{2-\sqrt{3}}{3}$}};
\draw (-5,3) arc (-150:-100:1);
\node at (-4,2.5){$\bar{A}_4$};
\draw (-3,3) arc (-30:-80:1);
\draw (-3,3) arc (-150:-100:1);
\node at (-2,2.5){$\bar{A}_3$};
\draw (-1,3) arc (-30:-80:1);
\draw (-1,3) arc (-150:-100:1);
\node at (0,2.5){$\bar{A}_4$};
\draw (1,3) arc (-30:-80:1);
\draw (1,3) arc (-150:-100:1);
\node at (2,2.5){$\bar{A}_3$};
\draw (3,3) arc (-30:-80:1);
\draw (3,3) arc (-150:-100:1);
\node at (4,2.5){$\bar{A}_4$};
\draw (5,3) arc (-30:-80:1);
\node at (5.5,3){$\nu$};
\end{tikzpicture}
\end{center}
\caption{Minimum of $\bar{A}_2, \bar{A}_3,\bar{A}_4$ with respect to the position of $\nu$} \label{fig:Minimum}
\end{figure}

\subsection{Quantitative comparison, second part} Recall that $\kappa_0$ was introduced in \eqref{k_0}.

\begin{lemma}\label{lem:upper maximum}
Let $\kappa_0$ satisfy $\frac{2-\sqrt{3}}{3} \le \kappa_0 < \frac14$. Let $\bar{B}_2, \bar{B}_3,\bar{B}_4$ be given in \eqref{B2B3B4} and $(\nu, b) \in \mathcal R_0$. Let $\wt{r}_{\pm}$ be defined in \eqref{r+-max}. Then the following hold:
\begin{enumerate}
\item $\bar{B}_2 \le \bar{B}_3$.
\smallskip
\item $\max (\bar{B}_3,\bar{B}_4) = \bar{B}_3$, if $\nu \in [\wt{r}_-,\wt{r}_+]$, otherwise $\bar{B}_4$.
\end{enumerate}
\end{lemma}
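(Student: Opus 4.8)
The plan is to follow the blueprint of Lemma~\ref{lem:upper minimum}, exploiting the $a\leftrightarrow c$ symmetry of the coefficients \eqref{A2B2}--\eqref{A4B4}: the substitution $\nu\mapsto\frac23-\nu$ together with $\alpha\mapsto-\alpha$ interchanges the roles of the $\bar A_j$ and the $\bar B_j$, so in principle both items follow from Lemma~\ref{lem:upper minimum} after relabelling. Nevertheless I would record the direct computations, since they are short. The first preliminary observation is that for $(\nu,b)\in\mathcal R_0$ one has $\nu<2b$, hence $4b-\nu>2b>0$ and $4b-2\nu>0$; thus every denominator occurring below is strictly positive and the sign of each difference $\bar B_i-\bar B_j$ is dictated by its numerator.

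For Item~(1), a direct computation from \eqref{B2B3B4} gives
\[
\bar B_3-\bar B_2=\frac{3\nu^2-4b(3\kappa_0+1)\nu+24\kappa_0b^2}{4b(4b-\nu)}.
\]
The numerator is a convex quadratic in $\nu$ whose discriminant is $16b^2\big((3\kappa_0+1)^2-18\kappa_0\big)=16b^2(9\kappa_0^2-12\kappa_0+1)$, and this is $\le 0$ exactly when $\kappa_0\in\big[\frac{2-\sqrt3}{3},\frac{2+\sqrt3}{3}\big]$. Since here $\frac{2-\sqrt3}{3}\le\kappa_0<\frac14<\frac{2+\sqrt3}{3}$, the numerator is nonnegative for every $\nu$, whence $\bar B_3\ge\bar B_2$; this is Item~(1). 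This is precisely the mirror of the step in Lemma~\ref{lem:upper minimum} where \eqref{A2-A3} is shown to be nonnegative.

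For Item~(2), the same elementary manipulation yields
\[
\bar B_3-\bar B_4=\frac{-\big(3\nu^2-4b(2\kappa_0+1)\nu+8\kappa_0b^2\big)}{(4b-\nu)(4b-2\nu)}.
\]
The quadratic $3\nu^2-4b(2\kappa_0+1)\nu+8\kappa_0b^2$ has discriminant $16b^2\big((2\kappa_0+1)^2-6\kappa_0\big)$, strictly positive by the identity $(2\kappa_0+1)^2-6\kappa_0=(2\kappa_0-1)^2+2\kappa_0$ already used in \eqref{positive}; its two real roots are exactly $\nu=\frac23 b\big((2\kappa_0+1)\pm\sqrt{(2\kappa_0+1)^2-6\kappa_0}\big)=\wt r_\pm$ by \eqref{r+-max_new}. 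Hence the numerator of $\bar B_3-\bar B_4$ is $\ge 0$ iff $\nu\in[\wt r_-,\wt r_+]$, so $\max(\bar B_3,\bar B_4)=\bar B_3$ on $[\wt r_-,\wt r_+]$ and $=\bar B_4$ otherwise, which is Item~(2). The only point that requires some care is keeping track of the leading sign $-3$ in the last display — it is what makes $\bar B_3$ dominate \emph{inside} the interval $[\wt r_-,\wt r_+]$ rather than outside it; beyond this bookkeeping and the verification that the denominators are positive throughout $\mathcal R_0$, the argument is routine and parallels Lemma~\ref{lem:upper minimum} line by line.
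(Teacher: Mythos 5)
Your proof is correct and follows exactly the route the paper intends: the paper's own proof of this lemma is a one-line remark that it is analogous to Lemma \ref{lem:upper minimum}, and your explicit computations (the quadratics $3\nu^2-4b(3\kappa_0+1)\nu+24\kappa_0b^2$ and $3\nu^2-4b(2\kappa_0+1)\nu+8\kappa_0b^2$, with discriminants $16b^2\big((3\kappa_0+1)^2-18\kappa_0\big)$ and $16b^2\big((2\kappa_0+1)^2-6\kappa_0\big)$ and roots exactly $\wt r_{\pm}$) correctly supply the omitted details. The sign bookkeeping for $\bar B_3-\bar B_4$ and the positivity of the denominators $4b-\nu$ and $4b-2\nu$ on $\mathcal R_0$ are handled properly, so nothing is missing.
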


\begin{proof}
The proof is analogous to the proof Lemma \ref{lem:upper minimum}, thus we omit the details. 
\end{proof}

\begin{lemma}\label{lem:lower maximum}
Let $\kappa_0$ satisfy $0 < \kappa_0 < \frac{2-\sqrt{3}}{3}$. Let $\bar{B}_2, \bar{B}_3,\bar{B}_4$ be given in \eqref{B2B3B4} and $(\nu, b) \in \mathcal R_0$. Let $\wt{r}_{\pm}$ and $\wt{s}_{\pm}$ be defined in \eqref{r+-max} and \eqref{s+-max}, respectively. Then the following hold:
\begin{enumerate}
\item $\max (\bar{B}_2, \bar{B}_3,\bar{B}_4) = \bar{B}_2$, if $\nu \in [\wt{s}_-,\wt{s}_+]$.
\smallskip
\item $\max (\bar{B}_2, \bar{B}_3,\bar{B}_4) = \bar{B}_3$, if $\nu \in [\wt{r}_-,\wt{s}_-] \cup [\wt{s}_+,\wt{r}_+]$.
\smallskip
\item $\max (\bar{B}_2, \bar{B}_3,\bar{B}_4) = \bar{B}_4$, if $\nu \le \wt{r}_-$ or $\nu \ge \wt{r}_+$.
\end{enumerate}
\end{lemma}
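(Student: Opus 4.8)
The plan is to parallel the proof of Lemma~\ref{lem:lower minimum} (and, for Item~(2), that of Lemma~\ref{lem:upper maximum}): the comparison among $\bar B_2,\bar B_3,\bar B_4$ is governed entirely by the signs of the two rational differences $\bar B_2-\bar B_3$ and $\bar B_3-\bar B_4$, each of which, after clearing denominators, reduces to a convex quadratic in $\nu$ whose roots turn out to be exactly among the threshold parameters of Definition~\ref{rs}.

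First I would compute $\bar B_2-\bar B_3$ from \eqref{B2B3B4}. Reducing over the common denominator $4b(4b-\nu)$ --- which is strictly positive for $(\nu,b)\in\mathcal R_0$, since $\nu<2b<4b$ there --- the numerator equals, up to the overall sign, the convex quadratic $3\nu^2-4b(1+3\kappa_0)\nu+24\kappa_0 b^2$ in the variable $\nu$. Its discriminant equals $16b^2\big((1+3\kappa_0)^2-18\kappa_0\big)$, which is strictly positive precisely for $0<\kappa_0<\frac{2-\sqrt3}{3}$ (the same threshold appearing in Lemma~\ref{lem:lower minimum}), and in that case its roots are exactly $\wt s_\pm$ from \eqref{s+-max_new}; since the numerator has negative leading coefficient in $\nu$, this yields $\bar B_2\ge\bar B_3$ iff $\nu\in[\wt s_-,\wt s_+]$. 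The same computation for $\bar B_3-\bar B_4$, now over $(4b-\nu)(4b-2\nu)>0$, produces the quadratic $3\nu^2-4b(1+2\kappa_0)\nu+8\kappa_0 b^2$, whose discriminant $16b^2\big((1+2\kappa_0)^2-6\kappa_0\big)=16b^2\big((2\kappa_0-1)^2+2\kappa_0\big)$ is always strictly positive (this is \eqref{positive}), with roots exactly $\wt r_\pm$ from \eqref{r+-max_new}; hence $\max(\bar B_3,\bar B_4)=\bar B_3$ iff $\nu\in[\wt r_-,\wt r_+]$, and $=\bar B_4$ otherwise, which is the analogue of Item~(2) of Lemma~\ref{lem:upper maximum}.

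Finally I would assemble these two sign analyses using the ordering $\wt r_-<\wt s_-<\wt s_+<\wt r_+$, which is exactly Items~(7)--(8) of Lemma~\ref{lem:properties of points} applied with $(\sigma,\rho)=(\kappa_0,b)$. Partitioning the $\nu$-axis into $\{\nu\le\wt r_-\}\cup\{\nu\ge\wt r_+\}$, then $[\wt r_-,\wt s_-]\cup[\wt s_+,\wt r_+]$, and then $[\wt s_-,\wt s_+]$, and chasing the inequalities in each zone, gives respectively $\max=\bar B_4$, $\max=\bar B_3$ and $\max=\bar B_2$ --- the three items of the lemma. The only delicate point, and hence the (minor) main obstacle, is the bookkeeping of the signs of the reduced denominators and the correct reversal of inequalities when clearing fractions, together with checking that the discriminant thresholds and the roots genuinely coincide with the parameters $\wt s_\pm$ and $\wt r_\pm$ of Definition~\ref{rs}; everything else is a transcription of the argument already carried out for the $\bar A_j$.
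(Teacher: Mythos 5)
Your proposal is correct and follows exactly the route the paper intends: the paper's own proof of this lemma is a one-line reference to the arguments of Lemmas \ref{lem:lower minimum} and \ref{lem:upper maximum} (pairwise comparison of the $\bar B_j$ via convex quadratics in $\nu$ whose roots are $\wt s_\pm$ and $\wt r_\pm$, assembled using the ordering from Lemma \ref{lem:properties of points}), and your computations of the numerators $-(3\nu^2-4b(1+3\kappa_0)\nu+24\kappa_0 b^2)$ and $-(3\nu^2-4b(1+2\kappa_0)\nu+8\kappa_0 b^2)$, their discriminants, and the positivity of the denominators on $\mathcal R_0$ all check out. You have simply written out the details the paper omits; no gap.
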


\begin{proof}
A same argument used in the proof of Lemma \ref{lem:lower minimum}, in addition to Lemmas \ref{lem:properties of points} and \ref{lem:upper maximum}, proves Lemma \ref{lem:lower maximum}, thus we omit the details.
\end{proof}

Together with Lemmas \ref{lem:upper maximum} and \ref{lem:lower maximum}, we summarize

\begin{proposition}\label{prop:max}
Let $0 < \kappa_0 < \frac14$ be given. Let $\bar{B}_2, \bar{B}_3,\bar{B}_4$ be given in \eqref{B2B3B4} and $(\nu, b) \in \mathcal R_0$. Let $\wt{r}_{\pm}$ and $\wt{s}_{\pm}$ be defined as in \eqref{r+-max} and \eqref{s+-max}, respectively. 
\begin{enumerate}
\item When $\nu \le \wt{r}_-$ or $\nu \ge \wt{r}_+$, one has 
\begin{equation}
\max(\bar{B}_2, \bar{B}_3,\bar{B}_4) = \bar{B}_4.
\end{equation}

\item When $\nu \in [\wt{r}_-, \wt{r}_+]$, one has 
\begin{equation}
\begin{aligned}
&\max(\bar{B}_2, \bar{B}_3,\bar{B}_4) =  \\
& \begin{cases}\begin{array}{lcl} 
\begin{cases}\begin{array}{lcl} 
\bar{B}_2, &&\mbox{if} \quad \nu \in [\wt{s}_-,\wt{s}_+],\\
\bar{B}_3, &&\mbox{if} \quad \nu \in \left([\wt{r}_-, \wt{s}_-] \cup [\wt{s}_+,\wt{r}_+]\right) \cap [0,1],
\end{array}\end{cases} &&\hspace{-1.5em}\mbox{if} \quad 0 < \kappa_0 < \frac{2-\sqrt{3}}{3},\\
\bar{B}_3, &&\hspace{-1.5em}\mbox{if} \quad \frac{2-\sqrt{3}}{3} \le \kappa_0 < \frac14.
\end{array}\end{cases}
\end{aligned}
\end{equation}
\end{enumerate}
\end{proposition}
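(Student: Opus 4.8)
The plan is to assemble Proposition \ref{prop:max} directly from the two case analyses already performed in Lemmas \ref{lem:upper maximum} and \ref{lem:lower maximum}, mirroring exactly how Proposition \ref{prop:min} is deduced from Lemmas \ref{lem:upper minimum} and \ref{lem:lower minimum}. First I would fix $(\nu,b)\in\mathcal R_0$ and recall from \eqref{B2B3B4} that each of $\bar B_2,\bar B_3,\bar B_4$ is a rational function of $\nu$ whose denominator is strictly positive on $\mathcal R_0$; consequently the sign of any difference $\bar B_i-\bar B_j$ is governed by a quadratic polynomial in the variable $3\nu-2$. Computing $\bar B_3-\bar B_4$ (the analogue of the $\bar A_3-\bar A_4$ computation in the proof of Lemma \ref{lem:upper minimum}) one sees that it vanishes precisely at $3\nu-2=3\wt r_\pm-2$, i.e. at $\nu=\wt r_\pm$ as in \eqref{r+-max_new}, and is negative for $\nu<\wt r_-$ or $\nu>\wt r_+$; since in that same exterior range one also checks $\bar B_2\le\bar B_4$, this yields $\max(\bar B_2,\bar B_3,\bar B_4)=\bar B_4$, which is part (1).

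For part (2) I would separate according to the size of $\kappa_0$. When $\frac{2-\sqrt3}{3}\le\kappa_0<\frac14$ the discriminant $(3\kappa_0+1)^2-18\kappa_0$ is $\le0$, so the quadratic controlling $\bar B_3-\bar B_2$ has no real root and keeps a fixed sign, which on evaluation gives $\bar B_2\le\bar B_3$ (this is Lemma \ref{lem:upper maximum}); hence $\max=\bar B_3$ on all of $[\wt r_-,\wt r_+]$. When $0<\kappa_0<\frac{2-\sqrt3}{3}$ that same discriminant is positive, so $\bar B_3-\bar B_2$ changes sign at the two roots $\nu=\wt s_\pm$ of \eqref{s+-max_new}: one has $\bar B_2\ge\bar B_3$ on $[\wt s_-,\wt s_+]$ and $\bar B_2<\bar B_3$ outside it. Combining this with $\max(\bar B_3,\bar B_4)=\bar B_3$ on $[\wt r_-,\wt r_+]$ (from part (1) of Lemma \ref{lem:upper maximum}) and the ordering $\wt r_-<\wt s_-<\wt s_+<\wt r_+$ supplied by items (6), (7), (8) of Lemma \ref{lem:properties of points}, one reads off the three-branch description, intersecting the middle region with $[0,1]$ because $\nu\in[0,1]$ by \eqref{R0}.

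I do not expect a genuine obstacle here: the argument is essentially the bookkeeping of signs of a few quadratics, and the nontrivial inputs — namely the positivity or negativity of the two discriminants $(2\kappa_0+1)^2-6\kappa_0$ and $(3\kappa_0+1)^2-18\kappa_0$ over the admissible range $0<\kappa_0<\frac14$, together with the ordering of the thresholds — are already packaged in Lemma \ref{lem:properties of points}. The one point I would flag for care is the sign convention: the $\bar B_j$ enter \eqref{vanish2} as \emph{lower} bounds for $\alpha$, so one needs their \emph{maximum}, in contrast to the $\bar A_j$, which are upper bounds and therefore call for the minimum treated in Proposition \ref{prop:min}. I would also note, as a minor contrast with the $\bar A_j$ situation, that no auxiliary constraint of the form $\nu\le r_+$ is needed to exhibit $\bar B_4$ as the maximum in the exterior regime, since $\bar B_2,\bar B_3\le\bar B_4$ there holds throughout $\mathcal R_0$.
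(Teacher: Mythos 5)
Your proposal is correct and follows essentially the same route as the paper: Proposition \ref{prop:max} is obtained there simply by collecting Lemmas \ref{lem:upper maximum} and \ref{lem:lower maximum}, which are in turn proved by the same sign analysis of the quadratics in $3\nu-2$ (with roots $\wt r_\pm$, $\wt s_\pm$ and the discriminants $(2\kappa_0+1)^2-6\kappa_0$, $(3\kappa_0+1)^2-18\kappa_0$) used for the $\bar A_j$ in Lemmas \ref{lem:upper minimum} and \ref{lem:lower minimum}. Your explicit unpacking of that analogy, including the ordering $\wt r_-<\wt s_-<\wt s_+<\wt r_+$ from Lemma \ref{lem:properties of points} and the max-versus-min sign convention, matches the paper's argument.
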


Similarly, Proposition \ref{prop:max} can be easily understood from Figure \ref{fig:Maximum}.

\begin{figure}[h!]
\begin{center}
\begin{tikzpicture}[scale=0.8]
\draw[thick] (-5,3) -- (5,3);
\node at (-3,3){$\bullet$};
\node at (-1,3){$\bullet$};
\node at (1,3){$\bullet$};
\node at (3,3){$\bullet$};
\node at (-2.9,2.5){$\wt{r}_-$};
\node at (-0.9,2.5){$\wt{s}_-$};
\node at (1.1,2.5){$\wt{s}_+$};
\node at (3.1,2.5){$\wt{r}_+$};
\node at (-5,4.5){\tiny{$\frac{2-\sqrt{3}}{3} \le \kappa_0 < \frac14$}};
\draw (-5,3) arc (150:100:1);
\node at (-4,3.5){$\bar{B}_4$};
\draw (-3,3) arc (30:80:1);
\draw (-3,3) arc (120:90:4);
\node at (0,3.5){$\bar{B}_3$};
\draw (3,3) arc (60:90:4);
\draw (3,3) arc (150:100:1);
\node at (4,3.5){$\bar{B}_4$};
\draw (5,3) arc (30:80:1);
\node at (-5,1.5){\tiny{$0 < \kappa_0 < \frac{2-\sqrt{3}}{3}$}};
\draw (-5,3) arc (-150:-100:1);
\node at (-4,2.5){$\bar{B}_4$};
\draw (-3,3) arc (-30:-80:1);
\draw (-3,3) arc (-150:-100:1);
\node at (-2,2.5){$\bar{B}_3$};
\draw (-1,3) arc (-30:-80:1);
\draw (-1,3) arc (-150:-100:1);
\node at (0,2.5){$\bar{B}_4$};
\draw (1,3) arc (-30:-80:1);
\draw (1,3) arc (-150:-100:1);
\node at (2,2.5){$\bar{B}_3$};
\draw (3,3) arc (-30:-80:1);
\draw (3,3) arc (-150:-100:1);
\node at (4,2.5){$\bar{B}_4$};
\draw (5,3) arc (-30:-80:1);
\end{tikzpicture}
\end{center}
\caption{Maximum of $\bar{B}_2, \bar{B}_3,\bar{B}_4$ with respect to the position of $\nu$} \label{fig:Maximum}
\end{figure}
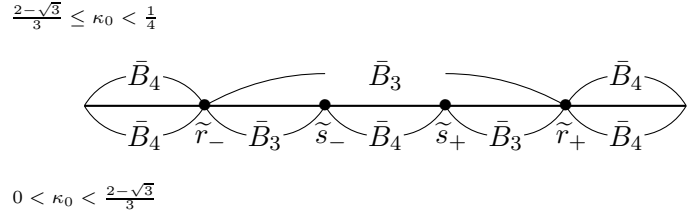

\subsection{Maximal positivity condition} We need some further understanding of the position of the points $r_{\pm}$, $\wt{r}_{\pm}$, $s_{\pm}$ and $\wt{s}_{\pm}$. That information will depend on the value of $\kappa_0$, and two additional parameters $b_1$ and $b_2$ that we introduce now.

\begin{lemma}\label{lem:rs ordering}
Let $0 < \kappa_0 < \frac14$ be given. Let $r_{\pm}$, $\wt{r}_{\pm}$, $s_{\pm}$ and $\wt{s}_{\pm}$ be defined as in \eqref{r+-min_new}-\eqref{r+-max_new}-\eqref{s+-min_new}-\eqref{s+-max_new}. Set 
\begin{equation}\label{b_1}
b_1 := \frac{1}{2(2\kappa_0+1)},
\end{equation}
and, for $0 < \kappa_0 < \frac{2-\sqrt{3}}{3}$,
\begin{equation}\label{b_2}
b_2 := \frac{1}{5\kappa_0+2-\sqrt{(2\kappa_0+1)^2-6\kappa_0}-\sqrt{(3\kappa_0+1)^2-18\kappa_0}}.
\end{equation}
Then, the following hold.
\ben
\item When $\frac{2-\sqrt{3}}{3} \le \kappa_0 < \frac14$,  one has
\begin{equation}\label{r ordering1}
r_- <  r_+ < \wt{r}_- <  \wt{r}_+ 
\end{equation}
or
\begin{equation}\label{r ordering2}
r_- <  \wt{r}_- < r_+  <  \wt{r}_+,
\end{equation}
if $b > b_1$.
\smallskip
\item When $0 < \kappa_0 < \frac{2-\sqrt{3}}{3}$, 
\begin{equation}\label{rs ordering1}
r_- <s_-<s_+< r_+ < \wt{r}_-<\wt{s}_-<\wt{s}_+ < \wt{r}_+
\end{equation}
or
\begin{equation}\label{rs ordering2}
r_- <s_-<s_+< \wt{r}_- < r_+<\wt{s}_-<\wt{s}_+ < \wt{r}_+,
\end{equation}
if $b > b_2$.
\een
\end{lemma}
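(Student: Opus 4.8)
The plan is to reduce everything to the behaviour of the two discriminant-type quantities
\[
\Delta_r(\kappa_0):=(2\kappa_0+1)^2-6\kappa_0,\qquad \Delta_s(\kappa_0):=(3\kappa_0+1)^2-18\kappa_0,
\]
both of which are nonnegative on the relevant ranges (by \eqref{positive} for $\Delta_r$, and by $0<\kappa_0<\frac{2-\sqrt3}{3}$ for $\Delta_s$). First I would observe that from the explicit formulas \eqref{r+-min_new}--\eqref{s+-max_new}, all eight quantities are affine in $b$: each is of the form $\text{(const)}+b\cdot\text{(slope)}$ (with constant $\frac23$ for $r_\pm,s_\pm$ and $0$ for $\wt r_\pm,\wt s_\pm$), where the slopes are $\tfrac23(-(2\kappa_0+1)\pm\sqrt{\Delta_r})$, $\tfrac23(2\kappa_0+1\pm\sqrt{\Delta_r})$, and analogously with $3\kappa_0+1$ and $\Delta_s$. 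This makes all pairwise comparisons of these eight numbers \emph{monotone in $b$}, so each ordering either holds for all $b$, or flips exactly once at the unique $b$ where the two affine functions agree. Lemma \ref{lem:properties of points} already gives the $b$-independent comparisons (items (7)–(8): $r_-<s_-<s_+<r_+$ and $\wt r_-<\wt s_-<\wt s_+<\wt r_+$), so the only genuinely new content is locating the crossings between the ``unwidetilde'' block $\{r_\pm,s_\pm\}$ and the ``widetilde'' block $\{\wt r_\pm,\wt s_\pm\}$.

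The key step is then to compute the crossing points. For item (1), I would compare $r_+$ and $\wt r_-$: setting $r_+=\wt r_-$, i.e.
\[
\tfrac23+\tfrac23 b\bigl(-(2\kappa_0+1)+\sqrt{\Delta_r}\bigr)=\tfrac23 b\bigl((2\kappa_0+1)-\sqrt{\Delta_r}\bigr),
\]
the $b$-terms combine and one solves for $b$, getting exactly $b=\frac{1}{2(2\kappa_0+1)}=b_1$ after the $\sqrt{\Delta_r}$ contributions cancel in pairs (they appear with opposite signs on the two sides). Since for $b$ slightly above $b_1$ the slope comparison forces $\wt r_->r_+$ to fail, i.e. $r_+<\wt r_-$ for $b<b_1$ and $r_+>\wt r_-$ for $b>b_1$ — one checks the direction by a single slope inequality, $-(2\kappa_0+1)+\sqrt{\Delta_r}<(2\kappa_0+1)-\sqrt{\Delta_r}$ $\iff$ $\sqrt{\Delta_r}<2\kappa_0+1$, which is true — we get that for $b>b_1$ either $r_+<\wt r_-$ (if the crossing with the \emph{other} pair has not yet occurred) or $\wt r_-<r_+$; combined with $r_-<r_+<\wt r_+$, $\wt r_-<\wt r_+$ and $r_-<\wt r_-$ (the last also from a slope comparison, $-(2\kappa_0+1)-\sqrt{\Delta_r}<(2\kappa_0+1)-\sqrt{\Delta_r}$, trivially true), this yields exactly the dichotomy \eqref{r ordering1}–\eqref{r ordering2}. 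For item (2) I would do the same with $s_+$ versus $\wt r_-$: solving $s_+=\wt r_-$ produces the denominator $5\kappa_0+2-\sqrt{\Delta_r}-\sqrt{\Delta_s}$ appearing in \eqref{b_2} (the ``$5\kappa_0+2$'' is $(3\kappa_0+1)+(2\kappa_0+1)$ from the two constant-slope pieces), so $b_2$ is precisely that crossing value; then monotonicity plus the already-known orderings $r_-<s_-<s_+<r_+$, $\wt r_-<\wt s_-<\wt s_+<\wt r_+$, and $s_+<r_+$, $\wt r_-<\wt r_+$, $r_-<\wt r_-$ (again slope comparisons) give the two possible interleavings \eqref{rs ordering1}–\eqref{rs ordering2} for $b>b_2$.

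The main obstacle I anticipate is bookkeeping rather than conceptual: one must verify that $b_1$ and $b_2$ are positive and well-defined (for $b_2$ this requires $5\kappa_0+2>\sqrt{\Delta_r}+\sqrt{\Delta_s}$, which follows because $\sqrt{\Delta_r}<2\kappa_0+1$ and $\sqrt{\Delta_s}<3\kappa_0+1$ on the stated ranges), and that the \emph{two} listed orderings genuinely exhaust the possibilities for $b>b_i$ — i.e. that no \emph{third} crossing (e.g. $s_+$ vs.\ $\wt s_-$, or $r_+$ vs.\ $\wt s_-$) can occur for $b>b_2$ producing an ordering not in the list. This I would settle by showing those further crossings happen at $b$-values $\le b_2$ (equivalently, for $b>b_2$ the relevant slope-dominant term already fixes their sign), so that the only ambiguity left above $b_2$ is whether $\wt r_-$ has passed $r_+$ yet, giving exactly \eqref{rs ordering1} (not yet) or \eqref{rs ordering2} (already). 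Since $\wt r_+\ge\frac16\cdot\text{(positive slope)}$ and all quantities are explicit, each such auxiliary comparison is a one-line inequality between affine functions of $b$, which I would list compactly rather than expand. The analogous statements for the $\bar B$-side orderings are immediate by the $\alpha\mapsto-\alpha$, $a\leftrightarrow c$ symmetry already exploited in Lemmas \ref{lem:upper maximum}–\ref{lem:lower maximum}.
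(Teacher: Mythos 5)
Your overall strategy --- treating all eight quantities as affine functions of $b$, locating pairwise crossing points, and importing the within-block orderings from Lemma \ref{lem:properties of points} --- is the same as the paper's. However, the execution of item (1) contains a genuine error. Writing $\Delta_r:=(2\kappa_0+1)^2-6\kappa_0$ as you do, the slope of $r_+$ is $\tfrac23\bigl(-(2\kappa_0+1)+\sqrt{\Delta_r}\bigr)$ while that of $\wt{r}_-$ is $\tfrac23\bigl((2\kappa_0+1)-\sqrt{\Delta_r}\bigr)$; upon subtracting one side of $r_+=\wt{r}_-$ from the other, the $\sqrt{\Delta_r}$ contributions therefore \emph{double} rather than cancel, and the crossing occurs at $b=\tfrac{1}{2\left((2\kappa_0+1)-\sqrt{\Delta_r}\right)}$, which is strictly larger than $b_1$. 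This is not a cosmetic slip: the entire point of the dichotomy \eqref{r ordering1}--\eqref{r ordering2} is that the sign of $r_+-\wt{r}_-$ remains undetermined for $b>b_1$, so $r_+$ versus $\wt{r}_-$ is precisely the one comparison that cannot yield the threshold $b_1$. The pairs whose crossings do occur at $b_1$ --- because there the $\sqrt{\Delta_r}$ terms enter with the \emph{same} sign and cancel --- are $r_+$ versus $\wt{r}_+$ and $r_-$ versus $\wt{r}_-$: both reduce to $\tfrac23<\tfrac23\cdot 2(2\kappa_0+1)\,b$, i.e.\ $b>b_1$, and this is the computation the paper performs.

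Relatedly, your claim that $r_-<\wt{r}_-$ is ``trivially true'' from the slope inequality $-(2\kappa_0+1)-\sqrt{\Delta_r}<(2\kappa_0+1)-\sqrt{\Delta_r}$ is false: $r_-$ has intercept $\tfrac23>0$ while $\wt{r}_-$ has intercept $0$, so $r_->\wt{r}_-$ for small $b$, and $r_-<\wt{r}_-$ holds exactly when $b>b_1$. Establishing $r_-<\wt{r}_-$ and $r_+<\wt{r}_+$ for $b>b_1$ is the actual content of item (1); once these are in hand, Lemma \ref{lem:properties of points} forces $r_-$ to be the minimum and $\wt{r}_+$ the maximum, and the only remaining ambiguity is the order of $r_+$ and $\wt{r}_-$, which is the stated dichotomy. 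Your item (2) is sound as far as it goes: the crossings of $s_+$ with $\wt{r}_-$ and of $r_+$ with $\wt{s}_-$ both occur exactly at $b_2$ (the two slope differences coincide), both inequalities are needed to pin down \eqref{rs ordering1}--\eqref{rs ordering2}, and $b_2>b_1$ makes item (1) available --- but it rests on the repaired version of item (1), not on the one you wrote.
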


\begin{proof}
A straightforward calculation gives
\[r_+ < \wt{r}_+ \quad \Longleftrightarrow \quad \frac23 < \frac23 \left(2(\kappa_0+1) \right)b,\]
and hence so \eqref{r ordering1} or \eqref{r ordering2}.

\medskip

When $0 < \kappa_0 < \frac{2-\sqrt{3}}{3}$, similarly as before, one knows 
\[r_+ < \wt{s}_- \quad  \Longleftrightarrow \quad \frac23 < \frac23 \left(5\kappa_0 +2 -\sqrt{(2\kappa_0+1)^2-6\kappa_0} -\sqrt{(3\kappa_0+1)^2-18\kappa_0}\right)b.\]
On the other hand, a direct calculation gives $b_2 > b_1$. Thus, \eqref{r ordering1} and \eqref{r ordering2} in addition to Lemma \ref{lem:properties of points} prove \eqref{rs ordering1} or \eqref{rs ordering2}.
\end{proof}

%


\begin{lemma}\label{lem:lower}
Let $0 < \kappa_0 < \frac14$ be given and $b_0$ be given in \eqref{b_0}. Let $\bar{A}_i$ and $\bar{B}_i$ be given in \eqref{A2A3A4} and \eqref{B2B3B4}, respectively. Set 
\begin{equation}\label{b_0}
b_0 = \frac{1}{9\kappa_0}.
\end{equation}
Then,
\begin{enumerate}
\item $\bar{A}_4 \ge \bar{B}_4$ happens only when $r_- < \nu < \wt{r}_+$, if $b > \frac16$.
\end{enumerate}
Assume now $0 < \kappa_0 <\frac{2-\sqrt{3}}{3}$. If $b\ge b_0$ and $\nu \in [0,1]$, the following hold:
\begin{enumerate}
\item[(2)] $s_+ < \nu < \wt{s}_+ $.
\smallskip
\item[(3)] The region of $(\nu,b)$, for which $\bar{A}_3 \ge \bar{B}_3$ holds, is included in the region defined by $\bar{A}_4 \ge \bar{B}_4$, if $0 \le \nu \le 1$.
\smallskip
\item[(4)] $\bar{A}_4 \ge \bar{B}_2$ happens only when $\frac23 - 2b < \nu < \wt{s}_-$, if $b > \frac16$.
\end{enumerate}
\end{lemma}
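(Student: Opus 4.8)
The plan is to turn each of the four claims into a statement about the sign of a single quadratic polynomial in $\nu$. First I would record that, on $\mathcal R_0$ from \eqref{R0}, every denominator occurring in $\bar A_i$, $\bar B_i$ of \eqref{A2A3A4}--\eqref{B2B3B4} --- namely $4b$, $12b+6\nu-4$, $4b-\nu$, $4b-2\nu$, $12b+3\nu-2$ --- is strictly positive, since there $b>\tfrac16$ and $\tfrac23-2b<\nu<2b$. Hence $\bar A_i-\bar B_j\ge 0$ is equivalent to $P_{ij}(\nu)\ge0$, where $P_{ij}$ is the explicit numerator obtained after clearing denominators, a degree-two polynomial with \emph{negative} leading coefficient (so $\{P_{ij}\ge0\}$ is a closed interval). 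The second thing I would record is the identification of the threshold parameters with roots of explicit quadratics: by \eqref{r+-min_new}--\eqref{r+-max_new}, $r_\pm$ are the roots (in $\nu$) of $(3\nu-2)^2+4b(2\kappa_0+1)(3\nu-2)+24\kappa_0 b^2$ and $\wt r_\pm$ of $(3\nu)^2-4b(2\kappa_0+1)(3\nu)+24\kappa_0 b^2$ --- these are exactly the polynomials already handled in the proofs of Lemmas \ref{lem:upper minimum} and \ref{lem:lower minimum} --- and likewise, by \eqref{s+-min_new}--\eqref{s+-max_new}, $s_\pm$, $\wt s_\pm$ are roots of the analogous quadratics with $3\kappa_0+1$, $18\kappa_0$ in place of $2\kappa_0+1$, $6\kappa_0$. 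These identities make the values $P_{ij}(r_-)$, $P_{ij}(\wt s_-)$, etc. collapse.

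For item (1), expanding gives $P_{44}(\nu)=-12\bigl(\nu-\tfrac13\bigr)^2+\tfrac43+96\kappa_0 b^2-8b-16\kappa_0 b$, a downward parabola with vertex at $\nu=\tfrac13$. Thus $\bar A_4\ge\bar B_4$ forces $\bigl(\nu-\tfrac13\bigr)^2\le D$ with $9D=1+72\kappa_0 b^2-6b-12\kappa_0 b$; since $72\kappa_0 b^2-(6+12\kappa_0)b+1$ has roots $b=\tfrac16$ and $b=\tfrac1{12\kappa_0}$, one gets $D\ge0\iff b\ge\tfrac1{12\kappa_0}$ (recall $b>\tfrac16$). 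For $\tfrac16<b<\tfrac1{12\kappa_0}$ the assertion is vacuous; for $b\ge\tfrac1{12\kappa_0}$ I would check $r_-<\tfrac13<\wt r_+$ and then $D<\bigl(\tfrac13-r_-\bigr)^2$ and $D<\bigl(\wt r_+-\tfrac13\bigr)^2$, each of which becomes, after clearing the remaining square root by conjugation, an elementary inequality in $(b,\kappa_0)$ that holds for $\kappa_0<\tfrac14$ and $b\ge\tfrac1{12\kappa_0}$; this yields $\{P_{44}\ge0\}=\bigl[\tfrac13-\sqrt D,\tfrac13+\sqrt D\bigr]\subset(r_-,\wt r_+)$. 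Item (4) runs the same way for $P_{42}(\nu)=\bar A_4-\bar B_2$'s numerator, again a downward parabola, with vertex $\nu^\ast=2\kappa_0 b-\tfrac23 b+\tfrac13$: here I would use that $P_{42}\bigl(\tfrac23-2b\bigr)=24b^2(2\kappa_0-1)<0$ (the left boundary of $\mathcal R_0$ already sits below the positivity interval) and $P_{42}(\wt s_-)\le0$ (using that $\wt s_-$ annihilates $(3\nu)^2-4b(3\kappa_0+1)(3\nu)+72\kappa_0 b^2$), together with the position of $\nu^\ast$ relative to $\tfrac23-2b$ and $\wt s_-$; these force $\{P_{42}\ge0\}\subset(\tfrac23-2b,\wt s_-)$, with a harmless sub-split according to whether $\nu^\ast\le\tfrac23-2b$ (in which case $P_{42}<0$ on all of $\mathcal R_0$).

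Item (2) is a pure surd estimate: rationalising, $(3\kappa_0+1)-\sqrt{(3\kappa_0+1)^2-18\kappa_0}=\tfrac{18\kappa_0}{(3\kappa_0+1)+\sqrt{(3\kappa_0+1)^2-18\kappa_0}}$, so $s_+\le0\iff b\ge\tfrac{(3\kappa_0+1)+\sqrt{(3\kappa_0+1)^2-18\kappa_0}}{18\kappa_0}$, and $(3\kappa_0+1)+\sqrt{(3\kappa_0+1)^2-18\kappa_0}\le2$ for $0<\kappa_0<\tfrac{2-\sqrt{3}}{3}$ (squaring reduces this to $-6\kappa_0\le0$); hence $b\ge b_0=\tfrac1{9\kappa_0}=\tfrac2{18\kappa_0}$ suffices, cf.\ \eqref{b_0}. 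Likewise $\wt s_+\ge1$ for $b\ge b_0$, the needed bound reducing to $\tfrac{21\kappa_0-2}{2}\le\sqrt{(3\kappa_0+1)^2-18\kappa_0}$, trivially true as the left side is negative on $0<\kappa_0<\tfrac{2-\sqrt{3}}{3}$; with $\nu\in[0,1]$ this is exactly $s_+<\nu<\wt s_+$. For item (3) the crucial structural point is that $P_{33}(\nu)=\bar A_3-\bar B_3$'s numerator and $P_{44}(\nu)$ are both downward parabolas \emph{with the same vertex} $\nu=\tfrac13$ and differ only by the $\nu$-independent constant $P_{44}-P_{33}=4b\bigl(2-\kappa_0-12\kappa_0 b\bigr)$. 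So if $b\le\tfrac{2-\kappa_0}{12\kappa_0}$ then $P_{44}\ge P_{33}$ pointwise, hence $\{P_{33}\ge0\}\subseteq\{P_{44}\ge0\}$; while if $b>\tfrac{2-\kappa_0}{12\kappa_0}$ I would instead show $\{P_{44}\ge0\}\supseteq[0,1]$, i.e.\ $\tfrac13+\sqrt D\ge1$, equivalently $24\kappa_0 b^2-(2+4\kappa_0)b-1\ge0$; this upward parabola in $b$ has vertex below $\tfrac{2-\kappa_0}{12\kappa_0}$ and value $\tfrac{4-26\kappa_0+6\kappa_0^2}{12\kappa_0}>0$ there (for $0<\kappa_0<\tfrac{2-\sqrt{3}}{3}$), so it stays $\ge0$ for all larger $b$. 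Since $b_0=\tfrac1{9\kappa_0}\le\tfrac{2-\kappa_0}{12\kappa_0}$, the two regimes cover $b\ge b_0$, and intersecting with $[0,1]$ gives the inclusion.

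The step I expect to be the real obstacle is item (3): the inclusion is \emph{not} a pointwise inequality between the two numerators --- the separating constant $P_{44}-P_{33}$ changes sign at $b=\tfrac{2-\kappa_0}{12\kappa_0}$ --- so one genuinely needs the two-regime argument, and this is where the restriction $0<\kappa_0<\tfrac{2-\sqrt{3}}{3}$ and the precise value of $b_0$ enter (a larger threshold would leave a gap between the two regimes). Everything else is mechanical: expand six quadratics, match vertices and roots with the threshold parameters via the defining quadratics identified at the start, and discharge a handful of square-root inequalities by conjugation; the care is entirely in the bookkeeping, not in any new idea.
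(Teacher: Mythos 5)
Your reduction of each claim to the sign of an explicit concave quadratic $P_{ij}(\nu)$ is a legitimate route, different from the paper's, which instead treats each locus $\bar{A}_i=\bar{B}_j$ as a hyperbola $b=\Gamma(\nu)$ in the $(\nu,b)$-plane and compares branches at the sample points $\nu=0,\tfrac13,1$. Items (1)--(3) of your proposal check out: your $P_{44}$ agrees with the paper's \eqref{hyperbola1} (both reduce to $(\nu-\tfrac13)^2\le 8\kappa_0b^2-\tfrac23(2\kappa_0+1)b+\tfrac19$), the symmetry of $r_-$ and $\wt{r}_+$ about $\nu=\tfrac13$ collapses your two endpoint checks in (1) into one, your surd estimates in (2) are exactly the paper's, and the identity $P_{44}-P_{33}=4b\,(2-\kappa_0-12\kappa_0 b)$ together with the two-regime argument gives (3); this last step is in fact tighter than the paper's inference of $\Gamma_1<\Gamma_2$ on $[0,1]$ from three sample values.

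The gap is in item (4): the asserted inequality $P_{42}(\wt{s}_-)\le0$ is false. Take $\kappa_0=\tfrac1{20}$ and $b=10$ (so $\kappa_0<\tfrac{2-\sqrt3}{3}$ and $b\ge b_0=\tfrac{20}{9}$): then $\sqrt{(3\kappa_0+1)^2-18\kappa_0}=0.65$, $\wt{s}_-=\tfrac23\cdot 10\cdot\tfrac12=\tfrac{10}{3}$, and $P_{42}(\nu)=-18\nu^2-192\nu+856$, so $P_{42}(\tfrac{10}{3})=-200-640+856=16>0$; equivalently $\bar{A}_4-\bar{B}_2=\tfrac{14}{136}-\tfrac1{10}>0$ at $\nu=\wt{s}_-$. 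Hence the unrestricted containment $\{P_{42}\ge0\}\subset(\tfrac23-2b,\wt{s}_-)$ that your endpoint-sign-plus-vertex argument is designed to establish is itself false for such parameters, and no amount of vertex bookkeeping will repair it. What saves the lemma is the standing restriction $\nu\in[0,1]$: in the regime where $P_{42}(\wt{s}_-)>0$ one expects $\wt{s}_->1$ (here $\wt{s}_-=\tfrac{10}{3}$), so the claim is vacuous on $[0,1]$ --- but that implication is precisely what has to be proved and is absent from your proposal. This is also why the paper's own proof of item (4) only verifies that the curve $\Gamma_3$ from \eqref{hyperbola3} lies above $\nu=\wt{s}_-$ \emph{on the line $\nu=1$}, rather than along the whole line $\nu=\wt{s}_-$. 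To close the gap you must either show $P_{42}(\nu)<0$ for all $\nu\in[\max(0,\wt{s}_-),1]$ whenever $\wt{s}_-\le1$, or show that $P_{42}(\wt{s}_-)>0$ forces $\wt{s}_->1$; without one of these, item (4) --- and with it the step in Lemma \ref{lem:alpha existence} that discards the case $\wt{s}_-\le\nu\le\wt{s}_+$ --- is unproved.
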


\begin{proof}
A computation gives
\begin{equation}\label{hyperbola1}
\bar{A}_4 \ge \bar{B}_4 \quad \Longleftrightarrow \quad \frac{\left(b-\frac{2\kappa_0+1}{24\kappa_0}\right)^2}{3} - \frac{\left(\nu - \frac13\right)^2}{24\kappa_0} \ge \frac{(2\kappa_0-1)^2}{1728\kappa_0^2}.
\end{equation}
Let $b=\Gamma_1(\nu)$ be a hyperbola derived by \eqref{hyperbola1} (only upper part is available). Note that $\Gamma_1(\nu)$ has a minimum value at $\nu = \frac13$. Moreover, two lines $\nu = r_-$ and $\nu = \wt{r}_+$ intersect at 
\[(\nu, b) = \left(\frac13, \frac{1}{2\left((2\kappa_0+1) +\sqrt{(2\kappa_0+1)^2-6\kappa_0} \right)} \right).\]
Hence, in order to show that the hyperbola $b = \Gamma_1(\nu)$ is located inside the region constructed by $\max(0,r_-) <\nu < \min(1,\wt{r}_+)$, it suffices to show\footnote{One can show it without the restriction $\nu \in [0,1]$ by the comparing the slopes, but we do not explore it here.}
\begin{equation}\label{comparison1}
\frac{1}{2\left((2\kappa_0+1) +\sqrt{(2\kappa_0+1)^2-6\kappa_0} \right)} < \Gamma_1\left(\frac13\right),
\end{equation}
\begin{equation}\label{comparison2}
\frac{1}{(2\kappa_0+1) +\sqrt{(2\kappa_0+1)^2-6\kappa_0}} < \Gamma_1(0),
\end{equation}
and
\begin{equation}\label{comparison3}
\frac{3}{2\left((2\kappa_0+1) +\sqrt{(2\kappa_0+1)^2-6\kappa_0}\right)} < \Gamma_1(1).
\end{equation}
One  has $\Gamma_1(1/3) = \frac{1}{12\kappa_0}$, and hence \eqref{comparison1}. On the other hand, one has 
\[\Gamma_1(0) = \frac{2\kappa_0+1}{12\kappa_0}, \quad \Gamma_1(1) = \frac{(2\kappa_0+1) + \sqrt{4\kappa_0 + 28\kappa_0 +1}}{24\kappa_0},\]
and hence \eqref{comparison2} and \eqref{comparison3} by a computation. Thus we complete the proof of Item (1).

\medskip

For Item (2), $\nu = s_+$ is decreasing and $\nu = \wt{s}_+$ is increasing  in $\nu$ , we put $\nu = 0$ (for the left) and $\nu = 1$ (for the right). Then, a computation gives
\[\frac{1}{3\kappa_0+1 - \sqrt{(3\kappa_0+1)^2-18\kappa_0}} < b, \]
for the left, and
\[\frac{3}{6\kappa_0+2 + 2\sqrt{(3\kappa_0+1)^2-18\kappa_0}} < b,\]
for the right. Both lower bounds are definitely smaller than $b_0$ for $0 < \kappa_0 < \frac{2-\sqrt{3}}{3}$. 

\medskip

A not very complicated computation gives
\begin{equation}\label{hyperbola2}
\bar{A}_3 \ge \bar{B}_3 \quad\Longleftrightarrow \quad\frac{\left(b-\frac{3\kappa_0+4}{72\kappa_0}\right)^2}{3} - \frac{\left(\nu - \frac13\right)^2}{36\kappa_0} \ge \frac{(3\kappa_0-4)^2}{15552\kappa_0^2}.
\end{equation}
Let $b=\Gamma_2(\nu)$ be the hyperbola derived by \eqref{hyperbola2} (only the upper parts are available). Note that $\Gamma_2(\nu)$ also has a minimum value at $\nu = \frac13$, in particular, $\Gamma_2(1/3) = \frac{1}{9\kappa_0} = b_0$. Then, since
\[\Gamma_2(0) = \frac{3\kappa_0+4}{36\kappa_0}, \quad \Gamma_2(1) = \frac{3\kappa_0+4 + \sqrt{9\kappa_0^2+168\kappa_0+16}}{72\kappa_0},\]
one  checks 
\[\Gamma_1(0) < \Gamma_2(0), \quad \Gamma_1\left(\frac13\right) < \Gamma_2\left(\frac13\right), \quad \Gamma_1(1) < \Gamma_2(1),\]
for $0 < \kappa_0 < \frac{2-\sqrt{3}}{3}$, which implies Item (3).

\medskip

Finally, we have
\begin{equation}\label{hyperbola3}
\begin{aligned}
\bar{A}_4 \ge &~{} \bar{B}_2 \quad \Longleftrightarrow \\
& \frac{\left(b-\frac{(1+6\kappa_0)+3(1-3a)\nu}{48\kappa_0}\right)^2}{9M} -\frac{\left(\nu-\frac{N}{3M}\right)^2}{(48\kappa_0)^2} \ge \frac{72\kappa_0(1-2\kappa_0)}{9M^2(48\kappa_0)^2},
\end{aligned}
\end{equation}
where $M = 9\kappa_0^2+18\kappa_0+1$ and $N = 18\kappa_0+21\kappa_0-1$. Since $M > 0$, $(1-3a) > 0$ and $\kappa_0(1-2\kappa_0)>0$ for $0 < \kappa_0 < \frac{2-\sqrt{3}}{3}$, the hyperbola (denoted by $b = \Gamma_3(\nu)$) derived by \eqref{hyperbola3} (also only upper part is available) can be rotated to a hyperbola of the form $y^2/a^2 - x^2/b^2 = c^2$. Hence, it suffices to show that $b = \Gamma_3(\nu)$ is always located above $\nu = \wt{s}_-$ on the line $\nu =1$. A computation yields
\[\frac{3}{2\left((3\kappa_0+1) - \sqrt{9\kappa_0^2-12\kappa_0+1}\right)} < \frac{(4-3\kappa_0)+\sqrt{9\kappa_0^2-48\kappa_0+16}}{48\kappa_0},\]
for $<\kappa_0<\frac{2-\sqrt{3}}{3}$.
\end{proof}

\begin{remark}\label{rem:b_0>b_2>b_1}
Remark that $b_0 > b_1$ for $0 < \kappa_0 < \frac14$ and $b_0 > b_2 > b_1$ for $0 < \kappa_0 < \frac{2-\sqrt{3}}{3}$, where $b_0$, $b_1$ and $b_2$ are as in \eqref{b_0}, \eqref{b_1} and \eqref{b_2}, respectively.
\end{remark}

Recall the sets $\mathcal R_1(b_0)$, $\mathcal R_2(b_0)$, $\mathcal R_3(b_0)$ and $\mathcal R_\#(b_0)$ defined in Definition \ref{Uni_Dis_Par}:
\[\mathcal R_1(b_0):=\left\{(\nu, b) \in \mathcal R_0 ~:~  9b_0\nu^2 - 6b_0\nu \le 12b^2 - \left(12b_0+1\right)b  \right\},\]
\smallskip
\[\mathcal R_2(b_0):=\left\{(\nu, b) \in \mathcal R_0 ~:~ 40b^2 - 8(3b_0+1)b + 4(2-9b_0)\nu b \ge 15b_0 (3\nu^2-2\nu)  \right\},
\]
\smallskip
\[\mathcal R_3(b_0):=\left\{(\nu, b) \in \mathcal R_0 ~:~ 120b^2 -8(18b_0+1)b + 12(9b_0-2)\nu b \ge 45b_0(3\nu^2 - 2\nu) \right\},
\]
and
\[
\begin{aligned}
\mathcal R_\#(b_0):= &~ \Bigg\{ (a,b,c) =\left(-\frac{\nu}{2}- \frac13 - b, \; b, \; \frac{\nu}{2} - b \right) : \\
&~ \qquad  \qquad \qquad b \ge b_0, ~ \nu \in \mathcal R_1(b_0) \cap \mathcal R_2(b_0) \cap \mathcal R_3(b_0) \Bigg\}.
\end{aligned}
\]
Note that all the sets above are well-defined (they are not empty) if $b$ is sufficiently large. See also Fig. \ref{Fig:0} for the understanding of $\mathcal R_0$. 

\medskip

The key lemma in this chapter is the following one. Here $\mathcal R_\#(b_0)$ appears.

\begin{lemma}\label{lem:alpha existence}
Let $0 < \kappa_0 < \frac14$ and $b_0$ be given in \eqref{b_0}. Let $\bar{A}_i$ and $\bar{B}_i$ be given in \eqref{A2A3A4} and \eqref{B2B3B4}, respectively. 
Then, 
\begin{equation}
\max(\bar{B}_2, \bar{B}_3,\bar{B}_4) \le \min(\bar{A}_2, \bar{A}_3,\bar{A}_4),
\end{equation}
if a triple $(a,b,c)$ belongs to the set $\mathcal R_\#(b_0)$ defined in \eqref{eq:set}.
\end{lemma}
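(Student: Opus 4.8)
The plan is to turn the single inequality $\max(\bar B_2,\bar B_3,\bar B_4)\le\min(\bar A_2,\bar A_3,\bar A_4)$ into a short finite list of pairwise comparisons $\bar B_i\le\bar A_j$, organized by the position of $\nu$. First I would fix the ordering of the threshold parameters: since $b\ge b_0$ and, by Remark \ref{rem:b_0>b_2>b_1}, $b_0>b_1$ (and $b_0>b_2$ when $0<\kappa_0<\tfrac{2-\sqrt3}{3}$), Lemma \ref{lem:rs ordering} applies and $r_\pm,\wt r_\pm,s_\pm,\wt s_\pm$ are ordered as in \eqref{r ordering1}/\eqref{r ordering2}, or, in the weakly dispersive regime, as in \eqref{rs ordering1}/\eqref{rs ordering2}. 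With the ordering in place, Propositions \ref{prop:min} and \ref{prop:max} identify, on each $\nu$-subinterval cut out by these thresholds, exactly which $\bar A_j$ is the minimal coefficient and which $\bar B_i$ is the maximal one.

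Second, I would record the algebraic bookkeeping. Clearing the denominators $12b+3\nu-2$, $12b+2(3\nu-2)$, $4b-\nu$, $4b-2\nu$ (all strictly positive on $\mathcal R_0$) and substituting $\kappa_0=\tfrac1{9b_0}$, a direct computation — of exactly the type that produced \eqref{hyperbola1}--\eqref{hyperbola3} — shows that the three conic conditions $\bar A_3\ge\bar B_3$, $\bar A_4\ge\bar B_3$, $\bar A_3\ge\bar B_4$ coincide, respectively, with the defining inequalities of $\mathcal R_1(b_0)$, $\mathcal R_2(b_0)$, $\mathcal R_3(b_0)$. Consequently, on $\mathcal R_\#(b_0)\subseteq\mathcal R_1(b_0)$ one has $\bar A_3\ge\bar B_3$, hence $\bar A_4\ge\bar B_4$ by Item (3) of Lemma \ref{lem:lower}, and then Item (1) of Lemma \ref{lem:lower} forces $r_-<\nu<\wt r_+$. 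In the regime $0<\kappa_0<\tfrac{2-\sqrt3}{3}$, the remaining point is that $\bar A_2$ and $\bar B_2$ are never extremal on $\mathcal R_\#(b_0)$: Item (2) of Lemma \ref{lem:lower} (with $b\ge b_0$) gives $\nu>s_+$, and the symmetry $\nu\mapsto\tfrac23-\nu$ — under which $\bar A_i(\nu)=-\bar B_i(\tfrac23-\nu)$, so that $\mathcal R_1(b_0)$ is invariant while $\mathcal R_2(b_0)$, $\mathcal R_3(b_0)$ are interchanged, whence $\mathcal R_\#(b_0)$ is invariant — then yields $\nu<\tfrac23-s_+=\wt s_-$; thus $\nu\in(s_+,\wt s_-)$.

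With these preliminaries, the proof closes by a finite case check. On $(r_-,\wt r_+)$ (or on $(s_+,\wt s_-)$ in the weakly dispersive case), the points $r_\pm,\wt r_\pm$ (and, where relevant, $s_\pm,\wt s_\pm$) split $\nu$ into finitely many subintervals, and on each of them Propositions \ref{prop:min} and \ref{prop:max} make the extremal pair $(\min_j\bar A_j,\max_i\bar B_i)$ equal to one of the four pairs $(\bar A_3,\bar B_4)$, $(\bar A_4,\bar B_4)$, $(\bar A_3,\bar B_3)$, $(\bar A_4,\bar B_3)$. For these four, the required inequality $\bar B_i\le\bar A_j$ is, respectively, membership in $\mathcal R_3(b_0)$, the relation $\bar A_4\ge\bar B_4$, membership in $\mathcal R_1(b_0)$, and membership in $\mathcal R_2(b_0)$ — all of which hold throughout $\mathcal R_\#(b_0)$ by the previous paragraph. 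Since both admissible orderings in each regime produce the same list of extremal pairs, this proves the claim.

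The bulk of the work, and the only genuinely delicate point, is the second step: establishing that $\mathcal R_\#(b_0)$ stays clear of the ``bad window'' $[s_-,s_+]\cup[\wt s_-,\wt s_+]$ on which $\bar A_2$ or $\bar B_2$ dominates, and on which no comparison among $\mathcal R_1,\mathcal R_2,\mathcal R_3$ can substitute for the missing inequalities $\bar B_4\le\bar A_2$, $\bar B_2\le\bar A_4$. This is precisely where Items (2)--(4) of Lemma \ref{lem:lower}, together with the explicit vertex/endpoint comparisons of the hyperbolae $\Gamma_1,\Gamma_2,\Gamma_3$ carried out in its proof, are indispensable; once $\nu$ has been confined to the ``good'' window, the individual inequalities between the $\bar A_j$'s and the $\bar B_k$'s are one-line conic computations.
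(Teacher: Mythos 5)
Your overall route is the same as the paper's: order the thresholds via Lemma \ref{lem:rs ordering} (using Remark \ref{rem:b_0>b_2>b_1}), read off the extremal pair $(\min_j\bar A_j,\max_i\bar B_i)$ on each $\nu$-subinterval from Propositions \ref{prop:min} and \ref{prop:max}, match the defining inequalities of $\mathcal R_1(b_0),\mathcal R_2(b_0),\mathcal R_3(b_0)$ with $\bar A_3\ge\bar B_3$, $\bar A_4\ge\bar B_3$, $\bar A_3\ge\bar B_4$ (this identification is correct, and in fact stated more precisely than in the paper), and deduce $\bar A_4\ge\bar B_4$ from $\bar A_3\ge\bar B_3$. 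One small slip: Item (3) of Lemma \ref{lem:lower} is only stated for $0<\kappa_0<\frac{2-\sqrt{3}}{3}$, so in the regime $\frac{2-\sqrt{3}}{3}\le\kappa_0<\frac14$ you need the corresponding containment of the hyperbolas $\Gamma_1,\Gamma_2$, which the paper verifies inline in its Case I; the fact is true, so this is only a citation issue.

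The genuine gap is in your exclusion of the window where $\bar B_2$ is maximal. You apply the bound ``$\nu>s_+$'' of Item (2) of Lemma \ref{lem:lower} at the reflected point $\frac23-\nu$ to conclude $\nu<\frac23-s_+=\wt{s}_-$. But Item (2) is valid only for arguments in $[0,1]$, and $\frac23-\nu\in[0,1]$ exactly when $\nu\le\frac23$. Since the proof of Item (2) in fact shows $s_+<0$ for $b\ge b_0$, one has $\wt{s}_-=\frac23-s_+>\frac23$, so the inequality $\nu<\wt{s}_-$ is vacuous on $\nu\le\frac23$ and carries content precisely on $\nu\in(\frac23,1]$ --- exactly where your reflection exits $[0,1]$ and the argument fails. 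The invariance of the polynomial inequalities under $\nu\mapsto\frac23-\nu$ does not repair this, because the constraint $\nu\in[0,1]$ built into $\mathcal R_0$ is not symmetric, so $(\frac23-\nu,b)$ need not lie in the sets as defined. Consequently you have not shown that $\mathcal R_\#(b_0)$ avoids $[\wt{s}_-,\wt{s}_+]$, where by Item (4) of Lemma \ref{lem:lower} the required inequality $\bar A_4\ge\bar B_2$ actually fails; this step must instead be established by checking directly that $\mathcal R_1(b_0)\cap\mathcal R_2(b_0)\cap\mathcal R_3(b_0)\cap\{b\ge b_0\}$ does not meet $\{\nu\ge\wt{s}_-\}$ (e.g., one verifies that $\bar A_4\ge\bar B_3$, i.e.\ membership in $\mathcal R_2(b_0)$, already fails there), which is what the paper's closing remarks about the intersections of the hyperbolas $\bar A_i=\bar B_j$ with the lines $\nu=r_+$ and $\nu=\wt{r}_-$ are meant to supply.
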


\begin{proof}
Thanks to Remark \ref{rem:b_0>b_2>b_1}, we have that Lemmas \ref{lem:rs ordering} and \ref{lem:lower} are valid. We split the proof into two parts in terms of the range of $\kappa_0$.

\medskip

\medskip

\textbf{Case I.} $\frac{2-\sqrt{3}}{3} \le \kappa_0 < \frac14$. Lemma \ref{lem:rs ordering} ensures that \eqref{r ordering1} or \eqref{r ordering2} happens (see Figure \ref{fig:min-max2}), and then, Propositions \ref{prop:min} and \ref{prop:max} yield
\begin{enumerate}
\item $\max(\bar{B}_2, \bar{B}_3,\bar{B}_4) = \bar{B}_4$ and $\min(\bar{A}_2, \bar{A}_3,\bar{A}_4) = \bar{A}_4$, if $\nu \le r_-$.
\smallskip
\item $\max(\bar{B}_2, \bar{B}_3,\bar{B}_4) = \bar{B}_4$ and $\min(\bar{A}_2, \bar{A}_3,\bar{A}_4) = \bar{A}_3$, if $r_- \le \nu \le r_+$.
\smallskip
\item $\max(\bar{B}_2, \bar{B}_3,\bar{B}_4) = \bar{B}_4$ and $\min(\bar{A}_2, \bar{A}_3,\bar{A}_4) = \bar{A}_4$, if $r_+ \le \nu \le \wt{r}_-$.
\smallskip
\item $\max(\bar{B}_2, \bar{B}_3,\bar{B}_4) = \bar{B}_3$ and $\min(\bar{A}_2, \bar{A}_3,\bar{A}_4) = \bar{A}_4$, if $\wt{r}_- \le \nu \le \wt{r}_+$.
\smallskip
\item $\max(\bar{B}_2, \bar{B}_3,\bar{B}_4) = \bar{B}_4$ and $\min(\bar{A}_2, \bar{A}_3,\bar{A}_4) = \bar{A}_4$, if $\wt{r}_+ < \nu$.
\end{enumerate}
and
\begin{enumerate}
\item $\max(\bar{B}_2, \bar{B}_3,\bar{B}_4) = \bar{B}_4$ and $\min(\bar{A}_2, \bar{A}_3,\bar{A}_4) = \bar{A}_4$, if $\nu \le r_-$.
\smallskip
\item $\max(\bar{B}_2, \bar{B}_3,\bar{B}_4) = \bar{B}_4$ and $\min(\bar{A}_2, \bar{A}_3,\bar{A}_4) = \bar{A}_3$, if $r_- \le \nu \le \wt{r}_-$.
\smallskip
\item $\max(\bar{B}_2, \bar{B}_3,\bar{B}_4) = \bar{B}_3$ and $\min(\bar{A}_2, \bar{A}_3,\bar{A}_4) = \bar{A}_3$, if $\wt{r}_- \le \nu \le r_+$.
\smallskip
\item $\max(\bar{B}_2, \bar{B}_3,\bar{B}_4) = \bar{B}_3$ and $\min(\bar{A}_2, \bar{A}_3,\bar{A}_4) = \bar{A}_4$, if $r_+ \le \nu \le \wt{r}_+$.
\smallskip
\item $\max(\bar{B}_2, \bar{B}_3,\bar{B}_4) = \bar{B}_4$ and $\min(\bar{A}_2, \bar{A}_3,\bar{A}_4) = \bar{A}_4$, if $\wt{r}_+ < \nu$.
\end{enumerate}
\medskip

From Lemma \ref{lem:lower}, we know that there is no possible $(\nu, b) \in \mathcal R_0$ for which the cases of Items (1) and (5) both happen. If $(\nu,b)$ belongs to the region $r_+ \le \nu \le \wt{r}_-$, for $b > b_0$, it satisfies $ \bar{A}_3 \ge \bar{B}_3$. Indeed, the intersection point of $\nu=r_+$ and $\nu=\wt{r}_-$ is
\[
(\nu,b) = \left(\frac13,\frac{1}{2\left((2\kappa_0+1)-\sqrt{(2\kappa_0+1)^2-6\kappa_0} \right)}\right),
\]
which is located above $(\nu,b) = (\frac13,b_0)$, the minimum point of $b = \Gamma_2(\nu)$, and where $\Gamma_2(\nu)$ was the hyperbola introduced in the proof of Lemma \ref{lem:lower}. Moreover, one can  check that
\[\frac{1}{(2\kappa_0+1)-\sqrt{(2\kappa_0+1)^2-6\kappa_0}} < \Gamma_2(0)\]
and
\[\frac{3}{2\left((2\kappa_0+1)-\sqrt{(2\kappa_0+1)^2-6\kappa_0}\right)} < \Gamma_2(1).\]
Then, since each region described by Items (2) and (4) corresponds to $\mathcal R_2 (b_0)$ and $\mathcal R_3 (b_0)$, respectively, we complete the proof.
 
\begin{figure}[h!]
\begin{center}
\begin{tikzpicture}[scale=0.8]
\draw[thick] (-5,3) -- (5,3);
\node at (-3,3){$\bullet$};
\node at (-1,3){$\bullet$};
\node at (1,3){$\bullet$};
\node at (3,3){$\bullet$};
\node at (-2.9,2.5){$r_-$};
\node at (-0.9,2.5){$r_+$};
\node at (1.1,2.5){$\wt{r}_-$};
\node at (3.1,2.5){$\wt{r}_+$};
\node at (5,4.5){\tiny{$\frac{2-\sqrt{3}}{3} \le \kappa_0 < \frac14$}};
\node at (-5,4.5){\tiny{$\min(\bar{A}_2,\bar{A}_3,\bar{A}_4)$}};
\draw (-5,3) arc (150:100:1);
\node at (-4,3.5){$\bar{A}_4$};
\draw (-3,3) arc (30:80:1);
\draw (-3,3) arc (150:100:1);
\node at (-2,3.5){$\bar{A}_3$};
\draw (-1,3) arc (30:80:1);
\draw (-1,3) arc (150:100:1);
\node at (0,3.5){$\bar{A}_4$};
\draw (1,3) arc (30:80:1);
\draw (1,3) arc (150:100:1);
\node at (2,3.5){$\bar{A}_4$};
\draw (3,3) arc (30:80:1);
\draw (3,3) arc (150:100:1);
\node at (4,3.5){$\bar{A}_4$};
\draw (5,3) arc (30:80:1);
\node at (-5,1.5){\tiny{$\max(\bar{B}_2,\bar{B}_3,\bar{B}_4)$}};
\draw (-5,3) arc (-150:-100:1);
\node at (-4,2.5){$\bar{B}_4$};
\draw (-3,3) arc (-30:-80:1);
\draw (-3,3) arc (-150:-100:1);
\node at (-2,2.5){$\bar{B}_4$};
\draw (-1,3) arc (-30:-80:1);
\draw (-1,3) arc (-150:-100:1);
\node at (0,2.5){$\bar{B}_4$};
\draw (1,3) arc (-30:-80:1);
\draw (1,3) arc (-150:-100:1);
\node at (2,2.5){$\bar{B}_3$};
\draw (3,3) arc (-30:-80:1);
\draw (3,3) arc (-150:-100:1);
\node at (4,2.5){$\bar{B}_4$};
\draw (5,3) arc (-30:-80:1);
\end{tikzpicture}

\begin{tikzpicture}[scale=0.8]
\draw[thick] (-5,3) -- (5,3);
\node at (-3,3){$\bullet$};
\node at (-1,3){$\bullet$};
\node at (1,3){$\bullet$};
\node at (3,3){$\bullet$};
\node at (-2.9,2.5){$r_-$};
\node at (-0.9,2.5){$\wt{r}_-$};
\node at (1.1,2.5){$r_+$};
\node at (3.1,2.5){$\wt{r}_+$};
\node at (5,4.5){\tiny{$\frac{2-\sqrt{3}}{3} \le \kappa_0 < \frac14$}};
\node at (-5,4.5){\tiny{$\min(\bar{A}_2,\bar{A}_3,\bar{A}_4)$}};
\draw (-5,3) arc (150:100:1);
\node at (-4,3.5){$\bar{A}_4$};
\draw (-3,3) arc (30:80:1);
\draw (-3,3) arc (150:100:1);
\node at (-2,3.5){$\bar{A}_3$};
\draw (-1,3) arc (30:80:1);
\draw (-1,3) arc (150:100:1);
\node at (0,3.5){$\bar{A}_3$};
\draw (1,3) arc (30:80:1);
\draw (1,3) arc (150:100:1);
\node at (2,3.5){$\bar{A}_4$};
\draw (3,3) arc (30:80:1);
\draw (3,3) arc (150:100:1);
\node at (4,3.5){$\bar{A}_4$};
\draw (5,3) arc (30:80:1);
\node at (-5,1.5){\tiny{$\max(\bar{B}_2,\bar{B}_3,\bar{B}_4)$}};
\draw (-5,3) arc (-150:-100:1);
\node at (-4,2.5){$\bar{B}_4$};
\draw (-3,3) arc (-30:-80:1);
\draw (-3,3) arc (-150:-100:1);
\node at (-2,2.5){$\bar{B}_4$};
\draw (-1,3) arc (-30:-80:1);
\draw (-1,3) arc (-150:-100:1);
\node at (0,2.5){$\bar{B}_3$};
\draw (1,3) arc (-30:-80:1);
\draw (1,3) arc (-150:-100:1);
\node at (2,2.5){$\bar{B}_3$};
\draw (3,3) arc (-30:-80:1);
\draw (3,3) arc (-150:-100:1);
\node at (4,2.5){$\bar{B}_4$};
\draw (5,3) arc (-30:-80:1);
\end{tikzpicture}
\end{center}
\caption{Minimum and maximum of $\bar{A}_i$ and $\bar{B}_i$, $i=2,3,4$, with respect to the position of $\nu$, for $\frac{2-\sqrt{3}}{3} \le \kappa_0 < \frac14$. (Above) when $r_+ < \wt{r}_-$ and (below) when $\wt{r}_- < r_+$. 
} \label{fig:min-max2}
\end{figure}

\medskip

\textbf{Case II.} $0 < \kappa_0 < \frac{2-\sqrt{3}}{3}$. For $b \ge b_0 > b_1$, we know from Lemmas \ref{lem:rs ordering} and \ref{lem:lower} (2) that the cases
\[s_+< r_+ < \wt{r}_-<\wt{s}_-<\wt{s}_+\]
and
\[s_+< \wt{r}_- < r_+<\wt{s}_-<\wt{s}_+\]
must happen, and hence from Propositions \ref{prop:min} and \ref{prop:max} we have
\begin{enumerate}
\item $\max(\bar{B}_2, \bar{B}_3,\bar{B}_4) = \bar{B}_4$ and $\min(\bar{A}_2, \bar{A}_3,\bar{A}_4) = \bar{A}_3$, if $s_+ \le \nu \le r_+$.
\smallskip
\item $\max(\bar{B}_2, \bar{B}_3,\bar{B}_4) = \bar{B}_4$ and $\min(\bar{A}_2, \bar{A}_3,\bar{A}_4) = \bar{A}_4$, if $r_+ \le \nu \le \wt{r}_-$.
\smallskip
\item $\max(\bar{B}_2, \bar{B}_3,\bar{B}_4) = \bar{B}_3$ and $\min(\bar{A}_2, \bar{A}_3,\bar{A}_4) = \bar{A}_4$, if $\wt{r}_- \le \nu \le \wt{s}_-$.
\smallskip
\item $\max(\bar{B}_2, \bar{B}_3,\bar{B}_4) = \bar{B}_2$ and $\min(\bar{A}_2, \bar{A}_3,\bar{A}_4) = \bar{A}_4$, if $\wt{s}_- \le \nu \le \wt{s}_+$,
\end{enumerate}
or
\begin{enumerate}
\item $\max(\bar{B}_2, \bar{B}_3,\bar{B}_4) = \bar{B}_4$ and $\min(\bar{A}_2, \bar{A}_3,\bar{A}_4) = \bar{A}_3$, if $s_+ \le \nu \le \wt{r}_-$.
\smallskip
\item $\max(\bar{B}_2, \bar{B}_3,\bar{B}_4) = \bar{B}_3$ and $\min(\bar{A}_2, \bar{A}_3,\bar{A}_4) = \bar{A}_3$, if $\wt{r}_- \le \nu \le r_+$.
\smallskip
\item $\max(\bar{B}_2, \bar{B}_3,\bar{B}_4) = \bar{B}_3$ and $\min(\bar{A}_2, \bar{A}_3,\bar{A}_4) = \bar{A}_4$, if $r_+ \le \nu \le \wt{s}_-$.
\smallskip
\item $\max(\bar{B}_2, \bar{B}_3,\bar{B}_4) = \bar{B}_2$ and $\min(\bar{A}_2, \bar{A}_3,\bar{A}_4) = \bar{A}_4$, if $\wt{s}_- \le \nu \le \wt{s}_+$.
\end{enumerate}

\medskip

Moreover, Lemma \ref{lem:lower} (4) enables us to drop the last case in both situations (see Figure \ref{fig:min-max3}). 
Similarly as \textbf{Case I}, the remaining cases exactly describe $\mathcal R_{\#}(b_0)$, thus we complete this case.

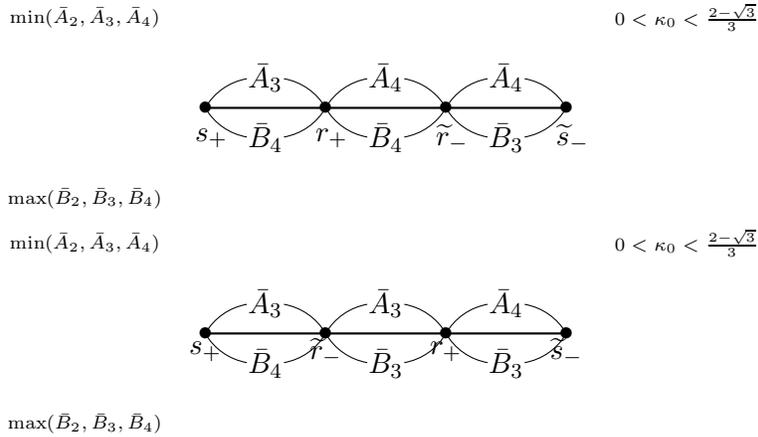
\begin{figure}[h!]
\begin{center}
\begin{tikzpicture}[scale=0.8]
\draw[thick] (-3,3) -- (3,3);
\node at (-3,3){$\bullet$};
\node at (-1,3){$\bullet$};
\node at (1,3){$\bullet$};
\node at (3,3){$\bullet$};
\node at (-2.9,2.5){$s_+$};
\node at (-0.9,2.5){$r_+$};
\node at (1.1,2.5){$\wt{r}_-$};
\node at (3.1,2.5){$\wt{s}_-$};
\node at (5,4.5){\tiny{$0 < \kappa_0 < \frac{2-\sqrt{3}}{3}$}};
\node at (-5,4.5){\tiny{$\min(\bar{A}_2,\bar{A}_3,\bar{A}_4)$}};
\draw (-3,3) arc (150:100:1);
\node at (-2,3.5){$\bar{A}_3$};
\draw (-1,3) arc (30:80:1);
\draw (-1,3) arc (150:100:1);
\node at (0,3.5){$\bar{A}_4$};
\draw (1,3) arc (30:80:1);
\draw (1,3) arc (150:100:1);
\node at (2,3.5){$\bar{A}_4$};
\draw (3,3) arc (30:80:1);
\node at (-5,1.5){\tiny{$\max(\bar{B}_2,\bar{B}_3,\bar{B}_4)$}};
\draw (-3,3) arc (-150:-100:1);
\node at (-2,2.5){$\bar{B}_4$};
\draw (-1,3) arc (-30:-80:1);
\draw (-1,3) arc (-150:-100:1);
\node at (0,2.5){$\bar{B}_4$};
\draw (1,3) arc (-30:-80:1);
\draw (1,3) arc (-150:-100:1);
\node at (2,2.5){$\bar{B}_3$};
\draw (3,3) arc (-30:-80:1);
\end{tikzpicture}

\begin{tikzpicture}[scale=0.8]
\draw[thick] (-3,3) -- (3,3);
\node at (-3,3){$\bullet$};
\node at (-1,3){$\bullet$};
\node at (1,3){$\bullet$};
\node at (3,3){$\bullet$};
\node at (-3,2.7){$s_+$};
\node at (-1,2.7){$\wt{r}_-$};
\node at (1,2.7){$r_+$};
\node at (3,2.7){$\wt{s}_-$};
\node at (5,4.5){\tiny{$0 < \kappa_0 < \frac{2-\sqrt{3}}{3}$}};
\node at (-5,4.5){\tiny{$\min(\bar{A}_2,\bar{A}_3,\bar{A}_4)$}};
\draw (-3,3) arc (150:100:1);
\node at (-2,3.5){$\bar{A}_3$};
\draw (-1,3) arc (30:80:1);
\draw (-1,3) arc (150:100:1);
\node at (0,3.5){$\bar{A}_3$};
\draw (1,3) arc (30:80:1);
\draw (1,3) arc (150:100:1);
\node at (2,3.5){$\bar{A}_4$};
\draw (3,3) arc (30:80:1);
\node at (-5,1.5){\tiny{$\max(\bar{B}_2,\bar{B}_3,\bar{B}_4)$}};
\draw (-3,3) arc (-150:-100:1);
\node at (-2,2.5){$\bar{B}_4$};
\draw (-1,3) arc (-30:-80:1);
\draw (-1,3) arc (-150:-100:1);
\node at (0,2.5){$\bar{B}_3$};
\draw (1,3) arc (-30:-80:1);
\draw (1,3) arc (-150:-100:1);
\node at (2,2.5){$\bar{B}_3$};
\draw (3,3) arc (-30:-80:1);
\end{tikzpicture}
\end{center}
\caption{Minimum and maximum of $\bar{A}_i$ and $\bar{B}_i$, $i=2,3,4$, with respect to the position of $\nu$, for $0 < \kappa_0 < \frac{2-\sqrt{3}}{3}$, in particular, (above) when $r_+ < \wt{r}_-$ and (below) when $\wt{r}_- < r_+$.} \label{fig:min-max3}
\end{figure}

\medskip

We end the proof with the following remarks\footnote{One can check them via simple computations.}:
\begin{enumerate}
\item The hyperbolas determined by $\bar{A}_3=\bar{B}_4$ and $\bar{A}_3=\bar{B}_3$, and the line $\nu = \wt{r}_-$, intersect at only one point.
\smallskip
\item The hyperbolas determined by $\bar{A}_4=\bar{B}_3$ and $\bar{A}_3=\bar{B}_3$, and the line $\nu = r_+$, intersect at only one point.
\end{enumerate}
This, indeed, shows that $\mathcal R_\#(b_0)$ is equivalent to the union of all $(\nu,b)$ obtained from each case above described. 
\end{proof}

\subsection{Positivity of the quadratic form $\mathcal{Q}_{\lambda(t)}(t)$}\label{sec:TH3}

As a consequence of all the analysis above performed, one has 
\begin{proposition}\label{prop:CONDITION}
Let $v_0 \in (0,1)$ be given. Let $v_0^+$ and $\kappa_0$ be defined as in \eqref{v_0^+} and \eqref{k_0}, respectively. Let $b_0$ be defined as in \eqref{b_0}. Let $\vp$ satisfy $|\vp^{(n)}| \lesssim |\vp'|$, $n \ge 1$. Then, there exists $\alpha   \in \R$, such that  
\begin{equation}\label{Q_est}
\begin{aligned}
\mathcal{Q}_{\lambda(t)}(t) =&~{} \frac{v_0^+}{2\lambda(t)} \int \vp' \left(\frac{x-vt}{\lambda(t)} \right) (u^2+\eta^2 + u_x^2 + \eta_x^2) \\
&+ O\left(\frac{1}{\lambda(t)^2}\int \vp' \left(\frac{x-vt}{\lambda(t)} \right) (u^2+\eta^2 + u_x^2 + \eta_x^2)\right),
\end{aligned}
\end{equation}
provided the triple $(a,b,c)$ belongs to the set $\mathcal R_\#(b_0)$ defined in \eqref{eq:set}.
\end{proposition}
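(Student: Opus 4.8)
The plan is to assemble Proposition~\ref{prop:CONDITION} from the decomposition of $\mathcal{Q}_{\lambda(t)}(t)$ obtained by rescaling Lemma~\ref{lem:leading}, namely the expression \eqref{Q-1}, and to exhibit an explicit choice of $\alpha$ that makes the ``remainder'' quadratic form (the last two integrals in \eqref{Q-1}) nonnegative. First I would recall that, after replacing each $\vp^{(n)}$ by $\lambda(t)^{-n}\vp^{(n)}$ and using the canonical variables \eqref{eq:fg}, $\mathcal{Q}_{\lambda(t)}(t)$ is the sum of a ``principal'' part
\[
\frac{1}{\lambda(t)}\int \vp'\Big(\tfrac{x-vt}{\lambda(t)}\Big)\Big(\tfrac{v_0^+}{2}f^2 + \tfrac{3v_0^+}{2}f_x^2 + \tfrac{3v_0^+}{2}f_{xx}^2 + \tfrac{v_0^+}{2}f_{xxx}^2 + (f\mapsto g)\Big),
\]
a ``correction'' part carrying the coefficients $A_j^*,B_j^*$ of \eqref{AB1}--\eqref{AB4}, and $\mathrm{l.o.t.}$ terms which come from $\vp''',\vp''$ and are $O(\lambda(t)^{-2})$ after applying $|\vp^{(n)}|\lesssim|\vp'|$. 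Then, invoking \eqref{eq:H1_est} from Lemma~\ref{lem:L2 comparable} with $\phi=\vp'$ (here $|\phi''|\le \lambda(t)^{-2}\lesssim\lambda(t)^{-1}\phi$ for $t$ large, so the hypothesis holds), the principal part equals
\[
\frac{v_0^+}{2\lambda(t)}\int \vp'\Big(\tfrac{x-vt}{\lambda(t)}\Big)(u^2+u_x^2+\eta^2+\eta_x^2) + O\Big(\tfrac{1}{\lambda(t)^2}\int\vp'(\cdots)(u^2+u_x^2+\eta^2+\eta_x^2)\Big),
\]
which is precisely the first term in \eqref{Q_est}. So the whole content of the proposition reduces to showing that the correction part is nonnegative for a suitable $\alpha$, up to the same kind of $O(\lambda(t)^{-2})$ error.

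The core step is therefore: find $\alpha\in\R$ such that both
\[
\int\vp'\big(A_1^* f^2 + A_2^* f_x^2 + A_3^* f_{xx}^2 + A_4^* f_{xxx}^2\big)\ge 0
\quad\text{and}\quad
\int\vp'\big(B_1^* g^2 + B_2^* g_x^2 + B_3^* g_{xx}^2 + B_4^* g_{xxx}^2\big)\ge 0.
\]
By the two ``Basic nonnegativity conditions'' lemmas this holds as soon as $\max(\bar B_2,\bar B_3,\bar B_4)\le\alpha\le\min(\bar A_2,\bar A_3,\bar A_4)$ (using $A_1^*=B_1^*=\kappa_0>0$; the correction $f^2$ and $g^2$ coefficients are automatically positive). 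Such an $\alpha$ exists precisely when $\max(\bar B_2,\bar B_3,\bar B_4)\le\min(\bar A_2,\bar A_3,\bar A_4)$, which is exactly the conclusion of Lemma~\ref{lem:alpha existence}, valid for $(a,b,c)\in\mathcal R_\#(b_0)$ with $b_0=\tfrac{1}{9\kappa_0}$. Fixing that $\alpha$, the correction quadratic forms are nonnegative, hence
\[
\mathcal{Q}_{\lambda(t)}(t)\ \ge\ \frac{v_0^+}{2\lambda(t)}\int \vp'\Big(\tfrac{x-vt}{\lambda(t)}\Big)(u^2+\eta^2+u_x^2+\eta_x^2)\ -\ C\,\frac{1}{\lambda(t)^2}\int\vp'\Big(\tfrac{x-vt}{\lambda(t)}\Big)(u^2+\eta^2+u_x^2+\eta_x^2),
\]
and a matching upper bound follows the same way (the correction forms are also $O(\lambda(t)^{-1})$-bounded above by the same local norm via Lemma~\ref{lem:L2 comparable}), giving \eqref{Q_est}. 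The $\mathrm{l.o.t.}$ terms involving $\vp''',\vp''$ are absorbed into the $O(\lambda(t)^{-2})$ remainder using $|\vp^{(n)}|\lesssim|\vp'|$ and, for those written in $f,g$, Lemma~\ref{lem:Can_H1} to pass back to $u,\eta$.

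The main obstacle is really bookkeeping rather than a deep new idea: one must (i) carefully justify that passing between the local $H^1$ norm of $(u,\eta)$ and the $H^3$ expressions in $(f,g)$ only costs $O(\lambda(t)^{-2})$ errors — this is where $|\vp^{(n)}|\lesssim|\vp'|$ and the smallness of $\lambda(t)^{-1}$ relative to $\vp'$ must be used cleanly, exactly as in \cite{KMPP2018}; and (ii) verify that the single $\alpha$ furnished by Lemma~\ref{lem:alpha existence} simultaneously kills the $f$ and $g$ corrections, which is immediate once the nonnegativity lemmas are in place. I would also double-check the endpoint/boundedness of $\bar A_j$, $\bar B_k$ so that $\alpha$ can be chosen uniformly in $t$ (it does not depend on $t$ at all, only on $v_0$ and $b$), ensuring the implicit constants in \eqref{Q_est} depend only on $(a,b,c)$ and $v_0$, as required for the subsequent integration in time. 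No compactness or PDE input beyond Proposition~\ref{prop:H} is needed; the statement is purely an algebraic positivity fact dressed up with the elementary Lemma~\ref{lem:L2 comparable}.
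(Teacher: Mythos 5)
Your argument is correct and follows essentially the same route as the paper: split $\mathcal{Q}_{\lambda(t)}(t)$ as in \eqref{Q-1} into the $v_0^+$-principal part plus the $A_j^*,B_j^*$ correction, convert the principal part to the local $H^1$ norm via \eqref{eq:H1_est}, absorb the $\vp''',\vp''$ terms into the $O(\lambda(t)^{-2})$ error using \eqref{eq:L2_est}, and invoke Lemma~\ref{lem:alpha existence} to choose $\alpha$ with $\max(\bar B_2,\bar B_3,\bar B_4)\le\alpha\le\min(\bar A_2,\bar A_3,\bar A_4)$ so that the correction forms are nonnegative and can be dropped. The only cosmetic remark is that \eqref{Q_est} is really used as a lower bound (the discarded correction terms are nonnegative but of size $\lambda(t)^{-1}$, not $\lambda(t)^{-2}$), which is exactly how the paper itself treats it.
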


\begin{remark}\label{rem:choice}
The leading coefficient $v_0^+$ in Proposition \ref{prop:CONDITION} can be freely chosen in $(v_0, 1)$ in order to find a probably smaller $b_0$ if possible. However, the free parameter $v_0^+$ also forces Theorem \ref{TH2} under a $v_0^+$-dependent smallness condition $\ve = \ve(v_0,b_0)$. Hence, in order to avoid this problem, we fix $v_0^+$ as in \eqref{v_0^+}.
\end{remark}

\begin{remark}
Proposition \ref{prop:CONDITION} with the choice of $v_0 = 0$ (as an asymptotic limit) corresponds to the result in \cite{KMPP2018} (or Theorem \ref{Thm2}) in the sense that we have $b > \frac29$ (see \eqref{k_0} and \eqref{b_0}). This is because all positive coefficients ($A_i, B_i > 0$) are needed for \eqref{Q_est}, while the virial estimate for Theorem \ref{TH1} allows some of negative coefficients. 
\end{remark}

\begin{proof}[Proof of Proposition \ref{prop:CONDITION}]
Thanks to \eqref{eq:L2_est}, the third terms in the right-hand side of \eqref{Q} are in
\[
O\left(\frac{1}{\lambda(t)^2}\int \vp' \left(\frac{x-vt}{\lambda(t)} \right) (u^2+\eta^2 + u_x^2 + \eta_x^2)\right).
\]
Then, we can rewrite $\mathcal{Q}_{\lambda(t)}(t)$ as
\be\label{Q_est1}
\begin{aligned}
\mathcal{Q}_{\lambda(t)}(t) =&~{} \frac{v_0^+}{2\lambda(t)} \int \vp' \left(\frac{x-vt}{\lambda(t)} \right) (f^2 + 3f_x^2 + 3f_{xx}^2 + f_{xxx}^2) \\
&+\frac{v_0^+}{2\lambda(t)} \int \vp' \left(\frac{x-vt}{\lambda(t)} \right) (g^2 + 3g_x^2 + 3g_{xx}^2 + g_{xxx}^2)\\
&+\frac{1}{\lambda(t)}\int \vp'\left(\frac{x-vt}{\lambda(t)} \right) \Big( A_1^* f^2 + A_2^* f_x^2 + A_3^* f_{xx}^2 + A_4^* f_{xxx}^2\Big)\\
&+\frac{1}{\lambda(t)}\int \vp' \left(\frac{x-vt}{\lambda(t)} \right)\Big( B_1^* g^2 + B_2^* g_x^2 + B_3^* g_{xx}^2 + B_4^* g_{xxx}^2\Big)\\
&+ O\left(\frac{1}{\lambda(t)^2}\int \vp' \left(\frac{x-vt}{\lambda(t)} \right) (u^2+\eta^2 + u_x^2 + \eta_x^2)\right),
\end{aligned}
\ee
where $A_i^*$ and $B_i^*$, $i=1,2,3,4$, are defined as in \eqref{AB1}--\eqref{AB4}. Note that $A_1^*, B_1^* > 0$. If 
\begin{equation}\label{Q_est2}
(a,b,c) \in \mathcal R_{\#}(b_0) \quad \Longleftrightarrow \quad A_i^*, \; B_i^* \ge 0, \; i=2,3,4,
\end{equation}
holds true, we know that \eqref{Q_est1} implies \eqref{Q_est} thanks to \eqref{eq:H1}. Moreover, Lemma \ref{lem:alpha existence}  shows \eqref{Q_est2}, and thus we complete the proof.
\end{proof}


\subsection{Sequentially-in-time decay}
Let $\la=\lambda(t)$ be the time-dependent function given in  \eqref{eq:lambda000} satisfying \eqref{eq:lambda0001}.

\begin{proposition}[Decay in time-dependent intervals]\label{prop:FULL Region}
Let $0 < v_0 < 1$ be given. Let $v_0^+$ and $\kappa_0$ be defined as in \eqref{v_0^+} and \eqref{k_0}, respectively. Let $(a,b,c)$ be a triple of parameters in the set \eqref{eq:set} and $v \in \R$ be a fixed constant with $|v| < v_0$.  Then, there exists $\varepsilon_0 = \varepsilon_0(v_0)>0$ such that for $H^1 \times H^1$ global solutions $(u,\eta)(t)$ to \eqref{boussinesq0000} satisfying 
\[
\norm{(u,\eta)(t=0)}_{H^1 \times H^1} < \varepsilon,
\]
for $0 < \varepsilon \le \varepsilon_0$, we have
\begin{equation}\label{Decay1}
\int_2^{\infty}\frac{1}{\lambda(t)}\int \sech^2 \left( \frac{x-vt}{\lambda(t)} \right)\left(u^2 + \eta^2 + u_x^2 + \eta_x^2 \right)(t,x) \; dx dt \lesssim \varepsilon^2.
\end{equation}
It  implies that there exists an increasing sequence of time $\{t_n\}$ $(t_n \to \infty$ as $n \to \infty)$ such that
\begin{equation}\label{Decay2}
\int \sech^2 \left(\frac{x-vt_n}{\lambda(t_n)}\right) \left(u^2 + (\px u)^2 + \eta^2 + (\px \eta)^2\right)(t_n,x) \; dx \longrightarrow 0,
\end{equation}
as $n \to \infty$.
\end{proposition}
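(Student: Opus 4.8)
The plan is to integrate the virial identity in time and use the key positivity from Proposition \ref{prop:CONDITION}. First I would apply Proposition \ref{prop:H} with the weight $\vp=\tanh$ and $\lambda(t)=t/\log^2 t$, which gives
\[
\frac{d}{dt}\mathcal H(t) = \mathcal{VH}_{\lambda(t)}(t) + \mathcal Q_{\lambda(t)}(t) + \mathcal{SQ}_{\lambda(t)}(t) + \mathcal{NQ}_{\lambda(t)}(t).
\]
By Proposition \ref{prop:CONDITION}, since $(a,b,c)\in\mathcal R_\#(b_0)$, the leading quadratic term satisfies
\[
\mathcal Q_{\lambda(t)}(t) \ge \frac{v_0^+}{2\lambda(t)}\int\vp'\Big(\tfrac{x-vt}{\lambda(t)}\Big)(u^2+\eta^2+u_x^2+\eta_x^2) + O\Big(\tfrac{1}{\lambda(t)^2}\int\vp'\,(\cdots)\Big).
\]
Next I would estimate the remaining terms. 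For $\mathcal{VH}_{\lambda(t)}(t)$, the dominant piece is $-\frac{v}{\lambda(t)}\int\vp'(u\eta+u_x\eta_x)$, which by Young's inequality is bounded below by $-\frac{|v|}{2\lambda(t)}\int\vp'(u^2+\eta^2+u_x^2+\eta_x^2)$; the terms carrying $\lambda'(t)/\lambda(t)=O(1/t)$ together with $|\frac{x-vt}{\lambda(t)}\vp'|\lesssim \vp^{1/2}$ (valid for $\vp=\tanh$, $\vp'=\sech^2$) are controlled by $\frac{1}{t}\int\vp'(\cdots)$ plus, after using the global bound $\|(u,\eta)(t)\|_{H^1\times H^1}\lesssim\varepsilon$, a term $\lesssim \varepsilon^2/(t\log^2 t)$. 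The terms $\mathcal{SQ}_{\lambda(t)}$ and $\mathcal{NQ}_{\lambda(t)}$ are handled exactly as in the proof of Proposition \ref{prop:TH1}: $|\mathcal{SQ}_{\lambda(t)}(t)|\lesssim \frac{1}{\lambda(t)^2}\int\vp'(\cdots)$ and $|\mathcal{NQ}_{\lambda(t)}(t)|\lesssim \frac{\varepsilon}{\lambda(t)}\int\vp'(\cdots)$.

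Combining, and choosing $v_0^+=(1+v_0)/2$ so that $v_0^+-|v|\ge v_0^+-v_0=(1-v_0)/2>0$, one obtains for all $t$ large enough (so that the $O(1/\lambda(t))$, $O(1/\lambda(t)^2)$ and $O(\varepsilon)$ corrections are absorbed, which requires $\varepsilon\le\varepsilon_0(v_0)$ small)
\[
\frac{d}{dt}\mathcal H(t) \ge \frac{1-v_0}{8\lambda(t)}\int\sech^2\Big(\tfrac{x-vt}{\lambda(t)}\Big)(u^2+\eta^2+u_x^2+\eta_x^2) - \frac{C\varepsilon^2}{t\log^2 t}.
\]
Integrating on $[2,\infty)$ and using the uniform bound $|\mathcal H(t)|\lesssim\|(u,\eta)(t)\|_{H^1\times H^1}^2\lesssim\varepsilon^2$ from \eqref{H_est}, together with the integrability of $1/(t\log^2 t)$ on $[2,\infty)$, yields
\[
\int_2^\infty\frac{1}{\lambda(t)}\int\sech^2\Big(\tfrac{x-vt}{\lambda(t)}\Big)(u^2+\eta^2+u_x^2+\eta_x^2)\,dx\,dt \lesssim \varepsilon^2,
\]
which is \eqref{Decay1}. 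The existence of a sequence $t_n\to\infty$ satisfying \eqref{Decay2} then follows from a standard contradiction argument: if the integrand in $t$ stayed bounded below along a set of infinite measure, the integral would diverge because $\int_2^\infty dt/\lambda(t)=\int_2^\infty \log^2 t/t\,dt=\infty$.

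The main obstacle I anticipate is the careful bookkeeping in $\mathcal{VH}_{\lambda(t)}(t)$: one must verify that every term containing the factor $(x-vt)/\lambda(t)$ is tamed by the pointwise bound $\big|\frac{x-vt}{\lambda(t)}\vp'\big(\frac{x-vt}{\lambda(t)}\big)\big|\lesssim \sech^2\big(\frac{x-vt}{\lambda(t)}\big)^{1/2}$ and then by Cauchy--Schwarz either against $\vp'$ itself (giving a $\lambda'(t)/\lambda(t)=O(1/t)$ gain, hence absorbable) or against the global $H^1\times H^1$ bound (giving the summable $\varepsilon^2/(t\log^2 t)$ tail); this is essentially the content of Propositions 5.1 and 5.2 in \cite{KMPP2018} and Remark \ref{rem:phi'}, adapted to the moving frame, so no genuinely new difficulty arises — only the need for uniformity of all implied constants in $v\in[-v_0,v_0]$, which holds because the $\mathcal{VH}$ bounds depend on $v$ only through $|v|\le v_0<1$ and the choice of $\alpha$ in Proposition \ref{prop:CONDITION} depends only on $(a,b,c)$, not on $v$.
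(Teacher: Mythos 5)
Your proposal is correct and follows essentially the same route as the paper: the same virial decomposition from Proposition \ref{prop:H}, the coercivity of $\mathcal Q_{\lambda(t)}$ from Proposition \ref{prop:CONDITION}, Young/Cauchy--Schwarz on $\mathcal{VH}_{\lambda(t)}$ isolating the $\frac{|v|}{2\lambda(t)}$ contribution and relegating the $\lambda'/\lambda$ terms to absorbable or time-integrable remainders, absorption using $v_0^+-|v|\ge\frac{1-v_0}{2}$ for small $\varepsilon_0$ and large $t$, integration against the uniform bound \eqref{H_est}, and the standard $\liminf$ argument (via $\int_2^\infty dt/\lambda(t)=\infty$) for \eqref{Decay2}. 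The only cosmetic difference is the precise splitting of the $\frac{x-vt}{\lambda(t)}\vp'$ terms (the paper's \eqref{VH_est} produces a $\frac{1}{\lambda(t)\log^{1/2}t}$ remainder rather than your $\frac{1}{t}$), which does not affect the argument.
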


\begin{proof}
We choose, in \eqref{eq:refined gvirial} (after replacing the parameters $a, c$ by $a/b, c/b$), the weight $\vp = \tanh$ with $\la(t)$ given by \eqref{eq:lambda000}. It  follows from the proof of Proposition 5.1 in \cite{KMPP2018} that
\begin{equation}\label{SQ_est}
|\mathcal{SQ}_{\lambda(t)}(t)| \lesssim \frac{1}{\lambda(t)^2}\int \vp' \left( \frac{x-vt}{\lambda(t)} \right)\left(u^2 + \eta^2 + u_x^2 + \eta_x^2 \right)
\end{equation}
and
\begin{equation}\label{NQ_est}
|\mathcal{NQ}_{\lambda(t)}(t)| \lesssim \frac{\norm{u}_{H^1} + \norm{\eta}_{H^1}}{\lambda(t)}\int \vp' \left( \frac{x-vt}{\lambda(t)} \right)\left(u^2 + \eta^2 + u_x^2 + \eta_x^2 \right).
\end{equation}
On the other hand, the Young's inequality and the Cauchy-Schwarz inequality in addition to \eqref{eq:lambda0001} yield
\begin{equation}\label{VH_est}
\begin{aligned}
\Big|\frac{\lambda'(t)}{\lambda(t)}&\int \frac{x-vt}{\lambda(t)}\sech^2 \left(\frac{x-vt}{\lambda(t)} \right)fg \Big| \\
\le&~{}\frac{\norm{f}_{L^2}\norm{g}_{L^2}}{t\log^{\frac32} t}+ \frac{1}{4\lambda(t)\log^{\frac12} t} \int \sech^2\left(\frac{x-vt}{\lambda(t)} \right) (f^2 + g^2),
\end{aligned}
\end{equation}
for $t > e^2$. Applying \eqref{VH_est} to the right-hand side of \eqref{VH}, one has
\begin{equation}\label{VH_est1}
\begin{aligned}
|\mathcal{VH}_{\lambda(t)}| \le&~{} \frac{|v|}{2\lambda(t)} \int \sech^2 \left(\frac{x-vt}{\lambda(t)} \right) (u^2 + \eta^2 + u_x^2 + \eta_x^2) \\
&+\frac{1}{\log^{\frac12} t}\frac{1}{\lambda(t)}\int \sech^2 \left(\frac{x-vt}{\lambda(t)} \right) (u^2 + \eta^2 + u_x^2 + \eta_x^2)\\
&+\frac{3\varepsilon^2}{t\log^{\frac32} t}.
\end{aligned}
\end{equation}
Collecting \eqref{Q_est}, \eqref{SQ_est}, \eqref{NQ_est} and \eqref{VH_est1}, we have
\[\begin{aligned}
\frac{3\varepsilon^2}{t\log^{\frac32} t} + \frac{d}{dt} \mathcal{H} \ge&~{}\frac{v_0^+ - |v|}{2\lambda(t)} \int \sech^2 \left(\frac{x-vt}{\lambda(t)} \right) (u^2 + \eta^2 + u_x^2 + \eta_x^2)\\
&-\frac{C_1}{\log^{\frac12} t}\frac{1}{\lambda(t)}\int \sech^2 \left(\frac{x-vt}{\lambda(t)} \right) (u^2 + \eta^2 + u_x^2 + \eta_x^2)\\
&-\frac{C_2\varepsilon_0}{\lambda(t)}\int \sech^2 \left(\frac{x-vt}{\lambda(t)} \right) (u^2 + \eta^2 + u_x^2 + \eta_x^2),
\end{aligned}\]
where $C_1, C_2 >0$ are implicit constants independent of $t$ and $\varepsilon$. 

\medskip

Note that, for any $|v| \le v_0$, the coefficient $\frac{v_0^+ - |v|}{2}$ of the local $H^1$-norm is always bounded below by  $\frac{v_0^+ -v_0}{2}$. This enables us to obtain the strong decay property uniformly in $v$.

\medskip

Take $\varepsilon_0 >0$ and $t_0 >0$ such that
\[\frac{C_1}{\log^{\frac12} t} < \frac{v_0^+-v_0}{8}, \quad t \ge t_0 \quad \mbox{and} \quad C_2\varepsilon_0 < \frac{v_0^+-v_0}{8}.\]
Therefore, we conclude for $\varepsilon \le \varepsilon_0$ and $t \ge t_0$ that
\[\frac{1}{\lambda(t)}\int \sech^2 \left( \frac{x-vt}{\lambda(t)} \right)\left(u^2 + \eta^2 + u_x^2 + \eta_x^2 \right) \lesssim \frac{\varepsilon^2}{t\log^{\frac32} t} + \frac{d}{dt} \mathcal{H},\]
which implies \eqref{Decay1} thanks to the fact that $\frac{1}{t\log^pt}$ is integrable on $[2,\infty)$ when $p>1$, and \eqref{H_est} in addition to \eqref{Energy}. The standard limiting process ensures that \eqref{Decay1} implies \eqref{Decay2}.
\end{proof}

\subsection{Local energy estimates}
We recall from \cite{KMPP2018} the localized energy functional (corresponding to \eqref{boussinesq0000}) defined by
\begin{equation}\label{eq:local energy}
E_{loc}(t) = \frac12 \int \psi(x) \left( - \frac ab(\px u)^2 - \frac cb(\px \eta)^2 + u^2 + \eta^2 + u^2\eta\right)(t,x)dx,
\end{equation}
and its variation
\begin{lemma}[Variation of local energy $E_{loc}$]\label{lem:energy1}
Let $u$ and $\eta$ satisfy \eqref{boussinesq0000}. Let $f$ and $g$ be canonical variables of $u$ and $\eta$ as in \eqref{eq:fg}. Then, the following hold.
\begin{enumerate}
\item Time derivative. We have
\begin{equation}\label{eq:energy1}
\begin{aligned}
\frac{d}{dt} E_{loc}(t) =&~{} \int \psi' fg + \left(1-\frac{2(a+c)}{b}\right) \int \psi' f_xg_x \\
&+ \frac{(3ac-2b(a+c))}{b^2}\int \psi'  f_{xx}g_{xx} + \frac{3ac}{b^2}\int \psi'  f_{xxx}g_{xxx}\\
& -\frac ab\int \psi''  f_xg  -\frac cb \int \psi'' fg_x\\
& + \frac{a(c-2b^2)}{b^2}\int \psi''  f_{xx}g_x+ \frac{c(a-2b^2)}{b^2}\int \psi'' f_xg_{xx}\\
&+SNL_1(t) + SNL_2(t) + SNL_3(t) + SNL_4(t).
\end{aligned}
\end{equation}
\item The small nonlinear parts $SNL_j(t)$ are given by
\begin{equation}\label{eq:SNL1}
SNL_1(t) := \frac{a}{2b} \int (\psi' u_x)_x\nlop(u^2) + \frac cb \int (\psi' \eta_x)_x\nlop(u\eta),
\end{equation}
\begin{equation}\label{eq:SNL2}
\begin{aligned}
SNL_2(t) :=&~{} \frac12 \int \psi' f \nlop (u^2) + \frac{a}{2b} \int \psi' f_{xx} \nlop (u^2)\\
&+ \int \psi' g \nlop (u \eta) + \frac cb \int \psi' g_{xx} \nlop (u \eta)\\
&+ \frac12\int \psi' f_x \nlop (u^2)_x +  \int \psi' g_x \nlop (u \eta)_x ,
\end{aligned}
\end{equation}
\begin{equation}\label{eq:SNL3}
SNL_3(t) := \frac{a}{2b} \int \psi' f_{xxx} \nlop (u^2)_x+ \frac cb \int \psi' g_{xxx} \nlop (u \eta)_x,
\end{equation}
and
\begin{equation}\label{eq:SNL4}
\begin{aligned}
SNL_4(t) :=&~{}\frac12 \int \psi' \nlop(u\eta) \nlop(u^2) \\
&+ \frac12 \int \psi' \nlop (u\eta)_x \nlop (u^2)_x.
\end{aligned}
\end{equation}
\end{enumerate}
\end{lemma}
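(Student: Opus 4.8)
The plan is to compute $\frac{d}{dt}E_{loc}(t)$ directly from \eqref{eq:local energy} and substitute the equations \eqref{boussinesq0000} in the resolved form
\[
\pt\eta = -\nlop\px\big(\tilde a\, u_{xx} + u + u\eta\big), \qquad \pt u = -\nlop\px\big(\tilde c\,\eta_{xx} + \eta + \tfrac12 u^2\big),
\]
and then reorganize the resulting integrals by repeated integration by parts. First I would differentiate term by term, so that e.g. $\frac{d}{dt}\,\tfrac12\int\psi u^2 = \int\psi u\,\pt u$, $\frac{d}{dt}\big(-\tfrac{\tilde a}{2}\int\psi u_x^2\big) = \tilde a\int\px(\psi u_x)\,\pt u$, and similarly for the $\eta$, $\eta_x$ and the cubic $u^2\eta$ contributions; substituting $\pt u$, $\pt\eta$ and moving $\px$ off the nonlocal factor (using that $\nlop$ is self-adjoint and $\px$ anti-self-adjoint) puts everything into the schematic form $\int(\text{weight})\cdot(\text{derivatives of }u,\eta)\cdot\nlop(\text{derivatives of }u,\eta\text{ or a nonlinearity})$. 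A convenient sanity check here is that for $\psi\equiv 1$ all the boundary-type contributions vanish and one recovers conservation of the (rescaled) energy of \eqref{boussinesq0000}; in particular this tells us that every surviving term must carry at least one derivative of $\psi$.

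The second step is the passage to the canonical variables \eqref{eq:fg}. The key point is the identity $\nlop(1-\px^2)=\mathrm{Id}$: since $u=(1-\px^2)f$ and $\eta=(1-\px^2)g$, every linear nonlocal factor collapses, e.g. $\nlop(\tilde c\,\eta_{xx}+\eta)=\tilde c\, g_{xx}+g$, so the quadratic part of $\frac{d}{dt}E_{loc}$ becomes a finite sum of weighted integrals of products of $f,g$ and their derivatives up to order three. Using the one-variable identities of Lemma \ref{lem:Can_H1} together with their bilinear analogues — obtained by polarizing, i.e. expanding $u\eta$, $u_x\eta_x$, $u\nlop\eta$ in $f,g$ and integrating by parts so that the only corrections carry $\psi''$ — one finds that this quadratic part reduces precisely to the first eight integrals of \eqref{eq:energy1}. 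Matching coefficients then produces $1$, $1-2(a+c)/b$, $(3ac-2b(a+c))/b^2$, $3ac/b^2$ for the $\psi'$-terms and $-a/b$, $-c/b$, $a(c-2b^2)/b^2$, $c(a-2b^2)/b^2$ for the $\psi''$-terms (recall $\tilde a=a/b$, $\tilde c=c/b$ from \eqref{tilde_ac}), and one checks that all contributions proportional to $\psi'''$ or higher derivatives of $\psi$ cancel. The genuinely cubic contributions — those carrying the $u\eta$ and $\tfrac12 u^2$ nonlinearities of \eqref{boussinesq0000}, together with $\frac{d}{dt}\int\psi u^2\eta$ — are collected, after the same canonical substitution in their linear factors, into the four groups $SNL_1,\dots,SNL_4$ of \eqref{eq:SNL1}--\eqref{eq:SNL4}, organized by the weight they carry ($\psi'$ versus $(\psi'\,\cdot)_x$) and by the order of the derivatives falling on $f,g$.

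The main obstacle is purely the bookkeeping; there is no analytic difficulty. The individual time derivatives of $\int\psi u^2$, $\int\psi\eta^2$, $\int\psi u_x^2$, $\int\psi\eta_x^2$ each produce mixed $f$--$g$ integrals, because the evolution of $u$ is driven by $\eta$ and vice versa, and one must track exactly how these recombine — including all the $\psi''$ corrections generated when derivatives are commuted past $\psi$ — in order to land on the stated coefficients and not, say, on symmetric $\psi''$ terms such as $\int\psi'' fg$ that ultimately cancel. A secondary point of care is that several of the cubic integrands admit further integration by parts, so the splitting into $SNL_1,\dots,SNL_4$ is a choice that must be made consistently; I would follow the conventions of \cite{KMPP2018}, from which this localized energy and its variation are adapted.
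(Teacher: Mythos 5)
Your overall strategy---differentiate $E_{loc}$ term by term, substitute the resolved equations $\pt u=-\nlop\px(\tilde c\eta_{xx}+\eta+\tfrac12u^2)$, $\pt\eta=-\nlop\px(\tilde a u_{xx}+u+u\eta)$, collapse the linear nonlocal factors via $\nlop(1-\px^2)=\mathrm{Id}$ after passing to $f,g$, and reduce by repeated integration by parts---is exactly the route of \cite{KMPP2018}, from which the present paper imports this lemma without reproving it; the $\psi\equiv1$ conservation check is a sound guide. So the approach is not in question.

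The gap is that for a lemma of this kind the statement \emph{is} its coefficients, and your proposal asserts the matching rather than exhibiting it; moreover the one structural claim you do make---that ``all contributions proportional to $\psi'''$ or higher derivatives of $\psi$ cancel''---is not accurate as stated. They do not cancel: the block obtained by pairing $-\frac ab\int\psi u_x\pt u_x$ with the $\frac cb$-part of $\pt u_x$ (plus its mirror) reduces to $\frac{ac}{b^2}\big(\int\psi' f_{xxx}g_{xxx}+3\int\psi' f_{xx}g_{xx}-\int\psi''' f_xg_x\big)$, and the last term must be converted by one further integration by parts into $\frac{ac}{b^2}\int\psi''(f_{xx}g_x+f_xg_{xx})$ in order to land in the basis of \eqref{eq:energy1}; this is precisely where the $ac/b^2$ portions of the last two $\psi''$ coefficients come from, and mishandling it changes the answer. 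I would therefore insist that you actually perform the two cheapest spot checks before declaring the coefficients matched: (i) the coefficient of $\int\psi' f_{xxx}g_{xxx}$ receives contributions \emph{only} from the block just described, so it is computable in three lines; (ii) setting $a=c=0$ isolates the $\int\psi' fg$ and $\int\psi' f_xg_x$ terms, since then $\frac{d}{dt}E_{loc}=-\int\psi\big(ug_x+\eta f_x\big)=\int\psi'\big(fg-f_xg_x\big)$, whose sign can be cross-checked against the group velocity $(1-\xi^2)/(1+\xi^2)^2$ of that reduced system. When I carry out these checks I do not land on the values displayed in \eqref{eq:energy1} (I find $\frac{ac}{b^2}$ rather than $\frac{3ac}{b^2}$ at top order, and $-1$ rather than $+1$ for the constant part of the $f_xg_x$ coefficient; note also that $\frac{a(c-2b^2)}{b^2}$ is not expressible in $\tilde a,\tilde c$ alone, which it must be since \eqref{boussinesq0000} and $E_{loc}$ are). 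So either there is a subtlety in the reduction that your write-up glosses over, or the coefficients must be re-derived and reconciled with \cite{KMPP2018}; since they propagate directly into \eqref{Loc_Energy}, the threshold \eqref{Sigma}, and Theorem \ref{TH3}, ``matching coefficients then produces\ldots'' is exactly the step that cannot be waved through.
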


Replacing the weight function $\psi(x)$ by the time-dependent function $\psi(\frac{x-vt}{\lambda(t)})$, where $\lambda(t)$ is defined in \eqref{eq:lambda000}, Lemma \ref{lem:energy1} extends to the case of a time-dependent, increasing (in the ``$x=vt$" axial) interval in space. 

\begin{lemma}[A refined variation of local energy $E_{loc}$]\label{lem:energy2}
Let $u$ and $\eta$ satisfy \eqref{boussinesq0000}. Let $f$ and $g$ be canonical variables of $u$ and $\eta$ as in \eqref{eq:fg}. Then, we have
\begin{equation}\label{eq:energy2}
\begin{aligned}
\frac{d}{dt} E_{loc}(t) =&~{} \frac12 \int \psi_t \left( - \frac ab(\px u)^2 - \frac cb(\px \eta)^2 + u^2 + \eta^2 + u^2\eta\right)\\
&+\mbox{RHS of } \eqref{eq:energy1},
\end{aligned}
\end{equation}
by replacing $\psi^{(n)}$ by $\frac{1}{\lambda(t)^{n}}\psi^{(n)}$.
\end{lemma}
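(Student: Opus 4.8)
The plan is to repeat, essentially verbatim, the computation behind Lemma~\ref{lem:energy1}, adding only the term produced by the time dependence of the weight. Write
\[
E_{loc}(t) = \frac12\int \psi\!\left(\frac{x-vt}{\lambda(t)}\right)\!\left( -\frac ab(\px u)^2 - \frac cb(\px \eta)^2 + u^2 + \eta^2 + u^2\eta\right)\!(t,x)\,dx,
\]
and differentiate in $t$ by the product rule. The derivative landing on the weight gives, by the chain rule \eqref{Derivative Weight},
\[
\frac12\int \left(\partial_t\psi\!\left(\frac{x-vt}{\lambda(t)}\right)\right)\!\left( -\frac ab(\px u)^2 - \frac cb(\px \eta)^2 + u^2 + \eta^2 + u^2\eta\right)(t,x)\,dx,
\]
which is precisely $\tfrac12\int \psi_t(\cdots)$, the first line of \eqref{eq:energy2}; here $\psi_t := \partial_t[\psi(\tfrac{x-vt}{\lambda(t)})]$ is a scalar multiple of $\psi'(\tfrac{x-vt}{\lambda(t)})$ by \eqref{Derivative Weight}. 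So the remaining task is to identify the term in which $\partial_t$ acts on $(u,\eta)$.

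For that term, I would note that at a frozen time $t$ the map $x\mapsto \psi(\tfrac{x-vt}{\lambda(t)})$ is merely a smooth, bounded spatial weight, so the manipulation is identical to the one already carried out for Lemma~\ref{lem:energy1}: substitute $\partial_t u$ and $\partial_t\eta$ from \eqref{boussinesq0000}, integrate by parts in $x$ to move every spatial derivative onto the weight, and pass to the canonical variables $f,g$ of \eqref{eq:fg} via the identities of Lemmas~\ref{lem:L2 comparable} and \ref{lem:Can_H1}, collecting the local, boundary-type and small nonlinear ($SNL_j$, $j=1,2,3,4$) contributions. The only bookkeeping difference is that each time a spatial derivative falls on the weight one has $\partial_x^n[\psi(\tfrac{x-vt}{\lambda(t)})] = \lambda(t)^{-n}\psi^{(n)}(\tfrac{x-vt}{\lambda(t)})$; hence every term of \eqref{eq:energy1} reappears with $\psi^{(n)}$ replaced by $\lambda(t)^{-n}\psi^{(n)}$, which is exactly the asserted identity \eqref{eq:energy2}. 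The small nonlinear pieces \eqref{eq:SNL1}--\eqref{eq:SNL4} are handled the same way, since they originate from differentiating the cubic term $u^2\eta$ and the nonlocal nonlinearities, and again only involve spatial derivatives hitting $\psi$.

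There is no genuine obstacle here: the statement is a routine reprise of Lemma~\ref{lem:energy1} with a moving, dilating weight. The one point deserving a line of care is to keep the two contributions from the product rule separated — the time derivative of the solution produces, through the equation, exactly the right-hand side of \eqref{eq:energy1}, while the time derivative of the weight produces the new $\tfrac12\int\psi_t(\cdots)$ term — and to observe that the spatial integrations by parts are performed with $t$ (hence $v$ and $\lambda(t)$) frozen, so the two contributions do not interact. In the application that follows one then exploits $\lambda(t)\to\infty$ together with $|\psi^{(n)}|\lesssim|\psi'|$ (the weight $\psi$, as in \cite{KMPP2018}, is chosen with this property), so that all the newly generated $\lambda(t)^{-n}$ factors render every term but the leading one negligible; but this is not needed for the identity itself.
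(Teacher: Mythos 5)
Your proposal is correct and coincides with the paper's own (essentially unwritten) argument: the paper likewise obtains \eqref{eq:energy2} by the product rule, with the time derivative of the moving weight producing the $\tfrac12\int\psi_t(\cdots)$ term via \eqref{Derivative Weight1} and the time derivative of $(u,\eta)$ reproducing the computation of Lemma \ref{lem:energy1} with $\psi^{(n)}$ scaled by $\lambda(t)^{-n}$. The only cosmetic caveat is that $\psi_t$ is $\psi'$ times the $x$-dependent factor $-\frac{v\lambda(t)+(x-vt)\lambda'(t)}{\lambda(t)^2}$, not a constant multiple, but this does not affect the identity.
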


\begin{remark}\label{rem:Derivative Weight}
We  have from \eqref{Derivative Weight} that
\begin{equation}\label{Derivative Weight1}
\frac{d}{dt} \psi\left( \frac{x-vt}{\lambda(t)}\right) = -\frac{v\lambda(t) + (x-vt)\lambda'(t)}{\lambda(t)^2}\psi'\left( \frac{x-vt}{\lambda(t)}\right).
\end{equation}
\end{remark}

\subsection{Proof of Theorem \ref{TH2}}
Now we take in \eqref{eq:energy2} $\psi = \sech^4$. The following Proposition finally proves Theorem \ref{TH2}.
\begin{proposition}\label{prop:energy2}
Let $0 < v_0 < 1$ be given. Let $v_0^+$ and $\kappa_0$ be defined as in \eqref{v_0^+} and \eqref{k_0}, respectively. Let $(a,b,c)$ be a triple of parameters in the set \eqref{eq:set} and $v \in \R$ be a fixed constant with $|v| < v_0$.  Then, there exists $\varepsilon_0 = \varepsilon_0(v_0)>0$ such that for $H^1 \times H^1$ global solutions $(u,\eta)(t)$ to \eqref{boussinesq0000} satisfying 
\[
\norm{(u,\eta)(t=0)}_{H^1 \times H^1} < \varepsilon,
\]
for $0 < \varepsilon \le \varepsilon_0$, we have
\begin{equation}\label{eq:energy2.1}
\lim_{t \to \infty} \int \sech^4 \left(\frac{x-vt}{\lambda(t)}\right) \left(u^2 + (\px u)^2 + \eta^2 + (\px \eta)^2\right)(t,x) \; dx = 0.
\end{equation}
\end{proposition}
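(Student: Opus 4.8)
### Proof plan for Proposition \ref{prop:energy2}

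The plan is to upgrade the sequential-in-time decay from Proposition \ref{prop:FULL Region} to full decay, by running a standard local-energy/virial bootstrap with the weight $\psi=\sech^4\big(\tfrac{x-vt}{\lambda(t)}\big)$, exactly in the spirit of Propositions 7.1 and 7.2 in \cite{KMPP2018}. First I would fix $0<v_0<1$, the triple $(a,b,c)\in\mathcal R_\#(b_0)$, and $|v|<v_0$, and recall from Proposition \ref{prop:FULL Region} that \eqref{Decay1}--\eqref{Decay2} hold: there is a sequence $t_n\to\infty$ along which the local $H^1\times H^1$ mass in $J_v(t_n)$ tends to zero. The goal is to propagate this to the full limit \eqref{eq:energy2.1}.

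The core estimate is the differential inequality for $E_{loc}(t)$ from Lemma \ref{lem:energy2} with $\psi=\sech^4$. The key structural point is that the leading quadratic terms in $\frac{d}{dt}E_{loc}$ coming from the RHS of \eqref{eq:energy1} are \emph{bilinear} in the canonical variables $f,g$ (terms like $\psi' fg$, $\psi' f_xg_x$, etc.), so by Cauchy--Schwarz and Lemma \ref{lem:L2 comparable} they are all bounded, up to a constant $C(a,b,c)$, by $\frac{1}{\lambda(t)}\int\sech^2\big(\tfrac{x-vt}{\lambda(t)}\big)(u^2+\eta^2+u_x^2+\eta_x^2)$ — precisely the integrand that is time-integrable by \eqref{Decay1}. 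The terms carrying $\psi''$ gain an extra $\lambda(t)^{-1}$ factor and are even better; the moving-weight term $\tfrac12\int\psi_t(\cdots)$ produces, via \eqref{Derivative Weight1}, a factor $\tfrac{v\lambda(t)+(x-vt)\lambda'(t)}{\lambda(t)^2}$, and since $\psi'$ decays exponentially away from $x=vt$ the quantity $\tfrac{x-vt}{\lambda(t)}\psi'$ is bounded, so this term is also controlled by $|v|\lambda(t)^{-1}\int\sech^2(\cdots)(\cdots)$ plus the harmless $\varepsilon^2/(t\log^{3/2}t)$. The nonlinear remainders $SNL_j(t)$ in \eqref{eq:SNL1}--\eqref{eq:SNL4} are handled as in \cite{KMPP2018}: using the smallness $\|(u,\eta)\|_{H^1\times H^1}<\varepsilon$ and the regularizing operator $\nlop$, each $SNL_j$ is bounded by $C\varepsilon\,\lambda(t)^{-1}\int\sech^2(\cdots)(u^2+\eta^2+u_x^2+\eta_x^2)$ plus integrable errors. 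Hence
\[
\Big|\frac{d}{dt}E_{loc}(t)\Big| \lesssim \frac{1}{\lambda(t)}\int \sech^2\Big(\frac{x-vt}{\lambda(t)}\Big)(u^2+\eta^2+u_x^2+\eta_x^2)(t,x)\,dx + \frac{\varepsilon^2}{t\log^{3/2}t},
\]
and by \eqref{Decay1} the right-hand side is integrable on $[2,\infty)$. Integrating from $t$ to $t_n$ and letting $n\to\infty$ along the sequence where $E_{loc}(t_n)\to 0$ (which follows from \eqref{Decay2} together with the equivalence of local norms, noting $\psi=\sech^4\le\sech^2$ and the cubic term $\int\psi u^2\eta$ is lower order), we get $\lim_{t\to\infty}E_{loc}(t)=0$. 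Finally, the coercivity of $E_{loc}$ in the localized energy norm — using $a,c<0$ so that $-\tfrac ab,-\tfrac cb>0$, and absorbing the cubic term $\int\psi u^2\eta$ by Sobolev embedding and smallness of $\varepsilon$ — gives
\[
\int \sech^4\Big(\frac{x-vt}{\lambda(t)}\Big)(u^2+u_x^2+\eta^2+\eta_x^2)(t,x)\,dx \lesssim E_{loc}(t) + \varepsilon\!\!\int \sech^4(\cdots)(u^2+\eta^2) \longrightarrow 0,
\]
which is \eqref{eq:energy2.1}. The smallness threshold $\varepsilon_0=\varepsilon_0(v_0)$ is the same one produced in Proposition \ref{prop:FULL Region}, possibly shrunk to guarantee the coercivity absorption; the implicit constants depend only on $(a,b,c)$, i.e. on $v_0$ through $b_0$.

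The main obstacle — and the only place real care is needed — is the moving-weight contribution $\tfrac12\int\psi_t(\cdots)$ and, relatedly, the bilinear term $\tfrac{\lambda'(t)}{\lambda(t)}\int\tfrac{x-vt}{\lambda(t)}\psi'(\cdots)fg$: one must check that the factor $v$ (not just $v_0$) appears with a benign sign/size and that the $\tfrac{x-vt}{\lambda(t)}$ growth is killed by the exponential decay of $\sech$, uniformly in $|v|\le v_0$, so that no term of size comparable to $\lambda(t)^{-1}\int\sech^2(\cdots)$ with a \emph{large} constant survives. Since here we only need an \emph{upper} bound on $|\frac{d}{dt}E_{loc}|$ (not a coercive lower bound as in Proposition \ref{prop:FULL Region}), this is softer than the virial estimate and goes through as in \cite{KMPP2018}; I would therefore only sketch it and refer to Propositions 7.1--7.2 there for the routine cubic-term bookkeeping. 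The uniformity in $v$ is automatic because all constants depend on $v$ only through the bound $|v|\le v_0<1$.
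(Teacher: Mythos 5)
Your proposal is correct and follows essentially the same route as the paper: take $\psi=\sech^4$ in the localized energy of Lemma \ref{lem:energy2}, bound $|\frac{d}{dt}E_{loc}(t)|$ by $\lambda(t)^{-1}\int\sech^2\left(\frac{x-vt}{\lambda(t)}\right)(u^2+\eta^2+u_x^2+\eta_x^2)$ (including the moving-weight term via $|\psi_t|\lesssim \lambda(t)^{-1}\sech^2$), integrate on $[t,t_n]$ using the integrability from \eqref{Decay1} and the vanishing of $E_{loc}(t_n)$ along the sequence from \eqref{Decay2}, and conclude by coercivity of $E_{loc}$ under the smallness assumption. No substantive differences from the paper's argument.
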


\begin{proof}
The proof is almost identical to the proof of Proposition 7.2 in \cite{KMPP2018}, while the estimate of $\psi_t$ is slightly different. The proof of Proposition 7.2 in \cite{KMPP2018} gives
\[|\mbox{RHS of } \eqref{eq:energy1}| \lesssim \frac{1}{\lambda(t)}\int \sech^2\left( \frac{x-vt}{\lambda(t)}\right) \left(u^2 + \eta^2 + u_x^2 + \eta_x^2 \right).\]
Moreover, a computation in addition to \eqref{eq:lambda0001} gives
\[\begin{aligned}
|\psi_t| =&~{}\left| \frac{v\lambda(t) + (x-vt)\lambda'(t)}{\lambda(t)^2}\sech^4\left( \frac{x-vt}{\lambda(t)}\right)\tanh\left( \frac{x-vt}{\lambda(t)}\right) \right|\\
\lesssim&~{}\frac{1}{\lambda(t)}\sech^2\left( \frac{x-vt}{\lambda(t)}\right),
\end{aligned}\]
which in addition to the Sobolev inequality implies
\[\begin{aligned}
\int \psi_t \Big( - \frac ab(\px u)^2 - & \frac cb(\px \eta)^2 + u^2 + \eta^2 + u^2\eta\Big) \\
&\lesssim \frac{1}{\lambda(t)}\int \sech^2\left( \frac{x-vt}{\lambda(t)}\right) \left(u^2 + \eta^2 + u_x^2 + \eta_x^2 \right).
\end{aligned}\]
Collecting all, we obtain
\[\left|\frac{d}{dt} E_{loc}(t) \right| \lesssim  \frac{1}{\lambda(t)}\int \sech^2 \left(\frac{x-vt}{\lambda(t)}\right) \left(u^2 + (\px u)^2 + \eta^2 + (\px \eta)^2\right) (t,x) \; dx.\]
Integrating on $[t, t_n]$, for $t < t_n$ as in \eqref{Decay2}, and \eqref{Decay1} yield
\[\begin{aligned}
\Big|E_{loc}&(t_n) - E_{loc}(t) \Big|\\
 \lesssim &~ \int_t^{\infty}\!\!\frac{1}{\lambda(s)} \int \sech^2 \left(\frac{x-vs}{\lambda(s)}\right) (u^2 + (\px u)^2 + \eta^2 + (\px \eta)^2)(s,x) \; dxds  ~\lesssim~ \varepsilon^2.
\end{aligned}\]
Since
\[
|E_{loc}(t_n)| \lesssim \int \sech^2 \left(\frac{x-vt}{\lambda(t)}\right) (u^2 + (\px u)^2 + \eta^2 + (\px \eta)^2) (t_n) \longrightarrow 0, \quad \mbox{ as } n \to \infty,
\]
thanks to the Sobolev embedding (with $\norm{\eta}_{H^1} \lesssim \ve$) and \eqref{Decay2}. Thus, a straightforward conclusion using the smallness condition ensures \eqref{eq:energy2.1}.
\end{proof}

\subsection{Proof of Theorem \ref{TH0} (3) and (4)}
Proposition \ref{prop:energy2} (or Theorem \ref{TH2}) does not immediately imply the decay in $J_v(t)$ for $|v| < 1 - \frac{2}{9b}$, when $a=c= \frac16 -b$, due to our choice of $\kappa_0$ as 
\begin{equation}\label{eq:choice of b}
\kappa_0 = \frac{1}{9b_0} = \frac{1-v_0}{4}.
\end{equation}
A direct calculation in $b > b_0$ says 
\[|v| < v_0 < 1 - \frac{4}{9b}.\]
However, the choice of $\kappa_0$ (or $b_0$) is performed to prove the decay in $J_v(t)$, uniformly in $|v| < v_0$ (smallness independent only on $v_0$ as well), see Remark \ref{rem:choice}.

\medskip

Thus we end this section with the proof of Theorem \ref{TH0} (3) and (4), in particular, decay in $J_v(t)$ for $|v| < 1 - \frac{2}{9b}$, when $a=c= \frac16 -b$, for the completeness the results presented in Section \ref{sec:new results}. Recall \eqref{note1}
\begin{equation}\label{note1-0}
\begin{aligned}
&\int \vp' \Bigg( \frac12 (f^2 + g^2) + \Big(\frac32 -\frac{1}{4b}\Big) (f_x^2 + g_x^2) \\
& \qquad \qquad + \Big(\frac32 - \frac{1}{3b}\Big) (f_{xx}^2 + g_{xx}^2 )+ \Big(1-\frac{1}{12b}\Big) (f_{xxx}^2 + g_{xxx}^2)\Bigg),
\end{aligned}
\end{equation}
An immediate comparison in coefficients shows that $\frac32 - \frac{1}{3b}$ is the smallest one among other coefficients. Using \eqref{eq:H1_est}, one rewrites \eqref{note1-0} as
\[\frac13 \left(\frac32 - \frac{1}{3b}\right) \int \vp' \left(u^2 + \eta^2 + u_x^2 +  \eta_x^2 \right) + \mbox{(positive quantities)} + \mbox{(small terms)}. \]
An analogous argument in the proof of Proposition \ref{prop:FULL Region} ensures
\[\begin{aligned}
\frac{3\varepsilon^2}{t\log^{\frac32} t} + \frac{d}{dt} \mathcal{H} \ge&~{} \left(\left(1-\frac{2}{9b} - |v| \right)\right)\frac{1}{2\lambda(t)} \int \sech^2 \left(\frac{x-vt}{\lambda(t)} \right) (u^2 + \eta^2 + u_x^2 + \eta_x^2)\\
&-\frac{C_1}{\log^{\frac12} t}\frac{1}{\lambda(t)}\int \sech^2 \left(\frac{x-vt}{\lambda(t)} \right) (u^2 + \eta^2 + u_x^2 + \eta_x^2)\\
&-\frac{C_2\varepsilon_0}{\lambda(t)}\int \sech^2 \left(\frac{x-vt}{\lambda(t)} \right) (u^2 + \eta^2 + u_x^2 + \eta_x^2),
\end{aligned}\]
and hence, for given $|v| < 1-\frac{2}{9b}$, one conclude  Theorem \ref{TH0} (3) and (4), by taking $t \gg 1$ and $\varepsilon_0 \ll 1$ dependent on $v$ (to make the rest small enough). To extend this to a union of the finite number of $J_v(t)$, let $|v_n| < 1-\frac{2}{9b}$, $n = 1, 2, \cdots, N$, be given for fixed $N \ge 1$. Without loss of generality, we assume that $|v_{\ell}| < |v_n|$ if $\ell < n$. Then the smallness of data depends only on $v_N$, thus an $N$-sum of virial estimates allow us to conclude the desired result.

\bigskip

\section{Decay in exterior regions: Proof of Theorem \ref{TH3}}\label{sec:TH3a}

\medskip

\subsection{Preliminaries} Let $\sigma > 0$, to be fixed later. We choose 
\be\label{psi(t,x)}
\psi(t,x) = \psi\left(\frac{x-\sigma t}{L}\right)
\ee
in Lemma \ref{lem:energy2}, for a large $L \gg 1$ to be chosen below. We obtain first
\[
\begin{aligned}
&  \frac12 \int \psi_t \left( - \frac ab(\px u)^2 - \frac cb(\px \eta)^2 + u^2 + \eta^2 + u^2\eta\right) \\
& \qquad = -\frac{\sigma}{2L}\int \psi' \left( - \frac ab(\px u)^2 - \frac cb(\px \eta)^2 + u^2 + \eta^2 + u^2\eta\right).
\end{aligned}
\]
With this identity on one hand, Lemma \ref{lem:energy2} in addition to \eqref{eq:H1_est} yields
\begin{equation}\label{Loc_Energy}
\begin{aligned}
2L  \frac{d}{dt}E_{loc}(t) =&~{} - \sigma \int \psi' \left(f^2 + \left(2-\frac{a}{b} \right) f_x^2 + \left(1-\frac{2a}{b} \right) f_{xx}^2 + \left(-\frac{a}{b} \right) f_{xxx}^2 \right)\\
&- \sigma \int \psi' \left(g^2 + \left(2-\frac{c}{b} \right) g_x^2 + \left(1-\frac{2c}{b} \right) g_{xx}^2 + \left(-\frac{c}{b} \right) g_{xxx}^2\right)\\
&+2\int \psi' fg + \frac{2(15b-2)}{3b}\int \psi' f_xg_x\\
&+\frac{2(12b^2-2b+9ac)}{3b^2}\int \psi' f_{xx}g_{xx} +\frac{6ac}{b^2}\int \psi' f_{xxx}g_{xxx}\\
&+{\color{black} O\left(\frac1L, \norm{(u,\eta)}_{H^1 \times H^1}\right)}.
\end{aligned}
\end{equation}
Here, $O\left(\frac1L, \norm{(u,\eta)}_{H^1 \times H^1}\right)$ contains all small quadratic and higher degree terms, which are controlled by at least 
\[
\frac{1}{L}\int \psi'(u_x^2 + \eta_x^2 +u^2 + \eta^2) \quad \mbox{or} \quad \norm{(u,\eta)}_{H^1 \times H^1} \int \psi'(u_x^2 + \eta_x^2 +u^2 + \eta^2).
\]
These are again bounded by 
\[
\tilde c_0\int \psi'(u_x^2 + \eta_x^2 +u^2 + \eta^2),
\]
for a fixed (small) constant $\tilde c_0 > 0$, by taking appropriate $L \gg 1$ and $\norm{(u,\eta)}_{H^1 \times H^1} \ll 1$. See the proof of Proposition 7.1 in \cite{KMPP2018} for more details. Consequently, the $O\left(\frac1L, \norm{(u,\eta)}_{H^1 \times H^1}\right)$ terms will be discarded in the analysis below.

\medskip

We denote by $Q_{\sigma}(t)$ the right-hand side of \eqref{Loc_Energy}, except for $O\left(\frac1L, \norm{(u,\eta)}_{H^1 \times H^1}\right)$. Our aim in what follows is to show the bound
\begin{equation}\label{Coercivity0}
Q_{\sigma}(t) \le -c_0\int \psi'(u_x^2 + \eta_x^2 +u^2 + \eta^2),
\end{equation}
for some $c_0>\tilde c_0 > 0$. To complete the proof of Theorem \ref{TH3}, we will take 
\be\label{psi_final}
\psi := \frac12(1+\tanh)
\ee
in order to guarantee the positivity of the local energy. See Section \ref{sec:proof exterior} for more details. 

\medskip

On the other hand, the proof of decay in the left (negative) exterior region is analogous, while we replace $\sigma$ by $-\sigma$ and take $\psi = \frac12(1-\tanh)$. In what follows, we only focus on the {\bf right exterior region} ($\sigma > 0$ in \eqref{Loc_Energy}) for the analysis.



\subsection{A coercivity property} The key element for the proof of Theorem \ref{TH3} is the following coercivity result. 

\begin{lemma}\label{lem:Coercivity}
Let $(a,b,c)$ satisfy \eqref{Conds}.  Assume that 
\begin{equation}\label{Sigma}
\sigma > \max\left(1, \frac{15b-2}{3\sqrt{(2b-a)(2b-c)}}, \frac{12b^2-2b+9ac}{3b\sqrt{(b-2a)(b-2c)}}, \frac{3\sqrt{ac}}{b} \right).
\end{equation}
Then, for a sufficiently large $L \gg 1$ and small $\norm{(u,\eta)}_{H^1 \times H^1}$, we have \eqref{Coercivity0}, and consequently, the following coercivity property for \eqref{Loc_Energy}
\begin{equation}\label{Coercivity}
\begin{aligned}
2L  \frac{d}{dt}E_{loc}(t) \le&~{} - d_0 \int \psi' \left(u_x^2 + \eta_x^2 + u^2 + \eta^2\right).
\end{aligned}
\end{equation}
\end{lemma}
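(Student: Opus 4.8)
The plan is to show that the quadratic form $Q_\sigma(t)$ defined as the right-hand side of \eqref{Loc_Energy} (minus the harmless $O(\tfrac1L,\|(u,\eta)\|)$ remainder) is negative definite, i.e.\ \eqref{Coercivity0} holds, provided $\sigma$ satisfies \eqref{Sigma}; then \eqref{Coercivity} follows at once by absorbing the small remainder into the coercive main term, choosing $L\gg1$ and $\|(u,\eta)\|_{H^1\times H^1}\ll1$. Since $\psi'\ge 0$ for $\psi$ as in \eqref{psi_final}, the weight $\psi'$ plays no role in the sign analysis beyond providing the integrating density, so the question reduces to a pointwise algebraic statement about the $8\times 8$ symmetric quadratic form in the variables $(f,f_x,f_{xx},f_{xxx},g,g_x,g_{xx},g_{xxx})$ (weighted by $\psi'$). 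By Cauchy--Schwarz in the form $2|\int\psi' f^{(j)}g^{(j)}|\le \int\psi'\big(\mu_j (f^{(j)})^2+\mu_j^{-1}(g^{(j)})^2\big)$, the whole form decouples into an $f$-part and a $g$-part, each a diagonal quadratic form in $(w,w_x,w_{xx},w_{xxx})$.

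First I would write out $Q_\sigma(t)$ explicitly: the diagonal $f$-coefficients are $-\sigma(1,\,2-\tilde a,\,1-2\tilde a,\,-\tilde a)$ (recall $\tilde a=a/b<0$ so $2-\tilde a>0$, $1-2\tilde a>0$, $-\tilde a>0$), similarly for $g$ with $\tilde c$, and the cross terms carry coefficients $2$, $\frac{2(15b-2)}{3b}$, $\frac{2(12b^2-2b+9ac)}{3b^2}$, $\frac{6ac}{b^2}$ for the pairs $f^{(j)}g^{(j)}$, $j=0,1,2,3$. For each $j$ I choose the optimal splitting parameter $\mu_j$ balancing the two diagonal contributions: $\mu_0=1$, and for $j=1,2,3$, $\mu_j=\sqrt{(2-\tilde a)(2-\tilde c)}$, $\sqrt{(1-2\tilde a)(1-2\tilde c)}$, $\sqrt{\tilde a\tilde c}$ respectively (all positive). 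With this choice the $f$-part of the $j$-th block becomes $-\sigma\alpha_j + |c_j|\cdot\sqrt{\alpha_j/\beta_j}$ times $(f^{(j)})^2$ where $\alpha_j,\beta_j$ are the $f$- and $g$-diagonal coefficients and $c_j$ the cross coefficient; negativity of every block with a uniform margin is then exactly
\[
\sigma \;>\; \frac{|c_j|}{\sqrt{\alpha_j\beta_j}}\qquad\text{for each }j=0,1,2,3,
\]
which, after substituting $\alpha_j\beta_j=(2-\tilde a)(2-\tilde c)$ etc.\ and clearing the factor $b$, is precisely the list of four conditions in \eqref{Sigma} (the first being $\sigma>1$ coming from $j=0$). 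This yields \eqref{Coercivity0} with some $c_0>0$, and then picking $L$ large and the data small enough that the discarded remainder is $\le \tilde c_0\int\psi'(u_x^2+\eta_x^2+u^2+\eta^2)$ with $\tilde c_0<c_0$ gives \eqref{Coercivity} with $d_0=c_0-\tilde c_0>0$ — actually one must also restore the passage from $(f,f_x,\dots)$ to $(u,u_x)$ via Lemma \ref{lem:L2 comparable}/\eqref{eq:H1_est}, which only changes $d_0$ by a harmless multiplicative constant and contributes further terms with a $\psi''$ weight that are $O(1/L)$ since $|\psi''|\lesssim|\psi'|$.

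I anticipate two mild obstacles. The first is purely bookkeeping: one has to verify that all the "$O(1/L,\|(u,\eta)\|)$" terms — the $\psi''$-weighted bilinear terms in $f,g$ appearing in \eqref{eq:energy1}, the nonlinear pieces $SNL_1,\dots,SNL_4$, and the $\psi_t$-cubic term $\int\psi_t u^2\eta$ — are genuinely controlled by $\tilde c_0\int\psi'(u_x^2+\eta_x^2+u^2+\eta^2)$; this is exactly the content of the proof of Proposition~7.1 in \cite{KMPP2018} (using $|\psi^{(n)}|\lesssim|\psi'|$ for $\psi=\tfrac12(1\pm\tanh)$, the nonlocal-operator boundedness of $\nlop$, and Sobolev in one dimension), so I would cite it rather than redo it. The second, slightly more delicate, point is checking that the Cauchy--Schwarz splitting is simultaneously admissible for all four blocks with a \emph{single} $\sigma$ — but since the blocks decouple completely (each $j$ uses only $f^{(j)},g^{(j)}$) there is no interaction, and it suffices that $\sigma$ exceed the maximum of the four ratios, which is the statement of \eqref{Sigma}. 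Hence the lemma follows.
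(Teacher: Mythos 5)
Your proposal is correct and follows essentially the same route as the paper: the paper likewise applies a weighted Cauchy--Schwarz inequality to each of the four cross terms $\int\psi' f^{(j)}g^{(j)}$, with weights $\Lambda_{f,j},\Lambda_{g,j}$ chosen so that their geometric mean equals half the cross coefficient and they balance the two diagonal coefficients, which produces exactly the four thresholds in \eqref{Sigma}, after which the $O\left(\frac1L,\norm{(u,\eta)}_{H^1\times H^1}\right)$ remainder is absorbed by invoking the estimates from \cite{KMPP2018}. The only blemishes are typographical: your splitting parameter should be the ratio $\mu_j=\sqrt{\alpha_j/\beta_j}$ rather than the product, and the resulting threshold is $\frac{|c_j|/2}{\sqrt{\alpha_j\beta_j}}$ since your $c_j$ already includes the factor $2$; your subsequent block computation and the final identification with \eqref{Sigma} use the correct quantities, so the argument is unaffected.
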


\begin{remark}
Theorem \ref{TH3} will be in part a related consequence of Lemma \ref{lem:Coercivity}, and a careful understanding of the condition \eqref{Sigma}.
\end{remark}

\begin{proof}[Proof of Lemma \ref{lem:Coercivity}]
We first deal with $f_{xxx}$ and $g_{xxx}$ terms in \eqref{Loc_Energy}. The Cauchy-Schwarz inequality in $\frac{6ac}{b^2}\int \psi' f_{xxx}g_{xxx}$ yields
\[\int \psi' \left|\frac{6ab}{b^2}f_{xxx}g_{xxx}\right| \le \int \psi' \left(\Lambda_{f,4} f_{xxx}^2 + \Lambda_{g,4} g_{xxx}^2\right),\]
where $\Lambda_{f,4}>0$ and $\Lambda_{g,4}>0$ satisfy
\begin{equation}\label{Lambda_fg_4}
\sqrt{\Lambda_{f,4} \Lambda_{g,4}} = \frac{3ac}{b^2} \quad \mbox{and} \quad \Lambda_{f,4} c = \Lambda_{g,4} a.
\end{equation}
Then, the terms containing $f_{xxx}$ and $g_{xxx}$ are reduced to
\[\begin{aligned}
&\frac{\sigma a}{b} \int \psi' f_{xxx}^2 + \frac{\sigma c}{b} \int \psi' g_{xxx}^2 +\frac{6ac}{b^2}\int \psi' f_{xxx}g_{xxx}\\
 &~{} \qquad \le \left(-\sigma \left(-\frac{a}{b} \right) + \Lambda_{f,4} \right) \int \psi' f_{xxx}^2 + \left(-\sigma \left(-\frac{c}{b} \right) + \Lambda_{g,4} \right) \int \psi' g_{xxx}^2.
\end{aligned}\]
In order to obtain \eqref{Coercivity0}, $\sigma$ should satisfy
\[\sigma > -\frac{\Lambda_{f,4} b}{a} = -\frac{\Lambda_{g,4} b}{c}.\]
Solving \eqref{Lambda_fg_4}, one finds
\[\Lambda_{f,4} = \frac{3(-a)^{\frac32}(-c)^{\frac12}}{b^2} \quad \mbox{and} \quad \Lambda_{g,4} = \frac{3(-a)^{\frac12}(-c)^{\frac32}}{b^2},\]
which implies
\[\sigma > \frac{3\sqrt{ac}}{b}.\]

\medskip

We use an analogous argument to deal with the other terms. The Cauchy-Schwarz inequality in $\frac{2(15b-2)}{3b}\int \psi' f_{x}g_{x}$ yields
\[\int \psi' \left|\frac{2(15b-2)}{3b} f_{x}g_{x}\right| \le \frac12 \int \psi' \left(\Lambda_{f,2} f_{x}^2 + \Lambda_{g,2} g_{x}^2\right),\]
where $\Lambda_{f,2}>0$ and $\Lambda_{g,2}>0$ satisfy
\begin{equation}\label{Lambda_fg_2}
\sqrt{\Lambda_{f,2} \Lambda_{g,2}} = \frac{15b-2}{3b} \quad \mbox{and} \quad \Lambda_{f,2} \left(2-\frac{c}{b} \right) = \Lambda_{g,2} \left(2-\frac{a}{b} \right).
\end{equation}
Solving \eqref{Lambda_fg_2}, one has
\[\Lambda_{f,2} = \frac{15b-2}{3b}\sqrt{\frac{2b-a}{2b-c}} \quad \mbox{and} \quad \Lambda_{g,2} = \frac{15b-2}{3b}\sqrt{\frac{2b-c}{2b-a}}.\]
The condition
\[\sigma > \Lambda_{f,2}\left(\frac{b}{2b-a}\right) = \Lambda_{g,2}\left(\frac{b}{2b-c}\right) = \frac{15b-2}{3\sqrt{(2b-a)(2b-c)}}\]
implies that the $f_x$, $g_x$ portions in \eqref{Loc_Energy} satisfies \eqref{Coercivity}.

\medskip

For the rest, the same argument ensures that if $\sigma$ satisfies the condition
\[\sigma > \Lambda_{f,3}\left(\frac{b}{b-2a}\right) = \Lambda_{g,3}\left(\frac{b}{b-2c}\right) = \frac{12b^2-2b+9ac}{3b\sqrt{(b-2a)(b-2c)}},\] 
one conclude \eqref{Coercivity} for $f_{xx}$, $g_{xx}$ portions, where $\Lambda_{f,3}$ and $\Lambda_{g,3}$ satisfy 
\[\sqrt{\Lambda_{f,2} \Lambda_{g,2}} = \frac{12b^2-2b+9ac}{3b^2} \quad \mbox{and} \quad \Lambda_{f,3} \left(1-\frac{2c}{b} \right) = \Lambda_{g,2} \left(1-\frac{2a}{b} \right).\]
The explicit formulas of $\Lambda_{f,3}$ and $\Lambda_{g,3}$ are given as
\[\Lambda_{f,3} = \frac{12b^2-2b+9ac}{3b^2} \sqrt{\frac{b-2a}{b-2c}} \quad \mbox{and} \quad \Lambda_{g,3} = \frac{12b^2-2b+9ac}{3b^2} \sqrt{\frac{b-2c}{b-2a}}.\] 
Therefore, we complete the proof.
\end{proof}

\subsection{Decay in the bounded region}
We take $\psi = \sech^2$ in \eqref{Loc_Energy}. Then, Lemma \ref{lem:Coercivity}  ensures the key property
\begin{proposition}[Integrability and decay along a sequence in time]\label{prop:Exterior Region1}
Let $(a,b,c)$ be parameters satisfying \eqref{Conds}. Let $\sigma > 0$ satisfy \eqref{Sigma}. Then, there exist $0 < \varepsilon_0 = \varepsilon_0(a,b,c,\sigma) \ll 1$ and $L_0 = L_0(a,b,c,\sigma) \gg 1$ such that for all $H^1 \times H^1$ global solutions $(u,\eta)(t)$ to \eqref{boussinesq0000} satisfying 
\begin{equation}\label{Smallness_Ext}
\norm{(u,\eta)(t=0)}_{H^1 \times H^1} < \varepsilon,
\end{equation}
for $0 < \varepsilon \le \varepsilon_0$ and for fixed $L \ge L_0$, we have 
\begin{equation}\label{Ext_Decay1}
\int_2^{\infty} \int \sech^2 \left(\frac{x-\sigma t}{L}\right) \left(u^2 + (\px u)^2 + \eta^2 + (\px \eta)^2 \right)(t,x) \, dx\,dt \lesssim L\ve^2.
\end{equation}
As an immediate consequence, there exists an increasing sequence of times $\{t_n\}$ $(t_n \to \infty$ as $n \to \infty)$ such that
\begin{equation}\label{Ext_Decay2}
\int \sech^2 \left(\frac{x - \sigma t_n}{L}\right) \left(u^2 + (\px u)^2 + \eta^2 + (\px \eta)^2 \right)(t_n,x) \; dx \longrightarrow 0\quad \mbox{  as }\quad n \to \infty.
\end{equation}
\end{proposition}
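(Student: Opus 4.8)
The plan is to combine the coercivity estimate from Lemma \ref{lem:Coercivity} with a time-integration argument against the boundedness of the localized energy $E_{loc}(t)$, mimicking the scheme of Proposition 7.1 in \cite{KMPP2018}. First I would fix $\sigma > 0$ satisfying \eqref{Sigma} and choose $\psi = \sech^2$ in \eqref{Loc_Energy}; since $\sech^2$ is a nonnegative, smooth, bounded weight with $|\psi^{(n)}| \lesssim \psi'$ (after the usual harmless modification, the derivative bound $|\psi'| \lesssim \sech^2$ also holds), all the hypotheses used in Lemma \ref{lem:energy2} and Lemma \ref{lem:Coercivity} are met. Then Lemma \ref{lem:Coercivity}, applied with $\psi = \sech^2$ rather than $\tfrac12(1+\tanh)$ --- the coercivity computation is weight-independent as long as $|\psi'|$ controls the error weight, which is the case here --- yields
\[
2L \frac{d}{dt} E_{loc}(t) \le - d_0 \int \sech^2\!\left(\frac{x-\sigma t}{L}\right)\left(u^2 + \eta^2 + u_x^2 + \eta_x^2\right),
\]
for $L \ge L_0$ and $\|(u,\eta)\|_{H^1\times H^1} \le \varepsilon_0$ both chosen appropriately (absorbing the $O(\tfrac1L,\|(u,\eta)\|_{H^1\times H^1})$ terms into a small fraction of the quadratic form, exactly as in the cited proof).

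Next I would integrate this differential inequality on $[2,T]$ to obtain
\[
d_0 \int_2^T \int \sech^2\!\left(\frac{x-\sigma t}{L}\right)\left(u^2 + \eta^2 + u_x^2 + \eta_x^2\right)(t,x)\,dx\,dt \le 2L\left(E_{loc}(2) - E_{loc}(T)\right).
\]
The right-hand side must be bounded uniformly in $T$. For this I would use that $E_{loc}(t)$ is, up to the localization weight $\psi$, the energy density of \eqref{Energy}; since $a,c<0$, the quadratic part $-\tfrac ab u_x^2 - \tfrac cb \eta_x^2 + u^2 + \eta^2$ is positive definite, and the cubic term $u^2\eta$ is controlled by $\|\eta\|_{L^\infty}\lesssim \|\eta\|_{H^1} \lesssim \varepsilon$ times the quadratic part, so $|E_{loc}(t)| \lesssim \|(u,\eta)(t)\|_{H^1\times H^1}^2 \lesssim \varepsilon^2$ for all $t$, using the conservation (or at least the global-in-time smallness) of the $H^1\times H^1$ norm guaranteed by global well-posedness in the Hamiltonian case. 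Hence the right-hand side is $\lesssim L\varepsilon^2$ uniformly in $T$; letting $T\to\infty$ gives \eqref{Ext_Decay1}. Finally, \eqref{Ext_Decay1} says the nonnegative function $t\mapsto \int \sech^2(\cdot)(u^2+\cdots)$ has finite integral on $[2,\infty)$, so its liminf along some sequence $t_n\to\infty$ must be zero; extracting such a sequence gives \eqref{Ext_Decay2}.

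The main obstacle, and the only point that requires genuine care rather than bookkeeping, is verifying that Lemma \ref{lem:Coercivity} applies verbatim with the weight $\psi = \sech^2$ instead of $\tfrac12(1+\tanh)$. The coercivity argument in the proof of Lemma \ref{lem:Coercivity} only manipulates the six quadratic cross-terms via Cauchy--Schwarz and the sign of $\sigma$, and it lumps all weight-derivative errors into the $O(\tfrac1L,\|(u,\eta)\|_{H^1\times H^1})$ term; the crucial structural fact is that for $\psi = \sech^2$ one still has the comparison $|\psi^{(n)}(y)| \lesssim \psi'(y)$ away from where $\psi'$ vanishes --- and more precisely that the error terms are dominated by $\tfrac1L \int \psi'(u^2+\eta^2+u_x^2+\eta_x^2)$ --- which is exactly the property exploited in Proposition 7.1 of \cite{KMPP2018}. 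I would simply observe that the cutoff $\sech^2$ satisfies these bounds (indeed $|(\sech^2)'| = 2\sech^2|\tanh| \le 2\sech^2$ and similarly for higher derivatives), so the $L\gg1$ and $\varepsilon_0\ll 1$ absorption goes through identically. Once this is granted, the rest is the routine integration-and-extraction argument sketched above.
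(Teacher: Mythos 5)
Your overall scheme is the paper's: feed the coercivity of Lemma \ref{lem:Coercivity} into the localized energy, integrate in time against the uniform bound $|E_{loc}(t)|\lesssim \varepsilon^2$, and extract a sequence $t_n$. The integration and extraction steps, and the bound on $E_{loc}$ via global smallness, are fine. However, the step you yourself single out as the crux --- applying Lemma \ref{lem:Coercivity} with the weight $\psi=\sech^2$ --- is precisely where the argument fails, and your justification for it is incorrect. The lemma is not ``weight-independent'': both its proof and its conclusion require $\psi'\ge 0$. Its proof absorbs the cross terms through inequalities of the form $\int \psi' \,|{\textstyle\frac{6ac}{b^2}}f_{xxx}g_{xxx}| \le \int \psi'\,(\Lambda_{f,4}f_{xxx}^2+\Lambda_{g,4}g_{xxx}^2)$, which presuppose that $\psi'$ is a nonnegative weight; and the conclusion \eqref{Coercivity} reads $2L\frac{d}{dt}E_{loc}(t)\le -d_0\int \psi'\,(u^2+u_x^2+\eta^2+\eta_x^2)$, which is a sign-definite (hence integrable) statement only when $\psi'\ge 0$. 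For $\psi=\sech^2$ one has $\psi'(y)=-2\sech^2(y)\tanh(y)$, which changes sign, so the Cauchy--Schwarz absorptions do not go through and, even formally, the right-hand side of \eqref{Coercivity} would carry the signed weight $-2\sech^2\tanh$ rather than the $\sech^2$ appearing in your displayed inequality. The bounds $|\psi^{(n)}|\lesssim\sech^2$ that you verify only control the error terms lumped into $O(\frac1L,\|(u,\eta)\|_{H^1\times H^1})$; they say nothing about the sign of the leading quadratic form.

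The repair is one line and is what the paper itself uses for the exterior argument (see \eqref{psi_final}): take the monotone weight $\psi=\frac12(1+\tanh)$, so that $\psi'=\frac12\sech^2\ge 0$. Lemma \ref{lem:Coercivity} then applies as stated and \eqref{Coercivity} becomes $2L\frac{d}{dt}E_{loc}(t)\le -\frac{d_0}{2}\int \sech^2\big(\frac{x-\sigma t}{L}\big)(u^2+u_x^2+\eta^2+\eta_x^2)$, which is exactly the differential inequality you wanted; since $|\psi|\le 1$ gives $|E_{loc}(t)|\lesssim \|(u,\eta)(t)\|_{H^1\times H^1}^2\lesssim\varepsilon^2$ uniformly in $t$, integrating on $[2,T]$ and letting $T\to\infty$ yields \eqref{Ext_Decay1}, and the sequence in \eqref{Ext_Decay2} is extracted as you describe. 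The $\sech^2$ in the conclusion \eqref{Ext_Decay1} thus arises as $\psi'$, not as $\psi$; be careful not to conflate the two roles, since the weight inserted into $E_{loc}$ must be monotone for the whole mechanism to produce a decay statement.
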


A similar decay property can be found in other fluid models, see e.g. \cite{KM2018,KMPP2018}, and the decay property does not depend on the power of the nonlinearity. 

\medskip

Moreover, an analogous argument as in the proofs of Propositions 7.1 and 7.2 in \cite{KMPP2018} (also Proposition \ref{prop:energy2} above) ensures decay in any compact interval along the line $|x| = \sigma|t|$, for $\sigma>0$ satisfying \eqref{Sigma}.

\begin{proposition}[Strong decay in compact intervals along lines $|x|=\sigma |t|$]\label{prop:Exterior Region2}
Let $(a,b,c)$ be parameters satisfying \eqref{Conds}. Let $\sigma > 0$ satisfy \eqref{Sigma}. Then, there exist $0 < \varepsilon_0 = \varepsilon_0(a,b,c,\sigma) \ll 1$ and $L_0 = L_0(a,b,c,\sigma) \gg 1$ such that for $H^1 \times H^1$ global solutions $(u,\eta)(t)$ to \eqref{boussinesq0000} satisfying \eqref{Smallness_Ext} for $0 < \varepsilon \le \varepsilon_0$ and for fixed $L \ge L_0$, we have 
\begin{equation}\label{Ext_Decay3}
\lim_{t \to \infty} \int \sech^4 \left(\frac{x-\sigma t}{L}\right) \left(u^2 + (\px u)^2 + \eta^2 + (\px \eta)^2\right)(t,x) \; dx = 0.
\end{equation}
\end{proposition}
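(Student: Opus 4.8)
The plan is to follow the argument of Proposition 7.2 in \cite{KMPP2018} (cf. the proof of Proposition \ref{prop:energy2} above): upgrade the sequential-in-time decay of Proposition \ref{prop:Exterior Region1} into a genuine limit by exploiting the coercivity of the localized energy. Fix $\sigma>0$ satisfying \eqref{Sigma}, take $\psi(t,x)=\sech^4\!\big(\tfrac{x-\sigma t}{L}\big)$ in Lemma \ref{lem:energy2}, and let $L\ge L_0$ be large and $0<\ve\le\ve_0$ small. The starting observation is that $|\psi^{(n)}|\lesssim L^{-n}\sech^2\!\big(\tfrac{x-\sigma t}{L}\big)$ for all $n\ge1$, and $|\psi_t|=\tfrac{4\sigma}{L}\sech^4\!\big(\tfrac{x-\sigma t}{L}\big)\big|\tanh(\cdot)\big|\lesssim\tfrac1L\sech^2\!\big(\tfrac{x-\sigma t}{L}\big)$, so every term on the right-hand side of \eqref{eq:energy2} is quadratic or higher in $(u,\eta)$ and carries at least one weight derivative. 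Using the local $H^1$ equivalence (Lemma \ref{lem:L2 comparable}, applicable since $\phi=\sech^2(\cdot)$ satisfies $|\phi''|\le\lambda\phi$ with $\lambda\sim L^{-2}\ll1$) to pass from $(f,g)$ back to $(u,\eta)$, the bounds for $SNL_j(t)$ established in \cite{KMPP2018} (each costing a factor $\norm{(u,\eta)}_{H^1}\lesssim\ve$), and the Sobolev control $\norm{\eta}_{L^\infty}\lesssim\ve$ on the cubic contribution of $\psi_t$, one obtains
\[
\Big|\frac{d}{dt}E_{loc}(t)\Big|\lesssim \frac1L\int \sech^2\!\Big(\frac{x-\sigma t}{L}\Big)\big(u^2+u_x^2+\eta^2+\eta_x^2\big)(t,x)\,dx .
\]

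Next, \eqref{Ext_Decay1} in Proposition \ref{prop:Exterior Region1} gives $\int_2^\infty\tfrac1L\int\sech^2(\tfrac{x-\sigma t}{L})(u^2+u_x^2+\eta^2+\eta_x^2)\,dx\,dt\lesssim\ve^2$, so $\tfrac{d}{dt}E_{loc}$ is integrable on $[2,\infty)$. Let $\{t_n\}$ be the sequence furnished by \eqref{Ext_Decay2}; integrating the previous estimate on $[t,t_n]$ for $2\le t<t_n$ yields
\[
\big|E_{loc}(t_n)-E_{loc}(t)\big|\lesssim \int_t^\infty \frac1L\int \sech^2\!\Big(\frac{x-\sigma s}{L}\Big)\big(u^2+u_x^2+\eta^2+\eta_x^2\big)(s,x)\,dx\,ds ,
\]
whose right-hand side is the tail of a convergent integral and hence tends to $0$ as $t\to\infty$. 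Since moreover $|E_{loc}(t_n)|\lesssim\int\sech^2(\tfrac{x-\sigma t_n}{L})(u^2+u_x^2+\eta^2+\eta_x^2)(t_n)\to0$ as $n\to\infty$ by \eqref{Ext_Decay2} and Sobolev (with $\norm{\eta}_{H^1}\lesssim\ve$), letting $n\to\infty$ gives $|E_{loc}(t)|\lesssim\int_t^\infty(\cdots)\,ds$, so $E_{loc}(t)\to0$ as $t\to\infty$. Finally, the coercivity of $E_{loc}$ closes the argument: since $\psi=\sech^4>0$ and $a,c<0$,
\[
E_{loc}(t)\ge \frac12\int \sech^4\!\Big(\frac{x-\sigma t}{L}\Big)\Big(-\frac ab u_x^2-\frac cb\eta_x^2+u^2+\eta^2\Big)-\frac{\norm{\eta}_{L^\infty}}{2}\int \sech^4\!\Big(\frac{x-\sigma t}{L}\Big)u^2 ,
\]
and for $\ve_0$ small the cubic term is absorbed, so $E_{loc}(t)\gtrsim\int\sech^4(\tfrac{x-\sigma t}{L})(u^2+u_x^2+\eta^2+\eta_x^2)(t,x)\,dx$. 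Combined with $E_{loc}(t)\to0$ this proves \eqref{Ext_Decay3}; the left exterior region is handled identically with $\sigma\mapsto-\sigma$.

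The main technical point — essentially the only place where care is needed — is the first display: one must verify that \emph{each} term produced by Lemma \ref{lem:energy2} with the $\sech^4$ weight, in particular the mixed terms $\psi''f_{xx}g_x$, $\psi''f_xg_{xx}$ and the four nonlinear pieces $SNL_j$, is dominated by $\tfrac1L\int\sech^2(\cdots)(u^2+u_x^2+\eta^2+\eta_x^2)$. This is precisely the computation carried out for Propositions 7.1--7.2 in \cite{KMPP2018}, and the loss of one power of $\sech$ when differentiating the weight is harmless here since Proposition \ref{prop:Exterior Region1} is already phrased with the $\sech^2$ weight. Note also that, in contrast with Lemma \ref{lem:Coercivity}, we do not need the sign of the quadratic form $Q_\sigma(t)$ for this step, only the crude upper bound; the hypothesis \eqref{Sigma} enters the present statement only through Proposition \ref{prop:Exterior Region1}.
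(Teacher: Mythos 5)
Your proposal is correct and follows essentially the same route as the paper, which itself only sketches this step by referring to the proofs of Propositions 7.1--7.2 in \cite{KMPP2018} and to Proposition \ref{prop:energy2}: take $\psi=\sech^4$ in the localized energy, bound $|\frac{d}{dt}E_{loc}|$ by the $\sech^2$-weighted local $H^1$ norm, integrate against the integrability and sequential decay of Proposition \ref{prop:Exterior Region1}, and close with the coercivity of $E_{loc}$ under the smallness assumption. Your observation that \eqref{Sigma} is only needed through Proposition \ref{prop:Exterior Region1} is also accurate.
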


However, Proposition \ref{prop:Exterior Region2} is not enough to ensure a complete proof of Theorem \ref{TH3}, because the intervals of decay in the former result do not cover the unbounded region claimed in the latter result. Consequently, we need a further result.

\subsection{Complete decay in exterior regions. Main argument}\label{sec:proof exterior}
The proof is based on the ideas in \cite{MM2} (see also \cite{KM2018} for a more recent application). 

\medskip

We want to prove \eqref{Conclusion_Ext} for the positive region.  Let $\sigma = 1+ \delta$ for given $\delta>0$ and define $\widetilde{\sigma} = 1 + \frac{\delta}{2}$. Then we know $1 < \widetilde \sigma < \sigma$. We choose $\psi := \frac12(1+\tanh)$ in \eqref{Loc_Energy}. For $2 < t \le t_0$ and large $L \gg 1$, which, in addition to the smallness condition \eqref{Smallness_Ext}, ensures Lemma \ref{lem:Coercivity}, we define the localized energy functional $\mathcal E_{t_0}(t)$ by
\[
\mathcal E_{t_0}(t) := \frac12 \int \psi \left(\frac{x - \sigma t_0 + \widetilde \sigma (t_0 -t)}{L} \right) \left(- \frac abu_x^2 - \frac cb\eta_x^2 + u^2 + \eta^2 + u^2\eta\right)(t,x)dx.
\]
Note that the smallness condition \eqref{Smallness_Ext} ensures the positivity of the localized energy functional $\mathcal E_{t_0}(t)$, i.e.,
\[0 \le \frac12 \int \psi \left(\frac{x - \sigma t_0 + \widetilde \sigma (t_0 -t)}{L} \right) \left(u_x^2 + \eta_x^2 + u^2 + \eta^2\right)(t,x)dx \lesssim \mathcal E_{t_0}(t).\]
An analogous argument to the proof of Lemma \ref{lem:Coercivity} yields
\[
\begin{aligned}
\frac{d}{dt} \mathcal E_{t_0}(t) \lesssim_{\widetilde \sigma, L} &~  {}  - \int \sech^2 \left(\frac{x - \sigma t_0 + \widetilde \sigma (t_0 -t)}{L}\right) (u^2 + (\px u)^2 + \eta^2 + (\px \eta)^2) \le 0,
\end{aligned}
\]
provided \eqref{Sigma} holds. This reveals that the localized energy functional $\mathcal E_{t_0} (t)$ is decreasing on $[2,t_0]$. 

\medskip

On the other hand, from the fact that $\lim_{x \to -\infty} \psi(x) =0$, we have
\begin{equation}\label{limsup}
\limsup_{t \to \infty}\int \psi \left(\frac{x -\beta t - \gamma }{L} \right) \left(- \frac abu_x^2 - \frac cb\eta_x^2 + u^2 + \eta^2 + u^2\eta \right)(\delta, x)dx =0,
\end{equation}
for any fixed $\beta, \gamma, \delta >0$. Together with all above, for any $2 < t_0$, we have
\[\begin{aligned}
0 &\le \int \psi \left(\frac{x - \sigma t_0}{L} \right) \left(u_x^2 + \eta_x^2 + u^2 + \eta^2\right)(t_0,x)dx\\
&\le  \int \psi \left(\frac{x -(\delta/2) t_0 - 2(1+ (\delta/2))}{L} \right) \left(- \frac abu_x^2 - \frac cb\eta_x^2 + u^2 + \eta^2 + u^2\eta\right)(2,x)dx,
\end{aligned}\]
which, in addition to \eqref{limsup}, implies
\[\lim_{t \to \infty} \int \psi \left(\frac{x - \sigma t}{L} \right) \left(u_x^2 + \eta_x^2 + u^2 + \eta^2\right)(t,x)dx = 0,\]
which completes the proof.

\begin{remark}
Except for a deeper understanding of the condition \eqref{Sigma}, Theorem \ref{TH3} is already completely proved.
\end{remark}
%

\subsection{Understanding the minimal speed condition \eqref{Sigma}} Our main interests now can be described as follows: 
\smallskip
 \ben
 \item Subsubsection \ref{SSS1}: to find the parameter conditions on $(a,b,c)$ for which energy decay is valid in $(-\infty, -(1+\epsilon) t) \cup ((1+\delta) t, \infty)$ for any $\epsilon, \delta >0$.  (Item (1) in Theorem \ref{TH3}.)
 \smallskip
 \item  Subsubsection \ref{SSS2}: to find the essential exterior region (strictly smaller than the interval $(-\infty, -(1+\epsilon) t) \cup ((1+\delta) t, \infty)$, or $|x| \gg |t|$), and parameter conditions on $(a,b,c)$ for which energy decay holds in that region. (Item (2) in Theorem \ref{TH3}.)
 \smallskip
 \item Subsubsection \ref{SSS3}: in the last paragraph, we will classify the parameter conditions on $(a,b,c)$ associated to the exterior regions in terms of $b$, when specified $a = c = \frac16 -b$. (Item (3) in Theorem \ref{TH3}.)
 \een 

\subsubsection{Exterior region I: the case $|x| > |t|$}\label{SSS1}
We first investigate the set of parameters $(a,b,c)$, for which $H^1$-decay of $(u,\eta)$ is valid inside the region $(-\infty, -t) \cup (t, \infty)$. This case is motivated by the observation made in Section \ref{sec:velocity}, where the group velocity of linear waves in the $a=c$ case was considered in some detail. The maximum group velocity for linear waves established in Section \ref{sec:velocity} was exactly $1$. Hence, this formal result naturally poses the energy decay problem outside the light cone.

\medskip

We shall use the alternative expression for $(a,b,c)$ in terms of $(\nu,b)$, the parametrization introduced in \eqref{R0}. Then \eqref{Sigma} reads  
\begin{equation}\label{Sigma2}
\begin{aligned}
\sigma > \max\Bigg(&1, \frac{2(15b-2)}{\sqrt{3}\sqrt{-3\nu^2+2\nu+108b^2-12b}},\\
&\qquad  \frac{-9\nu^2+6\nu+84b^2-20b}{4\sqrt{3}b\sqrt{-3\nu^2+2\nu+27b^2-6b}}, \frac{\sqrt{3}\sqrt{-3\nu^2+2\nu+12b^2-4b}}{2b} \Bigg).
\end{aligned}
\end{equation}

\begin{proposition}\label{prop:exterior1}
Let $(\nu,b)$ be defined as in \eqref{R0}. If additionally the parameters $(\nu,b)$ belong to the interior region bounded by the ellipse
\begin{equation}\label{ellipse}
153b^2-54b+4 \le 9ac,
\end{equation}
then the RHS of \eqref{Sigma2} is 1 and we have \eqref{Coercivity} for any $\sigma > 1$.
\end{proposition}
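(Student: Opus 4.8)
The statement to establish is Proposition \ref{prop:exterior1}: if $(\nu,b)$ corresponds (via \eqref{R0}) to parameters satisfying $153b^2-54b+4\le 9ac$, then the right-hand side of \eqref{Sigma2} equals $1$, so that Lemma \ref{lem:Coercivity} applies for any $\sigma>1$. The plan is to show that each of the three nontrivial candidates in the maximum in \eqref{Sigma2} is $\le 1$ under the hypothesis \eqref{ellipse}, thereby reducing the maximum to its first entry. Since all three candidates are positive quantities, ``$\le 1$'' for each is equivalent, after clearing the (positive) denominators and squaring, to a polynomial inequality in $(\nu,b)$, and the whole point will be that all three such inequalities are implied by the single inequality \eqref{ellipse}.

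\textbf{Step 1: rewrite \eqref{ellipse} in $(\nu,b)$ coordinates.} Substituting $a=-\tfrac{\nu}{2}+\tfrac13-b$ and $c=\tfrac{\nu}{2}-b$ from \eqref{R0}, one computes $9ac = 9\big(-\tfrac{\nu}{2}+\tfrac13-b\big)\big(\tfrac{\nu}{2}-b\big)$, which is a quadratic in $\nu$ with negative leading coefficient; so \eqref{ellipse} becomes an inequality of the form $-\tfrac94\nu^2 + (\text{linear in }b)\,\nu + (\text{quadratic in }b)\ge 153b^2-54b+4$, i.e. the interior of an ellipse in the $(\nu,b)$-plane (this is the content of the promised Remark \ref{ELLIPSE}). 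Write this as $\Phi(\nu,b)\le 0$ for an explicit quadratic polynomial $\Phi$.

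\textbf{Step 2: bound each candidate.} For the third candidate, $\frac{\sqrt3\sqrt{-3\nu^2+2\nu+12b^2-4b}}{2b}\le 1$ is equivalent (both sides positive, and the radicand is positive since it equals $3\cdot\big((2b-a)(2b-c)\big)/b^2\cdot b^2$-type expression after translation — more directly, it is $3(b-2a)(b-2c)/\text{stuff}$; in any case one checks positivity from $a,c<0$) to $3(-3\nu^2+2\nu+12b^2-4b)\le 4b^2$, i.e. $-9\nu^2+6\nu+32b^2-12b\le 0$. For the first nontrivial candidate, $\frac{2(15b-2)}{\sqrt3\sqrt{-3\nu^2+2\nu+108b^2-12b}}\le 1$ becomes $4(15b-2)^2\le 3(-3\nu^2+2\nu+108b^2-12b)$, i.e. $9\nu^2-6\nu + 4(15b-2)^2 - 324b^2+36b\le 0$ (one must first note $15b-2$ may be negative when $b<2/15<1/6$, but since $b>1/6$ here $15b-2>1/2>0$, so squaring is legitimate). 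For the second candidate, $\frac{-9\nu^2+6\nu+84b^2-20b}{4\sqrt3\, b\sqrt{-3\nu^2+2\nu+27b^2-6b}}\le 1$ — here one must check the numerator is nonnegative (it equals $3b\cdot(b-2a)(b-2c)$-type, positive from $a,c<0$ after the translation $9\nu$-substitution), so squaring gives $(-9\nu^2+6\nu+84b^2-20b)^2\le 48b^2(-3\nu^2+2\nu+27b^2-6b)$. Each of these is a polynomial inequality $P_i(\nu,b)\le 0$, $i=1,2,3$, where $P_1,P_3$ are quadratic and $P_2$ quartic in $\nu$.

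\textbf{Step 3: deduce $P_i\le 0$ from $\Phi\le 0$.} The mechanism is that on the region $\Phi(\nu,b)\le 0$ (an ellipse, hence $b$ is bounded above, and for each such $b$ the variable $\nu$ lies in a bounded interval), each $P_i$ is nonpositive. For $P_1$ and $P_3$, which are convex in $\nu$ with the same quadratic coefficient $9$ (or $-9$; one may multiply $P_3$ by $-1$), one compares roots: the $\nu$-interval cut out by $\Phi\le 0$ is contained in the $\nu$-interval where $P_i\le 0$, and it suffices to verify the containment at the two $b$-values where the ellipse is tangent to the lines $\nu$-constant, or equivalently to check that the discriminant/endpoint conditions hold — a finite, explicit set of one-variable polynomial inequalities in $b$ on the $b$-range of the ellipse. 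For the quartic $P_2$, the cleanest route is to express $P_2$ as $(-9\nu^2+6\nu+84b^2-20b)^2 - 48b^2(-3\nu^2+2\nu+27b^2-6b)$ and try to write it as a nonnegative-coefficient combination of $-\Phi$ and an obvious square; failing a slick identity, one treats $P_2\le 0$ as a constraint region and again checks containment of the $\Phi\le 0$ ellipse inside it by comparing boundaries, using that on the ellipse the relevant radicands stay positive.

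\textbf{Main obstacle.} The genuinely delicate point is Step 3 for the second (quartic) candidate: verifying $P_2(\nu,b)\le 0$ on the ellipse $\Phi\le 0$ is not a one-line discriminant comparison, and the honest argument is likely to require either an explicit SOS-type decomposition $-P_2 = \mu(\nu,b)(-\Phi) + (\text{squares})$ with $\mu\ge0$ on the region, or a careful reduction to the boundary of the ellipse followed by a single-variable analysis in $b$ over the (bounded) $b$-range $[b_-,b_+]$ of the ellipse. I would first attempt the SOS identity by symbolic computation, since the numbers $153,54,4,84,20,27,6,48$ suggest the authors engineered \eqref{ellipse} precisely so that such a combination exists; if that fails, the boundary reduction is the safe fallback, at the cost of a few pages of routine but unenlightening algebra. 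Everything else — Steps 1 and 2, and the quadratic cases $P_1,P_3$ of Step 3 — is mechanical.
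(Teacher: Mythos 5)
Your overall strategy coincides with the paper's: reduce each of the three nontrivial entries of the maximum in \eqref{Sigma2} to a polynomial inequality in $(\nu,b)$ and verify it under \eqref{ellipse}. But you miss the structural fact that makes the proposition work, and you leave the decisive step unexecuted. First, if you actually carry out your Step 1 you will find that $9ac=-\tfrac94\nu^2+\tfrac32\nu+9b^2-3b$, so \eqref{ellipse} reads $576b^2-204b+16\le -9\nu^2+6\nu$ --- which is \emph{identically} your inequality $P_1\le 0$ for the candidate $\tfrac{2(15b-2)}{\sqrt3\sqrt{-3\nu^2+2\nu+108b^2-12b}}$. So that implication is not one of the ``mechanical'' cases to be deduced from $\Phi\le 0$; it \emph{is} $\Phi\le 0$, and this is precisely how the authors arrive at \eqref{ellipse} in the first place. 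Second, for the fourth candidate the paper does not need any containment of $\nu$-intervals: the quadratic $\nu^2-\tfrac23\nu-\tfrac{32}{9}b^2+\tfrac43 b$ has nonpositive discriminant for $\tfrac18\le b\le\tfrac14$, hence is nonnegative for \emph{all} $\nu$, and the ellipse forces $b\le\tfrac14$ (its $b$-range is $[\tfrac5{48},\tfrac14]$, cf.\ Remark \ref{ELLIPSE}). So only membership in $\mathcal R_0$ together with $b\le\tfrac14$ is used there, not \eqref{ellipse} itself.

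The genuine gap is your ``main obstacle'': the quartic candidate $\tfrac{-9\nu^2+6\nu+84b^2-20b}{4\sqrt3\,b\sqrt{-3\nu^2+2\nu+27b^2-6b}}\le 1$ is asserted to be handleable by an SOS identity or a boundary reduction, but neither is carried out, and the SOS route is speculative. The paper's actual argument is a concrete instance of your fallback: it sets $C_\nu:=\left(-\tfrac{\nu}{2}+\tfrac13\right)\tfrac{\nu}{2}$, which collapses all $\nu$-dependence into a single bounded parameter ($\tfrac1{48}<C_\nu<\tfrac1{36}$ on the relevant region), rewrites the squared inequality as $q(b):=360b^4-192b^3+(25+342C_\nu)b^2-90C_\nu b+81C_\nu^2\le 0$, and then verifies $q\le 0$ on $(\tfrac16,\tfrac14]$ by evaluating $q$ at $b=\tfrac16,\tfrac15,\tfrac14$ and tracking the sign changes of $q'$, splitting into subranges of $C_\nu$. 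Without this (or an equivalent explicit verification), your proposal does not establish that the maximum in \eqref{Sigma2} equals $1$, which is the entire content of the proposition.
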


\begin{remark}\label{ELLIPSE}
An alternative expression in $(\nu,b)$ of \eqref{ellipse} is as follows:
\[\frac{144}{49}\left(\nu - \frac13 \right)^2 + \frac{9216}{49}\left(b-\frac{17}{96}\right)^2 = 1.\]
As an immediate consequence, the corresponding maximum value for $b$ when $\nu=\frac13$, i.e. $a=c$, is $\frac14$. This, in a particular case $a=c$, will be improved in Proposition \ref{prop:classification1} via the improved virial estimate.
\end{remark}


\begin{proof}[Proof of Proposition \ref{prop:exterior1}]
Consider the first and (fourth) last term in the RHS of \eqref{Sigma2}. A computation for all $(\nu,b) \in \mathcal R_0$ yields that 
\begin{equation}\label{1}
\frac{\sqrt{3}\sqrt{-3\nu^2+2\nu+12b^2-4b}}{2b} \le 1 \quad  \Longleftrightarrow \quad \nu^2-\frac23\nu-\frac{32}{9}b^2 + \frac43 b \ge 0.
\end{equation}
Since the discriminant for the above quadratic polynomial satisfies
\[\frac49 -4\left(-\frac{32}{9}b^2 + \frac43 b\right) \le 0,\]
when $\frac18 \le b \le \frac14$, the right-hand side of \eqref{1} is valid for all $(\nu,b) \in \mathcal R_0$ with $b \le \frac14$, and so the left one. Consequently, for $b\in [\frac18,\frac14]$, $(\nu,b) \in \mathcal R_0$,
\[
\max\left(1,\frac{\sqrt{3}\sqrt{-3\nu^2+2\nu+12b^2-4b}}{2b} \right)=1.
\]
Now we compare 1 and $\frac{-9\nu^2+6\nu+84b^2-20b}{4\sqrt{3}b\sqrt{-3\nu^2+2\nu+27b^2-6b}}$. Let 
\begin{equation}\label{eq:C}
C_\nu:=\left(-\frac{\nu}{2}+\frac13\right)\frac{\nu}{2}. 
\end{equation}
A computation gives
\[\frac{1}{48} < C_\nu < \frac{1}{36}, \quad \mbox{if} \quad (\nu,b) \in \mathcal R_0, \quad \frac16 < b \le \frac14.\]
Then, it is known (similarly as \eqref{1}, but it is more complicated) that
\begin{equation}\label{2}
\begin{aligned}
&\frac{-9\nu^2+6\nu+84b^2-20b}{4\sqrt{3}b\sqrt{-3\nu^2+2\nu+27b^2-6b}} \le 1\\
\Longleftrightarrow \quad &~{} 360b^4 -192b^3 +\left(25+342C_\nu\right)b^2 - 90C_\nu b+81C_\nu^2 \le 0.
\end{aligned}
\end{equation}
Thus, it suffices to show that the right-hand side of \eqref{2} is valid for all $(\nu,b) \in \mathcal R_0$ with $\frac16 < b \le \frac14$. 

\medskip

Let $q(b) := 360b^4 -192b^3 +\left(25+342C_\nu\right)b^2 - 90C_\nu b+81C_\nu^2$ be the quartic polynomial in $b$. A straightforward computation gives
\[q'(b) = 1440b^3-576b^3+2\left(25+342C_\nu\right)b -90.\]
Moreover, we find that
\[
q\left(\frac16\right) = \frac{(324C_\nu-11)^2}{1296} - \frac{13}{1296} \le 0, \quad \mbox{if} \quad \frac{11-\sqrt{13}}{324} \le C_\nu \le \frac{11+\sqrt{13}}{324}
\]
and
\[
q\left(\frac14\right) = 81C_\nu^2 - \frac98C_\nu-\frac{1}{32} \le 0, \quad \mbox{if} \quad -\frac{1}{72} \le C_\nu \le \frac{1}{36}.
\]
Hence, we know that both $q(1/6)$  and $q(1/4)$ are nonpositive when $\frac{11-\sqrt{13}}{324} \le C_\nu \le \frac{1}{36}$. Moreover, one can  check that
\[q'(0) < 0, \quad q'\left(\frac1{10}\right) > 0, \quad q'\left(\frac16\right) < 0, \quad q'\left(\frac14\right) > 0,\]
which reveals that $q$ has only one local minimum value at a point in $(1/6, 1/4)$. Thus, we conclude that $q(b) \le 0$ for $b \in (1/6, 1/4)$, when $\frac{11-\sqrt{13}}{324} \le C_\nu \le \frac{1}{36}$.

\medskip

On the other hand, one can see that for $(\nu,b) \in \mathcal R_0$ with $\frac16 < b \le \frac14$, the condition $\frac{1}{48} < C_\nu < \frac{11-\sqrt{13}}{324}$ ensures $(\frac15 < ) \frac{3+\sqrt{\sqrt{13}-2}}{18} < b \le \frac14$. Thus, it suffices to show $q(b) \le 0$ for $\frac15 < b \le \frac14$, when $\frac{1}{48} < C_\nu < \frac{11-\sqrt{13}}{324}$. A not very complicated computation gives
\[
q\left(\frac15\right) = 81C_\nu^2 - \frac{108C_\nu}{25} + \frac{1}{25} \le 0, \quad \mbox{if} \quad \frac{6-\sqrt{11}}{225} \le C_\nu \le \frac{6+\sqrt{11}}{225}.
\]
It is  known that $\frac{6-\sqrt{11}}{225} < \frac{1}{48} < \frac{11-\sqrt{13}}{324} < \frac{6+\sqrt{11}}{225}$, thus, our claim is valid thanks to the inequalities
\[
q'(0) < 0, \quad q'\left(\frac1{10}\right) > 0, \quad q'\left(\frac15\right) < 0, \quad q'\left(\frac14\right) > 0.
\]
Lastly, the condition 
\begin{equation}\label{ellipse0000}
\frac{2(15b-2)}{\sqrt{3}\sqrt{-3\nu^2+2\nu+108b^2-12b}} \le 1
\end{equation}
gives the conclusion \eqref{ellipse} (after substituting $a = -\frac{\nu}{2} + \frac13 -b$ and $c = \frac{\nu}{2} - b$ into \eqref{ellipse0000}), thus completing the proof.
\end{proof}

\begin{remark}
One can show that
\[
\frac{2(15b-2)}{\sqrt{3}\sqrt{-3\nu^2+2\nu+108b^2-12b}}
\]
is always bigger than the others three members in the RHS of \eqref{Sigma} (except for $1$), when $(\nu,b) \in \mathcal R_0$ and $\frac16 < b \le \frac14$. However, we do not pursue it here, in order to avoid another bunch of complicated calculations. 
\end{remark}

\begin{remark}
As explained above, Proposition \ref{prop:exterior1} finally ensures full energy decay inside the interval $\big(-\infty, -(1+\epsilon)t \big) \cup \big((1+\delta)t, \infty\big)$ for any $\epsilon, \delta >0$ (Item (1) in Theorem \ref{TH3}).
\end{remark}


\begin{remark}\label{rem:maximum velocity}
%
%
The candidate of the maximum velocity, for which most of parameters $(a,b,c)$ in \eqref{Loc_Energy} ensure \eqref{Coercivity}, is motivated by the asymptotic limit in \eqref{Sigma2} as $b \to \infty$, since $\nu$ is bounded. Indeed, taking $b \to \infty$ in \eqref{Sigma2}, one has
\[\sigma > \max\left(1, \frac53, \frac73, 3\right) = 3.\]
Indeed, some standard computation give
\[\frac{2(15b-2)}{\sqrt{3}\sqrt{-3\nu^2+2\nu+108b^2-12b}} \le \frac53,\]
\[\frac{-9\nu^2+6\nu+84b^2-20b}{4\sqrt{3}b\sqrt{-3\nu^2+2\nu+27b^2-6b}} \le \frac73,
\]
and
\[\frac{\sqrt{3}\sqrt{-3\nu^2+2\nu+12b^2-4b}}{2b} \le 3,\]
for all $(\nu,b) \in \mathcal R_0$. Thus, one expects that for a sufficiently large $b \gg 1$, $\frac{3\sqrt{ac}}{b}$ is the maximum value in the right-hand side of \eqref{Sigma} for all $(a,b,c)$. Consequently, Proposition \ref{prop:exterior2} below deals with the parameter conditions for which $\frac{\sqrt{ac}}{b}$ is the maximum value in the right-hand side of \eqref{Sigma}.
\end{remark}

\subsubsection{Exterior region II: the case $|x| \gg |t|$}\label{SSS2} Following Remark \ref{rem:maximum velocity}, we have

\begin{proposition}\label{prop:exterior2}
Let $(\nu,b)$ be defined as in \eqref{R0}. Assume $b \ge \frac14$. If, additionally, parameters $(\nu,b)$ belong to the region bounded below by the hyperbola-type curve given by
\begin{equation}\label{hyperbola}
-3\nu^2+2\nu \ge \frac{2b}{9}\left(20-75b+\sqrt{2169b^2-660b+152}\right),
\end{equation}
we have \eqref{Coercivity} for 
\[\sigma > \frac{\sqrt{3}\sqrt{-3\nu^2+2\nu+12b^2-4b}}{2b} = \frac{3\sqrt{ac}}{b}.\]
As a consequence, the corresponding minimum value of $b$ (when $\nu=\frac13$, i.e., $a=c$) is $\frac14$.
\end{proposition}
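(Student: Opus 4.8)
\textbf{Proof plan for Proposition \ref{prop:exterior2}.}

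The plan is to start from the coercivity condition \eqref{Sigma}, rewritten in the $(\nu,b)$ parametrization as \eqref{Sigma2}, and to isolate exactly when the fourth term $\frac{3\sqrt{ac}}{b}=\frac{\sqrt3\sqrt{-3\nu^2+2\nu+12b^2-4b}}{2b}$ dominates the other three members on the right-hand side. By Lemma \ref{lem:Coercivity}, once $\sigma$ exceeds the maximum of the four quantities in \eqref{Sigma2}, we get \eqref{Coercivity}; so it suffices to show that, in the parameter region cut out by \eqref{hyperbola} together with $b\ge\frac14$, the last member is $\ge$ each of the first three. The first comparison, namely $\frac{\sqrt3\sqrt{-3\nu^2+2\nu+12b^2-4b}}{2b}\ge 1$, is precisely the reverse of inequality \eqref{1} in the proof of Proposition \ref{prop:exterior1}: squaring, it is equivalent to $\nu^2-\tfrac23\nu-\tfrac{32}{9}b^2+\tfrac43b\le0$, and one checks the discriminant $\tfrac49-4(-\tfrac{32}{9}b^2+\tfrac43 b)$ is nonnegative precisely for $b\ge\tfrac14$ (this is the boundary case $b=\tfrac14$, $\nu=\tfrac13$ that will furnish the sharpness claim at the end). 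So for $b\ge\tfrac14$ the first member is dominated automatically, and in particular the ``$1$'' in \eqref{Sigma2} can be dropped.

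The remaining work is the two comparisons against $\frac{2(15b-2)}{\sqrt3\sqrt{-3\nu^2+2\nu+108b^2-12b}}$ and $\frac{-9\nu^2+6\nu+84b^2-20b}{4\sqrt3 b\sqrt{-3\nu^2+2\nu+27b^2-6b}}$. I would introduce the shorthand $m:=-3\nu^2+2\nu$ (equivalently $9ac=3(12b^2-4b+m)$ after substituting $a=-\frac\nu2+\frac13-b$, $c=\frac\nu2-b$), note $m\le\frac13$ on $\mathcal R_0$, and rewrite each comparison after clearing the square roots as a polynomial inequality in $m$ and $b$. For the $f_x,g_x$ term one gets an inequality of the form $m+108b^2-12b \ge \frac{4(15b-2)^2}{3}\cdot\frac{1}{12b^2-4b+m}$, i.e. a quadratic inequality in $m$ with $b$-dependent coefficients; solving for $m$ produces a lower threshold $m\ge \Phi_2(b)$. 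For the $f_{xx},g_{xx}$ term the same procedure (this is the more painful one, cf. the parenthetical remark in the proof of Proposition \ref{prop:exterior1}) gives a quadratic in $m$ and hence a threshold $m\ge\Phi_3(b)$. The claimed curve \eqref{hyperbola}, which is quadratic in $\nu$ (hence linear in $m$: it reads $m\ge\frac{2b}{9}(20-75b+\sqrt{2169b^2-660b+152})$), is then precisely $m\ge\max(\Phi_2(b),\Phi_3(b))$; one verifies by a monotonicity/discriminant computation that for $b\ge\tfrac14$ the binding constraint is the one coming from the $f_{xx},g_{xx}$ comparison, which is exactly the right-hand side of \eqref{hyperbola}. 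I would also record that \eqref{hyperbola} is indeed nonempty and compatible with $\mathcal R_0$ for $b\ge\frac14$, and that equality of the two sides of \eqref{hyperbola} at $\nu=\frac13$ forces $b=\frac14$ — giving the final ``minimum value of $b$ is $\frac14$'' assertion.

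The main obstacle is bookkeeping rather than conceptual: the comparison against the $f_{xx},g_{xx}$ term, after squaring, is a quartic-looking expression in $b$ with $m$ entering quadratically, and one must confirm that the resulting threshold in $m$ is exactly the square-root expression appearing in \eqref{hyperbola} and not merely comparable to it. I expect to handle this the way Proposition \ref{prop:exterior1} is handled — reduce to a single-variable polynomial $q(b)$ (or $q(m)$) on the relevant interval, locate its critical points explicitly (e.g. by evaluating $q'$ at $b=\tfrac14$ and at a few interior points and using that $q$ is quartic so has at most the expected number of sign changes), and conclude the required sign. Finally, having established \eqref{Coercivity} on this region, decay in $I_{ext}(t)$ as in \eqref{I_ext2} follows verbatim from Proposition \ref{prop:Exterior Region1}, Proposition \ref{prop:Exterior Region2}, and the monotone localized-energy argument of Subsection \ref{sec:proof exterior}, with $\sigma=3\sqrt{\tilde a\tilde c}+\delta$; note $3\sqrt{\tilde a\tilde c}=\frac{3\sqrt{ac}}{b}$, matching \eqref{I_ext2}.
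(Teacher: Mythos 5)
Your overall strategy is the same as the paper's: starting from Lemma \ref{lem:Coercivity} in the form \eqref{Sigma2}, you reduce each comparison ``fourth member $\ge k$-th member'' to a threshold inequality in $m=-3\nu^2+2\nu$ (the paper uses $C_\nu=(-\frac{\nu}{2}+\frac13)\frac{\nu}{2}=\frac{m}{12}$ and obtains quadratic inequalities such as \eqref{eq:claim1} and \eqref{eq:claim2}), identify \eqref{hyperbola} as exactly the condition that the fourth member dominates the $f_{xx},g_{xx}$ member, and check that the remaining thresholds are weaker for $b\ge\frac14$. That part of the plan matches the paper's proof essentially step by step, as does the final passage from \eqref{Coercivity} to decay in \eqref{I_ext2}.

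However, your treatment of the comparison against $1$ is wrong as written. From the fact that the discriminant of $\nu^2-\frac23\nu-\frac{32}{9}b^2+\frac43 b$ is nonnegative for $b\ge\frac14$ you conclude that ``the first member is dominated automatically'' and drop the $1$ from \eqref{Sigma2}. A nonnegative discriminant only says that the set of $\nu$ for which $\frac{3\sqrt{ac}}{b}\ge 1$ is a nonempty interval; it does not say that every admissible $\nu$ lies in it. Indeed, for $b$ slightly above $\frac14$ and $\nu$ close to $2b$ one has $c=\frac{\nu}{2}-b\to 0^-$, hence $\frac{3\sqrt{ac}}{b}\to 0<1$, so the first member is certainly not dominated for all $(\nu,b)\in\mathcal R_0$ with $b\ge\frac14$. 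What is true, and what the paper verifies, is that ``$\frac{3\sqrt{ac}}{b}\ge 1$'' is itself a threshold condition, $C_\nu\ge\frac{2b}{9}(3-8b)$, and that this threshold sits below the one defining \eqref{hyperbola} when $b\ge\frac14$; the needed inequality is therefore a consequence of \eqref{hyperbola}, not of $b\ge\frac14$ alone. You need to add this third threshold comparison to your list alongside $\Phi_2$ and $\Phi_3$; with that correction the rest of your plan goes through as in the paper.
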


\begin{remark}\label{HYPERBOLA}
Note that a computation yields the expression \eqref{hyperbola} is equivalent to \eqref{hyperbola0}.
\end{remark}

\begin{proof}[Proof of Proposition \ref{prop:exterior2}]
The claim is to show
\begin{equation}\label{eq:claim}
\begin{aligned}
&\frac{\sqrt{3}\sqrt{-3\nu^2+2\nu+12b^2-4b}}{2b} \ge \frac{-9\nu^2+6\nu+84b^2-20b}{4\sqrt{3}b\sqrt{-3\nu^2+2\nu+27b^2-6b}}\\
\Longleftrightarrow&~{} \frac{\sqrt{3}\sqrt{-3\nu^2+2\nu+12b^2-4b}}{2b} \ge 1 \\
&~{}\mbox{and} \quad \frac{\sqrt{3}\sqrt{-3\nu^2+2\nu+12b^2-4b}}{2b} \ge \frac{2(15b-2)}{\sqrt{3}\sqrt{-3\nu^2+2\nu+108b^2-12b}}.
\end{aligned}
\end{equation}
We again use $C_\nu$ as in \eqref{eq:C}. Note that $C_\nu \ge -\frac1{12}$. Then, the left-hand side of \eqref{eq:claim} is equivalent to 
\begin{equation}\label{eq:claim1}
243C_\nu^2 + 90b(15b-4)C_\nu + 4b^2(288b^2-195b+29) \ge 0.
\end{equation}
Since 
\[
\frac{2b}{54}\left(20-75b-\sqrt{2169b^2-660b+152}\right) < -\frac1{12}, \quad \mbox{for} \quad b \ge \frac14,
\]
\eqref{eq:claim1} is equivalent to the condition
\[
C_\nu \ge \frac{2b}{54}\left(20-75b+\sqrt{2169b^2-660b+152}\right),
\]
for $(\nu,b) \in \mathcal R_0$, which is exactly \eqref{hyperbola}. Thus the claim \eqref{eq:claim}  implies Proposition \ref{prop:exterior2}. 

\medskip

An analogous computation in the right-hand side of \eqref{eq:claim} gives
\[\frac{\sqrt{3}\sqrt{-3\nu^2+2\nu+12b^2-4b}}{2b} \ge 1 \Longleftrightarrow C_\nu \ge \frac{2b}{9}\left(3-8b\right)\]
and
\begin{equation}\label{eq:claim2}
\begin{aligned}
&\frac{\sqrt{3}\sqrt{-3\nu^2+2\nu+12b^2-4b}}{2b} \ge \frac{2(15b-2)}{\sqrt{3}\sqrt{-3\nu^2+2\nu+108b^2-12b}}\\
\Longleftrightarrow \quad &~{} 81C_\nu^2 + 108b(15b-2)C_\nu +4b^2(504b^2-264b+23) \ge 0.
\end{aligned}
\end{equation}
Since 
\[
\frac{2b}{9}\left(6-45b-\sqrt{1521b^2-276b+13}\right)\ <\ -\frac{1}{12}, \quad \mbox{for} \quad b \ge \frac14,
\]
the right-hand side of \eqref{eq:claim2} is equivalent to 
\[
C_\nu \ge \frac{2b}{9}\left(6-45b+\sqrt{1521b^2-276b+13}\right).
\]
It is not difficult to show that
\[ 
\begin{aligned}
\frac{2b}{9}\left(3-8b\right) \le &~ \frac{2b}{9}\left(6-45b+\sqrt{1521b^2-276b+13}\right) \\
\le &~ {} \frac{2b}{54}\left(20-75b+\sqrt{2169b^2-660b+152}\right),
\end{aligned}
\]
for $b \ge \frac14$. Hence, we prove the claim \eqref{eq:claim}, and so Proposition \ref{prop:exterior2}.
\end{proof}

\begin{remark}
Proposition \ref{prop:exterior2} finally shows energy decay to zero in the set \eqref{I_ext2}, and proves Item (2) in Theorem \ref{TH3}.
\end{remark}

The only remaining case to prove now is Item (3) in Theorem \ref{TH3}.

\subsubsection{Analysis in the special case $a=c$}\label{SSS3}
Assuming $a = c = \frac16 -b$, one has the following direct consequence of Propositions \ref{prop:exterior1} and \ref{prop:exterior2}:

\begin{proposition}[Preliminary energy decay conditions]\label{prop:classification}
Let $a = c = \frac16 - b$, for $b > \frac16$. Then, we have \eqref{Coercivity} for
\begin{enumerate}
\item \emph{(Full energy decay)} $\sigma > 1$, if $\frac16 < b \le \frac14$.
\smallskip
\item \emph{(Growing speed as $b\to +\infty$)} $\sigma > 3 - \frac{1}{2b}>1$, if $b \ge \frac14$.
\end{enumerate}
\end{proposition}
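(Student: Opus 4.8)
The plan is to obtain Proposition~\ref{prop:classification} as a direct specialization of Propositions~\ref{prop:exterior1} and~\ref{prop:exterior2} to the line $a=c=\tfrac16-b$, which corresponds to $\nu=\tfrac13$ in the parametrization \eqref{R0}. A useful preliminary observation, used throughout, is that for $b>\tfrac16$ one has $ac=\bigl(\tfrac16-b\bigr)^2$ and hence $\sqrt{ac}=b-\tfrac16$, so that the candidate minimal speed appearing as the last entry of the maximum in \eqref{Sigma} becomes $\tfrac{3\sqrt{ac}}{b}=3-\tfrac{1}{2b}$.

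For item~(1) I would substitute $9ac=9\bigl(\tfrac16-b\bigr)^2=9b^2-3b+\tfrac14$ into the ellipse condition \eqref{ellipse}: the inequality $153b^2-54b+4\le 9ac$ becomes $576b^2-204b+15\le 0$, whose discriminant is $204^2-4\cdot576\cdot15=84^2$, so its roots are $b=\tfrac{5}{48}$ and $b=\tfrac14$. Since $\tfrac{5}{48}<\tfrac16$, intersecting with the admissibility constraint $b>\tfrac16$ shows that \eqref{ellipse} holds precisely for $\tfrac16<b\le\tfrac14$. On this range Proposition~\ref{prop:exterior1} gives that the right-hand side of \eqref{Sigma2} (equivalently of \eqref{Sigma}) equals $1$, and Lemma~\ref{lem:Coercivity} yields the coercivity estimate \eqref{Coercivity} for every $\sigma>1$.

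For item~(2) I would assume $b\ge\tfrac14$ and verify that the hyperbola-type condition \eqref{hyperbola} of Proposition~\ref{prop:exterior2} holds on the line $a=c$. Setting $\nu=\tfrac13$, so that $-3\nu^2+2\nu=\tfrac13$ (equivalently $C_\nu=\tfrac1{36}$ in the notation of \eqref{eq:C}), collapses \eqref{hyperbola} to a single inequality in $b$ alone; after checking that the bracket on its right-hand side is nonnegative for $b\ge\tfrac14$ one squares and reduces to a polynomial inequality in $b$ that holds for all $b\ge\tfrac14$, with equality exactly at $b=\tfrac14$ --- this is precisely the statement ``the corresponding minimum value of $b$ is $\tfrac14$'' recorded right after Proposition~\ref{prop:exterior2}. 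Then Proposition~\ref{prop:exterior2} applies and gives \eqref{Coercivity} whenever $\sigma>\tfrac{3\sqrt{ac}}{b}=3-\tfrac{1}{2b}$. Finally, $3-\tfrac{1}{2b}\ge 1\iff b\ge\tfrac14$, with strict inequality when $b>\tfrac14$ (and at $b=\tfrac14$ item~(1) already supplies the sharper threshold $\sigma>1$), which completes the argument.

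There is no genuinely hard step here: the whole proof is a reduction to the two propositions already established, together with the identity $\sqrt{ac}=b-\tfrac16$. The only point requiring care is the second half of the previous paragraph, namely confirming that the $a=c$ line lies in the ``good'' region of the hyperbola \eqref{hyperbola} for every $b\ge\tfrac14$; this is where one must be attentive to the sign of the bracket before squaring (or, alternatively, simply invoke the concluding remark of Proposition~\ref{prop:exterior2}). The remaining computations --- the quadratic reduction in item~(1) and the rewriting $\tfrac{3\sqrt{ac}}{b}=3-\tfrac{1}{2b}$ --- are entirely routine.
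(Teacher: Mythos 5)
Your proposal is correct and follows essentially the same route as the paper: Proposition \ref{prop:classification} is stated there precisely as ``a direct consequence of Propositions \ref{prop:exterior1} and \ref{prop:exterior2}'' on the line $\nu=\tfrac13$, and your reduction of the ellipse condition \eqref{ellipse} to $576b^2-204b+15\le 0$ with roots $\tfrac{5}{48}$ and $\tfrac14$, together with the identity $\tfrac{3\sqrt{ac}}{b}=3-\tfrac{1}{2b}$, is exactly the intended specialization. One caution on the single delicate step you flag: if you substitute $\nu=\tfrac13$ into \eqref{hyperbola} \emph{as printed} and square, the resulting polynomial inequality does \emph{not} hold down to $b=\tfrac14$ (the printed constants in \eqref{hyperbola}/\eqref{eq:claim1} appear to be off by a factor), whereas the condition \eqref{hyperbola} is meant to encode --- namely that the fourth entry of \eqref{Sigma2} dominates the third --- does reduce at $\nu=\tfrac13$ to $32(4b-1)(6b-1)^2\bigl(b-\tfrac{3}{32}\bigr)\ge 0$, which holds exactly for $b\ge\tfrac14$; so your fallback of invoking the concluding remark of Proposition \ref{prop:exterior2} (or redoing that one comparison directly) is the right move.
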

This last result is not optimal (see item (3) in Theorem \ref{TH3}), so we will try to improve it. Indeed, Lemma \ref{lem:Coercivity} is not optimal even in the special case $a=c$, thus neither Proposition \ref{prop:classification}, which is based in Lemma \ref{lem:Coercivity}. 

\medskip

The remaining part of this section will be devoted to improve Proposition \ref{prop:classification} in essentially two directions\footnote{One can try to improve Lemma \ref{lem:Coercivity} in the general case, i.e., $a \neq c$, but we will not pursue this path due essentially to very complicated calculations.}: to increase the range of $b$ for which decay holds for $\sigma > 1$, and to prove decay in a wider exterior region (see Fig. \ref{fig:classification} for the description of Proposition \ref{prop:classification1} below).

\begin{proposition}[Improvement of Proposition \ref{prop:classification}]\label{prop:classification1}
Let $a = c = \frac16 - b$, for $b > \frac16$. Then, we have \eqref{Coercivity} for
\begin{enumerate}
\item $\sigma > 1$, if $\frac16 < b \le \frac{3+\sqrt{3}}{12}$.
\smallskip
\item $\sigma > \frac{2(b-\frac16)(b-\frac18)}{b(b-\frac1{12})} 
$, if $b \ge \frac{3+\sqrt{3}}{12}$.
\end{enumerate}
\end{proposition}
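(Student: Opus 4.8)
The plan is to improve the speed bound \eqref{Sigma} of Lemma \ref{lem:Coercivity} by discarding the four independent Cauchy--Schwarz splittings used there and instead exploiting the exact $f\leftrightarrow g$ symmetry that the case $a=c=\frac16-b$ produces in \eqref{Loc_Energy}, combined with the square-completion device of Lemma \ref{lem:StrPos} and an auxiliary correction of the localized energy. First I would specialize \eqref{Loc_Energy} to $a=c=\frac16-b$, localize with $\psi=\frac12(1+\tanh)$ on a large scale $L\gg1$, pass to canonical variables, and, as in the proof of Lemma \ref{lem:Coercivity}, work with $\wt f=\psi'^{1/2}f$, $\wt g=\psi'^{1/2}g$; commuting $\psi'^{1/2}$ through the derivatives only costs $O(L^{-1})$ relative to the leading terms, which are absorbed exactly as there once $L\gg1$ and $\|(u,\eta)\|_{H^1\times H^1}\ll1$. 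After these reductions $-2L\,\tfrac{d}{dt}E_{loc}$ equals, up to absorbed errors, a quadratic form $\sum_{k=0}^{3}\big(\sigma\alpha_k\big((\partial_x^k\wt f)^2+(\partial_x^k\wt g)^2\big)-\beta_k\,\partial_x^k\wt f\,\partial_x^k\wt g\big)$, with $\alpha_0=1$, $\alpha_1=2-\tfrac ab$, $\alpha_2=1-\tfrac{2a}b$, $\alpha_3=-\tfrac ab$ (coming from the $\psi_t$ term together with \eqref{eq:L2}--\eqref{eq:H1}) and $\beta_0=2$, $\beta_1=\tfrac{2(15b-2)}{3b}$, $\beta_2=\tfrac{2(12b^2-2b+9ac)}{3b^2}$, $\beta_3=\tfrac{6ac}{b^2}$ (the mixed terms of Lemma \ref{lem:energy1}).

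Because this form is invariant under $\wt f\leftrightarrow\wt g$, I would diagonalize it by the rotation $p=\tfrac1{\sqrt2}(\wt f+\wt g)$, $q=\tfrac1{\sqrt2}(\wt f-\wt g)$, writing it as $\Phi_p+\Phi_q$ with coefficient $\sigma\alpha_k-\tfrac{\beta_k}2$ on $(\partial_x^kp)^2$ and $\sigma\alpha_k+\tfrac{\beta_k}2$ on $(\partial_x^kq)^2$. Since $a=c<0$ and $b>\tfrac16$ force $\alpha_k>0$ and $\beta_k>0$, the $q$-component is coercive for every $\sigma>0$ and can be set aside; the whole problem is to make $\Phi_p$ nonnegative, and then coercive via \eqref{eq:H1_est} and Lemma \ref{lem:L2 comparable}. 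Setting $\mu=\tfrac1{6b}\in(0,1)$, the coefficients of $\Phi_p$ become $\sigma-1$, $\sigma(3-\mu)-(5-4\mu)$, $\sigma(3-2\mu)-(7-3\mu)(1-\mu)$ and $(1-\mu)\big(\sigma-3(1-\mu)\big)$. Requiring all four to be nonnegative recovers exactly the threshold $\max\big(1,3-\tfrac1{2b}\big)$ of Proposition \ref{prop:classification}; for $b>\tfrac14$ the binding constraint is the third-derivative one, $\sigma\ge 3-\tfrac1{2b}$, and since that coefficient is \emph{strictly negative} for $1<\sigma<3-\tfrac1{2b}$, no estimate using $E_{loc}$ alone can do better.

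To beat the bottleneck I would augment the localized energy by a well-chosen multiple of an auxiliary correction functional of the type $\mathcal I+\alpha\mathcal J$ of \eqref{I}--\eqref{J}, carried with the same localized weight so that, by Lemma \ref{lem:leading}, its leading quadratic part $\mathcal Q$ adds to $\Phi_p$ on the common weight $\psi'$ with no new mixed terms (the remaining $\mathcal{SQ},\mathcal{NQ},\mathcal{VH}$ contributions being at the $O(L^{-1},\varepsilon)$ scale and absorbed as before); then I would apply the square-completion identity \eqref{eq:coefficients}, writing the resulting $p$-block as $\int(\hat a\,p+\hat b\,p_x+\hat c\,p_{xx}+\hat d\,p_{xxx})^2\ge 0$ plus a controlled remainder, and choose the correction parameters and $\hat a,\hat b,\hat c,\hat d$ to minimize the admissible $\sigma$ subject to keeping the augmented functional $\gtrsim\int\psi(u^2+\eta^2+u_x^2+\eta_x^2)$ from below, so that the monotonicity--sandwich scheme of Section \ref{sec:proof exterior} still applies as for \eqref{psi_final}. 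The extremal configuration is the one for which the $p$-block collapses to a perfect square — the analogue of the critical situation in Remark \ref{rem:optimal virial} — and the ensuing (elementary but lengthy) algebra yields the minimal admissible speed $\sigma_0(b)=\tfrac{(1-\mu)(4-3\mu)}{2-\mu}=\tfrac{2(b-1/6)(b-1/8)}{b(b-1/12)}$. One then checks directly that $\sigma_0$ is increasing in $b$, with $\sigma_0(b)\to 2$ as $b\to\infty$ and $\sigma_0(b)\to 0$ as $b\to\tfrac16^+$, and that $\sigma_0(b)=1$ precisely when $3\mu^2-6\mu+2=0$, i.e. $\mu=1-\tfrac1{\sqrt3}$, i.e. $b=\tfrac{3+\sqrt3}{12}$; hence $\sigma_0(b)\le 1$ for $\tfrac16<b\le\tfrac{3+\sqrt3}{12}$ (so any $\sigma>1$ works, item (1)) and $\sigma_0(b)>1$ for $b\ge\tfrac{3+\sqrt3}{12}$ (so $\sigma>\sigma_0(b)$ is needed, item (2)), in both cases giving \eqref{Coercivity}.

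The hard part will be exactly this last step: one must trade the strictly negative third-derivative coefficient of $\Phi_p$ against its positive low-order coefficients through the correction functional, and carry out the joint optimization over the correction weight, the parameter $\alpha$, and the square coefficients $\hat a,\hat b,\hat c,\hat d$ so that (i) the threshold drops all the way down to $\sigma_0(b)$, (ii) the augmented functional stays positive definite in $(u,\eta)$ so the sandwich argument survives, and (iii) the crossover value $b=\tfrac{3+\sqrt3}{12}$ between the two regimes is pinned down correctly. Everything else — the $q$-block, the absorption of the $O(L^{-1},\varepsilon)$ errors, the smallness choices, and the passage back from $(u_b,\eta_b)$ to $(u,\eta)$ via the rescaling of Section \ref{sec:pre} — is routine and parallels what has already been done for Lemma \ref{lem:Coercivity} and Theorem \ref{TH2}.
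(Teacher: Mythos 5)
Your high-level strategy coincides with the paper's: the proof of Proposition \ref{prop:classification1} does augment the localized energy by a virial correction, working with $E_{loc}(t)+\tau\mathcal I(t)$ and $|\tau|<1$ (the positivity constraint you list as (ii)), and the two branches of the resulting condition \eqref{Sigma1111} are exactly your $\sigma_0(b)=\frac{(1-\mu)(4-3\mu)}{2-\mu}$ with $\mu=\frac1{6b}$, crossing $1$ at $b=\frac{3+\sqrt3}{12}$. Your diagnosis of the bottleneck (the third-derivative coefficient forcing $\sigma\ge 3-\frac1{2b}$ if one uses $E_{loc}$ alone, cf.\ Proposition \ref{prop:classification}) is also correct, and your diagonalization $p=\frac1{\sqrt2}(f+g)$, $q=\frac1{\sqrt2}(f-g)$ is just a repackaging of the paper's degreewise condition $\widetilde A_i\pm\widetilde B_i<0$.

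There is, however, a genuine gap in the step that is supposed to produce the improvement. You assert that the correction functional adds to the quadratic form ``with no new mixed terms,'' relegating the $\mathcal{VH}$-type contribution to the absorbable $O(L^{-1},\varepsilon)$ errors. This is false: differentiating the translating weight $\psi\big(\frac{x-\sigma t}{L}\big)$ inside $\tau\mathcal I$ produces $-\frac{\sigma\tau}{L}\int\psi'(u\eta+u_x\eta_x)$, which after multiplication by $2L$ is a leading-order \emph{cross} term of exactly the same size as every other term in \eqref{Loc_Energy}. In the paper's proof this term is the entire mechanism: it turns the off-diagonal coefficients into $\widetilde B_i=(\cdots)-\sigma\tau$, and the threshold $\sigma_0(b)$ emerges from the solvability in $\tau\in(-1,1)$ of $\widetilde A_4+|\widetilde B_4|<0$ (see \eqref{44} and Lemma \ref{claim_final}). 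If you drop it, the correction only shifts the diagonal coefficients by $O(\tau)$ while leaving $|\widetilde B_4|=3\widetilde b^2$ intact, so the order-three constraint becomes $\sigma>3\widetilde b-O(\tau)$ with $\tau$ capped by the positivity of $E_{loc}+\tau\mathcal I$; this is a different optimization problem and there is no reason it lands on $\sigma_0(b)$ (in particular it does not obviously reach $\sigma=1$ at $b=\frac{3+\sqrt3}{12}$). Since the decisive ``elementary but lengthy algebra'' is only asserted, and the bookkeeping feeding it omits the very terms that drive the answer, the core of the proof is missing. Two smaller points: the paper uses only $\mathcal I$ (adding $\alpha\mathcal J$ breaks the $f\leftrightarrow g$ symmetry you rely on), and no cross-order square completion of the type \eqref{eq:coefficients}/Lemma \ref{lem:StrPos} is used here --- the argument is purely degreewise Cauchy--Schwarz --- so the ``perfect-square collapse'' picture is an unverified extra layer rather than the actual extremal configuration.
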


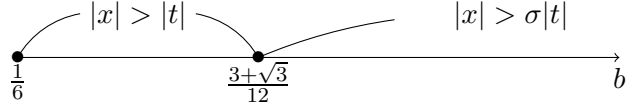
\begin{figure}[h!]
\begin{center}
\begin{tikzpicture}[scale=0.8]
\draw[->] (-5,3) -- (5,3) node[below] {$b$};
\node at (-5,3){$\bullet$};
\node at (-1,3){$\bullet$};
\node at (-5,2.6){$\frac16$};
\node at (-1,2.6){$\frac{3+\sqrt{3}}{12}$};
\draw (-5,3) arc (150:100:1.5);
\node at (-3,3.7){$|x| > |t|$};
\draw (-1,3) arc (30:80:1.5);
\draw (-1,3) arc (113:98:9);
\node at (3.2,3.7){$|x| > \sigma|t|$};
\end{tikzpicture}
\end{center}
\caption{Schematic representation of Proposition \ref{prop:classification1}. The exterior region $I_{ext}(t)$ in item (3), Theorem \ref{TH3}, for which the energy decay property holds, is determined by the corresponding value of $b$ as above depicted, when $a=c$. Note that the threshold value $b = \frac{3+\sqrt{3}}{12}$ (corresponding to $(a,c) = (-\frac{1+\sqrt{3}}{12},-\frac{1+\sqrt{3}}{12})$) is the maximum value found by us for which the energy decay property holds in whole exterior region outside the light cone. Above $b = \frac{3+\sqrt{3}}{12}$ the exterior region outside the light cone depends on $b$, i.e., $|x| > \sigma(b) |t|$, where $\sigma$ is given in Item (2) in Proposition \ref{prop:classification1}.} \label{fig:classification}
\end{figure}

\begin{remark}
Proposition \ref{prop:classification1} ends the proof of Item (3) in Theorem \ref{TH3}.
\end{remark}

\begin{proof}[Proof of Proposition \ref{prop:classification1}]
Recall the functional $\mathcal I(t)$ defined in \eqref{I}. Consider this functional but now with a weight $\varphi(t,x) = \psi(t,x)$, see \eqref{psi(t,x)} and \eqref{psi_final}. Then, a computation with $|\tau| < 1$\footnote{The restriction $|\tau| < 1$ is due to the positivity of $E_{loc}(t) + \tau\mathcal I(t)$.} gives
\[\begin{aligned}
2L\frac{d}{dt}(&E_{loc}(t) + \tau\mathcal I(t)) \\
=&~{} \int \psi' \left( \widetilde A_1(f^2+ g^2) + \widetilde A_2 (f_x^2 + g_x^2) + \widetilde A_3(f_{xx}^2+g_{xx}^2) + \widetilde A_4(f_{xxx}^2+g_{xxx}^2) \right)\\
&+\int \psi'\left( 2\widetilde B_1 fg + 2 \widetilde B_2 f_xg_x +2 \widetilde B_3 f_{xx}g_{xx} +2 \widetilde B_4 f_{xxx}g_{xxx}\right)\\
&+O\left(\frac1L, \norm{(u,\eta)}_{H^1 \times H^1}\right),
\end{aligned}\]
where
\[\begin{aligned}
&\widetilde A_1 = -\sigma + \tau, \quad \widetilde A_2 = -\sigma\left(3-\frac{1}{6b}\right)+\tau \left(3-\frac{1}{2b}\right), \\
&\widetilde A_3 = -\sigma\left(3-\frac{1}{3b}\right) + \tau\left(3-\frac{2}{3b}\right), \quad \widetilde A_4 =-\sigma\left(1-\frac{1}{6b}\right) + \tau\left(1-\frac{1}{6b}\right)\\
&\widetilde B_1 =  1-\sigma \tau, \quad \widetilde B_2 = \left(5-\frac{2}{3b}\right)  - 3\sigma\tau,\\
&\widetilde B_3 = \left(7-\frac{5}{3b}+\frac{1}{12b^2}\right) - 3\sigma \tau, \quad \widetilde B_4 = \left(3 - \frac{1}{b} + \frac{1}{12b^2}\right)-\sigma \tau.
\end{aligned}\]
Thanks to Proposition \ref{prop:classification}, we may assume $b > \frac14$.

\medskip

\noindent
\begin{lemma}\label{claim_final} For each $b > \frac14$, there exist $\tau = \tau(b)$ and $\sigma = \sigma(b)$ satisfying $|\tau|<1$ and $\sigma > 0$, respectively, such that
\begin{equation}\label{Ext_AB}
\widetilde A_i + |\widetilde B_i| < 0, \quad i=1,2,3,4.
\end{equation}
\end{lemma}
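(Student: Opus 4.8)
\textbf{Proof plan for Lemma \ref{claim_final}.}

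The plan is to treat \eqref{Ext_AB} as four simultaneous linear inequalities in the two unknowns $\sigma$ and $\tau$, parametrized by $b>\frac14$, and to exhibit an explicit choice that works. First I would observe that each $\widetilde A_i + |\widetilde B_i|<0$ splits into the two conditions $\widetilde A_i + \widetilde B_i < 0$ and $\widetilde A_i - \widetilde B_i < 0$; since $\widetilde A_i$ is linear in $(\sigma,\tau)$ with the $\sigma$-coefficient negative and the $\tau$-coefficient of the same sign, and $\widetilde B_i$ is affine with a $-\sigma\tau$ term, the natural move is to guess a relation between $\sigma$ and $\tau$ that makes the bilinear terms cooperate. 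The motivating computation is that when $a=c$ the good coupling is $\widetilde A_i = -(\sigma-\tau)\cdot(\text{positive factor})$ up to the $\frac1b$-corrections, while $\widetilde B_i$ is bounded; so $\widetilde A_i + |\widetilde B_i|<0$ should hold once $\sigma - \tau$ is large enough relative to $b$, and the optimal $\tau$ is determined by balancing the four constraints.

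Concretely, I would set, following the threshold in the statement, $\tau = \tau(b)$ to be the value that makes the $i=3$ (or $i=4$) constraint tight in the limiting case, and then $\sigma(b) = \frac{2(b-\frac16)(b-\frac18)}{b(b-\frac1{12})}$, and verify \eqref{Ext_AB} for this pair. The verification is four pairs of scalar inequalities in the single variable $b$ on $(\frac14,\infty)$; each reduces, after clearing the positive denominators $b$, $12b^2$, $b-\frac1{12}$, to a polynomial inequality in $b$ which I would check by factoring (the endpoint $b=\frac{3+\sqrt3}{12}$ and the behavior as $b\to\infty$, where $\sigma\to 2$ and $\tau\to$ its limit, serve as sanity checks, matching Corollary \ref{Corolario_0}). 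One also needs $|\tau(b)|<1$ on this range, which is a single rational inequality in $b$. Since $\widetilde A_1,\dots,\widetilde A_4$ share the structure ``$-\sigma\times(\text{coefficient})+\tau\times(\text{coefficient})$'' with coefficients tending to $3,3,3,1$-type values and $\widetilde B_1,\dots,\widetilde B_4$ tending to bounded constants, the inequalities are strict with room to spare for $b$ bounded away from $\frac14$, and the delicate point is only near $b=\frac14$, which is already covered by Proposition \ref{prop:classification} item (1)–(2), so one may even assume $b$ strictly larger.

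The main obstacle I anticipate is purely computational bookkeeping: writing out $\widetilde A_i \pm \widetilde B_i$ for the chosen $(\sigma,\tau)$ and confirming the sign of eight rational functions of $b$ without error, since the $\frac1b$ and $\frac1{b^2}$ corrections in $\widetilde A_3,\widetilde A_4,\widetilde B_3,\widetilde B_4$ make the numerators quartic in $b$. I would organize this by first proving the two ``hardest'' constraints (the ones that are tight at $b=\frac{3+\sqrt3}{12}$, which I expect to be $i=3$ with the $+$ sign and $i=4$ with the $+$ sign, since those drive the formula for $\sigma(b)$), then showing the remaining six are dominated by these. Once Lemma \ref{claim_final} is established, the functional $E_{loc}(t)+\tau\mathcal I(t)$ is a positive quantity whose time derivative is $\le -d_0\int\psi'(u^2+\eta^2+u_x^2+\eta_x^2)$ plus controllable error, exactly as in Lemma \ref{lem:Coercivity}, and the proof of Proposition \ref{prop:classification1}, hence of Theorem \ref{TH3} item (3), concludes by the same monotonicity argument as in Section \ref{sec:proof exterior}.
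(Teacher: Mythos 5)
Your overall strategy---splitting $\widetilde A_i + |\widetilde B_i|<0$ into the pair $\widetilde A_i \pm \widetilde B_i<0$ and reducing to sign conditions on expressions linear in $\tau$---is essentially the route the paper takes (the paper phrases the case split as $\tau \lessgtr \widetilde B_i$'s zero, which is the same thing). The genuine gap is in the point you propose to plug in. Taking $\sigma(b)$ \emph{equal} to $\frac{2(b-\frac16)(b-\frac18)}{b(b-\frac1{12})}$ fails on both ranges of $b$. For $\frac14<b<\frac{3+\sqrt3}{12}$ this quantity is strictly less than $1$, while the $i=1$ constraint alone forces $\sigma>1$: indeed $\widetilde A_1+|\widetilde B_1|=(1-\sigma)(1+\tau)$ whenever $\tau\le 1/\sigma$, which is positive for $\sigma<1$ and $|\tau|<1$. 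For $b>\frac{3+\sqrt3}{12}$ the quantity $\frac{2(b-\frac16)(b-\frac18)}{b(b-\frac1{12})}=\frac{3\widetilde b^2+\widetilde b}{1+\widetilde b}$ (with $\widetilde b=1-\frac1{6b}$) is exactly the value of $\sigma$ at which the admissible $\tau$-interval for $i=4$, namely $\big(\frac{\sigma\widetilde b-3\widetilde b^2}{\widetilde b-\sigma},\min(1,\frac{\sigma\widetilde b+3\widetilde b^2}{\widetilde b+\sigma})\big)$, degenerates: its left endpoint equals $1$, so no $|\tau|<1$ survives. Likewise, choosing $\tau$ ``to make the $i=3$ or $i=4$ constraint tight'' produces an equality, not the strict inequality \eqref{Ext_AB}. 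The correct statement is that an admissible $\tau$ exists precisely for $\sigma>\max\big(1,\frac{2(b-\frac16)(b-\frac18)}{b(b-\frac1{12})}\big)$, strictly; any fixed $\sigma$ strictly above this threshold (together with $\tau$ in the interior of the resulting interval) would make your eight polynomial verifications go through.

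Two further points. First, your heuristic that the inequalities hold ``with room to spare for $b$ bounded away from $\frac14$'' is backwards: the constraints are tight at the threshold for \emph{every} $b$, including $b\to\infty$ where $\sigma_0(b)\to 2$ and the $i=4$ inequality fails at $\sigma=2$ exactly; there is no uniform slack. Second, exhibiting a single pair $(\tau(b),\sigma(b))$ proves the lemma as literally stated, but it is not what the surrounding argument consumes: Proposition \ref{prop:classification1} and the exterior-region monotonicity argument need the coercivity \eqref{Coer} for \emph{all} $\sigma$ above the threshold (one uses both $\sigma=1+\delta$ and $\widetilde\sigma=1+\delta/2$ there). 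This is why the paper characterizes the full admissible region in $(\sigma,\tau)$---deriving the intervals \eqref{22}, \eqref{33}, \eqref{44} and comparing their endpoints---rather than verifying a single point. You should either prove the interval characterization or at least run your pointwise verification for an arbitrary $\sigma$ above the threshold, with $\tau$ chosen as an explicit function of $(\sigma,b)$ strictly inside the admissible window.
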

This lemma immediately implies
\begin{equation}\label{Coer}
2L \frac{d}{dt}\left(E_{loc}(t) + \tau\mathcal I(t)\right) \le -c_0\int \psi' (u_x^2 +u^2 + \eta_x^2 + \eta^2),
\end{equation}
for a set of $\sigma > 0$, thanks to the Cauchy-Schwarz inequality.

\begin{proof}[Proof of Lemma \ref{claim_final}]
We first consider $\widetilde A_1 + |\widetilde B_1|$. A computation gives
\[
-\sigma + \tau + |1-\sigma \tau| = \tau (1-\sigma) + (1-\sigma), \qquad \tau \le \frac1{\sigma},
\]
and
\[
-\sigma + \tau + |1-\sigma \tau| = \tau(1+\sigma)-(1+\sigma), \qquad \tau > \frac1{\sigma}.
\]
Thus, we conclude
\begin{equation}\label{11}
\widetilde A_1 + |\widetilde B_1| < 0 \Longleftrightarrow -1 < \tau < 1 \quad \sigma > 1.
\end{equation}

\medskip

Now we focus on proving that $\widetilde A_4 + |\widetilde B_4|<0$. 

\medskip

Let $\widetilde b = 1 - \frac{1}{6b}$ be as in Section \ref{sec:velocity}. Similarly as before, we have
\begin{equation}\label{Ext_A4B4_1}
-\sigma\widetilde b + \tau \widetilde b + |3\widetilde b^2-\sigma \tau| = \tau (\widetilde b-\sigma) -\sigma\widetilde b + 3\widetilde b^2, \quad \tau \le \frac{3\widetilde b^2}{\sigma}
\end{equation}
and
\begin{equation}\label{Ext_A4B4_2}
-\sigma\widetilde b + \tau \widetilde b + |3\widetilde b^2-\sigma \tau| = \tau (\widetilde b+\sigma) -\sigma\widetilde b - 3\widetilde b^2, \quad \tau > \frac{3\widetilde b^2}{\sigma}.
\end{equation}
Then, a computation in \eqref{Ext_A4B4_1}  gives that
\[\tau (\widetilde b-\sigma) -\sigma\widetilde b + 3\widetilde b^2 < 0 \quad \Longleftrightarrow \quad  \frac{\sigma\widetilde b - 3\widetilde b^2}{\widetilde b-\sigma} < \tau  \quad  \left(\le \frac{3\widetilde b^2}{\sigma}\right),\]
under the condition $\widetilde b < \sigma$. Note that 
\[\tau < \frac{\sigma\widetilde b - 3\widetilde b^2}{\widetilde b-\sigma}, \quad 0 < \sigma < \widetilde b\]
and $-1 < \tau$ have no intersection with respect to $\tau$, when $b > \frac14$ ($\Leftrightarrow \widetilde b > \frac13$). Similarly, we have in \eqref{Ext_A4B4_2} that
\[\tau (\widetilde b+\sigma) -\sigma\widetilde b - 3\widetilde b^2 < 0 \Longleftrightarrow   \frac{3\widetilde b^2}{\sigma} < \tau  < \frac{\sigma\widetilde b + 3\widetilde b^2}{\widetilde b + \sigma},\]
under the condition $\sigma>0$. Collecting all these, one has 
\begin{equation}\label{44}
\widetilde A_4 + |\widetilde B_4| < 0 \quad \Longleftrightarrow \quad \max\left(-1, \frac{\sigma\widetilde b - 3\widetilde b^2}{\widetilde b-\sigma}\right) < \tau < \min\left(1, \frac{\sigma\widetilde b + 3\widetilde b^2}{\widetilde b + \sigma}\right), \quad \sigma > \widetilde b.
\end{equation}
Note that $- 1 < \frac{\sigma\widetilde b - 3\widetilde b^2}{\widetilde b-\sigma} $ for $b > \frac14$ and $\sigma > 0$. Thus, we can choose $|\tau| < 1$ such that $\widetilde A_4 + |\widetilde B_4| <0$, if 
\begin{equation}\label{Sigma1111}
\sigma > \max\left(\frac{3 \widetilde b^2 + \widetilde b}{1+\widetilde b}, \sqrt{3}\widetilde b \right) = \max\left( \frac{48b^2 -14b +1}{24b^2-2b}, \sqrt{3}\left(1-\frac{1}{6b}\right) \right).
\end{equation}
Remark that $\sqrt{3}\widetilde b$ is the maximum in \eqref{Sigma1111}, when $\widetilde b \le \frac{1}{\sqrt{3}} $ ($\Leftrightarrow b \le \frac{3+\sqrt{3}}{12}$), and, in the same region, $\sqrt{3}\widetilde b \le 1$. However, when $\widetilde b > \frac{1}{\sqrt{3}} $ ($\Leftrightarrow b > \frac{3+\sqrt{3}}{12}$), we have 
\[\max\left(\sqrt{3}\left(1-\frac{1}{6b}\right), \frac{48b^2 -14b +1}{24b^2-2b} \right) =  \frac{48b^2 -14b +1}{24b^2-2b} > 1.\]
This is the essential part of Proposition \ref{prop:classification1}.

\medskip

The rest of the proof is divided in two parts: one step is to prove that there exists $|\tau|<1$ such that \eqref{Ext_AB} holds for $\sigma >1$, when $\frac14 < b \le \frac{3+\sqrt{3}}{12}$ (for Item (1)). The other step consists in proving that $(\tau, \sigma)$ satisfying $\widetilde A_4 + |\widetilde B_4| < 0$ also satisfies $\widetilde A_i + |\widetilde B_i| < 0$, $i=2,3$ (for Item (2)).

\medskip

An analogous argument (omitting the calculations) yields\footnote{Each minimum value in \eqref{22} and \eqref{33} is always greater than $-1$, if $\sigma > 0$.}
\begin{equation}\label{22}
\begin{aligned}
&\widetilde A_2 + |\widetilde B_2| < 0 \\
\Longleftrightarrow&~{} \frac{\sigma(2+\widetilde b)-1 - 4\widetilde b}{3(\widetilde b-\sigma)} < \tau < \min\left(1, \frac{\sigma(2+\widetilde b)+1 + 4\widetilde b}{3(\widetilde b + \sigma)}\right),
\end{aligned}
\end{equation}
for $\sigma > \widetilde b$, and
\begin{equation}\label{33}
\begin{aligned}
&\widetilde A_3 + |\widetilde B_3| < 0 \\
\Longleftrightarrow&~{} \frac{\sigma(1 + 2\widetilde b)-3\widetilde b^2-4\widetilde b}{4\widetilde b-1-3\sigma} < \tau < \min\left(1, \frac{\sigma(1+2\widetilde b)+3\widetilde b^2+4\widetilde b}{4\widetilde b -1 +3\sigma}\right),
\end{aligned}
\end{equation}
for $\sigma > \frac{4\widetilde b-1}{3}$. Note that $\frac{4\widetilde b-1}{3} > \frac{1- 4\widetilde b}{3}$ for $\widetilde b > \frac13$ ($\Leftrightarrow b > \frac14$).

\medskip

We first address the regime $\frac14 < b \le \frac{3+\sqrt{3}}{12}$ ($\Leftrightarrow \frac13 < \widetilde b \le \frac{1}{\sqrt{3}}$). On one hand, a computation gives
\[\frac{\sigma\widetilde b + 3\widetilde b^2}{\widetilde b + \sigma} \le \frac{\sigma(2+\widetilde b)+1 + 4\widetilde b}{3(\widetilde b + \sigma)} \Longleftrightarrow \frac{9\widetilde b^2-4\widetilde b-1}{2(1-\widetilde b)} \le \sigma.\]
The right-hand side is always true for $\sigma > 0$. On the other hand, an analogous calculation gives
\[\begin{aligned}
\frac{\sigma\widetilde b + 3\widetilde b^2}{\widetilde b + \sigma} \le \frac{\sigma(1+2\widetilde b)+3\widetilde b^2+4\widetilde b}{4\widetilde b -1 +3\sigma}\Longleftrightarrow (1-\widetilde b)\sigma^2 + (6\widetilde b - 8\widetilde b^2)\sigma + (7\widetilde b^2-9\widetilde b^3) \ge 0.
\end{aligned}\]
The right-hand side is always true, if
\begin{equation}\label{<<<}
(6\widetilde b - 8\widetilde b^2)^2-4(1-\widetilde b)(7\widetilde b^2-9\widetilde b^3) \le 0 \Longleftrightarrow (\frac13 <)~ \frac{4-\sqrt{2}}{7} \le \widetilde b \le \frac{4+\sqrt{2}}{7}.
\end{equation}
Otherwise (if $\frac13 < b < \frac{4-\sqrt{2}}{7}$), the right-hand side is equivalent to
\[\sigma \ge \frac{-(6\widetilde b - 8\widetilde b^2) + \sqrt{(6\widetilde b - 8\widetilde b^2)^2-4(1-\widetilde b)(7\widetilde b^2-9\widetilde b^3)}}{2(1-\widetilde b)},\]
which is always true for $\sigma >0$. 

\medskip

Thus, for each $\frac13 < \widetilde b \le \frac1{\sqrt{3}}$, if 
\begin{equation}\label{55}
\max\left(\frac{\sigma(2+\widetilde b)-1 - 4\widetilde b}{3(\widetilde b-\sigma)}, \frac{\sigma(1 + 2\widetilde b)-3\widetilde b^2-4\widetilde b}{4\widetilde b-1-3\sigma}, \frac{\sigma\widetilde b - 3\widetilde b^2}{\widetilde b-\sigma}\right) < \min\left(1, \frac{\sigma\widetilde b + 3\widetilde b^2}{\widetilde b + \sigma}\right),
\end{equation}
we have \eqref{Coer}. Note that 
\[1\le \frac{\sigma\widetilde b + 3\widetilde b^2}{\widetilde b + \sigma} \Longleftrightarrow \sigma \le\frac{3\widetilde b^2 - \widetilde b}{1 - \widetilde b}.\]
Since $\sigma > \widetilde b$ and 
\[\frac{\sigma\widetilde b - 3\widetilde b^2}{\widetilde b-\sigma} < 1 \Longleftrightarrow \sigma > \frac{3 \widetilde b^2 + \widetilde b}{1+\widetilde b},\] 
over the region of $\widetilde b$, for which 
\[\widetilde b< \frac{3\widetilde b^2 - \widetilde b}{1 - \widetilde b}\]
holds true (indeed $(\frac12,\frac{1}{\sqrt{3}}]$), there is no $\sigma >0$ such that both
\[\widetilde b< \sigma \le \frac{3\widetilde b^2 - \widetilde b}{1 - \widetilde b} \quad \mbox{and} \quad \sigma > \frac{3 \widetilde b^2 + \widetilde b}{1+\widetilde b}\]
satisfy at the same time. Thus, \eqref{55} is further reduced as
\[\max\left(\frac{\sigma(2+\widetilde b)-1 - 4\widetilde b}{3(\widetilde b-\sigma)}, \frac{\sigma(1 + 2\widetilde b)-3\widetilde b^2-4\widetilde b}{4\widetilde b-1-3\sigma}, \frac{\sigma\widetilde b - 3\widetilde b^2}{\widetilde b-\sigma}\right) < \frac{\sigma\widetilde b + 3\widetilde b^2}{\widetilde b + \sigma},\]
under the condition
\[\sigma > \max\left(\widetilde b, \frac{3\widetilde b^2 - \widetilde b}{1 - \widetilde b}\right).\]
In this case, we obtain
\[\sigma > \max\left(\frac{-\left(7\widetilde b^2-2\widetilde b-1\right)+\sqrt{\left(7\widetilde b^2-2\widetilde b-1\right)^2+4\left(4\widetilde b+2\right)\left(\widetilde b^3+4\widetilde b^2+x\right)}}{2\left(4\widetilde b+2\right)}, \sqrt{3}\,\widetilde b\right).\]
It is not difficult to show that the right-hand side is less than $1$, when $\frac13 < \widetilde b < \frac{1}{\sqrt{3}}$, and hence we, in addition to \eqref{11}, complete the proof of Lemma \ref{claim_final} in the case of Proposition \ref{prop:classification1}, Item (1).

\medskip

In order to prove Item (2) of the same proposition, it suffices to assume that $b > \frac{3+\sqrt{3}}{12}$ ($\Leftrightarrow 1> \widetilde b > \frac{1}{\sqrt{3}}$) and $\sigma >1$. A straightforward calculation reveals
\[ \frac{\sigma(2+\widetilde b)-1 - 4\widetilde b}{3(\widetilde b-\sigma)} < \frac{\sigma\widetilde b - 3\widetilde b^2}{\widetilde b-\sigma} \quad \mbox{and} \quad \frac{\sigma(1 + 2\widetilde b)-3\widetilde b^2-4\widetilde b}{4\widetilde b-1-3\sigma} < \frac{\sigma\widetilde b - 3\widetilde b^2}{\widetilde b-\sigma},\]
for all $\sigma > 1$ and $\frac{1}{\sqrt{3}} < \widetilde b <1$. 

\medskip

In view of  \eqref{22}, \eqref{33} and \eqref{44}, once we prove
\begin{equation}\label{>1}
\frac{\sigma\widetilde b + 3\widetilde b^2}{\widetilde b + \sigma} \ge 1 \quad \Longrightarrow \quad \frac{\sigma(2+\widetilde b)+1 + 4\widetilde b}{3(\widetilde b + \sigma)}, \quad \frac{\sigma(1+2\widetilde b)+3\widetilde b^2+4\widetilde b}{4\widetilde b -1 +3\sigma} \ge 1
\end{equation} 
and
\begin{equation}\label{<1}
\frac{\sigma\widetilde b + 3\widetilde b^2}{\widetilde b + \sigma} < 1 \quad \Longrightarrow \quad \frac{\sigma\widetilde b + 3\widetilde b^2}{\widetilde b + \sigma} < \frac{\sigma(2+\widetilde b)+1 + 4\widetilde b}{3(\widetilde b + \sigma)},  \quad\frac{\sigma(1+2\widetilde b)+3\widetilde b^2+4\widetilde b}{4\widetilde b -1 +3\sigma},
\end{equation} 
under $\sigma > 1$ and $\frac{1}{\sqrt{3}} < \widetilde b <1$, we immediate prove that $(\tau, \sigma)$ satisfying $\widetilde A_4 + |\widetilde B_4| < 0$ also satisfies $\widetilde A_i + |\widetilde B_i| < 0$, $i=2,3$, which completes the proof for Item (2).

\medskip

A not very complicated computation gives
\[
\frac{\sigma\widetilde b + 3\widetilde b^2}{\widetilde b + \sigma} \ge 1 \Longleftrightarrow \sigma \le \frac{3\widetilde b^2 - \widetilde b}{1 - \widetilde b},
\]
\[
\frac{\sigma(2+\widetilde b)+1 + 4\widetilde b}{3(\widetilde b + \sigma)} \ge 1 \Longleftrightarrow \sigma \le \frac{\widetilde b+1}{1-\widetilde b},
\]
and
\[\frac{\sigma(1+2\widetilde b)+3\widetilde b^2+4\widetilde b}{4\widetilde b -1 +3\sigma} \ge 1 \Longleftrightarrow \sigma \le \frac{3 \widetilde b^2 +1}{2(1-\widetilde b)}.\]
When $-\frac13 < \widetilde b < 1$, we show
\[\frac{3\widetilde b^2 - \widetilde b}{1 - \widetilde b} ~\le~ \frac{\widetilde b+1}{1-\widetilde b},~ \frac{3 \widetilde b^2 +1}{2(1-\widetilde b)},\]
which implies \eqref{>1}.

\medskip

On one hand, an analogous computation yields
\[\frac{\sigma\widetilde b + 3\widetilde b^2}{\widetilde b + \sigma} < 1 \Longleftrightarrow \sigma > \frac{3\widetilde b^2 - \widetilde b}{1 - \widetilde b}\]
and
\[\frac{\sigma\widetilde b + 3\widetilde b^2}{\widetilde b + \sigma} < \frac{\sigma(2+\widetilde b)+1 + 4\widetilde b}{3(\widetilde b + \sigma)} \Longleftrightarrow \sigma > \frac{9\widetilde b^2 -4\widetilde b-1}{2(1-\widetilde b)}.\]
When $-\frac13 < \widetilde b < 1$, we show
\[\frac{3\widetilde b^2 - \widetilde b}{1 - \widetilde b} > \frac{9\widetilde b^2 -4\widetilde b-1}{2(1-\widetilde b)},\]
which implies 
\begin{equation}\label{part1}
\frac{\sigma\widetilde b + 3\widetilde b^2}{\widetilde b + \sigma} < 1 \Longrightarrow \frac{\sigma\widetilde b + 3\widetilde b^2}{\widetilde b + \sigma} < \frac{\sigma(2+\widetilde b)+1 + 4\widetilde b}{3(\widetilde b + \sigma)}.
\end{equation}
On the other hand, we also have
\[\frac{\sigma(1+2\widetilde b)+3\widetilde b^2+4\widetilde b}{4\widetilde b -1 +3\sigma} \ge 1 \Longleftrightarrow \sigma^2(1-\widetilde b) + \sigma(6\widetilde b-8\widetilde b^2) + \widetilde b^2(7-9\widetilde b) > 0.\]
From \eqref{<<<}, the right-hand side is always true for $\frac{1}{\sqrt{3}} < \widetilde b < \frac{4+\sqrt{2}}{7}$. Otherwise ($\frac{4+\sqrt{2}}{7} \le \widetilde b$), we obtain
\[\sigma > \frac{\widetilde b}{1-\widetilde b}\left((4\widetilde b-3) + \sqrt{(4\widetilde b-3)^2-(1-\widetilde b)(7-9\widetilde b)}\right).\]
A computation ensures 
\[\frac{3\widetilde b^2 - \widetilde b}{1 - \widetilde b} > \frac{\widetilde b}{1-\widetilde b}\left((4\widetilde b-3) + \sqrt{(4\widetilde b-3)^2-(1-\widetilde b)(7-9\widetilde b)}\right),\]
whenever $-\frac13 < \widetilde b < 1$, which, in addition to \eqref{part1}, implies \eqref{<1}. Thus, we complete the proof of Lemma \ref{claim_final} and Proposition \ref{prop:classification1}.
\end{proof}
\end{proof}


\begin{thebibliography}{99}
\bibitem{ACKM2018} M.A. Alejo, M. Cortez, C. Kwak, and C. Mu\~noz, \emph{On the dynamics of zero-speed solutions for Camassa-Holm type equations}, accepted for publication in IMRN.

\bibitem{AM1} M.A. Alejo, and C. Mu\~noz, \emph{Almost sharp nonlinear scattering in one-dimensional Born-Infeld equations arising in nonlinear electrodynamics}, Proc. Amer. Math. Soc. 146 (2018), no. 5, 2225--2237. (https://doi.org/10.1090/proc/13947.)
%
\bibitem{ASL} B. Alvarez-Samaniego, and D. Lannes, \emph{Large time existence for 3D water-waves and asymptotics}, Invent. Math. 171 (2008), no. 3, 485--541.
%
\bibitem{Amick}  Amick, Charles J. \emph{Regularity and uniqueness of solutions to the Boussinesq system of equations} J. Differential Equations 54 (1984), no. 2, 231--247.

\bibitem{BCL0} Ellen ShiTing Bao, Robin Ming Chen, and Qing Liu, \emph{Existence and symmetry of ground states to the Boussinesq abcd systems}, Arch. Rat. Mech. Anal. May 2015, vol. 216, Issue 2, pp. 569--591.

\bibitem{BBM} Benjamin, T. B.; Bona, J. L.; Mahony, J. J. (1972), \emph{Model Equations for Long Waves in Nonlinear Dispersive Systems}, Philosophical Transactions of the Royal Society of London. Series A, Mathematical and Physical Sciences, 272 (1220): 47--78.

\bibitem{BCS1} J. L. Bona, M. Chen, and J.-C. Saut, \emph{Boussinesq equations and other systems for small-amplitude long waves in nonlinear dispersive media. I: Derivation and linear theory}, J. Nonlinear. Sci. Vol. 12: pp. 283--318 (2002).

\bibitem{BCS2} J. L. Bona, M. Chen, and J.-C. Saut, \emph{Boussinesq equations and other systems for small-amplitude long waves in nonlinear dispersive media. II: The nonlinear theory}, Nonlinearity 17 (2004) 925--952.

\bibitem{BCL} J. L. Bona, T. Colin and D. Lannes, \emph{Long wave approximations for water waves}, Arch. Ration. Mech. Anal. 178 (2005), no. 3, 373--410.

\bibitem{BLS} J. L. Bona, D. Lannes, and J.-C. Saut, \emph{Asymptotic models for internal waves}, J. Math. Pures Appl. (9) 89 (2008), no. 6, 538--566.


\bibitem{BT} J. L. Bona, and N. Tzvetkov, \emph{Sharp well-posedness results for the BBM equation}, Discrete Contin. Dyn. Syst. 23 (2009), no. 4, 1241--1252.
%
\bibitem{Bous} J. Boussinesq, \emph{Th\'eorie des ondes et des remous qui se propagent le long d'{}un canal rectangulaire horizontal, en communiquant au liquide contenu dans ce canal des vitesses sensiblement pareilles de la surface au fond}, J. Math. Pure Appl. (2) 17 (1872), 55--108.
%
\bibitem{Burtea} C. Burtea, \emph{New long time existence results for a class of Boussinesq-type systems.} J. Math. Pures Appl. (9) 106 (2016), no. 2, 203--236.


\bibitem{CNS1} Chen, Min; Nguyen, Nghiem V.; Sun, Shu-Ming, \emph{Existence of traveling-wave solutions to Boussinesq systems.} Differential Integral Equations 24 (2011), no. 9-10, 895--908.

\bibitem{CNS2} Chen, Min; Nguyen, Nghiem V.; Sun, Shu-Ming, \emph{Solitary-wave solutions to Boussinesq systems with large surface tension}, Discrete Contin. Dyn. Syst. 26 (2010), no. 4, 1153--1184.

\bibitem{CL} Chen, R. M., and Liu, Y., \emph{On the ill-posedness of a weakly dispersive one-dimensional Boussinesq system}, J. Anal. Math. 121 (2013), 299--316.

\bibitem{DD} P. Daripa, and R. Dash, \emph{A class of model equations for bi-directional propagation of capillary--gravity waves}, Int. J. Eng. Sci. 41 (2003) 201--218.

\bibitem{ElDika} K. El Dika, \emph{Asymptotic stability of solitary waves for the Benjamin-Bona-Mahony equation}, DCDS 2005, 13(3): pp. 583--622. doi: 10.3934/dcds.2005.13.583.


\bibitem{ElDika2}  K. El Dika, \emph{Smoothing effect of the generalized BBM equation for localized solutions moving to the right}, Discrete and Contin. Dyn. Syst. 12 (2005), no 5, 973--982.

\bibitem{ElDika_Martel} K. El Dika, Khaled, and Y. Martel, \emph{Stability of N solitary waves for the generalized BBM equations}, Dyn. Partial Differ. Equ. 1 (2004), no. 4, 401--437.



\bibitem{KMM1} M. Kowalczyk, Y. Martel, and C. Mu\~noz, \emph{Kink dynamics in the $\phi^4$ model: asymptotic stability for odd perturbations in the energy space}, J. Amer. Math. Soc. 30 (2017), 769--798.
%
\bibitem{KMM2} M. Kowalczyk, Y. Martel, and C. Mu\~noz,  \emph{Nonexistence of small, odd breathers for a class of nonlinear wave equations}, Letters in Mathematical Physics, May 2017, Volume 107, Issue 5, pp 921--931.

\bibitem{KMM3} M. Kowalczyk, Y. Martel, and C. Mu\~noz,  \emph{On asymptotic stability of nonlinear waves}, Laurent Schwartz seminar notes (2017), see url at \url{http://slsedp.cedram.org/slsedp-bin/fitem?id=SLSEDP_2016-2017____A18_0}.

\bibitem{KM2018} C. Kwak and C. Mu\~noz, \emph{Extended decay properties for generalized BBM equations}, accepted for publication in Fields Institute Communications.

\bibitem{KMPP2018} C. Kwak, C. Mu\~noz, F. Poblete and J.-C. Pozo, \emph{The scattering problem for Hamiltonian $abcd$ Boussinesq systems in the energy space}, to appear in J. Math. Pures Appl. https://doi.org/10.1016/j.matpur.2018.08.005

\bibitem{LPS} Linares, F., Pilod, D., and Saut, J.-C., \emph{Well-posedness of strongly dispersive two-dimensional surface wave Boussinesq systems},  SIAM J. Math. Anal. 44 (2012), no. 6, 4195--4221.
%

\bibitem{MM}
Y. Martel and F. Merle, \emph{A {L}iouville theorem for the critical
  generalized {K}orteweg-de {V}ries equation}, J. Math. Pures Appl. (9)
  \textbf{79} (2000), no.~4, 339--425.

\bibitem{MM1}
Y. Martel and F. Merle, \emph{Asymptotic stability of solitons for subcritical generalized {K}d{V}
  equations}, Arch. Ration. Mech. Anal. \textbf{157} (2001), no.~3, 219--254.

{
\bibitem{MM2}
Y. Martel and F. Merle, \emph{Asymptotic stability of solitons for subcritical  gKdV equations revisited}.
Nonlinearity, 18 (2005), no.~1, 55-80. }


\bibitem{MR}
F. Merle and P. Rapha\"el, \emph{The blow-up dynamic and upper bound on
  the blow-up rate for critical nonlinear {S}chr\"odinger equation}, Ann. of
  Math. (2) \textbf{161} (2005), no.~1, 157--222.

\bibitem{MP2018} C. Mu\~noz, and G. Ponce \emph{Breathers and the dynamics of solutions to the KdV type equations}, to appear in Comm. Math. Phys. \url{https://doi.org/10.1007/s00220-018-3206-9}

\bibitem{MPP} C. Mu\~noz, F. Poblete, and J. C. Pozo, \emph{Scattering in the energy space for Boussinesq equations}, preprint arXiv:1707.02616 (2017), to appear in Comm. Math. Phys. (https://doi.org/10.1007/s00220-018-3099-7)


\bibitem{Munoz_Rivas} J. C. Mu\~noz, and Y. Rivas, \emph{Existence and asymptotic decay for a Boussinesq-type model}, J. Math. Anal. Appl. 345 (2008) 455--475.

\bibitem{Olivera} Oliveira, Filipe, \emph{A note on the existence of traveling-wave solutions to a Boussinesq system}, Differential Integral Equations 29 (2016), no. 1-2, 127--136.

\bibitem{Saut} J. C. Saut, \emph{Asymptotic models for surface and internal waves}, 29th. Coloquio Brasileiro de Matem\'atica, Publica\c{c}\~oes matem\'aticas IMPA, 2013.

\bibitem{SX} J.-C. Saut and L. Xu, \emph{The Cauchy problem on large time for surface waves Boussinesq systems}, J. Math. Pures Appl. (9) 97 (2012), no. 6, 635--662.

\bibitem{SWX} J.-C. Saut, C. Wang, and L. Xu, \emph{The Cauchy problem on large time for surface-waves-type Boussinesq systems II.} SIAM J. Math. Anal. 49 (2017), no. 4, 2321-2386.

\bibitem{Schonbek} Schonbek, Maria E., \emph{Existence of solutions for the Boussinesq system of equations.} J. Differential Equations 42 (1981), no. 3, 325--352.


\end{thebibliography}
\end{document}